\newcommand{\Res}{\mathrm{Res}}
\newcommand{\fgl}{\mathfrak{gl}}
\newcommand{\fsl}{\mathfrak{sl}}
\newcommand{\fg}{\mathfrak g}
\newcommand{\fh}{\mathfrak h}
\newcommand{\wt}{\widetilde}
\newcommand{\wh}{\widehat}
\newcommand{\rG}{\mathrm{G}}
\newcommand{\rGL}{\mathrm{GL}}
\newcommand{\rH}{\mathrm{H}}
\newcommand{\rN}{\mathrm{N}}
\newcommand{\ot}{\otimes}
\newcommand{\CH}{\mathcal{H}}
\newcommand{\CL}{\mathcal{L}}
\newcommand{\C}{\mathbb{C}}
\newcommand{\N}{\mathbb N}
\newcommand{\Z}{\mathbb Z}
\newcommand{\rd}{\mathrm{d}}
\newcommand{\rk}{\mathrm{k}}
\newcommand{\ba}{\begin {eqnarray}}
\newcommand{\ea}{\end {eqnarray}}
\newcommand{\baa}{\begin {eqnarray*}}
\newcommand{\eaa}{\end {eqnarray*}}
\newcommand{\be}{\begin {equation}}
\newcommand{\ee}{\end {equation}}
\newcommand{\bee}{\begin {equation*}}
\newcommand{\eee}{\end {equation*}}
\newcommand{\U}{\mathcal{U}}
\newcommand{\te}[1]{\textnormal{{#1}}}
\theoremstyle{Theorem}
\theoremstyle{Theorem}
\newtheorem{thm}{Theorem}[section]
\newtheorem{cort}[thm]{Corollary}
\newtheorem{lemt}[thm]{Lemma}
\newtheorem{prpt}[thm]{Proposition}
\newtheorem{thmt}[thm]{Theorem}
\newtheorem{remt}[thm]{Remark}
\newtheorem{ext}[thm]{Example}
\newtheorem{dfnt}[thm]{Definition}
\def\({\left(}
\def\){\right)}
\newlength{\dhatheight}
\def \<{{\langle}}
\def \>{{\rangle}}
\numberwithin{equation}{section}
\title[EALA, vertex algebra, and reductive group]{Extended affine Lie algebras, vertex algebras, and reductive groups}
\author{Fulin Chen$^1$}
\address{School of Mathematical Sciences, Xiamen University,
 Xiamen, China 361005} \email{chenf@xmu.edu.cn}
 \thanks{$^1$Partially supported by China NSF grant (No.11971397) and the Fundamental
Research Funds for the Central Universities (No.20720190069).}
 \author{Haisheng Li}\address{Department of Mathematical Sciences,
Rutgers University, Camden, NJ 08102, USA} \email{hli@camden.rutgers.edu}
 \author{Shaobin Tan$^2$}\address{School of Mathematical Sciences, Xiamen University,
 Xiamen, China 361005} \email{tans@xmu.edu.cn }\thanks{$^2$Partially supported by China NSF grants (Nos.11531004, 11971397)}
\author{Qing Wang$^3$}\address{School of Mathematical Sciences, Xiamen University,
 Xiamen, China 361005} \email{qingwang@xmu.edu.cn }\thanks{$^3$Partially supported by
 China NSF grants (Nos.11531004, 11622107)}
\subjclass[2010]{17B67 \& 17B69}
\begin{document}

\begin{abstract}
In this paper, we explore natural connections among the representations
of the extended affine Lie algebra $\wh{\fsl_N}(\C_q)$ with $\C_q=\C_q[t_0^{\pm 1},t_1^{\pm 1}]$ an irrational
quantum $2$-torus, the simple affine vertex algebra $L_{\wh{\fsl_\infty}}(\ell,0)$ with $\ell$ a positive integer, and  Levi subgroups $\mathrm{GL}_{\bf I}$
of $\mathrm{GL}_\ell(\C)$.
First, we give a canonical isomorphism between the category of integrable restricted
$\wh{\fsl_N}(\C_q)$-modules of level $\ell$ and that of
 equivariant quasi $L_{\wh{\fsl_\infty}}(\ell,0)$-modules.
Second, we classify irreducible $\N$-graded equivariant quasi $L_{\wh{\fsl_\infty}}(\ell,0)$-modules.
Third, we establish a duality between irreducible $\N$-graded equivariant quasi  $L_{\wh{\fsl_\infty}}(\ell,0)$-modules
 and irreducible regular $\mathrm{GL}_{\bf I}$-modules on certain fermionic Fock spaces.
 Fourth, we obtain an explicit realization of every irreducible  $\N$-graded equivariant quasi $L_{\wh{\fsl_\infty}}(\ell,0)$-module.
Fifth, we completely determine the following branchings: \te{(i)} The branching from
 $L_{\wh{\fsl_\infty}}(\ell,0)\ot L_{\wh{\fsl_\infty}}(\ell',0)$ to $L_{\wh{\fsl_\infty}}(\ell+\ell',0)$ for quasi modules.
\te{(ii)} The branching from $\wh{\fsl_N}(\C_q)$ to its Levi subalgebras.
\te{(iii)} The branching from $\wh{\fsl_N}(\C_q)$
to its subalgebras $\wh{\fsl_N}(\C_q[t_0^{\pm M_0},t_1^{\pm M_1}])$.
\end{abstract}
\maketitle

\section{Introduction}
This paper is to establish and explore a natural connection between the extended affine Lie algebras of type $A_{N-1}$
coordinated by irrational quantum tori and vertex algebras.
Extended affine Lie algebra, written as EALA in short,  was
first introduced by Hoegh-Krohn and Torresani in \cite{H-KT} under the name of quasi-simple Lie algebra,
and since then it has been extensively studied in literature (see \cite{AABGP,BGK,N} and the references therein).
 By definition, an EALA  is a Lie algebra together with a finite-dimensional
ad-diagonalizable subalgebra  and a nondegenerate symmetric invariant bilinear form, satisfying a list of conditions.
One of the conditions is that the group generated by the isotropic roots is a
free abelian group of finite rank called the {\em nullity}.
It is known that EALAs of nullity $0$ are exactly finite-dimensional simple Lie algebras,
while EALAs of nullity $1$ are precisely affine Kac-Moody algebras (see \cite{ABGP}).

Note that affine Kac-Moody algebras were classified as untwisted affine Lie algebras and twisted affine Lie algebras
(see \cite{Kac}).
It is known that untwisted affine Lie algebras through their highest weight modules can be naturally associated with
vertex operator algebras and modules (see \cite{FZ}), whereas twisted affine Lie algebras
can be associated to twisted modules for the vertex operator algebras associated to
the corresponding untwisted affine Lie algebras (see \cite{FLM}, \cite{li-twisted}).
These (affine) vertex operator algebras, being the major building blocks in vertex operator algebra theory,
play an important role.  An eminent problem is to establish and explore natural connections of vertex algebras with all EALAs.

The structure of a general EALA is now pretty well understood and
 the EALAs with positive nullity are like affine Kac-Moody algebras in many ways.
Among these Lie algebras, toroidal extended affine Lie algebras are multi-loop generalizations of
untwisted affine Lie algebras.
 It was proved (see \cite{BGK,ABFP, N}) that every (maximal) EALA of positive nullity  is isomorphic to a toroidal EALA, or
 the subalgebra of the fixed points in a toroidal EALA
with respect to a finite abelian automorphism group,
 or an EALA of type $A$ coordinated by an irrational quantum torus.
 Meanwhile, there have been several studies on EALAs and their related algebras where vertex algebras
  have played an important role.
 In \cite{BBS}, Berman, Billig, and Szmigielski established a natural connection between
 toroidal Lie algebras and affine vertex operator algebras.
 Later on, Billig studied in \cite{B1} the structure of the vertex  operator algebras associated to
  full toroidal Lie algebras.
By using the vertex operator algebras constructed in \cite{B1}, some irreducible integrable modules
for  toroidal EALAs  and their subalgebras of the fixed-points under certain automorphism groups
were constructed in \cite{B2,CLT} and \cite{BL},  respectively.
As for the EALAs coordinated by irrational quantum tori, an explicit natural connection with vertex algebras
is yet to be established. This is the main concern of this present paper.

In this paper, we concentrate ourselves to nullity $2$ EALAs coordinated by irrational quantum tori.
Let $q$ be a generic complex number. By definition, the quantum $2$-torus, denoted by
$\C_q[t_0^{\pm 1},t_1^{\pm 1}]$ or simply by $\C_q$,
 is the associative algebra with underlying space $\C[t_0^{\pm 1},t_1^{\pm 1}]$
and with the basic commutation relation $t_1t_0=qt_0t_1$.
On the other hand, let $N$ be a positive integer with $N\ge 2$.
 Denote by $\fgl_N(\C_q)$ the matrix Lie algebra over $\C_q$ and
 set $\fsl_N(\C_q)=[\fgl_N(\C_q),\fgl_N(\C_q)]$, the derived subalgebra.
 Furthermore, let $\wh{\fsl_N}(\C_q)$ denote the universal central extension of $\fsl_N(\C_q)$.
The nullity $2$ EALA  $\wt{\fsl_N}(\C_q)$ coordinated by $\C_q$ by definition is the extension of
 $\wh{\fsl_N}(\C_q)$ by  the two canonical degree-zero derivations (see \cite{BGK}).
 Representations of $\wh{\fsl_N}(\C_q)$ and $\wt{\fsl_N}(\C_q)$ have been extensively
 studied in literature (cf. \cite{BS,BGT,G1,G2,G3,GZ,ER,CT}).
 In particular,  a fermionic construction of the basic modules for  $\wh{\fsl_N}(\C_q)$
was obtained by Gao in \cite{G3} and integrable highest weight modules for $\wh{\fsl_N}(\C_q)$
were studied by Rao (see \cite{ER}).
As for this paper, our focus will be on the Lie algebra $\wh{\fsl_N}(\C_q)$.

Note that the EALAs coordinated by irrational quantum tori cannot be directly associated to modules or twisted modules
for vertex algebras, due to the fact that their canonical generating functions are not ``local.''
Then we come to a theory of what were called quasi modules for vertex algebras.
The notion of quasi module was introduced in \cite{li-gamma} to associate vertex algebras to a certain family of Lie algebras.
Indeed, with this new theory a much wider variety of Lie algebras, including the quantum $2$-torus Lie algebra,
can be associated with vertex algebras.
The theory of quasi modules was developed further in \cite{li-tlie}, where
a notion of vertex $\Gamma$-algebra with $\Gamma$ a group was introduced and
an enhanced notion of quasi module, called equivariant quasi module for a vertex $\Gamma$-algebra,  was introduced.
The notion of quasi module from definition generalizes that of module. On the other hand,
this notion also generalizes the notion of twisted module in a certain natural way (see  \cite{li-twisted-quasi}).
%More precisely, for a vertex operator algebra $V$ and  a finite order automorphism $\sigma$ of $V$,
%a canonical isomorphism was established in between the category of $\sigma$-twisted $V$-modules
%and  the category of equivariant quasi modules for $V$ with $\Gamma=\<\sigma\>$.

In this paper, we shall intensively study a natural connection of the Lie algebra $\wh{\fsl_N}(\C_q)$ with
vertex algebras and their equivariant quasi modules. As the first step, we relate $\wh{\fsl_N}(\C_q)$ to a (general) affine Lie algebra.
Let $\fgl_{\infty}$ be the Lie algebra of infinite order complex matrices with only finitely many nonzero entries and
set $\fsl_{\infty}=[\fgl_{\infty},\fgl_{\infty}]$ (the derived subalgebra).
Equip $\fgl_{\infty}$ with a suitable nondegenerate symmetric invariant bilinear form.
Then we have an affine Lie algebra
$\widehat{\fsl_{\infty}}=\fsl_{\infty}\otimes \C[t,t^{-1}]\oplus \C {\bf k}$.
(Note that $\wh{\fsl_\infty}$ is different from the infinite rank affine Kac-Moody algebra $\overline{\fsl_\infty}$
which is a completion of $\fsl_\infty$ (see \cite{Kac}).)
For any complex number $\ell$, we have a vertex algebra $V_{\widehat{\fsl_{\infty}}}(\ell,0)$
and its simple quotient vertex algebra $L_{\wh{\fsl_{\infty}}}(\ell,0)$.
We show that $\wh{\fsl_N}(\C_q)$ has an intrinsic connection with the affine Lie algebra $\wh{\fsl_\infty}$.
As the first main result, using this  intrinsic connection we establish a canonical category isomorphism
between the categories of
restricted $\wh{\fsl_N}(\C_q)$-modules of level $\ell$ and equivariant quasi modules for $V_{\wh{\fsl_\infty}}(\ell,0)$.
Furthermore, assuming that $\ell$ is a positive integer, we show that under the canonical category isomorphism,
integrable and restricted $\wh{\fsl_N}(\C_q)$-modules of level $\ell$ correspond exactly to
equivariant quasi $L_{\wh{\fsl_\infty}}(\ell,0)$-modules.

In representation theory, one of the prominent notions is that of dual pair
which was first studied by Howe for reductive groups (see \cite{H1,H2}).
In \cite{F}, Igor Frenkel discovered a duality for affine Lie algebras of type $A$,
which is now commonly known as the level-rank duality.
Wang in \cite{W2} obtained certain dualities
between a completed infinite rank affine Kac-Moody algebra and certain reductive Lie groups.
 An interesting level-rank duality was also found for vertex operator algebras
of types $A$ by Jiang and Lin in \cite{JLin} and $B,D$ by Jiang and Lam in \cite{JLam}.
In \cite{VV1}, the  usual Schur $(\fgl_N(\C),S_\ell)$-duality was generalized to the two-parameters quantum toroidal
algebra and the classical limit (see \cite{VV2}) gives rise to a duality between  $\wh{\fsl_N}(\C_q)$ and
a distinguished double affine Hecke algebra.

As the second main result of this paper, we obtain a level-rank type duality between irreducible integrable highest weight
$\wh{\fsl_N}(\C_q)$-modules of level $\ell\in \Z_+$ and irreducible regular modules for Levi subgroups $\rGL_{\bf I}$
of $\mathrm{GL}_\ell(\C)$, where the parameter ${\bf I}$ is a partition of $\{1,\dots,\ell\}$.
To a certain extent, this duality is analogous to the classical skew
$(\fgl_N(\C),\mathrm{GL}_\ell(\C))$-duality (see \cite{H2}).
By using this $(\wh{\fsl_N}(\C_q),\rGL_{\bf I})$-duality, we furthermore
obtain a fermionic realization of every irreducible integrable highest weight $\wh{\fsl_N}(\C_q)$-module.
Consequently, we obtain a realization of every irreducible $\N$-graded equivariant quasi
$L_{\wh{\fsl_\infty}}(\ell,0)$-module.

Note that one of important applications of dual pair is to study branchings.
For a pair $\rH\subset \rG$ of (complex) reductive  groups,
the branching law from $\rG$ to $\rH$  is  a description of the irreducible $\rH$-modules and their multiplicities that occur
in the decomposition of each irreducible regular $\rG$-module.
Among the better known classical branchings are the branching from $\rH\times \rH$ to $\rH$ and
 the branching from a group $\rG$ to a Levi subgroup $\rH$.
 Analogously, the two better branchings in vertex operator algebra theory are
the branchings from
$L_{\wh{\fg}}(\ell,0)\ot L_{\wh{\fg}}(\ell',0)$ to $L_{\wh{\fg}}(\ell+\ell',0)$ and
from $L_{\wh{\fg}}(\ell,0)$ to $L_{\wh{\fh}}(\ell,0)$,
where $\ell, \ell'$ are positive integers and $\fh$ is a Levi subalgebra of $\fg$.
On the other hand, Igor Frenkel studied  in \cite{F} the $\wh\fg\rightarrow \wh\fg^{(M)}$ branching,
where
$\wh\fg^{(M)}=\fg\ot \C[t^M,t^{-M}]\oplus \C \bm\rk\subset \wh\fg$ with $M$ a positive integer.
Frenkel made a conjecture on the decomposition of the basic modules and he confirmed the conjecture for $\fg=\fsl_N$
by using the level-rank duality for $\wh{\fsl_N}$.

 As the third part of this paper, we study three analogous branchings.
More specifically, we determine the following branchings:
\te{(i)} The branching from
  $L_{\wh{\fsl_\infty}}(\ell,0)$
   $\ot L_{\wh{\fsl_\infty}}(\ell',0)$ to $L_{\wh{\fsl_\infty}}(\ell+\ell',0)$ on equivariant quasi modules.
  \te{(ii)} The branching from $\wh{\fsl_N}(\C_q)$ to its Levi subalgebras.
  \te{(iii)} The branching from $\wh{\fsl_N}(\C_q)$
  to its subalgebra $\wh{\fsl_N}(\C_q[t_0^{\pm M_0},t_1^{\pm M_1}])$.
 It turns out that all the multiplicities in these branchings are \emph{finite}, which
 can be calculated by the Littlewood-Richardson rule (cf. \cite{GW}).
 This is quite different from the branching from
  $L_{\wh{\fg}}(\ell,0)$  $\ot L_{\wh{\fg}}(\ell',0)$ to $L_{\wh{\fg}}(\ell+\ell',0)$
  on ordinary modules with $\wh{\fg}$ a finite rank affine Kac-Moody algebra, where
the multiplicities are in general infinite and determining such branchings is an important and difficult problem
(cf. \cite{JLin, JLam, KMPX}). % and the references therein).
% the multiplicities are in general infinite and are
 %yet to be determined (see \cite{JLin, JLam, KMPX} and the references therein).

Now, we start a more detailed introduction section by section.
In Section 2, we recall the Lie algebra $\wh{\fsl_{N}}(\C_q)$ and establish a canonical isomorphism
between $\wh{\fsl_{N}}(\C_q)$ and what was called
the covariant algebra of the affine Lie algebra $\wh{\fsl_{\infty}}$ (see \cite{gkk}, \cite{li-gamma}).
Recall that associated to any Lie algebra $\fg$ with a nondegenerate symmetric invariant bilinear
form $\langle\cdot,\cdot\rangle$ we have an affine Lie algebra $\wh\fg$.
Furthermore, let $G$ be an automorphism group of $\fg$, which preserves the form $\langle\cdot,\cdot\rangle$,
and let $\chi: G\rightarrow \C^{\times}$ be a linear character such that for any $a,b\in \fg$,
$$[ga,b]=0\  \mbox{ and }\  \langle ga,b\rangle=0 \   \mbox{ for all but finitely many }g\in G.$$
The  covariant algebra $\wh\fg[G]$ of the affine Lie algebra $\wh\fg$ is defined to be the Lie algebra
with  $\wh\fg$ as the generating space space, subject to the following relations
\begin{align*}
g(a\otimes t^n)&=\chi(g)^{n}ga\otimes t^n, \\
[a\otimes t^m,b\otimes t^n]&=\sum_{g\in G}\chi(g)^{m}\left([ga,b]\otimes t^{m+n}+m\delta_{m+n,0}\langle ga,b\rangle {\bf k}\right)
\end{align*}
for $g\in G,\ a,b\in \fg,\ m, n\in \Z$, and $[{\bf k},\wh\fg]=0$.
For the Lie algebra $\fgl_{\infty}$, take the automorphism group
$\Z=\langle \sigma^N\rangle$, where $\sigma E_{m,n}=E_{m+1,n+1}$ for $m,n\in \Z$,
together with the linear character $\chi_q$ defined by $\chi_q(n)=q^{n}$ for $n\in \Z$.
It is proved that $\wh{\fsl_{N}}(\C_q)$ is canonically isomorphic to the covariant algebra $\wh{\fsl_{\infty}}[\Z]$.

In Section 3, we give a precise connection between $\wh{\fsl_N}(\C_q)$-modules of level $\ell$
and equivariant quasi modules for vertex algebras $V_{\wh{\fsl_\infty}}(\ell,0)$ and $L_{\wh{\fsl_\infty}}(\ell,0)$.
We first recall the notion of quasi module for a  vertex algebra $V$ and some basic results.
The notion of quasi module is defined by simply replacing
the Jacobi identity in the definition of a module with the ``quasi Jacobi identity''
stating that for any $u,v\in V$,  the usual Jacobi identity after multiplied by
a nonzero polynomial depending on $u,v$ holds.
For the notion of vertex $\Gamma$-algebra,
$\Gamma$ is a group with  a linear character $\chi: \Gamma\rightarrow \C^{\times}$, where
a {\em vertex $\Gamma$-algebra} is a vertex algebra $V$ equipped with a representation
$R$ of $\Gamma$ on $V$ such that $R_{g}({\bf 1})={\bf 1}$ for $g\in \Gamma$ and
 $$R_gY(v,x)R_g^{-1}=Y(R_gv,\chi(g)^{-1}x)\   \   \   \mbox{ for }g\in \Gamma,\ v\in V.$$
Note that $\Gamma$ does not act on $V$ as an automorphism group in general. On the other hand,
suppose $V$ is a $\Z$-graded vertex algebra in the sense that $V$ is a vertex algebra with
a $\Z$-grading $V=\oplus_{n\in \Z}V_{(n)}$ such that ${\bf 1}\in V_{(0)}$ and
$$u_{r}V_{(n)}\subset V_{(m+n-r-1)}\   \   \   \mbox{ for }u\in V_{(m)},\ m,r,n\in \Z.$$
Then  for any automorphism group $\Gamma$ of $V$ such that $gV_{(n)}\subset V_{(n)}$ for $g\in \Gamma,\ n\in \Z$
 and  for any linear character $\chi$ of $\Gamma$,
$V$ becomes a vertex $\Gamma$-algebra with $R_{g}=\chi(g)^{-L(0)}g$ for $g\in \Gamma$,
where $L(0)$ denotes the grading operator on $V$.
For a vertex $\Gamma$-algebra $V$,
an equivariant quasi module is a quasi module $(W,Y_W)$ satisfying the conditions that
$$Y_{W}(R_gv,x)=Y_W(v,\chi(g)x)\  \  \  \mbox{ for }g\in \Gamma,\ v\in V$$
and that for $u,v\in V$,  there exists a nonzero polynomial
$f(z)$ whose roots are contained in $\chi(\Gamma)$ such that
$$f(x_1/x_2)[Y_{W}(u,x_1),Y_W(v,x_2)]=0.$$
For any complex number $\ell$, we have a $\Z$-graded vertex algebra $V_{\widehat{\fsl_{\infty}}}(\ell,0)$ and
its simple quotient $L_{\wh{\fsl_{\infty}}}(\ell,0)$. On the other hand, the automorphism $\sigma$
of the Lie algebra $\fsl_{\infty}$ induces an automorphism of $V_{\widehat{\fsl_{\infty}}}(\ell,0)$ and $L_{\wh{\fsl_{\infty}}}(\ell,0)$.
Take $\Z=\langle\sigma^N\rangle$ with the linear character $\chi_q$.
Then $V_{\widehat{\fsl_{\infty}}}(\ell,0)$ and $L_{\wh{\fsl_{\infty}}}(\ell,0)$
are both vertex $(\Z,\chi_q)$-algebras.
As the main results of this section, we establish a canonical category isomorphism between the categories of
restricted $\wh{\fsl_N}(\C_q)$-modules of level $\ell$ and equivariant quasi modules for $V_{\wh{\fsl_\infty}}(\ell,0)$.
Furthermore, assuming that $\ell$ is a positive integer, we prove that under the canonical category isomorphism,
integrable and restricted $\wh{\fsl_N}(\C_q)$-modules of level $\ell$ correspond exactly to
equivariant quasi $L_{\wh{\fsl_\infty}}(\ell,0)$-modules.

In Section 4,  we classify irreducible $\N$-graded equivariant quasi modules
for $L_{\wh{\fsl_\infty}}(\ell,0)$ with $\ell$ any positive integer.
First,  we show that every
irreducible $\N$-graded equivariant quasi $L_{\wh{\fsl_\infty}}(\ell,0)$-module is
an irreducible integrable highest weight  $\wh{\fsl_N}(\C_q)$-module of level $\ell$.
Then, slightly generalizing a result of Rao in \cite{ER}
we obtain a classification of irreducible integrable highest weight  $\wh{\fsl_N}(\C_q)$-modules
of level $\ell$, which
 are parameterized by a positive integer $k$ and a pair
\[(\bm\lambda,\bm{c})=((\lambda_1,\dots,\lambda_k),(c_1,\dots,c_k))\in P_+^k\times (\C^\times)^k\]
such that $\lambda_1+\cdots+\lambda_k$ is of level $\ell$,
where $P_+$ denotes the set of dominant integral weights for the affine Lie algebra $\wh{\fsl_N}$.
From the canonical category isomorphism given in Section 3, we obtain an explicit classification of
irreducible $\N$-graded equivariant quasi $L_{\wh{\fsl_\infty}}(\ell,0)$-modules.

In Sections 5 and 6, we present a level-rank duality between irreducible integrable highest weight $\wh{\fsl_N}(\C_q)$-modules
 of level $\ell$ and finite-dimensional irreducible regular modules for an arbitrary Levi subgroup of the general linear group $\rGL_\ell=\mathrm{GL}_\ell(\C)$.
 As an application, we obtain an explicit realization of every irreducible  $\N$-graded equivariant quasi $L_{\wh{\fsl_\infty}}(\ell,0)$-module.
More specifically,  set
\begin{align*}
(\C^\times)_q^\ell=\{(a_1,\dots,a_\ell)\in (\C^\times)^\ell\mid \te{either}\ a_i=a_j \ \te{or}\ a_i\notin a_j\Gamma_q\ \te{for}\ 1\le i,j\in \ell\},
\end{align*}
where $\Gamma_q=\{ q^n\ |\ n\in \Z\}$.
For any $\bm{a}\in (\C^\times)_q^\ell$,
by using a result of Gao (see \cite{G3}) and the Chari-Pressely evaluation module construction (see \cite{CP}),
we construct a fermionic vacuum module $\mathcal F_N^{\bm{a}}$ for  $\wh{\fsl_N}(\C_q)$ of level $\ell$.
On the other hand, for any partition ${\bf I}$ of $\{1,\dots,\ell\}$,
denote by $\rGL_{\bf I}$ the Levi subgroup of $\rGL_\ell$ associated to $\bf I$.
From \cite{FF}, $\mathcal F_N^{\bm{a}}$ is naturally a  locally regular $\rGL_{\bf I}$-module.
Using Frenkel's level-rank duality, we prove that for any $\bm a\in (\C^\times)_q^\ell$, there is a partition
${\bf I}_{\bm a}$ of $\{1,\dots,\ell\}$ associated to $\bm{a}$ such that $(\wh{\fsl_N}(\C_q), \rGL_{{\bf I}_{\bm a}})$
 (and hence $(L_{\wh{\fsl_\infty}}(\ell,0),\rGL_{{\bf I}_{\bm a}})$)
 form a dual pair on $\mathcal F_N^{\bm{a}}$ in the sense of Howe (see \cite{H1}).
We also determine  the irreducible isotypic decomposition
of $\mathcal F_N^{\bm{a}}$ as an $(\wh{\fsl_N}(\C_q), \rGL_{{\bf I}_{\bm a}})$-module.
Furthermore, it is proved that every finite-dimensional irreducible regular module of
every Levi subgroup of $\rGL_\ell$ occurs in  $\mathcal F_N^{\bm{a}}$ and
every  irreducible integrable highest weight $\wh{\fsl_N}(\C_q)$-module of level $\ell$
 occurs in $\mathcal F_N^{\bm{a}}$ for a suitable choice of $\bm{a}$.

In Sections 7, 8, and 9,  we study the following branchings respectively:
\te{(i)} The branching from
  $L_{\wh{\fsl_\infty}}(\ell,0)\ot L_{\wh{\fsl_\infty}}(\ell',0)$ to $L_{\wh{\fsl_\infty}}(\ell+\ell',0)$.
  \te{(ii)} The branching from $\wh{\fsl_N}(\C_q)$ to its Levi subalgebras.
  \te{(iii)} The branching from $\wh{\fsl_N}(\C_q)$
  to its subalgebra $\wh{\fsl_N}(\C_q[t_0^{\pm M_0},t_1^{\pm M_1}])$.
Note that the first branching is about quasi modules and it amounts to the tensor product decomposition of
irreducible integrable highest weight $\wh{\fsl_N}(\C_q)$-modules.
By employing the reciprocity laws associated to certain seesaw pairs related to
the dual pair $(\wh{\fsl_N}(\C_q),\rGL_{{\bf I}_{\bm a}})$, we completely determine these three branchings.
More specifically, in Section 7,  for every irreducible $\N$-graded
equivariant quasi $L_{\wh{\fsl_\infty}}(\ell,0)\ot L_{\wh{\fsl_\infty}}(\ell',0)$-module $W$, we show that
$W$ as a quasi $L_{\wh{\fsl_\infty}}(\ell+\ell',0)$-module is completely reducible and
we determine the irreducible quasi $L_{\wh{\fsl_\infty}}(\ell+\ell',0)$-modules and their multiplicities in $W$.
We also show that the branching from
$L_{\wh{\fsl_\infty}}(\ell,0)\ot L_{\wh{\fsl_\infty}}(1,0)$ to $L_{\wh{\fsl_\infty}}(\ell+1,0)$ is multiplicity free.
Similarly, we completely determine the other two branchings.

For this paper, we use symbols $\Z_{+}$ and $\N$ for the sets of positive integers and nonnegative integers, respectively.
 We work on the field $\C$ of complex numbers.

\section{Extended affine Lie algebras coordinated by quantum tori}

In this section, we first recall the EALA (and its core) of type $A_{N-1}$ coordinated by the $2$-dimensional quantum torus
and then we give a realization as a covariant Lie algebra of the affine Lie algebra of  $\fsl_\infty$.

\subsection{The extended affine Lie algebra $\wh{\fsl_N}(\C_q)$}
Let $N\ge 2$ be a positive integer. For any unital associative algebra $\mathcal A$ (over $\C$),
we denote by $\fgl_N(\mathcal A)$
 the associative algebra of all $N\times N$ matrices with entries in $\mathcal A$.
Naturally, $\fgl_N(\mathcal A)$ is a Lie algebra with commutator as its Lie bracket. Note that
$\fgl_N(\mathcal A)$ is naturally a (left) $\mathcal A$-module (with each $a\in \mathcal A$ as a scalar).
Set
\begin{eqnarray}
\fsl_N(\mathcal A)=[\fgl_N(\mathcal A),\fgl_N(\mathcal A)],
\end{eqnarray}
the derived Lie subalgebra.
For $1\le i,j\le N$,  $a\in \mathcal A$, we also write $E_{i,j}a=aE_{i,j}$,
the matrix whose only nonzero entry is the $(i,j)$-entry which is $a$.

Let  $q$ be a nonzero complex number, which is fixed throughout this paper.
Denote by $\C_q[t_0^{\pm 1}, t_1^{\pm 1}]$ the $2$-dimensional quantum torus
associated to $q$. By definition,
\[\C_q[t_0^{\pm 1}, t_1^{\pm 1}]=\C[t_0^{\pm 1}, t_1^{\pm 1}]\]
as a vector space, where
\[(t_0^{m_0}t_1^{m_1})(t_0^{n_0}t_1^{n_1})=q^{m_1n_0}t_0^{m_0+n_0}t_1^{m_1+n_1}\]
for $m_0,m_1,n_0,n_1\in \Z$. From now on, we simply write $\C_q$ for $\C_q[t_0^{\pm 1}, t_1^{\pm 1}]$.
Throughout this paper, we assume that $q$ is \emph{not a root of unity}.
Note that this assumption amounts to that $\C_q$ is a simple associative algebra.

Consider the following two-dimensional central extension of Lie algebra $\mathfrak{gl}_N(\C_q)$:
\begin{align}
\wh{\fgl_N}(\C_q)=\mathfrak{gl}_N(\C_q)\oplus \C \bm{\rk}_0\oplus \C\bm{\rk}_1,
\end{align}
where $\bm\rk_0,\bm\rk_1$ are (linearly independent) central elements, and
\begin{equation}\begin{split}\label{commutator1}
&[E_{i,j} t_0^{m_0}t_1^{m_1}, E_{k,l} t_0^{n_0}t_1^{n_1}]\\
=&\ \delta_{j,k}q^{m_1n_0}E_{i,l} t_0^{m_0+n_0}t_1^{m_1+n_1}-\delta_{i,l}q^{n_1m_0}E_{k,j} t_0^{m_0+n_0}t_1^{m_1+n_1}\\
&\ +\delta_{j,k}\delta_{i,l}\delta_{m_0+n_0,0}\delta_{m_1+n_1,0}
q^{m_1n_0}(m_0\bm\rk_0+m_1\bm\rk_1)
\end{split}\end{equation}
for $1\le i,j,k,l\le N$ and for $m_0,m_1,n_0,n_1\in \Z$.

It is clear that $\wh{\fgl_N}(\C_q)$ is a $\Z\times \Z$-graded Lie algebra with
$$\deg \bm\rk_0=(0,0),\   \   \deg \bm\rk_1=(0,0),\   \   \deg (E_{i,j} t_0^{m_0}t_1^{m_1})=(m_0,m_1)$$
for $1\le i,j\le N,\ m_0,m_1\in \Z$.
Define degree-zero derivations $\bm\rd_0, \bm\rd_1$ of $\wh{\fgl_N}(\C_q)$ by
\begin{align}\label{def-derivations-d}
\bm\rd_r (E_{i,j} t_0^{m_0}t_1^{m_1})=m_rE_{i,j} t_0^{m_0}t_1^{m_1}
\end{align}
for $r=0,1$ and for $1\le i,j\le N, m_0,m_1\in \Z$.
Adding derivations $\bm\rd_0, \bm\rd_1$ to $\wh{\fgl_N}(\C_q)$, we obtain a  Lie algebra
\begin{eqnarray}
\wt{\fgl_N}(\C_q)=\wh{\fgl_N}(\C_q)\oplus \C\bm\rd_0\oplus \C\bm\rd_1.
\end{eqnarray}
Set
\begin{eqnarray}
\wh{\fsl_N}(\C_q)=[\wh{\fgl_N}(\C_q),\wh{\fgl_N}(\C_q)]=\mathfrak{sl}_N(\C_q)\oplus \C\bm\rk_0\oplus \C\bm\rk_1,
\end{eqnarray}
the  derived subalgebra of $\wh{\fgl_N}(\C_q)$.
It is known  (see \cite{BGK}) that $\wh{\fsl_N}(\C_q)$ is a universal central extension of $\fsl_N(\C_q)$ and
\begin{align}\label{decwhfgl}
\wh{\fgl_N}(\C_q)=\wh{\fsl_N}(\C_q)\oplus \C I_N,
\end{align}
 where $I_N=E_{1,1}+\cdots+E_{N,N}$ (the identity matrix).

Set
\begin{eqnarray}
\wt{\fsl_N}(\C_q)=\wh{\fsl_N}(\C_q)\oplus \C\bm\rd_0\oplus \C\bm\rd_1\subset \wt{\fgl_N}(\C_q).
\end{eqnarray}
The Lie algebra $\wt{\fsl_N}(\C_q)$ is known to be a nullity-$2$ extended affine Lie algebra of type $A_{N-1}$,
where $\wh{\fsl_N}(\C_q)$ is called the core of $\wt{\fsl_N}(\C_q)$. We refer the reader to \cite{BGK} or \cite{AABGP} for details.

In this paper, we shall focus on the subalgebra $\wh{\fsl_N}(\C_q)$. From \cite{BGK} we have:

\begin{lemt}\label{basis-slCq}
The following elements form a basis of $\wh{\fsl_N}(\C_q)$:
\begin{align}\label{basis1}
E_{i,j}t_0^{m_0}t_1^{m_1},\quad E_{r,r}-E_{r+1,r+1},\quad \bm\rk_0,\quad \bm\rk_1,
\end{align}
where $1\le i,j\le N,\ m_0,m_1\in \Z$ with $(i-j,m_0,m_1)\ne (0,0,0)$ and $1\le r\le N-1$.
\end{lemt}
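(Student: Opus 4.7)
Since the paper has already recorded the vector space decomposition $\wh{\fsl_N}(\C_q) = \fsl_N(\C_q) \oplus \C\bm\rk_0 \oplus \C\bm\rk_1$, and $\bm\rk_0, \bm\rk_1$ appear explicitly in the list, the plan is to reduce the statement to proving that
\[
\{E_{i,j} t_0^{m_0} t_1^{m_1} : (i-j,m_0,m_1)\ne(0,0,0)\} \cup \{E_{r,r}-E_{r+1,r+1}: 1\le r\le N-1\}
\]
is a basis of $\fsl_N(\C_q)$, viewed as a subspace of $\fgl_N(\C_q)$. Linear independence is visible inside the standard basis of $\fgl_N(\C_q)$: these elements sit in the three complementary blocks ``off-diagonal'', ``diagonal of nonzero degree'', and ``constant traceless diagonal'', and the $E_{r,r}-E_{r+1,r+1}$ form the usual Chevalley basis of the last piece.

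For the spanning I would introduce the linear functional $\mathrm{Tr}:\fgl_N(\C_q)\to\C$ sending $\sum E_{i,j}a_{i,j}$ to the coefficient of $t_0^0 t_1^0$ in $\sum_i a_{i,i}$, and verify that $\fsl_N(\C_q)=\ker(\mathrm{Tr})$. The inclusion $\fsl_N(\C_q)\subseteq\ker(\mathrm{Tr})$ is a direct computation from (\ref{commutator1}): a bracket $[E_{i,j}t_0^{m_0}t_1^{m_1},\,E_{k,l}t_0^{n_0}t_1^{n_1}]$ contributes to $\mathrm{Tr}$ only when $i=l$, $j=k$ and $m_0+n_0=m_1+n_1=0$, in which case the scalar $q^{m_1 n_0}-q^{n_1 m_0}$ automatically vanishes. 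For the reverse inclusion, off-diagonal entries are obtained as $E_{i,j}a=[E_{i,i},E_{i,j}a]$ (for $i\ne j$), the constant traceless generators as $E_{r,r}-E_{r+1,r+1}=[E_{r,r+1},E_{r+1,r}]$, and each individual $E_{i,i}t_0^{m_0}t_1^{m_1}$ with $(m_0,m_1)\ne(0,0)$ must be extracted from brackets $[E_{i,j}t_0^{a_0}t_1^{a_1},\,E_{j,i}t_0^{b_0}t_1^{b_1}]$ (with $j\ne i$, $a_0+b_0=m_0$, $a_1+b_1=m_1$) for two splittings of the exponents, e.g.\ $(a_0,a_1)=(m_0,m_1)$ versus $(a_0,a_1)=(m_0,0)$ with obvious variants when $m_0m_1=0$, so that the two resulting elements lie in the span of $E_{i,i}t_0^{m_0}t_1^{m_1}$ and $E_{j,j}t_0^{m_0}t_1^{m_1}$ and differ by a nonzero multiple of $1-q^k$ with $k\in\{m_0 m_1,\,m_0,\,m_1\}\setminus\{0\}$; this multiple is nonzero because $q$ is not a root of unity, so the $2\times 2$ linear system can be solved for the two diagonal generators separately.

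The main obstacle is this last separation of the individual degree-nonzero diagonal elements from their companions $E_{j,j}t_0^{m_0}t_1^{m_1}$. The naive bracket $[E_{i,j},E_{j,i}t_0^{m_0}t_1^{m_1}]$ delivers only their difference, and one must genuinely shift the $t_0,t_1$-powers between the two factors to produce distinct $q$-weights on the two diagonal entries; this is the unique place where the hypothesis that $q$ is not a root of unity enters, and without it the claimed spanning property would fail. The rest of the argument is routine bookkeeping inside a $\Z^2$-graded matrix Lie algebra.
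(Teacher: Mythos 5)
The paper does not actually prove Lemma~\ref{basis-slCq}; it is stated as a fact imported from \cite{BGK}, so there is no argument in the paper to compare against. Your blind proof fills this gap, and it is correct.

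Your reduction to the claim that $\fsl_N(\C_q)$ coincides with the kernel of the ``constant term of the trace'' functional $\mathrm{Tr}$ is the right move, and both inclusions are handled correctly. The inclusion $\fsl_N(\C_q)\subseteq\ker(\mathrm{Tr})$ indeed boils down to the identity $q^{m_1 n_0}=q^{n_1 m_0}$ under the constraints $n_0=-m_0$, $n_1=-m_1$, and for the reverse inclusion your division into off-diagonals, constant traceless diagonals, and nonconstant diagonals is exactly the right bookkeeping. The last case is genuinely the only nonroutine step: your two splittings $(a_0,a_1)=(m_0,m_1)$ versus $(a_0,a_1)=(m_0,0)$ produce the $2\times 2$ system
\begin{align*}
E_{i,i}t_0^{m_0}t_1^{m_1}-E_{j,j}t_0^{m_0}t_1^{m_1},\qquad E_{i,i}t_0^{m_0}t_1^{m_1}-q^{m_0m_1}E_{j,j}t_0^{m_0}t_1^{m_1},
\end{align*}
with determinant $1-q^{m_0m_1}$, and the ``obvious variants'' for $m_0m_1=0$ (e.g.\ $(a_0,a_1)=(m_0,1)$, $(b_0,b_1)=(0,-1)$ when $m_1=0$) produce determinants $1-q^{\pm m_0}$ or $1-q^{\pm m_1}$. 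You have correctly identified this as the sole point where the hypothesis that $q$ is not a root of unity is indispensable. Once $\fsl_N(\C_q)=\ker(\mathrm{Tr})$ is in hand, the listed elements are visibly a basis, and the direct sum $\wh{\fsl_N}(\C_q)=\fsl_N(\C_q)\oplus\C\bm\rk_0\oplus\C\bm\rk_1$ recorded in~\eqref{decwhfgl} and the preceding display finishes the argument. An equivalent organization would be to first show $[\C_q,\C_q]=\operatorname{span}\{t_0^{m_0}t_1^{m_1}:(m_0,m_1)\neq(0,0)\}$ and then invoke the general description of $[\fgl_N(\CA),\fgl_N(\CA)]$, but your matrix-level computation proves the same thing without that detour.
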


\subsection{The $(\Z,\chi_q)$-covariant algebra $\wh{\fsl_\infty}[\Z]$ of $\wh{\fsl_\infty}$}
We start with the definition of a covariant algebra of an affine Lie algebra.
Let $\fg$ be a (possibly infinite-dimensional) Lie algebra equipped with a symmetric
invariant bilinear form $\<\cdot,\cdot\>$.
Associated to the pair $(\fg, \<\cdot,\cdot\>)$, we have an affine Lie algebra
\begin{align*}
\wh\fg=\fg\ot \C[t,t^{-1}]\oplus \C \bm\rk,
\end{align*}
where $\bm\rk$ is a (nonzero) central element, and for $a,b\in \fg$, $m,n\in \Z$,
\begin{align*}
[a\ot t^m, b\ot t^n]=[a,b]\ot t^{m+n} + m\delta_{m+n,0}\<a,b\> \bm\rk.
\end{align*}
Assume that $\Gamma$ is a group acting on $\fg$ as an automorphism group preserving the bilinear form
$\<\cdot,\cdot\>$ such that
for $a,b\in \fg$,
\begin{align}\label{conconvar}
[ga,b]=0\quad \te{and}\quad \<ga,b\>=0\quad \te{for all but finitely many}\ g\in \Gamma.
\end{align}
Let $\chi:\Gamma\rightarrow \C^\times$ be any linear character.
We lift the $\Gamma$-action from $\fg$ to  $\wh\fg$ by
\begin{align}
g(a\ot t^m+\beta \bm\rk)=\chi(g)^m(ga\ot t^m)+\beta\bm\rk
\end{align}
for $g\in \Gamma$, $a\in \fg$, $m\in \Z$, $\beta\in \C$.
From \cite{li-tlie}, the {\em $(\Gamma,\chi)$-covariant algebra}  of $\wh\fg$
is the Lie algebra $\wh\fg[\Gamma]$, where
\begin{align}\label{kgamma}
\wh\fg[\Gamma]=\wh\fg/{\rm span}\{gu-u\mid g\in \Gamma, u\in \wh\fg\}
\end{align}
as a vector space and the Lie bracket  is given by
\begin{align*}
[\overline{u},\overline{v}]=\sum_{g\in \Gamma} \overline{[gu,v]}\quad \te{for}\ u,v\in \wh\fg.
\end{align*}
Here and below, for $u\in \wh\fg$, $\overline{u}$ stands for the image of $u$ in $\wh\fg[\Gamma]$
under the natural quotient map
$\wh\fg\rightarrow \wh\fg[\Gamma]$.
Note that  for $a,b\in \fg, \  m,n\in \Z$ and $g\in \Gamma$, we have
\begin{align}\label{relation2}\overline{a\ot t^m}&=\chi(g)^m\overline{ga\ot t^m},\\
\label{commutator2}
[\overline{a\ot t^m},\overline{b\ot t^n}]&=\sum_{g\in \Gamma}\chi(g)^{m}\(\overline{[ga,b]\ot t^{m+n}}+m\delta_{m+n,0}\<ga,b\>\bm\rk\),
\end{align}
where we still denote $\overline{\bm\rk}$ by $\bm\rk$ for simplicity.

The following is straightforward:

\begin{lemt}\label{linear-algebra-fact}
Let $\mathfrak {g}$ be a Lie algebra, $\Gamma$ a group as above.
Suppose that  $\mathfrak{g}_0$ is a subalgebra of $\mathfrak{g}$ which is stable under the action of $\Gamma$.
Then the embedding map of $\mathfrak{g}_0$ into $\mathfrak{g}$ gives rise to a
Lie algebra homomorphism $\psi: \wh{\mathfrak{g}}_0[\Gamma]\rightarrow \wh{\mathfrak{g}}[\Gamma]$.
\end{lemt}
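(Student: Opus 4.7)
The plan is to construct $\psi$ by first lifting the inclusion $\iota:\mathfrak{g}_0\hookrightarrow \mathfrak{g}$ to an affine-level map $\hat\iota:\wh{\mathfrak{g}}_0\rightarrow \wh{\mathfrak{g}}$, then showing this lift is $\Gamma$-equivariant and hence descends to the covariant quotients.

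First I would define $\hat\iota:\wh{\mathfrak{g}}_0\rightarrow\wh{\mathfrak{g}}$ by $a\otimes t^m\mapsto \iota(a)\otimes t^m$ and $\bm\rk\mapsto\bm\rk$. Since the bilinear form on $\mathfrak{g}_0$ is by definition the restriction of $\<\cdot,\cdot\>$, and brackets in $\mathfrak{g}_0$ agree with those in $\mathfrak{g}$, this is manifestly a Lie algebra homomorphism (note that the finiteness condition \eqref{conconvar} for $\mathfrak{g}_0$ is inherited automatically from that of $\mathfrak{g}$, since for $a,b\in\mathfrak{g}_0$ we are taking a subset of the indices $g$ for which $[ga,b]$ or $\<ga,b\>$ is nonzero in $\mathfrak{g}$). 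Next, because $\mathfrak{g}_0$ is stable under $\Gamma$ and the action on $\mathfrak{g}_0$ is the restriction of the action on $\mathfrak{g}$, one checks directly from the definition $g(a\otimes t^m)=\chi(g)^m(ga\otimes t^m)$ that $\hat\iota\circ g=g\circ\hat\iota$ for every $g\in\Gamma$.

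Then I would deduce that $\hat\iota$ carries the subspace $\mathrm{span}\{gu-u\mid g\in\Gamma,u\in\wh{\mathfrak{g}}_0\}$ into $\mathrm{span}\{gv-v\mid g\in\Gamma,v\in\wh{\mathfrak{g}}\}$, so by \eqref{kgamma} it induces a well-defined linear map
\begin{equation*}
\psi:\wh{\mathfrak{g}}_0[\Gamma]\longrightarrow \wh{\mathfrak{g}}[\Gamma],\qquad \overline{u}\mapsto \overline{\hat\iota(u)}.
\end{equation*}
It remains to verify that $\psi$ respects the brackets \eqref{commutator2}. For $a,b\in\mathfrak{g}_0$ and $m,n\in\Z$, applying $\psi$ to the right-hand side of the bracket formula in $\wh{\mathfrak{g}}_0[\Gamma]$ gives
\begin{equation*}
\sum_{g\in\Gamma}\chi(g)^m\bigl(\overline{[g\iota(a),\iota(b)]\otimes t^{m+n}}+m\delta_{m+n,0}\<g\iota(a),\iota(b)\>\bm\rk\bigr),
\end{equation*}
which is precisely $[\overline{\iota(a)\otimes t^m},\overline{\iota(b)\otimes t^n}]$ in $\wh{\mathfrak{g}}[\Gamma]$, i.e.\ $[\psi(\overline{a\otimes t^m}),\psi(\overline{b\otimes t^n})]$; the central generator is mapped centrally, so compatibility on all brackets follows.

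There is essentially no hard step here: the entire argument is a bookkeeping exercise, and the only point requiring a small amount of care is confirming that the finiteness condition \eqref{conconvar} transfers from $\mathfrak{g}$ to the $\Gamma$-stable subalgebra $\mathfrak{g}_0$, which is immediate. Note that the lemma makes no injectivity claim for $\psi$, and indeed $\psi$ need not be injective in general, since elements of the form $gu-u$ with $u\in\wh{\mathfrak{g}}\setminus\wh{\mathfrak{g}}_0$ may accidentally land in the image of $\hat\iota$.
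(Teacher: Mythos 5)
The paper states this lemma without proof, declaring it straightforward, so there is no argument of the authors' to compare against. Your verification is correct and is exactly what one would write to fill the gap: the lift $\hat\iota:\wh{\mathfrak{g}}_0\rightarrow\wh{\mathfrak{g}}$ is a $\Gamma$-equivariant map (with $\mathfrak{g}_0$ carrying the restricted form, as is implicit in the statement), so it carries $\mathrm{span}\{gu-u\}$ into $\mathrm{span}\{gv-v\}$ and descends linearly via \eqref{kgamma}, and then the bracket \eqref{commutator2} is preserved term by term because $\iota$ intertwines the $\Gamma$-actions, the Lie brackets, and the bilinear forms. Your closing remark that $\psi$ need not be injective is also borne out later in the paper, where in the proof of Theorem \ref{prop:thetanq} the induced map $\psi:\wh{\fsl_\infty}[\Z]\rightarrow\wh{\fgl_\infty}[\Z]$ has kernel $\C\bm\rk'$.
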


We also formulate a simple lemma  which we shall use to determine a basis of the covariant Lie algebra $\wh{\fg}[\Gamma]$.
Let $G$ be a group and let $U$ be a $G$-module. Define $U/G$ to be the
quotient space of $U$ modulo the subspace spanned by $\{gu-u\ |\ g\in G,\ u\in U\}$.

\begin{lemt}\label{linear-algebra-fact}
Let $G$ be a group, $U$ a $G$-module.
(a) If $U=\oplus_{\alpha\in S}U_{\alpha}$ as a $G$-module, then $U/G\simeq \oplus_{\alpha\in S}U_{\alpha}/G$.
(b) Suppose that $U$ has a basis $B$ with a subset $B_0$ satisfying the condition that
for every $b\in B$,  there exist unique $g\in G,\ b_0\in B_0$ such that  $b\in \C gb_0$.
Then $B_0$ gives rise to a basis for the quotient space $U/G$.
\end{lemt}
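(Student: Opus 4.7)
The proof plan is to handle (a) by a routine direct-sum manipulation and (b) by constructing an explicit ``coinvariant'' map whose $G$-invariance is the real content.

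For part (a), I would observe that the subspace $V := \te{span}\{gu - u \mid g \in G,\ u \in U\}$ respects any $G$-stable decomposition $U = \oplus_{\alpha\in S} U_\alpha$. Indeed, writing $u = \sum_\alpha u_\alpha$ one has $gu - u = \sum_\alpha (gu_\alpha - u_\alpha)$, each summand lying in the corresponding $U_\alpha$ because $U_\alpha$ is $G$-stable. Hence $V = \oplus_\alpha V_\alpha$ with $V_\alpha := \te{span}\{gu-u\mid u\in U_\alpha\}$, and passing to quotients yields $U/G = \oplus_\alpha U_\alpha/V_\alpha = \oplus_\alpha U_\alpha/G$.

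For part (b), I would first verify that the images of $B_0$ span $U/V$: any $u = \sum_{b\in B} c_b b\in U$ can, by the hypothesis, be rewritten via $b = \alpha_b g_b b_{0,b}$ as $u \equiv \sum_b c_b \alpha_b b_{0,b}\pmod{V}$, using $g_bb_{0,b}\equiv b_{0,b}$ in $U/V$. To get linear independence, I would introduce the $\C$-linear map
\begin{equation*}
\pi\colon U\longrightarrow \C B_0,\qquad \pi(b) = \alpha_b\, b_{0,b}\quad\text{for } b\in B,
\end{equation*}
which is well defined precisely because the pair $(g_b, b_{0,b})$ (and hence $\alpha_b$) is unique. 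By construction $\pi$ is the identity on $B_0\subset B$. The crucial step is to show that $\pi$ vanishes on $V$, equivalently $\pi(gb)=\pi(b)$ for every $g\in G$, $b\in B$. Once this is established, $\pi$ descends to $\overline{\pi}\colon U/V\to \C B_0$ sending each $b_0\in B_0$ to itself, which forces the images of $B_0$ in $U/V$ to be $\C$-linearly independent, completing the proof.

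The main obstacle is the $G$-invariance of $\pi$. Writing $b = \alpha_b g_b b_{0,b}$, we have $gb = \alpha_b (gg_b) b_{0,b}$; expanding $(gg_b)b_{0,b}$ in the basis $B$ as $\sum c_{b'}b'$ and applying $\pi$ term by term, one must show $\sum c_{b'}\alpha_{b'}b_{0,b'} = \alpha_b b_{0,b}$. Conceptually, this amounts to proving that $U$ is a free $\C[G]$-module with basis $B_0$: the surjective $\C[G]$-linear map $\bigoplus_{b_0\in B_0}\C[G]\cdot\widetilde{b_0}\to U$, $\widetilde{b_0}\mapsto b_0$, has no kernel because any nontrivial relation $\sum c_{g,b_0}\, g b_0 = 0$ would, after expanding in $B$, contradict the unique-pair hypothesis. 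Given this freeness, part (a) applied to the decomposition $U=\bigoplus_{b_0\in B_0}\C[G]\cdot b_0$ reduces the problem to the elementary fact that the coinvariants of the regular representation $\C[G]$ are one-dimensional, spanned by the class of the identity, which is exactly what $\pi$ computes on each summand.
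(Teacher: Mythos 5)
Your proof is correct, and for part (b) it takes a genuinely different route from the paper. Both arguments hinge on the same key observation that $\{g\cdot b_0 : g\in G,\ b_0\in B_0\}$ is a basis of $U$, but they exploit it differently. The paper's proof is a direct change-of-basis computation: it notes that $B_0\cup\{g b_0-b_0 : g\ne e\}$ is again a basis, and then uses the telescoping identity $g(hb_0)-hb_0=(gh\,b_0-b_0)-(hb_0-b_0)$ to show that $\{g b_0-b_0 : g\ne e\}$ spans the subspace $V=\operatorname{span}\{gu-u\}$, so that the images of $B_0$ form a basis of $U/V$. Your argument is more structural: you recognize that the basis fact means $U$ is a free $\C[G]$-module on $B_0$, apply part (a) to the $G$-stable decomposition $U=\bigoplus_{b_0\in B_0}\C[G]\cdot b_0$, and reduce to the elementary fact that the coinvariants of the regular representation $\C[G]$ are one-dimensional (spanned by the image of the identity, detected by the augmentation map, which is precisely your $\pi$ on each summand). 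This is clean and exposes the representation-theoretic content that the paper's computation hides, and it has the nice feature of actually using (a) in the proof of (b); the paper's version is more elementary and avoids invoking the module language. One small remark: both you and the paper pass over the verification that $\{g b_0\}$ is linearly independent (you gesture at it via ``a nontrivial relation would contradict the unique-pair hypothesis''); strictly the hypothesis as stated guarantees that each $b\in B$ lies in a unique line $\C g b_0$, which gives spanning and a well-defined retraction, but independence of the $g b_0$ requires the implicit assumption that $G$ permutes the lines $\C b$, which is the situation in the paper's application. Since the paper glosses over the same point, this is not a gap relative to the reference proof.
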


\begin{proof} Part (a) follows immediately from a standard result in linear algebra.
For Part (b), from the assumption,
$\{ g(b_0)\ |\ g\in G,\ b_0\in B_0\}$ is also a basis of $U$. It then follows that
$B_0\cup \{g(b_0)-b_0\ |\ g\in G, g\ne e,\ b_0\in B_0\}$ is also a basis of $U$.
Note that $g(hb_0)-hb_0=(ghb_0-b_0)-(hb_0-b_0)$ for $g,h\in G,\ b_0\in B_0$.
Using this and using the basis $\{ g(b_0)\ |\ g\in G,\ b_0\in B_0\}$ of $U$,
we see that $\{g(b_0)-b_0\ |\ g\in G, g\ne e,\ b_0\in B_0\}$ is a basis of
the span of $\{ gu-u\ |\ g\in G,\ u\in U\}$. Consequently, $B_0$ gives rise to a basis for the quotient space $U/G$.
\end{proof}

Let $\fgl_\infty$ be the associative algebra of all doubly infinite complex matrices
with only finitely many nonzero entries, which is also naturally a Lie algebra.
As before, for $m,n\in \Z$, let $E_{m,n}$ denote the unit matrix whose only nonzero entry is the $(m,n)$-entry which is $1$.
Equip $\fgl_\infty$ with the bilinear form  $\<\cdot,\cdot\>$  defined by
\begin{align*}
\<E_{i,j},E_{k,l}\>= \delta_{j,k}\,\delta_{i,l}\quad\te{for}\ i,j,k,l\in \Z,
\end{align*}
which is nondegenerate, symmetric and (associative) invariant.
Set
\begin{align}
\fsl_\infty=[\fgl_\infty,\fgl_\infty],
\end{align}
the derived subalgebra of $\fgl_\infty$.
We see that  $\<\cdot,\cdot\>$ is also nondegenerate on $\fsl_\infty$.
 Then we have the affine Lie algebra $\wh{\fsl_\infty}$
associated to the pair $(\fsl_\infty, \<\cdot,\cdot\>)$.

\begin{dfnt}\label{sigma-def}
{\em Let $\sigma$ be the automorphism of  the  algebra $\fgl_{\infty}$ defined by
\begin{align}\label{sigma-definition}
\sigma(E_{m,n})=E_{m+1,n+1}\   \   \   \   \mbox{ for }m,n\in \Z.
\end{align}}
\end{dfnt}

The following is straightforward:

\begin{lemt}\label{lem:covalg}
The automorphism $\sigma$ of $\fgl_\infty$ preserves the bilinear form $\<\cdot,\cdot\>$
and the Lie subalgebra $\fsl_\infty$. Furthermore, for any positive integer $N$,  the map
\begin{align}\label{defthetaN}
\rho_N:\Z\rightarrow {\rm Aut} (\fgl_\infty)\ (\te{resp}.\ {\rm Aut}(\fsl_\infty)),\quad r\mapsto \sigma^{Nr}\quad(r\in \Z)
\end{align}
 is a one-to-one group homomorphism, and for any $a,b\in \fgl_\infty$ (resp. $\fsl_\infty$),
\begin{align*}
[\sigma^{r}(a),b]=0\quad \te{and}\quad \<\sigma^{r}(a),b\>=0\quad
\te{for all but finitely many}\ r\in \Z.
\end{align*}
\end{lemt}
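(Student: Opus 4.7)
The plan is to verify each assertion in turn, with most steps reducing to a direct calculation on the standard basis $\{E_{m,n}\}_{m,n\in\Z}$ of $\fgl_\infty$.

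First, I would observe that $\sigma$ is an automorphism of the \emph{associative} algebra $\fgl_\infty$: a quick computation on matrix units yields $\sigma(E_{i,j})\sigma(E_{k,l})=E_{i+1,j+1}E_{k+1,l+1}=\delta_{j,k}E_{i+1,l+1}=\sigma(E_{i,j}E_{k,l})$, and extension by linearity is harmless since all sums in $\fgl_\infty$ are finite. Consequently $\sigma$ is a Lie automorphism and therefore preserves the derived subalgebra $\fsl_\infty=[\fgl_\infty,\fgl_\infty]$. Invariance of the bilinear form is equally immediate on matrix units, since $\langle\sigma E_{i,j},\sigma E_{k,l}\rangle=\delta_{j+1,k+1}\delta_{i+1,l+1}=\langle E_{i,j},E_{k,l}\rangle$, and this extends bilinearly. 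The restriction of $\langle\cdot,\cdot\rangle$ to $\fsl_\infty$ is then automatically $\sigma$-invariant.

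Second, for the map $\rho_N$, the homomorphism property is just $\sigma^{N(r+s)}=\sigma^{Nr}\sigma^{Ns}$. For injectivity, if $\sigma^{Nr}=\mathrm{id}$, then evaluating on $E_{0,0}$ gives $E_{Nr,Nr}=E_{0,0}$, forcing $Nr=0$ and hence $r=0$ since $N\ge 1$. The same argument works verbatim in ${\rm Aut}(\fsl_\infty)$, provided one picks a basis element visibly moved by $\sigma$ (for example $E_{0,1}-E_{1,2}$, or more simply $E_{0,0}-E_{1,1}\in\fsl_\infty$, which is sent to $E_{Nr,Nr}-E_{Nr+1,Nr+1}$).

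Third, for the finiteness condition I would reduce to the basis. For matrix units $a=E_{i,j}$ and $b=E_{k,l}$, one has $\sigma^r(a)=E_{i+r,j+r}$, so
\[
[\sigma^r(a),b]=\delta_{j+r,k}E_{i+r,l}-\delta_{i+r,l}E_{k,j+r},
\]
which is nonzero only when $r\in\{k-j,\,l-i\}$; and $\langle\sigma^r(a),b\rangle=\delta_{j+r,k}\delta_{i+r,l}$, which is nonzero for at most one $r$. A general $a,b\in\fgl_\infty$ is a finite linear combination of such matrix units, so only finitely many $r$ can contribute to either $[\sigma^r(a),b]$ or $\langle\sigma^r(a),b\rangle$. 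Restricting to $\fsl_\infty\subset\fgl_\infty$ gives the parenthetical version.

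The entire lemma is bookkeeping rather than conceptual; I do not anticipate any genuine obstacle. The only mild care needed is to make sure each claim about $\fsl_\infty$ follows automatically from the corresponding claim about $\fgl_\infty$ together with $\sigma$-stability of $\fsl_\infty$, which is why I would handle $\fgl_\infty$ throughout and append a one-line restriction argument at each stage.
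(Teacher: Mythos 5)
Your verification is correct. The paper states this lemma with the single remark ``The following is straightforward'' and supplies no proof at all, so there is nothing to compare against; your component-wise check on matrix units (associativity and form-invariance of $\sigma$, injectivity of $\rho_N$ via the action on a single basis vector, and the finiteness of support in $r$ by reducing to finitely many matrix units) is exactly the intended bookkeeping and contains no gaps.
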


From now on, we fix the group actions of $\Z$ on $\fgl_\infty$ and $\fsl_\infty$ via the homomorphism $\rho_N$.
Define a linear character $\chi_q:  \Z\rightarrow \C^\times$ by
\begin{align}\label{chiq}
\chi_q (r)=q^r\   \   \   \mbox{ for }r\in \Z.
\end{align}
Then we have the $(\Z,\chi_q)$-covariant algebras $\wh{\fgl_\infty}[\Z]$ and $\wh{\fsl_\infty}[\Z]$.
In what follows, we shall show that Lie algebra $\wh{\fsl_N}(\C_q)$ is isomorphic to  $\wh{\fsl_\infty}[\Z]$.

Define a linear map $\theta_{N,q}:\wh{\fgl_N}(\C_q)\rightarrow \wh{\fgl_\infty}[\Z]$ by
\begin{align}
\theta_{N,q} (E_{i,j}t_0^{m_0}t_1^{m_1})&= \overline{E_{N m_1+i,j}\ot t^{m_0}},\quad
\theta_{N,q} (\bm\rk_0)=\bm\rk,\quad \theta_{N,q} (\bm\rk_1)= 0
\end{align}
for $1\le i,j\le N$, $m_0,m_1\in \Z$. Then we have:

\begin{prpt}\label{lem:thetanq}
The linear map $\theta_{N,q}:\wh{\fgl_N}(\C_q)\rightarrow \wh{\fgl_\infty}[\Z]$
is a surjective Lie homomorphism with $\ker (\theta_{N,q})=\C\bm\rk_1$.
\end{prpt}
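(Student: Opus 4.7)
The plan is to establish three things: (a) $\theta_{N,q}$ is a Lie algebra homomorphism, (b) $\theta_{N,q}$ is surjective, and (c) $\ker\theta_{N,q} = \C\bm\rk_1$. Parts (b) and (c) will follow simultaneously from a good choice of basis for $\wh{\fgl_\infty}[\Z]$; only (a) involves a genuine computation. For (a), the brackets involving $\bm\rk_0, \bm\rk_1$ are trivial because these are central in $\wh{\fgl_N}(\C_q)$ and their images $\bm\rk, 0$ are central in $\wh{\fgl_\infty}[\Z]$. The substantive step is to match the image under $\theta_{N,q}$ of \eqref{commutator1} with the bracket expanded via \eqref{commutator2}:
\[
[\overline{E_{Nm_1+i,j}\ot t^{m_0}}, \overline{E_{Nn_1+k,l}\ot t^{n_0}}] = \sum_{r\in\Z}q^{rm_0}\overline{[E_{N(m_1+r)+i, j+Nr}, E_{Nn_1+k,l}]\ot t^{m_0+n_0}} + (\text{central}).
\]
The crucial observation is that the constraints $1\le i,j,k,l\le N$ force each of the two Kronecker deltas in the $\fgl_\infty$-bracket to fire at a unique value of $r$: $\delta_{j+Nr, Nn_1+k}$ requires $r=n_1$ with $j=k$, and $\delta_{N(m_1+r)+i, l}$ requires $r=-m_1$ with $i=l$. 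The character factor $q^{rm_0}$ then contributes $q^{n_1 m_0}$ and $q^{-m_1 m_0}$ respectively; after using \eqref{relation2} to bring the resulting matrix units $E_{N(m_1+n_1)+i,l}$ and $E_{Nn_1+k, j-Nm_1}$ to canonical form, the accumulated $q$-powers combine to reproduce the coefficients of \eqref{commutator1}. The central contribution arises precisely when both deltas fire simultaneously, which forces $j=k$, $i=l$, and $m_1+n_1=0$; the resulting $m_0\bm\rk$ term matches $\theta_{N,q}(m_0\bm\rk_0 + m_1\bm\rk_1)$ since $\bm\rk_1 \mapsto 0$.

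For (b) and (c), I would apply Lemma \ref{linear-algebra-fact}(b) with $U = \fgl_\infty\ot\C[t,t^{-1}]$, $B = \{E_{a,b}\ot t^m: a,b,m\in\Z\}$, and $\Z$ acting via $\rho_N$. Each $\Z$-orbit contains a unique element with first index in $\{1,\dots,N\}$, so
\[
\{\overline{E_{i,j}\ot t^m}: 1\le i\le N,\ j,m\in\Z\}\cup\{\bm\rk\}
\]
is a basis of $\wh{\fgl_\infty}[\Z]$. By \eqref{relation2} applied with $g$ the generator acting as $\sigma^{-Nm_1}$,
\[
\theta_{N,q}(E_{i,j}t_0^{m_0}t_1^{m_1}) = \overline{E_{Nm_1+i,j}\ot t^{m_0}} = q^{-m_0 m_1}\overline{E_{i,\, j-Nm_1}\ot t^{m_0}},
\]
and as $(i,j,m_1)$ ranges over $\{1,\dots,N\}^2\times\Z$, the pair $(i,\, j-Nm_1)$ ranges bijectively over $\{1,\dots,N\}\times\Z$. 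Combined with $\theta_{N,q}(\bm\rk_0)=\bm\rk$ and $\theta_{N,q}(\bm\rk_1)=0$, this shows that $\theta_{N,q}$ sends the basis $\{E_{i,j}t_0^{m_0}t_1^{m_1}\}\cup\{\bm\rk_0\}$ of a vector-space complement of $\C\bm\rk_1$ in $\wh{\fgl_N}(\C_q)$ to a full collection of nonzero scalar multiples of the chosen basis of $\wh{\fgl_\infty}[\Z]$, yielding surjectivity and $\ker\theta_{N,q} = \C\bm\rk_1$ at once.

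The main obstacle is the $q$-bookkeeping in step (a). Once the Kronecker conditions in \eqref{commutator2} pin down a value of $r$, the resulting matrix units are typically not in canonical form, and invoking \eqref{relation2} to rewrite them introduces additional $q$-powers that must conspire with the character factor $q^{rm_0}$ to recover precisely the factors $q^{m_1 n_0}$ and $q^{n_1 m_0}$ in \eqref{commutator1}. Tracking these cancellations carefully is the only genuinely delicate point; everything else reduces to standard linear algebra.
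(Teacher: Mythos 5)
Your overall strategy coincides with the paper's: compare the image under $\theta_{N,q}$ of \eqref{commutator1} with the covariant bracket \eqref{commutator2} term by term, and use Lemma \ref{linear-algebra-fact}(b) to exhibit a basis of $\wh{\fgl_\infty}[\Z]$ that settles surjectivity and the kernel simultaneously. Your basis argument is correct; you normalize the \emph{first} index of $E_{a,b}\ot t^m$ to $\{1,\dots,N\}$ whereas the paper normalizes the \emph{second}, which is slightly more convenient since then the images $\theta_{N,q}(E_{i,j}t_0^{m_0}t_1^{m_1})=\overline{E_{Nm_1+i,j}\ot t^m}$ are already canonical representatives and no correction factor $q^{-m_0m_1}$ is needed. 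Both choices are valid applications of the lemma.

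The gap is exactly the step you flag as ``the only genuinely delicate point'' and then decline to carry out: you assert that the accumulated $q$-powers ``combine to reproduce the coefficients of \eqref{commutator1},'' but this is a promise, not a computation, and it is the entire content of part (a). Worse, if you actually track the exponents with the definitions as given, the match \emph{fails}. The $\delta_{j,k}$-term fires at $r=n_1$ with character factor $\chi_q(n_1)^{m_0}=q^{n_1m_0}$, and since $E_{N(m_1+n_1)+i,l}$ already has second index $l\in\{1,\dots,N\}$, no renormalization occurs; so this term carries coefficient $q^{n_1m_0}$. The $\delta_{i,l}$-term fires at $r=-m_1$ with factor $q^{-m_1m_0}$, and renormalizing $\overline{E_{Nn_1+k,\,j-Nm_1}\ot t^{m_0+n_0}}$ via \eqref{relation2} contributes a further $q^{m_1(m_0+n_0)}$, giving a net $q^{m_1n_0}$. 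But $\theta_{N,q}$ applied to the right-hand side of \eqref{commutator1} produces $q^{m_1n_0}$ on the $\delta_{j,k}$-term and $q^{n_1m_0}$ on the $\delta_{i,l}$-term — the two exponents come out \emph{swapped}. They agree only on the central summand, where $\delta_{m_0+n_0,0}\delta_{m_1+n_1,0}$ forces $q^{n_1m_0}=q^{-m_1m_0}=q^{m_1n_0}$, which is presumably why the swap is easy to miss. (The same mismatch is present between \eqref{commutator1} and \eqref{commutator3} in the paper; it disappears if $\theta_{N,q}$ is instead defined by $E_{i,j}t_0^{m_0}t_1^{m_1}\mapsto\overline{E_{i,\,Nm_1+j}\ot t^{m_0}}$, attaching $Nm_1$ to the second index, in which case the $\delta_{j,k}$-term fires at $r=-m_1$ and the $\delta_{i,l}$-term at $r=n_1$, yielding precisely $q^{m_1n_0}$ and $q^{n_1m_0}$.) The upshot: a complete proof of this proposition must actually perform the bookkeeping you defer, and doing so reveals an index issue that has to be resolved before the homomorphism claim holds.
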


\begin{proof} By Lemma \ref{linear-algebra-fact}, we see  that the central element $\bm\rk$ together with the elements
\[\overline{E_{N m_1+i,j}\ot t^{m_0}}\quad \te{for}\  1\le i,j\le N,\   m_0,m_1\in \Z,\]
form a basis of $\wh{\fgl_\infty}[\Z]$. Thus $\theta_{N,q}$ is a surjective linear map with $\ker (\theta_{N,q})=\C\bm\rk_1$.
Let $1\le i,j,k,l\le N$ and $m_0,m_1,n_0,n_1\in \Z$.
By definition (see \eqref{commutator2}) we have
\begin{align}
\nonumber&[\overline{E_{N m_1+i,j}\ot t^{m_0}},\overline{E_{N n_1+k,l}\ot t^{n_0}}]\\
\nonumber=&\, \sum_{r\in \Z}\chi_q(r)^{m_0}\Big(\overline{[E_{N (m_1+r)+i,Nr+j}\ot t^{m_0},E_{N n_1+k,l}\ot t^{n_0}]}\\
\nonumber&\quad +m_0\delta_{m_0+n_0,0}\<E_{N (m_1+r)+i,Nr+j},E_{N n_1+k,l}\>\bm\rk\Big)\\
\nonumber=&\ \delta_{j,k}q^{n_1m_0}\overline{E_{N (m_1+n_1)+i,l}\ot t^{m_0+n_0}}
-\delta_{i,l}q^{-m_1m_0}\overline{E_{N n_1+k,-Nm_1+j}\ot t^{m_0+n_0}}\\
\nonumber&\quad+m_0q^{m_0n_1}\delta_{m_0+n_0,0}\delta_{m_1+n_1,0}\delta_{j,k}\delta_{i,l}\bm\rk\\
\label{commutator3}=&\ \delta_{j,k}q^{n_1m_0}\overline{E_{N (m_1+n_1)+i,l}\ot t^{m_0+n_0}}
-\delta_{i,l}q^{m_1n_0}\overline{E_{N (m_1+n_1)+k,j}\ot t^{m_0+n_0}}\\
\nonumber&\quad+m_0q^{m_0n_1}\delta_{m_0+n_0,0}\delta_{m_1+n_1,0}\delta_{j,k}\delta_{i,l}\bm\rk,
\end{align}
noticing that (see \eqref{relation2} and \eqref{chiq})
\begin{align*}
&\overline{E_{N n_1+k,-Nm_1+j}\ot t^{m_0+n_0}}
=\chi_q(m_1)^{m_0+n_0}\overline{\sigma_{N,m_1}(E_{N n_1+k,-Nm_1+j})\ot t^{m_0+n_0}}\\
=&\ q^{(m_0+n_0)m_1}\overline{E_{N (n_1+m_1)+k,j}\ot t^{m_0+n_0}}.
\end{align*}
It then follows from  \eqref{commutator1} and \eqref{commutator3} that $\theta_{N,q}$ is a Lie algebra homomorphism.
\end{proof}

Now, we continue to study Lie algebra $\wh{\fsl_\infty}[\Z]$.
For $1\le i,j\le N$, $m_0,m_1\in \Z$ with $(i,m_0,m_1)\ne (j,0,0)$, set
\begin{align*}
&e_{i,j}(m_0,m_1)= \overline{E_{Nm_1+i,j}\ot t^{m_0}}\   \   \   \te{if}\ (i,m_1)\ne (j,0),\\
&e_{i,i}(m_0,0)=\frac{1}{1-q^{-m_0}}\overline{(E_{i,i}-E_{N+i,N+i})\ot t^{m_0}}\  \  \   \te{if}\ m_0\ne 0.
\end{align*}
On the other hand, set
\begin{align*}
\bm\rk'=\overline{E_{N+1,N+1}-E_{1,1}}\  \  \mbox{ and }\  \overline{h_r}=\overline{E_{r,r}-E_{r+1,r+1}}
\   \   \mbox{ for }1\le r\le N-1.
\end{align*}
Then we have:

\begin{lemt}\label{lem:slzre}
The following relations hold
for $1\le i,j\le N$, $m_0,m_1,n_1\in \Z$ with $(i,m_1)\ne (j,n_1)$:
\begin{align}\label{slzre1}
\overline{E_{Nm_1+i,Nn_1+j}\ot t^{m_0}}=q^{-m_0n_1}e_{i,j}(m_0,m_1-n_1),
\end{align}
\begin{equation}\begin{split}\label{slzre2}
&\overline{(E_{Nm_1+i,Nm_1+i}-E_{Nn_1+j,Nn_1+j})\ot t^{m_0}}\\
=\,&\begin{cases}
q^{-m_0m_1}e_{i,i}(m_0,0)-q^{-m_0n_1}e_{j,j}(m_0,0)\ &\te{if}\ m_0\ne 0\\
\overline{E_{i,i}-E_{j,j}}+(m_1-n_1)\bm\rk'\ &\te{if}\ m_0=0.
\end{cases}
\end{split}
\end{equation}
Furthermore, the vectors
\begin{align}\label{basis2}
 e_{i,j}(m_0,m_1),\quad \overline{h_r},\quad \bm\rk', \quad  \bm\rk,
\end{align}
where
$1\le i,j\le N$, $m_0,m_1\in \Z$ with $(i-j,m_0,m_1)\ne (0,0,0)$ and $1\le r\le N-1$,
form a basis of $\wh{\fsl_\infty}[\Z]$.
\end{lemt}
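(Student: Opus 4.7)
The plan is to first establish the two identities \eqref{slzre1}--\eqref{slzre2}, both of which are direct consequences of the defining relation \eqref{relation2}, and then deduce the basis statement via Lemma \ref{linear-algebra-fact} applied to the $t$-degree decomposition $\wh{\fsl_\infty}=\C\bm\rk\oplus\bigoplus_{m_0\in \Z}\fsl_\infty\ot t^{m_0}$, which is $\Z$-stable. For \eqref{slzre1}, the hypothesis $(i,m_1)\ne (j,n_1)$ ensures $E_{Nm_1+i,Nn_1+j}\in \fsl_\infty$, and applying \eqref{relation2} with $g=-n_1\in \Z$ (so that $\rho_N(-n_1)=\sigma^{-Nn_1}$ and $\chi_q(-n_1)^{m_0}=q^{-m_0n_1}$) produces
\[\overline{E_{Nm_1+i,Nn_1+j}\ot t^{m_0}}=q^{-m_0 n_1}\overline{E_{N(m_1-n_1)+i,j}\ot t^{m_0}}=q^{-m_0 n_1}e_{i,j}(m_0,m_1-n_1),\]
where the second equality is the definition of $e_{i,j}(m_0,m_1-n_1)$ since $(i,m_1-n_1)\ne (j,0)$.

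For \eqref{slzre2} at $m_0=0$, I would split the left-hand side as $\overline{(E_{Nm_1+i,Nm_1+i}-E_{i,i})\ot t^0}+\overline{(E_{i,i}-E_{j,j})\ot t^0}+\overline{(E_{j,j}-E_{Nn_1+j,Nn_1+j})\ot t^0}$. Since $\chi_q(r)^0=1$, the shifts $\sigma^{Nr}$ act trivially on the $t$-degree $0$ quotient, so a telescoping argument collapses the first summand to $m_1\,\overline{(E_{N+i,N+i}-E_{i,i})\ot t^0}$. That $\overline{(E_{N+i,N+i}-E_{i,i})\ot t^0}=\bm\rk'$ independently of $i$ follows from
\[(E_{N+i,N+i}-E_{i,i})-(E_{N+1,N+1}-E_{1,1})=\sigma^{N}(E_{i,i}-E_{1,1})-(E_{i,i}-E_{1,1}),\]
which vanishes in the quotient. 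Treating the third summand symmetrically yields the formula. For $m_0\ne 0$, the key identity, immediate from \eqref{relation2} with $g=1$, is
\[(1-q^{-m_0})\,\overline{u\ot t^{m_0}}=\overline{(u-\sigma^{N}(u))\ot t^{m_0}}\qquad (u\in \fsl_\infty).\]
Taking $u=E_{Nm_1+i,Nm_1+i}-E_{Nn_1+j,Nn_1+j}$, the difference $u-\sigma^N(u)$ splits as $\sigma^{Nm_1}(E_{i,i}-E_{N+i,N+i})-\sigma^{Nn_1}(E_{j,j}-E_{N+j,N+j})$, and one more application of \eqref{relation2} together with the definition of $e_{i,i}(m_0,0)$ and $e_{j,j}(m_0,0)$ converts this into $q^{-m_0m_1}(1-q^{-m_0})e_{i,i}(m_0,0)-q^{-m_0n_1}(1-q^{-m_0})e_{j,j}(m_0,0)$. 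Dividing by the nonzero scalar $1-q^{-m_0}$ (using that $q$ is not a root of unity) completes the case.

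For the basis claim, Lemma \ref{linear-algebra-fact}(a) reduces the problem to finding a basis of each summand of $\wh{\fsl_\infty}[\Z]=\C\bm\rk\oplus\bigoplus_{m_0\in \Z}(\fsl_\infty\ot t^{m_0})/\Z$. Applying Lemma \ref{linear-algebra-fact}(b) to the standard basis $\{E_{p,q}:p\ne q\}\cup\{E_{p,p}-E_{p+1,p+1}:p\in \Z\}$ of $\fsl_\infty$, I would take as $\Z$-orbit representatives $\{E_{Nm_1+i,j}: 1\le i,j\le N,\ m_1\in \Z,\ (i,m_1)\ne (j,0)\}$ for the off-diagonals and $\{E_{p,p}-E_{p+1,p+1}:1\le p\le N\}$ for the diagonals, for every $m_0\in \Z$. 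The off-diagonal representatives are, by definition, the vectors $e_{i,j}(m_0,m_1)$ with $(i,m_1)\ne (j,0)$. For the diagonals at $m_0=0$, telescoping $E_{N+1,N+1}-E_{1,1}$ gives $\bm\rk'=-(\overline{h_1}+\cdots+\overline{h_N})$, so $\overline{h_N}$ can be replaced by $\bm\rk'$, yielding the basis $\{\overline{h_1},\dots,\overline{h_{N-1}},\bm\rk'\}$. For the diagonals at $m_0\ne 0$, expressing $(1-q^{-m_0})e_{i,i}(m_0,0)$ in the representative basis $\{\overline{(E_{p,p}-E_{p+1,p+1})\ot t^{m_0}}\}_{p=1}^{N}$ (via the shift relation $\overline{(E_{p+N,p+N}-E_{p+N+1,p+N+1})\ot t^{m_0}}=q^{-m_0}\overline{(E_{p,p}-E_{p+1,p+1})\ot t^{m_0}}$) produces the $N\times N$ matrix with $1$'s on and above the diagonal and $q^{-m_0}$'s below; a short row reduction shows its determinant is $\pm(1-q^{-m_0})^{N-1}$. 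The main technical point of the whole proof is the non-vanishing of this determinant, which together with the hypothesis that $q$ is not a root of unity guarantees that $\{e_{i,i}(m_0,0)\}_{i=1}^{N}$ is a basis of the diagonal component of $(\fsl_\infty\ot t^{m_0})/\Z$ for every nonzero $m_0$.
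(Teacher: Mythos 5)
Your proof is correct and follows essentially the same route as the paper's: both identities come straight from \eqref{relation2}, and the basis claim is obtained by applying Lemma \ref{linear-algebra-fact} to the $t$-degree decomposition and then swapping out the degree-zero and nonzero-degree diagonal parts. The one small variation is in showing that $\{e_{i,i}(m_0,0)\}_{i=1}^N$ is a basis of the diagonal piece for $m_0\ne 0$: you compute the change-of-basis matrix to $\{\overline{(E_{p,p}-E_{p+1,p+1})\ot t^{m_0}}\}_{p=1}^N$ explicitly and evaluate its determinant $(1-q^{-m_0})^{N-1}$, whereas the paper avoids this by invoking \eqref{slzre2} to see that the $N$ vectors $\overline{(E_{i,i}-E_{N+i,N+i})\ot t^{m_0}}$ span the $N$-dimensional space spanned by the $N$ original basis vectors, hence already form a basis; both are valid, with the paper's dimension count being a touch cleaner.
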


\begin{proof} From definition (see  \eqref{relation2}) we get  \eqref{slzre1} as
\begin{equation*}\begin{split}
&\overline{E_{N m_1+i,Nn_1+j}\ot t^{m_0}}
=\chi_q(-n_1)^{m_0}\overline{\sigma^{N(-n_1)}(E_{N m_1+i,Nn_1+j})\ot t^{m_0}}\\
=&\ q^{-m_0n_1}\overline{E_{N (m_1-n_1)+i,j}\ot t^{m_0}}=q^{-m_0n_1}e_{i,j}(m_0,m_1-n_1).
\end{split}\end{equation*}
Assume $m_0\ne 0$.  For convenience, define $e_{r,r}(m_0,0)$ for any $r\in \Z$ in the same way:
$$e_{r,r}(m_0,0)=\frac{1}{1-q^{-m_0}}\overline{(E_{r,r}-E_{N+r,N+r})\ot t^{m_0}}.$$
Note that
\begin{align*}
e_{r+kN,r+kN}(m_0,0)=q^{-km_0}e_{r,r}(m_0,0)
\end{align*}
for any $r,k\in \Z$.
Using this we get
\begin{align*}
&(1-q^{-m_0})\overline{(E_{m,m}-E_{n,n})\ot t^{m_0}}\\
=\ &\(\overline{(E_{m,m}-E_{n,n})\ot t^{m_0}}-\overline{(E_{N+m,N+m}-E_{N+n,N+n})\ot t^{m_0}}\)\\
=\ &\overline{(E_{m,m}-E_{N+m,N+m})\ot t^{m_0}-(E_{n,n}-E_{N+n,N+n})\ot t^{m_0}}\\
=\,&(1-q^{-m_0})\left(e_{m,m}(m_0,0)-e_{n,n}(m_0,0)\right),
\end{align*}
which gives
\begin{align*}
\overline{(E_{m,m}-E_{n,n})\ot t^{m_0}}&=e_{m,m}(m_0,0)-e_{n,n}(m_0,0)
\end{align*}
for any $m,n\in \Z$. Then we have
\begin{align*}
&\overline{(E_{Nm_1+i,Nm_1+i}-E_{Nn_1+j,Nn_1+j})\ot t^{m_0}}\\
=\ &e_{Nm_1+i,Nm_1+i}(m_0,0)-e_{Nn_1+j,Nn_1+j}(m_0,0)\\
=\ &q^{-m_1m_0}e_{i,i}(m_0,0)-q^{-n_1m_0}e_{j,j}(m_0,0).
\end{align*}

Now, assume $m_0=0$. For any integer $r$, as
\begin{align*}
\overline{E_{N+1,N+1}-E_{1,1}}-\overline{E_{N+r,N+r}-E_{r,r}}=\overline{\sigma_{N,1}(E_{1,1}-E_{r,r})-(E_{1,1}-E_{r,r})}=0,
\end{align*}
we have
\begin{align*}
\overline{E_{N+r,N+r}-E_{r,r}}=\overline{E_{N+1,N+1}-E_{1,1}}=\bm\rk'.
\end{align*}
It follows that
\begin{align*}
\overline{E_{nN+i,nN+i}-E_{i,i}}=n\,\bm\rk'\quad \te{for all}\  1\le i\le N,\ n\in \Z.
\end{align*}
Then we have
\begin{align*} &\overline{E_{Nm_1+i,Nm_1+i}-E_{Nn_1+j,Nn_1+j}}\\
=\ &\overline{E_{Nm_1+i,Nm_1+i}-E_{i,i}} +\overline{E_{i,i}-E_{j,j}}
+\overline{E_{j,j}-E_{Nn_1+j,Nn_1+j}}\\
=\ &\overline{E_{i,i}-E_{j,j}}+(m_1-n_1)\bm\rk'.
\end{align*}
This proves the first assertion.

Now, we prove the basis assertion. Note that matrices $E_{m,n}$ and $E_{m,m}-E_{m+1,m+1}$
for $m,n\in \Z$ with $m\ne n$ form a basis of $\fsl_\infty$. Then the elements
$$E_{m,n}\otimes t^{m_0}, \  \  \  \ (E_{m,m}-E_{m+1,m+1})\otimes t^{m_0}$$
for $m,n,m_0\in \Z$ with $m\ne n$ together with ${\bf k}$ form a basis of $\wh{\fsl_\infty}$.
Recall that for $r\in \Z$, $\rho_{N}(r)=(\sigma^{N})^{r}$ on $\fsl_\infty$ is the restriction of the automorphism of $\fgl_\infty$ defined by
$\sigma^{Nr}(E_{m,n})=E_{m+rN,n+rN}$ for $m,n\in \Z$ and that
$$\rho_{N}(r)(E_{m,n}\otimes t^{m_0})=\chi_q(r)^{m_0}E_{m+rN,n+rN}\otimes t^{m_0}
=q^{rm_0}E_{m+rN,n+rN}\otimes t^{m_0} $$
 for $m,n, m_0\in \Z$.
 It then follows from Lemma \ref{linear-algebra-fact} that the elements
 $$\overline{E_{mN+i,j}\otimes t^{m_0}}, \  \  \  \ \overline{(E_{i,i}-E_{i+1,i+1})\otimes t^{m_0}}$$
for $1\le i,j\le N,\  m,m_0\in \Z$ with $(i,m)\ne (j,0)$ together with ${\bf k}$ form a basis of $\wh\fsl_\infty[\Z]$.
When $m_0=0$, it is clear that $\{ \overline{h_1},\dots,\overline{h_{N-1}}, {\bf k}'\}$
is also a basis for the subspace spanned by $\overline{(E_{i,i}-E_{i+1,i+1})\otimes t^{m_0}}$ for $1\le i\le N$.

Let $m_0$ be any nonzero integer. From the first assertion,  the $N$ vectors
\begin{align}\label{i,N+i}
 \overline{(E_{i,i}-E_{N+i,N+i})\otimes t^{m_0}}\   \   \   \mbox{ for }1\le i\le N
 \end{align}
linearly span the $N$-dimensional subspace spanned by
\begin{align}\label{i,1+i}
\overline{(E_{i,i}-E_{i+1,i+1})\otimes t^{m_0}}\   \   \   \mbox{ for }1\le i\le N.
\end{align}
Consequently, the $N$ vectors in (\ref{i,N+i}) also form a basis for the subspace spanned by the vectors in
(\ref{i,1+i}). Therefore,
$$\overline{E_{mN+i,j}\otimes t^{m_0}}, \  \  \  \ \overline{(E_{i,i}-E_{N+i,N+i})\otimes t^{m_0}}$$
for $1\le i,j\le N,\  m,m_0\in \Z$ with $(i,m)\ne (j,0)$ together with ${\bf k}'$ and ${\bf k}$
form a basis of $\wh\fsl_\infty[\Z]$.
Now, the proof is complete.
\end{proof}

As the main result of this section, we have:

\begin{thmt}\label{prop:thetanq}
The Lie algebra  $\wh{\fsl_N}(\C_q)$ is isomorphic to the covariant algebra $\wh{\fsl_\infty}[\Z]$,
where an isomorphism $\theta:\wh{\fsl_N}(\C_q)\rightarrow \wh{\fsl_\infty}[\Z]$ is given by
\begin{align*}
&\theta( \bm\rk_0)=\bm\rk,\quad \theta(\bm\rk_1)=\bm\rk',\\
&\theta(E_{i,j}t_0^{m_0}t_1^{m_1})=e_{i,j}(m_0,m_1),\quad \theta(E_{r,r}-E_{r+1,r+1})=\overline{E_{r,r}-E_{r+1,r+1}}
\end{align*}
for $1\le i,j\le N$, $m_0,m_1\in \Z$ with $(i-j,m_0,m_1)\ne (0,0,0)$ and $1\le r\le N-1$.
\end{thmt}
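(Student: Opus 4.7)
My approach is to verify in stages that $\theta$ is a linear isomorphism, reduce the bracket-preservation to an obstruction valued in $\C\bm\rk'$, and then show the obstruction vanishes. For the linear isomorphism, Lemma~\ref{basis-slCq} provides a basis of $\wh{\fsl_N}(\C_q)$ and Lemma~\ref{lem:slzre} gives a matching basis of $\wh{\fsl_\infty}[\Z]$; the map $\theta$ sends the first bijectively onto the second.

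For bracket-preservation I would exploit the surjective Lie homomorphism $\theta_{N,q}:\wh{\fgl_N}(\C_q) \to \wh{\fgl_\infty}[\Z]$ of Proposition~\ref{lem:thetanq}. The inclusion $\fsl_\infty \hookrightarrow \fgl_\infty$ induces, via Lemma~\ref{linear-algebra-fact}, a Lie homomorphism $\psi: \wh{\fsl_\infty}[\Z] \to \wh{\fgl_\infty}[\Z]$. I would first verify on the basis the identity $\psi \circ \theta = \theta_{N,q}|_{\wh{\fsl_N}(\C_q)}$; the only non-obvious case is $\psi(e_{i,i}(m_0,0)) = \overline{E_{i,i}\ot t^{m_0}}$, which follows from the covariant relation $\overline{E_{N+i,N+i}\ot t^{m_0}} = q^{-m_0}\overline{E_{i,i}\ot t^{m_0}}$. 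A direct comparison of the two bases then yields $\ker\psi = \C\bm\rk'$. Hence for any $X,Y \in \wh{\fsl_N}(\C_q)$,
\[
\psi\bigl(\theta([X,Y]) - [\theta(X),\theta(Y)]\bigr) = \theta_{N,q}([X,Y]) - [\theta_{N,q}(X),\theta_{N,q}(Y)] = 0,
\]
so there is a bilinear antisymmetric form $\omega$ with $\theta([X,Y]) - [\theta(X),\theta(Y)] = \omega(X,Y)\,\bm\rk'$.

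It remains to show $\omega \equiv 0$ by checking on pairs of basis vectors. A preliminary step is that $\bm\rk' = \overline{E_{N+1,N+1}-E_{1,1}}$ is central in $\wh{\fsl_\infty}[\Z]$, via a telescoping sum on the $\Z$-orbit; this disposes of all brackets involving $\bm\rk_0$ or $\bm\rk_1$. Brackets of the form $[E_{r,r}-E_{r+1,r+1},\,E_{k,l}t_0^{n_0}t_1^{n_1}]$ are scalar multiples of a single basis vector on each side and match immediately. The essential case is $[E_{i,j}t_0^{m_0}t_1^{m_1},\,E_{k,l}t_0^{n_0}t_1^{n_1}]$, where a $\bm\rk'$-contribution on the right can only arise when the covariant bracket~\eqref{commutator2} produces a shifted diagonal difference that is then rewritten via~\eqref{slzre2}. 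The critical sub-case is $j=k$, $i=l$, $n_0=-m_0$, $n_1=-m_1$: both sides evaluate to
\[
q^{-m_0m_1}\bigl(\overline{E_{i,i}-E_{j,j}} + m_0\bm\rk + m_1\bm\rk'\bigr),
\]
so the $m_1\bm\rk_1$-term on the left is matched exactly by the $m_1\bm\rk'$-term on the right.

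The main obstacle is the bookkeeping in this last step: the whole design of $\theta$ — sending $\bm\rk_1 \mapsto \bm\rk'$ and using the particular scaling $(1-q^{-m_0})^{-1}$ in the definition of $e_{i,i}(m_0,0)$ — is engineered precisely so that the $m_1\bm\rk_1$ summand in~\eqref{commutator1} is reproduced by the $\bm\rk'$ piece arising from the shifted-diagonal relation~\eqref{slzre2}. Once this is verified generically, the remaining subcases (where $i=j$ forces the alternative formula for $e_{i,i}(m_0,0)$) follow by the same kind of direct computation.
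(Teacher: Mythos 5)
Your proposal is correct and follows essentially the same approach as the paper's own proof: it uses the same linear-isomorphism input (Lemmas~\ref{basis-slCq} and~\ref{lem:slzre}), the same reduction through $\theta_{N,q}$ and $\psi$ to isolate the obstruction in $\C\bm\rk'$, and the same critical central-extension check in the sub-case $j=k$, $i=l$, $n_0=-m_0$, $n_1=-m_1$. The only differences are expository (you name the obstruction $\omega$ explicitly and argue the centrality of $\bm\rk'$ by a telescoping sum, which the paper leaves implicit), so there is no need for further comparison.
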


\begin{proof} In view of Lemma \ref{lem:slzre},  the linear map $\theta$ is a linear isomorphism from
$\wh{\fsl_N}(\C_q)$ onto $\wh{\fsl_\infty}[\Z]$, so it remains to prove that $\theta$ is a Lie algebra homomorphism.
Recall from Proposition \ref{lem:thetanq} that we have a Lie algebra homomorphism
$\theta_{N,q}:\wh{\fgl_N}(\C_q)\rightarrow \wh{\fgl_\infty}[\Z]$.
This gives a Lie algebra homomorphism
from $\wh{\fsl_N}(\C_q)$ to $\wh{\fgl_\infty}[\Z]$, also denoted by $\theta_{N,q}$, such that $\ker (\theta_{N,q})=\C \bm\rk_1$.

On the other hand, by Lemma \ref{linear-algebra-fact} the Lie algebra embedding of $\fsl_\infty$ into $\fgl_\infty$
gives rise to a Lie algebra homomorphism $\psi: \wh{\fsl_\infty}[\Z]\rightarrow \wh{\fgl_\infty}[\Z]$.
Notice that the following relations hold in $\wh{\fgl_\infty}[\Z]$:
\begin{align*}
&\overline{(E_{i,i}-E_{N+i,N+i})\otimes t^{m_0}}=(1-q^{-m_0})\overline{(E_{i,i}\otimes t^{m_0})},\\
&\overline{E_{N+1,N+1}-E_{1,1}}=0
\end{align*}
as $\overline{E_{N+j,N+j}\otimes t^{m_0}}=q^{-m_0}\overline{E_{j,j}\otimes t^{m_0}}$ for $1\le j\le N$.
Thus
$$\psi (e_{i,i}(m_0,0))=\overline{E_{i,i}\otimes t^{m_0}}\   \   \   \mbox{ for }1\le i\le N,\ m_0\ne 0$$
and $\psi( \bm\rk')=0$. It follows that $\ker (\psi)=\C \bm\rk'$ and
$\theta_{N,q}(\wh{\fsl_N}(\C_q))=\psi (\wh{\fsl_\infty}[\Z])$.
Consequently, $\theta$ reduces to a Lie algebra isomorphism from $\wh{\fsl_N}(\C_q)/\C \bm\rk_1$
onto $\wh{\fsl_\infty}[\Z]/\C \bm\rk'$.
To show that $\theta$ is a Lie algebra homomorphism, it suffices to check the central extensions involving
$\bm\rk_1$ and $\bm\rk'$, respectively.
From the Lie commutator relation (\ref{commutator1}), we only need to consider
$[E_{i,j}t_0^{m_0}t_1^{m_1},E_{k,l}t_0^{n_0}t_1^{n_1}]$ in $\wh{\fsl_N}(\C_q)$
%and $[e_{i,j}(m_0,n_0),e_{k,l}(m_1,n_1)]$ in $\wh{\fsl_\infty}[\Z]$
with $\delta_{j,k}\delta_{i,l}\delta_{m_0+n_0,0}\delta_{m_1+n_1,0}=1$ and $m_1\ne 0$.

Let $1\le i,j\le N,\  m_0,m_1\in \Z$ with $m_1\ne 0$.
From (\ref{commutator1}) we have
\begin{align*}
[E_{i,j}t_0^{m_0}t_1^{m_1},E_{j,i}t_0^{-m_0}t_1^{-m_1}]
=&\ q^{-m_1m_0}(E_{i,i}-E_{j,j}+m_0\bm\rk_0+m_1\bm\rk_1).
\end{align*}
On the other hand, by definition (see \eqref{commutator2}) we have
\begin{align*}
%&[e_{i,j}(m_0,m_1),e_{k,l}(n_0,n_1)]=
&[\overline{E_{N m_1+i,j}\ot t^{m_0}},\overline{E_{-N m_1+j,i}\ot t^{-m_0}}]\\
=&\, \sum_{r\in \Z}\chi_q(r)^{m_0}\Big(\overline{[E_{N (m_1+r)+i,Nr+j}\ot t^{m_0},E_{-N m_1+j,i}\ot t^{-m_0}]}\\
&\quad +m_0\delta_{m_0+n_0,0}\<E_{N (m_1+r)+i,Nr+j},E_{-N m_1+j,i}\>\bm\rk\Big)\\
=&\ \overline{q^{-m_1m_0}E_{i,i}
-q^{-m_1m_0}E_{-N m_1+j,-Nm_1+j}}+m_0q^{-m_0m_1}\bm\rk\\
=&\ q^{-m_1m_0}\(\overline{E_{i,i}-E_{j,j}}+m_1\bm\rk'\)+m_0q^{-m_0m_1}\bm\rk,
\end{align*}
where we are using  \eqref{slzre2} for the last equality. We see that the central extension by $\bm\rk_1$
matches the central extension by $\bm\rk'$. Therefore, $\theta$ is a Lie algebra isomorphism
from $\wh{\fsl_N}(\C_q)$ to $\wh{\fsl_\infty}[\Z]$.
\end{proof}

%\begin{remt}
%{\em Note that Proposition \ref{prop:thetanq} (and its proof) particularly gives a Lie algebra isomorphism
%from the affine Kac-Moody Lie algebra
%\[\fsl_N(\C[t_1,t_1^{-1}])\oplus \C\rk_1\ \left(\subset \wh{\fsl_N}(\C_q)\right)\]
%to the $\Z$-covariant algebra
% \[\fsl_\infty[\Z]=\fsl_\infty/\text{span}\{gu-u\mid g\in \Z,u\in \fsl_\infty\}\]
%  of the simple Lie algebra $\fsl_\infty$.}
%\end{remt}

%{\bf New:}  Let $W$ be a restricted $\wh{\fgl_N}(\C_q)$-module of level $\ell$ with $\ell\ne 0, -N$.
%We have a vertex algebra $V_{W}$ generated by
%$a(z)$ for $a\in \fgl_N[t_1,t_1^{-1}]$,
%where $W$ is a canonical quasi module. Conformal vector
%\begin{align}
%\omega=\frac{1}{\ell+N}\sum_{1\le i,j\le N}\sum_{n\in \Z}  :(E_{i,j}t_0^{n})(E_{j,i}t_0^{-n}):
%\end{align}
%In particular, we have
%\begin{align}
%L(0)=\frac{1}{\ell+N}\left(\sum_{1\le i,j\le N} E_{i,j}E_{j,i}+
%2\sum_{1\le i,j\le N}\sum_{n\in \Z_+}  (E_{i,j}t_0^{-n})(E_{j,i}t_0^{n})\right).
%\end{align}

%\begin{lemt}
%The following relations hold on any restricted $\wh{\fgl_N}(\C_q)$-module of level $\ell$:
%\begin{align}
%[L(0), at_0^{m}t_1^{n}]=-mat_0^{m}t_1^{n}
%\end{align}
%on $W$ for $a\in \fgl_N,\ m,n\in \Z$.
%\end{lemt}

\section{Equivariant quasi modules for simple vertex algebra $L_{\wh{\fsl_\infty}}(\ell,0)$}
In this section, first we recall from \cite{li-gamma} the notion of
vertex $\Gamma$-algebra and the notion of equivariant quasi module for a vertex $\Gamma$-algebra.
Then, as the main result of this section, we  prove that for any nonnegative integer $\ell$,
the category of equivariant quasi modules for the simple vertex algebra
 $L_{\wh{\fsl_\infty}}(\ell,0)$ associated to the affine Lie algebra $\wh{\fsl_\infty}$ is canonically isomorphic to that of
 integrable restricted $\wh{\fsl_N}(\C_q)$-modules of level $\ell$.

\subsection{Vertex $\Gamma$-algebras  and their equivariant quasi modules}

We start with the notion of vertex $\Gamma$-algebra  introduced in \cite{li-gamma}.

\begin{dfnt}
{\em Let $\Gamma$ be a group. A {\em vertex
$\Gamma$-algebra} is a vertex algebra $V$ equipped with two group
homomorphisms
\begin{eqnarray*}
&&R: \Gamma \rightarrow {\rm GL}(V);\ \  g\mapsto R_{g},\\
&&\chi: \Gamma \rightarrow \C^{\times},
\end{eqnarray*}
satisfying the condition that $R_{g}({\bf 1})={\bf 1}$ and
\begin{eqnarray}\label{egenerating-vgamma-algebra}
R_{g}Y(v,x)R_{g}^{-1}=Y(R_{g}(v),\chi(g)^{-1}x) \ \ \mbox{ for }g\in
\Gamma,\ v\in V.
\end{eqnarray}}
 \end{dfnt}

\begin{remt}\label{rem:defvga}{\rm
Let $V$ be a $\Z$-graded vertex algebra in the sense that $V$ is a vertex algebra equipped with a $\Z$-grading
 $V=\oplus_{n\in \Z}V_{(n)}$ such that ${\bf 1}\in V_{(0)}$ and
\begin{eqnarray}
u_{m}V_{(n)}\subset V_{(n+k-m-1)}\ \ \mbox{ for }u\in V_{(k)},\ m,n,
k\in \Z.
\end{eqnarray}
Define a linear operator $L(0)$ on $V$ by $L(0)v=nv$ for $v\in
V_{(n)}$ with $n\in \Z$. Assume that $\Gamma$ is an automorphism group of $V$ as a
$\Z$-graded vertex algebra and let $\chi:
\Gamma\rightarrow \C^{\times}$ be any linear character. For $g\in
\Gamma$, set $R_{g}=\chi(g)^{-L(0)}g$. Then $V$  with group homomorphisms $R$ and $\chi$ is a vertex
$\Gamma$-algebra (see \cite{li-gamma}).}
\end{remt}

\begin{ext}\label{firstex}
{\rm Here we recall the vertex algebras associated to affine Lie algebras.
Let $\fg$ be a Lie algebra equipped with a nondegenerate symmetric invariant bilinear form $\<\cdot,\cdot\>$.
Equip the affine Lie algebra $\wh\fg$ with a
$\Z$-grading $\wh\fg=\oplus_{n\in \Z}\wh\fg_{(n)}$, where
\begin{align}
\wh\fg_{(0)}=\fg\oplus \C\bm\rk\quad \te{and}\quad \wh\fg_{(n)}=\fg\ot \C t^{-n}\quad \te{for}\ n\ne 0.
\end{align}
Let $\ell$ be a complex number.
 View $\C$ as a $\(\fg\otimes \C[t]\oplus \C\bm\rk\)$-module with $\fg\otimes \C[t]$
acting trivially and with $\bm\rk$ acting as scalar $\ell$. Form the induced $\wh\fg$-module
\begin{eqnarray}\label{vwhfsl}
V_{\wh\fg}(\ell,0)=\U(\wh\fg)\otimes_{\U(\fg\ot \C[t]\oplus \C\bm\rk)} \C,
\end{eqnarray}
which is naturally $\N$-graded by defining $\deg \C=0$.
 Set ${\bf 1}=1\otimes 1\in
V_{\wh\fg}(\ell,0)$ and identify $\fg$ as the degree-one subspace of
$V_{\wh\fg}(\ell,0)$ through the linear map
\[a\mapsto
a(-1){\bf 1}\in V_{\wh\fg}(\ell,0)\quad\te{for}\ a\in \fg.\]
Following the standard practice, we alternatively write $a(m)$ for  $a\ot t^m\in \wh\fg$.
It was known (cf. \cite{FZ}) that there
exists a vertex algebra structure on $V_{\wh\fg}(\ell,0)$,
which is uniquely determined by the condition that
${\bf 1}$ is the vacuum vector and
\[Y(a,x)=a(x):=\sum_{m\in \Z}a(m) x^{-m-1}\quad \te{for}\ a\in \fg.\]
Denote by $J_{\wh\fg}(\ell,0)$ the unique maximal graded $\wh\fg$-submodule of $V_{\wh\fg}(\ell,0)$.
Then $J_{\wh\fg}(\ell,0)$ is a (two-sided) ideal of vertex algebra $V_{\wh\fg}(\ell,0)$.
Define
\begin{align}
L_{\wh\fg}(\ell,0)=V_{\wh\fg}(\ell,0)/J_{\wh\fg}(\ell,0),\end{align}
which is a simple $\Z$-graded vertex algebra. One can show
 that the vertex algebra $L_{\wh\fg}(\ell,0)$ is simple.
Assume that $\Gamma$ is a group which acts on $\fg$ as an automorphism group preserving the bilinear form.
It is a simple fact that the action of $\Gamma$ on $\fg$ can be uniquely lifted to an action
on  $V_{\wh\fg}(\ell,0)$ where $\Gamma$ acts as an automorphism group
preserving the $\Z$-grading.
As $J_{\wh\fg}(\ell,0)$ is $\Gamma$-stable, $\Gamma$ naturally acts on $L_{\wh\fg}(\ell,0)$.
From Remark \ref{rem:defvga}, for any linear character  $\chi$  of $\Gamma$,
 $V_{\wh\fg}(\ell,0)$ and $L_{\wh\fg}(\ell,0)$ are vertex $\Gamma$-algebras with
$R_g=\chi(g)^{-L(0)}g$ for $g\in \Gamma$. }
\end{ext}

The following notion was introduced in \cite{li-tlie}:

\begin{dfnt}
{\em Let $V$ be a vertex $\Gamma$-algebra. An
{\em equivariant quasi $V$-module} is a vector space $W$ equipped with a linear map
\begin{eqnarray*}
Y_{W}(\cdot,x):&& V\rightarrow \mathrm{Hom}(W,W((x)))\subset (\mathrm{End}
W)[[x,x^{-1}]]\\
&&v\mapsto Y_{W}(v,x),
\end{eqnarray*}
satisfying the conditions that
\begin{eqnarray}Y_{W}({\bf 1},x)&=&1_{W}\quad\te{(the identity operator on $W$)},\\
Y_{W}(R_{g}v,x)&=&Y_{W}(v,\chi(g)x)\ \ \ \mbox{ for }g\in \Gamma,\
v\in V
\end{eqnarray}
and  that for $u,v\in V$, the {\em quasi Jacobi identity}
\begin{eqnarray}\label{epjacobi}
&&x_{0}^{-1}\delta\left(\frac{x_{1}-x_{2}}{x_{0}}\right)p(x_{1},x_{2})Y_{W}(u,x_{1})Y_{W}(v,x_{2})
\nonumber\\
&&\hspace{1cm}
-x_{0}^{-1}\delta\left(\frac{x_{2}-x_{1}}{-x_{0}}\right)p(x_{1},x_{2})Y_{W}(v,x_{2})Y_{W}(u,x_{1})\nonumber\\
&=&x_{2}^{-1}\delta\left(\frac{x_{1}-x_{0}}{x_{2}}\right)p(x_{1},x_{2})Y_{W}(Y(u,x_{0})v,x_{2})
\end{eqnarray}
holds on $W$ for some polynomial $p(x_{1},x_{2})$ of the form
\begin{eqnarray}
p(x_{1},x_{2})=(x_{1}-\chi(g_{1})x_{2})\cdots
(x_{1}-\chi(g_{k})x_{2}),
\end{eqnarray}
where $g_{1},\dots,g_{k}\in \Gamma$.}
\end{dfnt}

To emphasize the dependence on the group $\Gamma$ and the linear character $\chi$,
we shall also use the term ``$(\Gamma,\chi)$-equivariant quasi $V$-module."
Assume that $V=\oplus_{n\in \Z}V_{(n)}$ is also $\Z$-graded. A {\em $\Z$-graded  equivariant quasi
 $V$-module} is an equivariant quasi $V$-module $W$ with a $\Z$-grading
$W=\oplus_{n\in \Z} W_{(n)}$ such that
\begin{eqnarray}
u_{m}W_{(n)}\subset W_{(n+k-m-1)}\ \ \mbox{ for }u\in V_{(k)},\ m,n,
k\in \Z.
\end{eqnarray}

Recall the $(\Gamma,\chi)$-covariant algebra $\wh\fg[\Gamma]$ of the affine Lie algebra $\wh\fg$ from Section 2.2.
The following result was obtained in \cite{li-tlie}:

\begin{prpt}\label{exquasimo1}
Let $\fg, \Gamma$ be given as in Example \ref{firstex}, let $\chi: \Gamma\rightarrow \C^{\times}$
be a one-to-one group homomorphism, and let $\ell\in \C$.
Then for any restricted $\wh\fg[\Gamma]$-module $W$ of level $\ell$, there is a
$(\Gamma,\chi)$-equivariant quasi $V_{\wh\fg}(\ell,0)$-module structure $Y_{W}(\cdot,x)$
which is uniquely determined by
\begin{align*}
Y_W(a,x)=\bar{a}(x):=\sum_{n\in \Z}\overline{a(n)} x^{-n-1}\quad \te{for}\ a\in \fg.
\end{align*}
On the other hand, any $(\Gamma,\chi)$-equivariant quasi $V_{\wh\fg}(\ell,0)$-module $W$ is a
 restricted $\wh\fg[\Gamma]$-module of level $\ell$ with $\overline{a}(x)=Y_W(a,x)$ for $a\in \fg$.
\end{prpt}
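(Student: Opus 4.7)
My plan is to prove the two directions separately, using the standard field-theoretic machinery for (equivariant) quasi modules developed in \cite{li-gamma, li-tlie}.

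\medskip
\noindent\textbf{Forward direction.} Given a restricted $\wh\fg[\Gamma]$-module $W$ of level $\ell$, I first form the generating functions $\bar a(x)=\sum_{n\in\Z}\overline{a(n)}\,x^{-n-1}$ for $a\in\fg$. The key computation is to extract from the commutator relation (\ref{commutator2}) the identity
\[
[\bar a(x_1),\bar b(x_2)]=\sum_{g\in\Gamma}\Bigl(\overline{[ga,b]}(x_2)\,x_1^{-1}\delta\!\bigl(\tfrac{\chi(g)x_2}{x_1}\bigr)+\ell\<ga,b\>\,\tfrac{\partial}{\partial x_2}x_1^{-1}\delta\!\bigl(\tfrac{\chi(g)x_2}{x_1}\bigr)\Bigr),
\]
where the sum is finite by the hypothesis (\ref{conconvar}). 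Letting $S_{a,b}\subset\Gamma$ be the finite set of $g$ contributing, the polynomial $p_{a,b}(x_1,x_2)=\prod_{g\in S_{a,b}}(x_1-\chi(g)x_2)^2$ annihilates $[\bar a(x_1),\bar b(x_2)]$, i.e.\ the fields $\{\bar a(x):a\in\fg\}$ are pairwise quasi-local. By the general construction theorem for quasi modules (the appropriate analogue in \cite{li-gamma} of the Frenkel--Kac--Radul--Wang local field construction), this quasi-locality together with $Y_W(\vac,x)=1_W$ allows a unique extension of $a\mapsto\bar a(x)$ to a quasi $V_{\wh\fg}(\ell,0)$-module structure $Y_W(\cdot,x)$ on $W$ satisfying the quasi Jacobi identity.

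\medskip
\noindent\textbf{Equivariance.} Since $\fg$ generates $V_{\wh\fg}(\ell,0)$ as a vertex algebra, it suffices to verify $Y_W(R_gv,x)=Y_W(v,\chi(g)x)$ on generators $a\in\fg\subset V_{\wh\fg}(\ell,0)_{(1)}$. There $R_ga=\chi(g)^{-1}g(a)$, while (\ref{relation2}) yields $\overline{a(n)}=\chi(g)^n\overline{(ga)(n)}$; comparing coefficients gives
\[
Y_W(a,\chi(g)x)=\sum_{n}\chi(g)^{-n-1}\overline{a(n)}\,x^{-n-1}=\chi(g)^{-1}\overline{(ga)}(x)=Y_W(R_ga,x).
\]
Equivariance on all of $V_{\wh\fg}(\ell,0)$ then follows from the axiom (\ref{egenerating-vgamma-algebra}) and the standard argument that vertex operator commutation relations (here the quasi Jacobi identity) propagate equivariance from a generating set.

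\medskip
\noindent\textbf{Reverse direction.} Conversely, let $W$ be a $(\Gamma,\chi)$-equivariant quasi $V_{\wh\fg}(\ell,0)$-module. For $a\in\fg$, define $\overline{a(n)}$ as the coefficient of $x^{-n-1}$ in $Y_W(a,x)$, and let $\bm\rk$ act as $\ell$. The restrictedness condition on $W$ as a would-be $\wh\fg[\Gamma]$-module is automatic from $Y_W(a,x)\in\Hom(W,W((x)))$. The twist relation $\overline{a(n)}=\chi(g)^n\overline{(ga)(n)}$ is read off by comparing coefficients of $Y_W(R_ga,x)=Y_W(a,\chi(g)x)$, using that $R_ga=\chi(g)^{-1}g(a)$ for $a\in\fg$. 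For the commutator relation (\ref{commutator2}), extract the commutator formula from the quasi Jacobi identity by applying $\Res_{x_0}x_0^j$ for $j=0,1$: since the Laurent expansions of the delta functions involving a polynomial $p(x_1,x_2)=\prod(x_1-\chi(g_i)x_2)$ evaluate on elements of $W$ to \emph{finite} sums, one recovers the identity on $W$ without the $p$-factor, and then matches term-by-term against the right-hand side using $[Y(a,x_0)b]=[a,b](x_0)+\ell\<a,b\>x_0^{-2}\vac$ inside $V_{\wh\fg}(\ell,0)$.

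\medskip
\noindent\textbf{Main obstacle.} The substantive step is the construction theorem used in the forward direction: verifying that the quasi-locality of $\{\bar a(x):a\in\fg\}$ indeed integrates to a \emph{quasi} $V_{\wh\fg}(\ell,0)$-module structure, rather than merely giving well-defined fields. This relies on the existence result in \cite{li-gamma,li-tlie} for the functor from quasi-local field systems to equivariant quasi modules, which is what makes the bijection canonical; once that is invoked, both directions reduce to matching structure constants on generators.
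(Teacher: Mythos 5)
The paper gives no proof of Proposition \ref{exquasimo1}; it is quoted verbatim from \cite{li-tlie}, so there is no in-paper argument to compare against. Your forward direction is a correct outline of that cited argument: the $\wh\fg[\Gamma]$-commutator does package as
\[
[\bar a(x_1),\bar b(x_2)]=\sum_{g\in\Gamma}\Bigl(\overline{[ga,b]}(x_2)\,x_1^{-1}\delta\bigl(\tfrac{\chi(g)x_2}{x_1}\bigr)+\ell\<ga,b\>\,\tfrac{\partial}{\partial x_2}x_1^{-1}\delta\bigl(\tfrac{\chi(g)x_2}{x_1}\bigr)\Bigr),
\]
the sum is finite by \eqref{conconvar}, the squared factor in $p_{a,b}$ is correctly needed to kill the $\partial_{x_2}\delta$ terms, and the equivariance check on generators is right.

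The reverse direction, however, contains a genuine gap. You assert that because the series applied to a fixed $w\in W$ are finite, one can ``recover the identity on $W$ without the $p$-factor.'' That step fails: after taking $\Res_{x_0}$ of the quasi Jacobi identity you obtain an equation of the form $p(x_1,x_2)[Y_W(a,x_1),Y_W(b,x_2)]=p(x_1,x_2)F(x_1,x_2)$ where $F$ involves only $\delta(x_1/x_2)$ and $\partial_{x_1}\delta(x_1/x_2)$. The polynomial $p(x_1,x_2)=\prod_i(x_1-\chi(g_i)x_2)$ annihilates every $\delta(\chi(g_i)x_2/x_1)$, so this equation does \emph{not} determine $[Y_W(a,x_1),Y_W(b,x_2)]$; in particular it cannot by itself produce the sum over $g\in\Gamma$ appearing in \eqref{commutator2}. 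To pin down the commutator you must also use the equivariance axiom: for each $g\in\Gamma$, apply the quasi Jacobi identity to the pair $(g(a),b)$ and rewrite $Y_W(g(a),x_1)=\chi(g)\,Y_W(a,\chi(g)x_1)$ via $Y_W(R_gv,x)=Y_W(v,\chi(g)x)$ and $R_g(g^{-1}a)=\chi(g)^{-1}a$; this localizes the commutator near $x_1=\chi(g)^{-1}x_2$, supplying the $g$-th summand of \eqref{commutator2}. Your sketch invokes equivariance only to get the twist relation \eqref{relation2}, not to reconstruct the commutator, which is exactly where the result you are citing from \cite{li-tlie} does its real work.
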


\begin{remt}\label{tensorex}{\rm
Let $V_1,\dots,V_r$ be vertex $\Gamma$-algebras with the same linear character $\chi$. It is straightforward to see that
the tensor product vertex algebra $V_1\ot \cdots\ot V_r$ (see \cite{FHL}) is also a vertex $\Gamma$-algebra.
On the other hand,
 let $W_i$ be a $(\Gamma,\chi)$-equivariant quasi $V_i$-module for $1\le i\le r$.
 Then $W_1\ot \cdots \ot W_r$ is an  equivariant quasi $V_1\ot \cdots\ot V_r$-module with
\begin{align*}
Y_{W_1\ot \cdots\ot W_r}(v_1\ot \cdots\ot v_r,x)
=Y_{W_1}(v_1,x)\ot \cdots \ot Y_{W_r}(v_r,x)\quad \te{for}\ v_i\in V_i.
\end{align*}}
\end{remt}

\subsection{Equivariant quasi $L_{\wh{\fsl_\infty}}(\ell,0)$-modules}
 Recall from  Section 2.2 that $\Z$ acts on $\fsl_\infty$ via the representation $\rho_N$ (see \eqref{defthetaN})
  and $\chi_q$ is the linear character of $\Z$ defined by $\chi_q(n)=q^n$ for $n\in \Z$.
 From Example \ref{firstex},  for any complex number $\ell$ we have $\Z$-graded vertex $\Z$-algebras
 $V_{\wh{\fsl_\infty}}(\ell,0)$ and $L_{\wh{\fsl_\infty}}(\ell,0)$.
 To emphasize the dependence on the $\Z$-action via $\rho_N$,
we shall also denote the vertex $\Z$-algebra $L_{\wh{\fsl_\infty}}(\ell,0)$ by $(L_{\wh{\fsl_\infty}}(\ell,0),\rho_N)$.
  In this subsection, for any nonnegative integer $\ell$, we give a characterization of $(\Z,\chi_q)$-equivariant quasi $L_{\wh{\fsl_\infty}}(\ell,0)$-modules in terms of integrable restricted $\wh{\fsl_N}(\C_q)$-modules of level $\ell$.

\begin{dfnt}
{\em An $\wh{\fsl_N}(\C_q)$-module $W$ is said to be
{\em restricted} if for any $w\in W$ and for $1\le i,j\le N,\ m_0,m_1\in \Z$ with $(i-j,m_0,m_1)\ne (0,0,0)$,
$$(E_{i,j}t_0^{m_0}t_1^{m_1})w=0  \   \    \   \mbox{ for $m_0$ sufficiently large},$$
 and it is said to be {\em of level $\ell\in \C$} if $\bm\rk_0$ acts as
scalar $\ell$.}
\end{dfnt}

For $1\le i,j\le N,\  m\in \Z$ with $(i-j,m)\ne (0,0)$, we form a generating function
\begin{align}
\(E_{i,j}t_1^m\)(x)=\sum_{n\in \Z}(E_{i,j}t_0^nt_1^m) x^{-n-1}\in \wh{\fsl_N}(\C_q)[[x,x^{-1}]],
\end{align}
and we form generating functions
\begin{align}
H_i(x)=\sum_{n\in \Z}(E_{i,i}-E_{i+1,i+1})t_0^n x^{-n-1}\   \   \   \mbox{ for }1\le i\le N-1,
\end{align}
\begin{align}
H_N(x)=\sum_{n\in \Z}(E_{N,N}t_0^{n}-q^{-n}E_{1,1}t_0^n-\delta_{n,0}{\bf k}_1)x^{-n-1}.
\end{align}
It follows from Lemma 2.1 that all the components of these generating functions together with
 ${\bf k}_0$ form a basis of $\wh{\fsl_N}(\C_q)$.

The following result gives an isomorphism between the category of
  $(\Z,\chi_q)$-equivariant quasi $V_{\wh{\fsl_\infty}}(\ell,0)$-modules
and the category of restricted $\wh{\fsl_N}(\C_q)$-modules of level $\ell$ (cf. \cite{li-tlie}):

\begin{prpt}\label{prop:main1}
Let $\ell$ be any complex number.
For any restricted $\wh{\fsl_N}(\C_q)$-module $W$ of level $\ell$, there exists a $(\Z,\chi_q)$-equivariant quasi $V_{\wh{\fsl_\infty}}(\ell,0)$-module
structure $Y_W(\cdot,x)$ on $W$, which is uniquely determined by
\begin{align*}
&Y_W(E_{mN +i,nN +j},x)=q^n \(E_{i,j}t_1^{m-n}\)(q^n x),\\
&Y_W(E_{nN+i,nN+i}-E_{nN+i+1,nN+i+1},x)=q^n H_i(q^n x),
\end{align*}
for $1\le i,j\le N, m,n\in \Z$ with $(i,m)\ne (j,n)$.
On the other hand, for any $(\Z,\chi_q)$-equivariant quasi $V_{\wh{\fsl_\infty}}(\ell,0)$-module $(W,Y_W)$,
$W$ is a  restricted $\wh{\fsl_N}(\C_q)$-module of level $\ell$ with the action uniquely determined by
\[ \(E_{i,j}t_1^{m}\)(x)=Y_W(E_{mN +i,j},x),\quad H_i(x)=Y_W(E_{i,i}-E_{i+1,i+1},x) \]
for $1\le i,j\le N$, $m\in \Z$ with $(i-j,m)\ne (0,0)$.
\end{prpt}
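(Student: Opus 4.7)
The plan is to deduce Proposition 3.5 by composing two ingredients already established in the paper: the Lie algebra isomorphism $\theta:\wh{\fsl_N}(\C_q)\to\wh{\fsl_\infty}[\Z]$ from Theorem \ref{prop:thetanq} and the general vertex-algebra correspondence of Proposition \ref{exquasimo1}. Applied with $\fg=\fsl_\infty$, with $\Gamma=\Z$ acting through $\rho_N$, and with the injective character $\chi=\chi_q$, Proposition \ref{exquasimo1} already gives a canonical bijection between restricted $\wh{\fsl_\infty}[\Z]$-modules of level $\ell$ and $(\Z,\chi_q)$-equivariant quasi $V_{\wh{\fsl_\infty}}(\ell,0)$-modules, implemented abstractly by $Y_W(a,x)=\bar a(x)$ for $a\in\fsl_\infty$. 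Pulling back along $\theta^{-1}$ then produces the desired correspondence, so what remains is to translate the abstract formula $Y_W(a,x)=\bar a(x)$ into the concrete generating-function identities displayed in the statement, and to check that the two notions of ``restricted'' match.

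For the off-diagonal formula I would take $a=E_{mN+i,nN+j}$ and expand $\bar a(x)=\sum_{r\in\Z}\overline{E_{mN+i,nN+j}\otimes t^r}\,x^{-r-1}$. Identity \eqref{slzre1} rewrites each coefficient as $q^{-rn}e_{i,j}(r,m-n)$, while $\theta^{-1}(e_{i,j}(r,m-n))=E_{i,j}t_0^rt_1^{m-n}$ by Theorem \ref{prop:thetanq}. Substituting and changing variables $y=q^nx$ in $(E_{i,j}t_1^{m-n})(y)$ collects the factor $q^{-rn}$ into $q^{n}(E_{i,j}t_1^{m-n})(q^n x)$, which is exactly the claimed formula. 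For the diagonal formula I would apply \eqref{slzre2} to $\overline{(E_{nN+i,nN+i}-E_{nN+i+1,nN+i+1})\otimes t^r}$: the $r=0$ case collapses, since $m_1-n_1=0$, to $\overline{E_{i,i}-E_{i+1,i+1}}=\theta(E_{i,i}-E_{i+1,i+1})$, while for $r\ne0$ one checks, using the $\Z$-equivariance relation $\overline{E_{N+j,N+j}\otimes t^r}=q^{-r}\,\overline{E_{j,j}\otimes t^r}$ coming from \eqref{relation2}, that $e_{i,i}(r,0)-e_{i+1,i+1}(r,0)=\overline{(E_{i,i}-E_{i+1,i+1})\otimes t^r}$; the common prefactor $q^{-rn}$ assembles, after the substitution $y=q^n x$, into $q^nH_i(q^n x)$.

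For the reverse direction one simply reads these formulas backwards to extract the operators $(E_{i,j}t_1^m)(x)$ and $H_i(x)$ from $Y_W(E_{mN+i,j},x)$ and $Y_W(E_{i,i}-E_{i+1,i+1},x)$, and invokes Proposition \ref{exquasimo1} together with Theorem \ref{prop:thetanq} to conclude that the resulting operators satisfy the defining relations of $\wh{\fsl_N}(\C_q)$ at level $\ell$. The equivalence of the two restrictedness conditions is the last point: under $\theta$, the $\wh{\fsl_N}(\C_q)$-restricted condition on the basis of Lemma \ref{basis-slCq} becomes the condition that $\overline{E_{Nm_1+i,j}\otimes t^{m_0}}$ annihilates each vector for $m_0\gg 0$, which by $\Z$-equivariance and Lemma \ref{linear-algebra-fact} is equivalent to restrictedness of $W$ as an $\wh{\fsl_\infty}[\Z]$-module in the sense required by Proposition \ref{exquasimo1}. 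The only real obstacle is bookkeeping: keeping track of every power of $q$ and every index shift $m_1-n_1$ produced by \eqref{slzre1} and \eqref{slzre2}; there is no conceptual difficulty beyond Theorem \ref{prop:thetanq} and Proposition \ref{exquasimo1}.
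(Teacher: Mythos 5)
Your proposal follows exactly the paper's route: transfer restricted $\wh{\fsl_N}(\C_q)$-modules to $\wh{\fsl_\infty}[\Z]$-modules via the isomorphism $\theta$ of Theorem~\ref{prop:thetanq}, invoke Proposition~\ref{exquasimo1} to pass to and from equivariant quasi $V_{\wh{\fsl_\infty}}(\ell,0)$-modules, and unwind $Y_W(a,x)=\bar a(x)$ into the displayed generating-function identities; the off-diagonal and $1\le i\le N-1$ diagonal bookkeeping you sketch is correct. The only caveat is that your diagonal argument, as written, silently assumes $i\le N-1$: for $i=N$ one has $nN+i+1=(n+1)N+1$, so $m_1-n_1=-1$ rather than $0$, and \eqref{slzre2} produces the extra $-\bm\rk'$ (at $r=0$) and the extra $q^{-r}$ on the $e_{1,1}$-term (at $r\ne 0$) that are precisely compensated by the $-\delta_{n,0}\bm\rk_1$ and $-q^{-n}E_{1,1}t_0^n$ terms built into the definition of $H_N(x)$ — still bookkeeping, but not of the form you describe.
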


\begin{proof}
Let $W$ be a restricted $\wh{\fsl_N}(\C_q)$-module  of level $\ell$.
It follows from  Proposition \ref{prop:thetanq} that
$W$ is a restricted  $\wh{\fsl_\infty}[\Z]$-module of level $\ell$ on which
\begin{align*}
&\overline{E_{mN+i,j}\ot t^n}=E_{i,j}t_0^nt_1^{m},\
\overline{(E_{k,k}-E_{k+1,k+1})\ot t^n}=(E_{k,k}-E_{k+1,k+1})t_0^n,\\
&\overline{(E_{N,N}-E_{N+1,N+1})\ot t^n}=E_{N,N}t_0^{n}-q^{-n}E_{1,1}t_0^n-\delta_{n,0}{\bf k}_1,
\end{align*}
where $1\le i,j\le N, 1\le k\le N-1$ and $m,n\in \Z$ with $(i-j,m)\ne (0,0)$.
As $\chi_q$ is injective,
from Proposition \ref{exquasimo1}, the $\wh{\fsl_\infty}[\Z]$-module $W$  is naturally a
$(\Z,\chi_q)$-equivariant quasi $V_{\wh{\fsl_\infty}}(\ell,0)$-module with
\begin{align}\label{thm1:2}
Y_W(a,x)=\bar{a}(x)=\sum_{n\in \Z}\overline{a(n)}x^{-n-1}\quad \te{for}\ a\in \fsl_\infty.
\end{align}
Then %by \eqref{thm1:1} and \eqref{thm1:2} the
 $(\Z,\chi_q)$-equivariant quasi $V_{\wh\fsl_\infty}(\ell,0)$-module
 structure $Y_W(\cdot,x)$ on $W$ is uniquely determined by
\begin{align*}
&Y_W(E_{mN+i,nN+j},x)=\overline{E_{mN+i,nN+j}}(x)
=q^n\overline{E_{(m-n)N+i,j}}(q^n x)=q^n\(E_{i,j}t_1^{m-n}\)(q^nx),\\
&Y_W(E_{nN+i,nN+i}-E_{nN+i+1,nN+i+1},x)=
q^n \overline{E_{i,i}-E_{i+1,i+1}}(q^n x)
=q^n H_i(q^n x),
\end{align*}
for $1\le i\ne j \le N,\  m,n\in \Z$ with $(i,m)\ne (j,n)$.

On the other hand, let $(W,Y_W)$ be a $(\Z,\chi_q)$-equivariant quasi $V_{\wh{\fsl_\infty}}(\ell,0)$-module.
From Proposition \ref{exquasimo1} also, $W$ is a restricted $\wh{\fsl_\infty}[\Z]$-module of level $\ell$ with
$\bar{a}(x)=Y_W(a,x)$ for $a\in \fsl_\infty.$
By Proposition \ref{prop:thetanq}, $W$ becomes a restricted $\wh{\fsl_N}(\C_q)$-module of level $\ell$  on which
\begin{align*}
 &\(E_{i,j}t_1^{m}\)(x)=\overline{E_{mN+i,j}}(x)=Y_W(E_{mN +i,j},x),\\
& H_i(x)=\overline{E_{i,i}-E_{i+1,i+1}}(x)=Y_W(E_{i,i}-E_{i+1,i+1},x)
\end{align*}
for $1\le i,j\le N, m\in \Z$ with $(i-j,m)\ne (0,0)$.
\end{proof}

We fix a $\Z$-grading on $\wh{\fgl_N}(\C_q)$ given by derivation $-{\bf d}_{0}$ (recall (\ref{def-derivations-d})), i.e.,
\begin{align}
\deg (E_{i,j}t_0^{n}t_1^m)=-n\   \   \   \mbox{ for }1\le i,j\le N,\ m,n\in \Z.
\end{align}
Then $\wh{\fsl_N}(\C_q)=\oplus_{n\in \Z}\wh{\fsl_N}(\C_q)_{(n)}$, where
for nonzero $n\in \Z$,
\begin{align}\label{gradingong}
\wh{\fsl_N}(\C_q)_{(n)}=\sum_{1\le i,j\le N}\sum_{m\in \Z}\C E_{i,j}t_0^{-n}t_1^{m}
\end{align}
and
\begin{align}\label{grading-0}
&\wh{\fsl_N}(\C_q)_{(0)}\\
=& \sum_{1\le i,j\le N,m\in \Z, (i-j,m)\ne (0,0)}\C E_{i,j}t_1^{m}
+\sum_{r=1}^{N-1}\C (E_{r,r}-E_{r+1,r+1})+\C {\bf k}_0+\C {\bf k}_1.\nonumber
\end{align}

%\begin{align}\label{gradingong}
%\wh{\fsl_N}(\C_q)_{(n)}=\oplus_{\al\in\dot\Delta, m\in \Z} \wh{\fsl_N}(\C_q)_{\al-n\delta_0+m\delta_1}
%\end{align}
%(recall \eqref{whfgal}).

\begin{dfnt}\label{C-graded-module}
{\em A {\em $\C$-graded $\wh{\fsl_N}(\C_q)$-module} is an $\wh{\fsl_N}(\C_q)$-module $W$
equipped with a $\C$-grading
$W=\oplus_{\alpha\in \C} W(\alpha)$ such that $\wh{\fsl_N}(\C_q)_{(n)}W(\alpha)\subset W(n+\alpha)$
for $n\in \Z,\ \alpha\in \C$.}
\end{dfnt}

Note that that a $\C$-graded $\wh{\fsl_N}(\C_q)$-module is the same as an
$\wh{\fsl_N}(\C_q)\rtimes \C \bm\rd_0$-module on which $\bm\rd_0$ acts semisimply.
The notions of homomorphism and isomorphism for $\C$-graded $\wh{\fsl_N}(\C_q)$-modules
are defined in the obvious way (which are required to preserve the $\C$-gradings).

\begin{remt}\label{rem:gradedver}
{\rm  It follows from Proposition \ref{prop:main1} that  for any complex number $\ell$,
 a $\Z$-graded $(\Z,\chi_q)$-equivariant quasi $V_{\wh{\fsl_\infty}}(\ell,0)$-module structure on a vector space $W$
  amounts to a $\Z$-graded restricted $\wh{\fsl_N}(\C_q)$-module structure of level $\ell$.}
\end{remt}

Recall that $J_{\wh{\fsl_\infty}}(\ell,0)$ is the maximal $\wh{\fsl_\infty}$-submodule of $V_{\wh{\fsl_\infty}}(\ell,0)$.
The following is a description of $J_{\wh{\fsl_\infty}}(\ell,0)$ for nonnegative integers $\ell$.

\begin{lemt}\label{lem:charintslmod}
Let $\ell$ be a nonnegative integer. Then
\begin{align}
E_{i,j}(-1)^{\ell+1}\bm{1}\in J_{\wh{\fsl_\infty}}(\ell,0) \   \   \mbox{ for any }i\ne j\in \Z.
\end{align}
Furthermore,
$J_{\wh{\fsl_\infty}}(\ell,0)$ as an $\wh{\fsl_\infty}$-module is generated by the vectors
\begin{align}\label{genofj}
(E_{mN+i,nN+j}(-1))^{\ell+1}\bm{1}\quad\te{for}\ 1\le i\ne j\le N,\ m,n\in \Z.
\end{align}
\end{lemt}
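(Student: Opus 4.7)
My plan is to handle part (a) by a singular-vector argument combined with a grading computation, and then to deduce part (b) by first enlarging the list of generators of $M$ to all off-diagonal pairs via an explicit commutator identity, and finally reducing the resulting equality to the classical (Frenkel--Zhu) finite-rank case by a direct limit.

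For part (a), I will show that $(E_{i,j}(-1))^{\ell+1}\mathbf 1$ is annihilated by the subalgebra $\wh\fsl_\infty^+:=\fsl_\infty\otimes t\C[t]$. Direct computation of the affine bracket
\[
[E_{a,b}(n),E_{i,j}(-1)]=\delta_{b,i}E_{a,j}(n-1)-\delta_{a,j}E_{i,b}(n-1)+n\delta_{n,1}\delta_{(a,b),(j,i)}\bm\rk\qquad(n\ge 1),
\]
together with its analogue for the Cartan modes, shows by case analysis that every such operator kills the vector; the only non-trivial case $(a,b,n)=(j,i,1)$ collapses to the $\fsl_2$-singular-vector identity
\[
E_{j,i}(1)(E_{i,j}(-1))^{\ell+1}\mathbf 1 = (\ell+1)(\ell-\ell)(E_{i,j}(-1))^{\ell}\mathbf 1 = 0,
\]
attached to the $\fsl_2$-triple $\{E_{j,i}(1),E_{i,j}(-1),-H_{i,j}(0)+\ell\}$ with $\mathbf 1$ of weight $\ell$. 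By PBW the $\wh\fsl_\infty$-submodule generated then equals $U(\wh\fsl_\infty^{\le 0})(E_{i,j}(-1))^{\ell+1}\mathbf 1$, which is concentrated in degrees $\ge\ell+1\ge 1$ of the canonical $\N$-grading on $V_{\wh\fsl_\infty}(\ell,0)$. Being thus disjoint from $V_{(0)}=\C\mathbf 1$, it is proper and therefore contained in the maximal proper graded submodule $J_{\wh\fsl_\infty}(\ell,0)$.

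For part (b), let $M$ denote the $\wh\fsl_\infty$-submodule generated by the vectors listed in \eqref{genofj}; part (a) gives $M\subseteq J_{\wh\fsl_\infty}(\ell,0)$. The first step is to show that $M$ actually contains $(E_{p,q}(-1))^{\ell+1}\mathbf 1$ for \emph{every} $p\ne q$ in $\Z$, via the identity
\[
(E_{r,q}(0))^{\ell+1}(E_{p,r}(-1))^{\ell+1}\mathbf 1 = (-1)^{\ell+1}(\ell+1)!\,(E_{p,q}(-1))^{\ell+1}\mathbf 1,
\]
valid for any three distinct $p,q,r\in\Z$. This is proved by setting $X=E_{p,r}(-1)$, $Y=E_{r,q}(0)$, $Z=-E_{p,q}(-1)$: one verifies $[Y,X]=Z$ with $Z$ commuting with both $X$ and $Y$, and a Heisenberg-type induction using $Y\mathbf 1=0$ yields $Y^{\ell+1}X^{\ell+1}\mathbf 1=(\ell+1)!\,Z^{\ell+1}\mathbf 1$. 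For any $p\ne q$ with $p\equiv q\pmod N$, pick $r\notin\{p,q\}$ in a residue class mod $N$ different from $p$'s (possible since each residue class is infinite); then $(E_{p,r}(-1))^{\ell+1}\mathbf 1$ is one of the listed generators of $M$, and the identity places $(E_{p,q}(-1))^{\ell+1}\mathbf 1$ into $M$.

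To complete the proof of $M=J_{\wh\fsl_\infty}(\ell,0)$, I would reduce to the finite-rank case via direct limit. Writing $\fsl_\infty=\bigcup_n\fsl_{[-n,n]}$ (with $\fsl_{[-n,n]}\cong\fsl_{2n+1}$), one has $V_{\wh\fsl_\infty}(\ell,0)=\bigcup_n V_{\wh\fsl_{[-n,n]}}(\ell,0)$ as an increasing union of vertex subalgebras. The Frenkel--Zhu theorem describes $J_{\wh\fsl_{[-n,n]}}(\ell,0)$ as the $\wh\fsl_{[-n,n]}$-submodule generated by $(E_{i,j}(-1))^{\ell+1}\mathbf 1$ for $i\ne j\in\{-n,\dots,n\}$, so part (a) gives $J_{\wh\fsl_{[-n,n]}}(\ell,0)\subseteq J_{\wh\fsl_\infty}(\ell,0)$. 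The induced map $L_{\wh\fsl_{[-n,n]}}(\ell,0)\to L_{\wh\fsl_\infty}(\ell,0)$ sends $\mathbf 1$ to $\mathbf 1\ne 0$, and since the domain is simple as a $\wh\fsl_{[-n,n]}$-module, the map is injective; this forces $J_{\wh\fsl_\infty}(\ell,0)\cap V_{\wh\fsl_{[-n,n]}}(\ell,0)=J_{\wh\fsl_{[-n,n]}}(\ell,0)$, whence $J_{\wh\fsl_\infty}(\ell,0)=\bigcup_n J_{\wh\fsl_{[-n,n]}}(\ell,0)\subseteq M$. I expect the most delicate step to be the Heisenberg identity in the preceding paragraph, where one must verify that the chosen auxiliary $r$ always matches a generator in \eqref{genofj} and that the Heisenberg-type structure on $\{X,Y,Z\}$ closes correctly against the vacuum; the remaining pieces are standard applications of PBW, the canonical grading, and the Frenkel--Zhu theorem.
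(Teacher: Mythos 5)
Your proof is correct, and for the second assertion it takes a genuinely different route from the paper's.

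For part (a) the two arguments are essentially the same: both reduce to the finite-rank singular-vector identity for $\wh{\fsl_2}$ (the paper cites this from the finite-rank theory after identifying $\U(\wh{\fsl_I})\bm{1}\cong V_{\wh{\fsl_I}}(\ell,0)$ by PBW; you reprove it by hand via the bracket computation), and then both conclude properness from the grading. For part (b), however, the paper and you diverge. The paper proves $J'=J_{\wh{\fsl_\infty}}(\ell,0)$ by showing $V'=V_{\wh{\fsl_\infty}}(\ell,0)/J'$ is irreducible directly: given $0\neq v\in V'$, it chooses $I=[Nm+1,Nn]$ with $v\in\U(\wh{\fsl_I})\bm{1'}$, notes that the highest-root vector $E_{mN+1,nN}$ is one of the listed generators (so $E_{mN+1,nN}(-1)^{\ell+1}\bm{1'}=0$), invokes integrability and irreducibility of the resulting highest-weight $\wh{\fsl_I}$-module, and concludes $\bm{1'}\in\U(\wh{\fsl_I})v$. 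You instead argue in two stages: first you use the Heisenberg identity $Y^{\ell+1}X^{\ell+1}\bm{1}=(\ell+1)!\,Z^{\ell+1}\bm{1}$ (with $X=E_{p,r}(-1)$, $Y=E_{r,q}(0)$, $Z=-E_{p,q}(-1)$, and $r\not\equiv p\pmod N$) to show the listed generators already produce \emph{all} $(E_{p,q}(-1))^{\ell+1}\bm{1}$, $p\ne q$; second you identify $J_{\wh{\fsl_\infty}}(\ell,0)$ with $\bigcup_n J_{\wh{\fsl_{[-n,n]}}}(\ell,0)$ by the injectivity of $L_{\wh{\fsl_{[-n,n]}}}(\ell,0)\to L_{\wh{\fsl_\infty}}(\ell,0)$ and then invoke the finite-rank description of $J_{\wh{\fsl_{[-n,n]}}}(\ell,0)$. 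Both routes are valid. The paper's version is shorter, avoids the Heisenberg computation entirely, and never needs to know that the larger generating set coincides with the listed one. Your version is more modular: it isolates the fact that the special generators with $i\not\equiv j\pmod N$ suffice (a point implicit in the paper's choice $I=[Nm+1,Nn]$) and the direct-limit identification $J_{\wh{\fsl_\infty}}(\ell,0)=\bigcup_n J_{\wh{\fsl_{[-n,n]}}}(\ell,0)$, which is reusable independently of this lemma.

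One small checkpoint you flagged as delicate is indeed fine: with $p\equiv q\pmod N$, $p\ne q$, and $r$ chosen with $r\notin\{p,q\}$ and $r\not\equiv p\pmod N$, the triple $\{X,Y,Z\}$ closes as a Heisenberg algebra (the commutators $[X,Z]$ and $[Y,Z]$ vanish because $p\ne q,r$ and $q\ne r$, and no central term appears since $m+n\ne 0$ in each bracket), $Y\bm{1}=0$ because $\fsl_\infty\otimes\C[t]$ annihilates the vacuum, and $(E_{p,r}(-1))^{\ell+1}\bm{1}$ is one of the listed generators since $p\not\equiv r\pmod N$.
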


\begin{proof} Let $i,j\in \Z$ with $i\ne j$. We first show that
\begin{align}\label{first-mnk}
E_{m,n}(k)E_{i,j}(-1)^{\ell+1}\bm{1}=0,\   \   \   \   (E_{m,m}-E_{n,n})(k)E_{i,j}(-1)^{\ell+1}\bm{1}=0
\end{align}
for all $m,n,k\in \Z$ with $m\ne n,\ k>0$. Let $m,n\in \Z$ with $m\ne n$.
Pick a finite interval $I$ of $\Z$, containing $m,n,i,j$.
Set
\[\fsl_I=\te{Span}\{E_{i,j}, E_{i,i}-E_{j,j}\mid i,j\in I \  \mbox{with }i\ne j\}.\]
It follows from the P-B-W theorem that  the $\wh{\fsl_I}$-submodule $\U(\wh{\fsl_I})\bm 1$ of $V_{\wh{\fsl_\infty}}(\ell,0)$
is isomorphic to $V_{\wh{\fsl_I}}(\ell,0)$.
On the other hand, it was known (cf. \cite{LL}) that
\begin{align*}
\(\fsl_I\ot t\C[t]\)E_{i,j}(-1)^{\ell+1}\bm{1}=0\  \  \te{ in }V_{\wh{\fsl_I}}(\ell,0)
\end{align*}
for all $i, j\in I$ with $i\ne j$. Thus \eqref{first-mnk} holds. Then it follows from the P-B-W theorem that
 $\U(\wh{\fsl_\infty})E_{i,j}(-1)^{\ell+1}\bm{1}$ is a proper graded submodule of
$V_{\wh{\fsl_\infty}}(\ell,0)$, and hence a submodule of $J_{\wh{\fsl_\infty}}(\ell,0)$. This proves the first assertion.

For the second assertion, denote by $J'$ the $\wh{\fsl_\infty}$-submodule of $V_{\wh{\fsl_\infty}}(\ell,0)$
generated by the vectors in \eqref{genofj}.
Set $V'=V_{\wh{\fsl_\infty}}(\ell,0)/J'$ and $\bm 1'=1+J'\in V'$. Then it suffices to show that
$V'$ is an irreducible $\wh{\fsl_\infty}$-module.
Let $v$ be any nonzero vector in $V'$. %By the PBW theorem,  we may (and do) choose $m<n\in \Z$ such that
Fix a finite interval $I=[Nm+1,Nn]$ of $\Z$ with $m<n$
  such that $v\in \U(\wh{\fsl_I})\bm 1'$.
Note that $E_{mN+1,nN}$ is a highest root vector in $\fsl_I$ and that (see \eqref{genofj})
\begin{align}\label{nmnn}
E_{mN+1,nN}(-1)^{\ell+1}\bm 1'=0.
\end{align}
Then the highest weight
$\wh{\fsl_I}$-module $\U(\wh{\fsl_I})\bm 1'$ is integrable and hence irreducible (see \cite{Kac}),
which implies $\U(\wh\fsl_I)\bm{1'} =\U(\wh\fsl_I)v$.
Thus
\[\bm{1'}\in \U(\wh\fsl_I)\bm{1'} =\U(\wh\fsl_I)v\subset \U(\wh\fsl_\infty)v.\]
As $V'=\U(\wh\fsl_\infty)\bm{1'}$, we get
$V'=\U(\wh\fsl_\infty)v$.
This proves that $V'$ is an irreducible $\wh{\fsl_\infty}$-module, and hence $J'=J_{\wh{\fsl_\infty}}(\ell,0)$.
Now, the proof is complete.
\end{proof}

The following notion (cf. \cite{ER}) is similar to the notion of integrable module for affine Kac-Moody algebras:

\begin{dfnt}
{\em An $\wh{\fsl_N}(\C_q)$-module $W$  is said to be {\em integrable} if for any $1\le i\ne j\le N,$
$m_0,m_1\in \Z$, $E_{i,j}t_0^{m_0}t_1^{m_1}$ acts locally nilpotently on $W$.}
\end{dfnt}

Notice that for any $1\le i\ne j\le N,\ m,n\in\Z$, from (\ref{commutator1}) we have
\begin{align}\label{special-commuting}
[\(E_{i,j}t_1^m\)(x_1),\(E_{i,j}t_1^n\)(x_2)]=0\  \   \te{in } \wh{\fgl_N}(\C_q)[[x_1^{\pm 1},x_2^{\pm 1}]]
\end{align}
and hence this is true in $\wh{\fsl_N}(\C_q)[[x_1^{\pm 1},x_2^{\pm 1}]]$.
The following is analogous to a result  for affine Kac-Moody algebras:

\begin{prpt}\label{prop:charintgmod}
Let $W$ be a restricted $\wh{\fsl_N}(\C_q)$-module of level $\ell\in \C$.
Then $W$ is integrable if and only if $\ell$ is a nonnegative integer and
\[\(E_{i,j}t_1^m\)(x)^{\ell+1}=0\  \  \te{on}\  W\]
for all $1\le i\ne j\le N$ and $m\in\Z$.
\end{prpt}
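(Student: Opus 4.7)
The plan is to transfer the question to the affine Lie algebra $\wh{\fsl_\infty}$ via the isomorphism $\theta:\wh{\fsl_N}(\C_q)\xrightarrow{\sim}\wh{\fsl_\infty}[\Z]$ of Theorem \ref{prop:thetanq} and the natural quotient $\wh{\fsl_\infty}\twoheadrightarrow\wh{\fsl_\infty}[\Z]$, and then reduce to the classical characterization of integrability for affine Kac-Moody algebras. With this transfer, $W$ becomes a restricted $\wh{\fsl_\infty}$-module of level $\ell$. The equivariance relation \eqref{slzre1} shows that every matrix unit $E_{a,b}\otimes t^n$ with $a\ne b$ in $\Z$ acts on $W$ as a nonzero scalar multiple of some $E_{i,j}t_0^n t_1^{m}$ with $1\le i\ne j\le N$, and that $E_{Nm+i,j}(x)=(E_{i,j}t_1^m)(x)$ on $W$. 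Consequently, $\wh{\fsl_N}(\C_q)$-integrability of $W$ is equivalent to local nilpotence of every $E_{a,b}\otimes t^n$ ($a\ne b$) on $W$, and the relations $(E_{i,j}t_1^m)(x)^{\ell+1}=0$ for all $1\le i\ne j\le N,\ m\in\Z$ are equivalent to $E_{a,b}(x)^{\ell+1}=0$ for all $a\ne b$ in $\Z$. It therefore suffices to prove the same equivalence for restricted $\wh{\fsl_\infty}$-modules.

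For the \emph{only if} direction, suppose $W$ is integrable. For any $a\ne b$ in $\Z$, choose a finite interval $I\subset\Z$ with $a,b\in I$; then $W$ restricts to a restricted integrable $\wh{\fsl_I}$-module of level $\ell$, and the classical theory of integrable representations of finite-type affine Kac-Moody algebras (see \cite{Kac}) forces $\ell\in\N$ and $E_{a,b}(x)^{\ell+1}=0$, since $E_{a,b}$ is a real root vector of $\wh{\fsl_I}$. Letting $I$ exhaust $\Z$ yields the desired vanishing on all of $\wh{\fsl_\infty}$.

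For the \emph{if} direction, fix $a\ne b$ in $\Z$ and $w\in W$, and set $A_n:=E_{a,b}\otimes t^n$. Since $[E_{a,b},E_{a,b}]=0$ the operators $\{A_n\}_{n\in\Z}$ pairwise commute on $W$; combined with restrictedness this implies that each $A_n$ acts as zero on the subspace $\CA w$ for $n$ sufficiently large, where $\CA=\C[A_n:n\in\Z]$. Hence the image $R$ of $\CA$ in $\te{End}(\CA w)$ is a commutative ring and $A(x)=\sum_n A_n x^{-n-1}$ is a well-defined element of $R((x))$ satisfying $A(x)^{\ell+1}=0$. The elementary commutative-algebra fact that a nilpotent formal Laurent series over a commutative ring $R$ must have every coefficient in the nilradical of $R$---proved by reducing modulo each prime ideal of $R$ and using that Laurent series over an integral domain again form an integral domain---then forces each $A_n$ to be nilpotent in $R$, so $A_n^k w=0$ for some $k\ge 0$. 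This yields local nilpotence of every $E_{a,b}\otimes t^n$ on $W$, equivalently $\wh{\fsl_N}(\C_q)$-integrability by the first step.

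The main obstacle is the invocation in the \emph{only if} direction of the classical vanishing $e_\alpha(x)^{\ell+1}=0$ for real root vectors on restricted integrable modules of finite-type affine Kac-Moody algebras. This can be justified either through the description of the maximal submodule of the universal vacuum module (in the spirit of Lemma \ref{lem:charintslmod}) or via a direct $\fsl_2$-triple argument applied to the triples $\{E_{a,b}\otimes t^n,\,E_{b,a}\otimes t^{-n},\,E_{a,a}-E_{b,b}+n\bm\rk\}$; the restriction of $W$ from $\wh{\fsl_\infty}$ to $\wh{\fsl_I}$ trivially preserves the restricted integrable condition of level $\ell$.
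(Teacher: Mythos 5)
There is a genuine gap in your proof, and it is foundational: the claim that ``with this transfer, $W$ becomes a restricted $\wh{\fsl_\infty}$-module of level $\ell$'' is false. The natural linear surjection $\wh{\fsl_\infty}\twoheadrightarrow\wh{\fsl_\infty}[\Z]$ in \eqref{kgamma} is \emph{not} a Lie algebra homomorphism: the bracket on the covariant algebra is the averaged bracket $[\overline{u},\overline{v}]=\sum_{g\in\Z}\overline{[gu,v]}$, which differs from $\overline{[u,v]}$ by the nonzero terms $\sum_{g\ne e}\overline{[gu,v]}$. This is precisely why the paper has to invoke the \emph{quasi}-module machinery (Proposition \ref{exquasimo1}) rather than ordinary modules; a restricted $\wh{\fsl_\infty}[\Z]$-module does not carry an $\wh{\fsl_\infty}$-module structure. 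Consequently the key step in your ``only if'' direction --- ``$W$ restricts to a restricted integrable $\wh{\fsl_I}$-module'' for an arbitrary finite interval $I$ --- has no justification: for $|I|>N$ the elements $\overline{E_{a,b}\otimes t^n}$ with $a,b\in I$ do not span a Lie subalgebra of $\wh{\fsl_\infty}[\Z]$ isomorphic to $\wh{\fsl_I}$, since $\sigma^{N}$ identifies some of them up to scalar and the covariant bracket mixes them. There is also a secondary flaw: the claimed equivalence between the condition on $(E_{i,j}t_1^m)(x)$ with $1\le i\ne j\le N$ and the condition $E_{a,b}(x)^{\ell+1}=0$ for \emph{all} $a\ne b$ in $\Z$ is not correct. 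When $a\equiv b\pmod N$ but $a\ne b$, relation \eqref{slzre1}\,--\,\eqref{slzre2} shows $\overline{E_{a,b}\otimes t^n}$ corresponds to a diagonal element $E_{i,i}t_0^n t_1^{m_1}$ with $m_1\ne 0$; these Cartan-like elements are neither covered by the hypothesis nor required to act locally nilpotently by the definition of integrability (indeed they act semisimply with nonzero eigenvalues on highest weight vectors).

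By contrast, the paper's proof stays entirely inside $\wh{\fsl_N}(\C_q)$: it exhibits explicit Lie subalgebras $\wh{\mathcal A}_{i,j}(m)\cong\wh{\fsl_2}$ and cites the known characterization of integrable restricted $\wh{\fsl_2}$-modules of a given level, which immediately yields both directions. Your $\fsl_2$-triple idea, and the commutative-algebra argument about nilpotent Laurent series in the ``if'' direction, are salvageable and even appealing --- but one must verify that for $a\not\equiv b\pmod N$ the elements $\overline{E_{a,b}\otimes t^n}$, $\overline{E_{b,a}\otimes t^n}$, $\overline{(E_{a,a}-E_{b,b})\otimes t^n}$, $\bm\rk$ span a copy of $\wh{\fsl_2}$ \emph{inside $\wh{\fsl_\infty}[\Z]$ with the covariant bracket} (a direct computation does confirm this), rather than appealing to a nonexistent $\wh{\fsl_\infty}$-module structure. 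As written, the proof does not establish the statement.
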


\begin{proof} Let $\{e, h, f\}$ be the standard basis of $\fsl_2$ such that
\begin{align}\label{standbasis}
[e,f]=h,\quad [h,e]=2e,\quad  [h,f]=-2f.
\end{align}
It was known (see \cite{LP}, \cite{DLM}) that a restricted module $U$ of level $\ell$
for the affine Lie algebra $\wh{\fsl_2}$  is integrable if and only if
 $\ell$ is a nonnegative integer and
\begin{align*}
e(x)^{\ell+1}=0=f(x)^{\ell+1}\   \  \te{on}\ U,
\end{align*}
where  $a(x)=\sum_{n\in \Z} \(a\ot t^n\)x^{-n-1}$ for $a\in \fsl_2$.
For $1\le i <j\le N,\  m\in \Z$, set
\begin{align*}
\wh{\mathcal A}_{i,j}(m)=\te{Span}\{ E_{i,j}t_0^nt_1^m, E_{j,i}t_0^nt_1^{-m}, q^{mn}E_{i,i}t_0^n-E_{j,j}t_0^n
+m\delta_{n,0}\bm\rk_1, \bm\rk_0\mid
n\in \Z\}.
\end{align*}
It is straightforward to show that $\wh{\mathcal A}_{i,j}(m)$ is a subalgebra of $\wh{\fsl_N}(\C_q)$
 isomorphic to $\wh{\fsl_2}$, where an isomorphism is given by
\begin{equation*}\begin{split}
&q^{mn}E_{i,j}t_0^nt_1^m\mapsto e\ot t^n,\quad
E_{j,i}t_0^nt_1^{-m}\mapsto f\ot t^n,\\
&q^{mn}E_{i,i}t_0^n-E_{j,j}t_0^n
+m\delta_{n,0}\bm\rk_1\mapsto h\ot t^n,\quad \bm\rk_0\mapsto \bm\rk
\end{split}\end{equation*}
for $n\in \Z$.
Then $W$ is an integrable $\wh{\mathcal A}_{i,j}(m)$-module if and only if $\ell$ is a nonnegative integer and
\begin{align*}
\(E_{i,j}t_1^m\)(x)^{\ell+1}=\(E_{j,i}t_1^{-m}\)(x)^{\ell+1}=0\  \  \te{on}\ W.
\end{align*}
 On the other hand, from definition $W$ is an integrable $\wh{\fsl_N}(\C_q)$-module if and only if for
 any $1\le i <j\le N,\  m\in \Z$, $W$ is an integrable $\wh{\mathcal A}_{i,j}(m)$-module.
 Then it follows immediately.
\end{proof}

The following is a special case of \cite[Corollary 5.3]{ltw-tri}:

\begin{lemt}\label{lem:special}
Let $V$ be a vertex $\Gamma$-algebra, let $a\in V$
such that $a_{n}a=0$ for $n\ge 0$, and let $\ell$ be a nonnegative
integer. Suppose that $(W,Y_{W})$ is an equivariant quasi $V$-module such that
\[[Y_W(a,x_1),Y_W(a,x_2)]=0.\]
If $(a_{-1})^{\ell+1}{\bf 1}=0$ in $V$, then
\begin{eqnarray}\label{ePnilpotent}
Y_W(a,x)^{\ell+1}=0\  \  \mbox{on }W.
\end{eqnarray}
On the other hand, the converse is also true if $(W,Y_W)$ is faithful.
\end{lemt}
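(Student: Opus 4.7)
The plan is to establish, by induction on $n\ge 1$, the operator identity $Y_W(a,x)^{n}=Y_W\bigl((a_{-1})^{n}{\bf 1},x\bigr)$ on $W$. Once this is in hand, the direct assertion follows by taking $n=\ell+1$ and invoking $(a_{-1})^{\ell+1}{\bf 1}=0$, while the converse (under the faithfulness hypothesis) follows because $Y_W(v,x)=0$ forces $v=0$.

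For the base case $n=2$, the hypothesis $[Y_W(a,x_1),Y_W(a,x_2)]=0$ permits us to take the locality polynomial $p(x_1,x_2)=1$ in the quasi Jacobi identity \eqref{epjacobi} applied to the pair $(a,a)$. The commutator portion then collapses via the usual formal delta-function identity, and taking $\Res_{x_1}$ yields the ordinary iterate formula
\[
Y_W(a,x_0+x_2)Y_W(a,x_2)=Y_W\bigl(Y(a,x_0)a,x_2\bigr).
\]
Since $a_{k}a=0$ for $k\ge 0$, the vector $Y(a,x_0)a=\sum_{k\ge 0}a_{-k-1}a\,x_0^{k}$ lies in $V[[x_0]]$, so the right-hand side admits the specialization $x_0=0$; commutativity of $Y_W(a,\cdot)$ with itself likewise guarantees that the composition on the left is regular on the diagonal $x_1=x_2$. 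Specializing $x_0=0$ therefore yields $Y_W(a,x)^{2}=Y_W(a_{-1}a,x)$.

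For the inductive step, assume $Y_W(a,x)^{n}=Y_W((a_{-1})^{n}{\bf 1},x)$. Then $Y_W(a,x_1)$ commutes with $Y_W((a_{-1})^{n}{\bf 1},x_2)=Y_W(a,x_2)^{n}$ on $W$; moreover, the Borcherds commutator formula together with $a_k a=0$ for $k\ge 0$ gives $a_k\bigl((a_{-1})^{n}{\bf 1}\bigr)=0$ for all $k\ge 0$, whence $Y(a,x_0)(a_{-1})^{n}{\bf 1}\in V[[x_0]]$. Applying the same iterate argument to the pair $(a,(a_{-1})^{n}{\bf 1})$ and specializing $x_0=0$ produces $Y_W(a,x)^{n+1}=Y_W((a_{-1})^{n+1}{\bf 1},x)$, closing the induction.

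The principal point requiring care is the legitimacy of the specialization $x_0=0$ at each stage, which amounts to asking that the formal product $Y_W(a,x_1)Y_W((a_{-1})^{n}{\bf 1},x_2)$ be regular on the diagonal $x_1=x_2$. This is exactly what the commutativity hypothesis provides at the base step, and what the inductive identification $Y_W((a_{-1})^{n}{\bf 1},x)=Y_W(a,x)^{n}$ propagates thereafter. Modulo this, the proof reduces to a direct application of the quasi Jacobi identity with trivial locality polynomial and the Borcherds identity in $V$.
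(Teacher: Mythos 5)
The paper does not give an in‐house proof of this lemma; it simply invokes \cite[Corollary~5.3]{ltw-tri}. Your strategy of proving $Y_W(a,x)^n = Y_W\bigl((a_{-1})^n\mathbf{1},x\bigr)$ by induction and then applying $(a_{-1})^{\ell+1}\mathbf{1}=0$ is the natural one and is presumably the content of that cited result; the inductive step (checking $a_k\bigl((a_{-1})^n\mathbf{1}\bigr)=0$ for $k\ge 0$ and that $Y_W(a,x_1)$ commutes with $Y_W(a,x_2)^n$) and the converse under faithfulness are handled correctly.

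The gap is in the opening move. The assertion that commutativity ``permits us to take the locality polynomial $p(x_1,x_2)=1$'' in the quasi Jacobi identity \eqref{epjacobi} is not justified and is, in general, false. The quasi Jacobi identity is postulated for \emph{some} polynomial $p(x_1,x_2)=\prod_{i}(x_1-\chi(g_i)x_2)$, and the factors with $\chi(g_i)\neq 1$ reflect the genuinely nonlocal (quasi) structure of the module, not a failure of ordinary locality, so the vanishing of $[Y_W(a,x_1),Y_W(a,x_2)]$ does not let you discard them. What the commutativity (together with $a_na=0$ for $n\ge 0$) does allow is the removal of any factor with $\chi(g_i)=1$: using $x_0^{-1}\delta\bigl(\tfrac{x_1-x_2}{x_0}\bigr)(x_1-x_2)=\delta\bigl(\tfrac{x_1-x_2}{x_0}\bigr)$, $x_0^{-1}\delta\bigl(\tfrac{x_2-x_1}{-x_0}\bigr)(x_1-x_2)=\delta\bigl(\tfrac{x_2-x_1}{-x_0}\bigr)$ and $x_2^{-1}\delta\bigl(\tfrac{x_1-x_0}{x_2}\bigr)(x_1-x_2)=x_0\,x_2^{-1}\delta\bigl(\tfrac{x_1-x_0}{x_2}\bigr)$, one sees that a quasi Jacobi identity with $(x_1-x_2)q$ is equivalent, upon dividing by $x_0$, to one with $q$ — but this reduction is a lemma you neither state nor prove, and it gets you only to a $p$ with $p(x_2,x_2)\ne 0$, not to $p=1$. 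With that arranged, the $\Res_{x_1}$ computation actually yields
\[
p(x_0+x_2,x_2)\,Y_W(a,x_0+x_2)Y_W(a,x_2)w \;=\; p(x_0+x_2,x_2)\,Y_W\bigl(Y(a,x_0)a,x_2\bigr)w,
\]
not the bare iterate formula you wrote, and only the observation that $p(x_0+x_2,x_2)$ is invertible in $\C[x_2,x_2^{-1}][[x_0]]$ — because its constant term in $x_0$ is the nonzero $p(x_2,x_2)$ — lets you cancel $p$ and then set $x_0=0$. Tracking this polynomial is precisely the substance of the lemma; declaring $p=1$ skips exactly that.
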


As $L_{\wh{\fsl_\infty}}(\ell,0)$ is a quotient algebra of $V_{\wh{\fsl_\infty}}(\ell,0)$,
in view of Proposition \ref{prop:main1}, $(\Z,\chi_q)$-equivariant
quasi $L_{\wh{\fsl_\infty}}(\ell,0)$-modules are naturally restricted $\wh{\fsl_N}(\C_q)$-modules of level $\ell$.
The following is the first main result of this paper:

\begin{thm}\label{thm:main2}
Let  $\ell$ be a  nonnegative integer. Then  $(\Z,\chi_q)$-equivariant
quasi $L_{\wh{\fsl_\infty}}(\ell,0)$-modules exactly correspond to integrable restricted
$\wh{\fsl_N}(\C_q)$-modules of level $\ell$.
\end{thm}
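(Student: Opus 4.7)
The plan is to combine Propositions \ref{prop:main1} and \ref{prop:charintgmod} with Lemmas \ref{lem:charintslmod} and \ref{lem:special}. By Proposition \ref{prop:main1}, I identify $(\Z,\chi_q)$-equivariant quasi $V_{\wh{\fsl_\infty}}(\ell,0)$-modules with restricted $\wh{\fsl_N}(\C_q)$-modules of level $\ell$. A quasi $V_{\wh{\fsl_\infty}}(\ell,0)$-module $(W,Y_W)$ descends to a quasi $L_{\wh{\fsl_\infty}}(\ell,0)$-module precisely when $Y_W(v,x)=0$ for every $v\in J_{\wh{\fsl_\infty}}(\ell,0)$; by a standard argument using the quasi Jacobi identity to express $Y_W(u_n v,x)$ in terms of $Y_W(u,\cdot)$ and $Y_W(v,\cdot)$, it suffices to check this on a set of ideal generators.

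By Lemma \ref{lem:charintslmod}, $J_{\wh{\fsl_\infty}}(\ell,0)$ is generated as a vertex-algebra ideal by the vectors
\begin{equation*}
v_{i,j,m,n}:=\(E_{mN+i,nN+j}(-1)\)^{\ell+1}\bm{1},\quad 1\le i\ne j\le N,\ m,n\in\Z.
\end{equation*}
Fix such a generator and set $a=E_{mN+i,nN+j}\in\fsl_\infty$. Since $i\ne j$, we have $[a,a]=0$ and $\<a,a\>=\delta_{j,i}\delta_{i,j}=0$, so $a_r a=0$ in $V_{\wh{\fsl_\infty}}(\ell,0)$ for every $r\ge 0$. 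Moreover, by Proposition \ref{prop:main1} the field $Y_W(a,x)$ equals $q^n\(E_{i,j}t_1^{m-n}\)(q^n x)$, and the commutativity relation \eqref{special-commuting}, valid exactly because $i\ne j$, gives $[Y_W(a,x_1),Y_W(a,x_2)]=0$.

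Thus the hypotheses of Lemma \ref{lem:special} are satisfied for $a$. The content of that lemma --- or more precisely the operator identity $Y_W(v_{i,j,m,n},x)=Y_W(a,x)^{\ell+1}$ that its proof establishes under the present commutativity --- tells us that $Y_W(v_{i,j,m,n},x)=0$ if and only if $Y_W(a,x)^{\ell+1}=0$. Translating again via Proposition \ref{prop:main1}, this holds for all ideal generators simultaneously if and only if $\(E_{i,j}t_1^k\)(x)^{\ell+1}=0$ on $W$ for every $1\le i\ne j\le N$ and every $k\in\Z$, which by Proposition \ref{prop:charintgmod} is exactly the condition that $W$ be an integrable restricted $\wh{\fsl_N}(\C_q)$-module of level $\ell$. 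The main technical subtlety is extracting the two-sided equivalence from Lemma \ref{lem:special} rather than only its explicitly stated one-way implication; this is legitimate precisely because $Y_W(a,x)$ commutes with itself, making the product $Y_W(a,x)^{\ell+1}$ well defined and equal to $Y_W(v_{i,j,m,n},x)$ without any normal-ordering correction.
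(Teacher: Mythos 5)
Your proposal follows the same blueprint as the paper's proof: combine Proposition \ref{prop:main1} with Propositions \ref{prop:charintgmod}, Lemma \ref{lem:charintslmod}, and Lemma \ref{lem:special} so that the integrability condition $\(E_{i,j}t_1^m\)(x)^{\ell+1}=0$ is matched against the vanishing of the ideal generators $\(E_{mN+i,nN+j}(-1)\)^{\ell+1}\bm{1}$ under $Y_W$. The one difference is how you invoke the converse half of Lemma \ref{lem:special}: you assert the operator identity $Y_W\big(\(a_{-1}\)^{\ell+1}\bm{1},x\big)=Y_W(a,x)^{\ell+1}$ and derive both implications from it, whereas the paper obtains the converse by first observing that $W$ is a \emph{faithful} equivariant quasi module for $V_{\wh{\fsl_\infty}}(\ell,0)/\ker Y_W$ and then applying the lemma's stated converse (which is conditioned on faithfulness) in that quotient. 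Your shortcut is almost certainly what underpins the lemma, and it cleanly explains why faithfulness is needed for the converse but not the forward direction; but since Lemma \ref{lem:special} is cited from \cite{ltw-tri} and the present paper never records the identity explicitly, you are leaning on the content of an external proof rather than the lemma's statement. If you want the argument to be self-contained relative to what this paper actually states, you should mirror the paper and pass to $V_{\wh{\fsl_\infty}}(\ell,0)/\ker Y_W$, on which $W$ is faithful, so that the stated converse applies verbatim.
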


\begin{proof}  Let $(W,Y_W)$ be a $(\Z,\chi_q)$-equivariant
quasi $L_{\wh{\fsl_\infty}}(\ell,0)$-module.
In view of Proposition \ref{prop:main1}, $W$ is a restricted
$\wh{\fsl_N}(\C_q)$-module of level $\ell$ with
\begin{align*}
\(E_{i,j}t_1^m\)(x)=Y_W(E_{Nm+i,j},x)\quad\te{for}\ 1\le i\ne j\le N,\, m\in \Z.
\end{align*}
Let $1\le i\ne j\le N,\ m,n\in \Z$.
Noticing that $Nm+i\ne Nn+j$, we have
\begin{align*}
[E_{Nm+i,Nn+j}(x_1),E_{Nm+i,Nn+j}(x_2)]=0
\end{align*}
in $\wh{\fsl_\infty}$, which implies
\begin{align}\label{main21}
\(E_{Nm+i,Nn+j}\)_k \(E_{Nm+i,Nn+j}\)=0\quad\te{for}\ k\ge 0
\end{align}
in $V_{\wh{\fsl_\infty}}(\ell,0)$ and hence in $L_{\wh{\fsl_\infty}}(\ell,0)$.
From Lemma \ref{lem:charintslmod} we also have
\begin{align*}
E_{Nm+i,j}(-1)^{\ell+1}\bm 1=0\   \   \mbox{ in }L_{\wh{\fsl_\infty}}(\ell,0).
\end{align*}
On the other hand, as $i\ne j$, by (\ref{special-commuting}) we have
\begin{align*}
[Y_W(E_{Nm+i,j},x_1),Y_W(E_{Nm+i,j},x_2)]
=[\(E_{i,j}t_1^m\)(x_1),\(E_{i,j}t_1^m\)(x_2)]=0.
\end{align*}
Then by Lemma \ref{lem:special} we get
$$Y_W(E_{Nm+i,j},x)^{\ell+1}=0\  \ \te{on}\ W.$$
Thus
\[\(E_{i,j}t_1^m\)(x)^{\ell+1}=Y_W(E_{Nm+i,j},x)^{\ell+1}=0\  \  \te{on}\ W.\]
In view of Proposition \ref{prop:charintgmod}, $W$ is an integrable $\wh{\fsl_N}(\C_q)$-module.

On the other hand, let $W$ be an integrable restricted $\wh{\fsl_N}(\C_q)$-module of level $\ell$.
By Propositions \ref{prop:main1} and  \ref{prop:charintgmod},
there is  a  $(\Z,\chi_q)$-equivariant
quasi $V_{\wh{\fsl_\infty}}(\ell,0)$-module structure $Y_W(\cdot,x)$ on $W$ such that
$$Y_W(E_{Nm+i,Nn+j},x)=q^{n}\(E_{i,j}t_1^m\)(q^nx)\   \   \te{ for }1\le i\ne j\le N,\ m,n\in \Z.$$
Then we have
\begin{align}\label{main24}
Y_W(E_{Nm+i,Nn+j},x)^{\ell+1}=q^{n(\ell+1)}\(E_{i,j}t_1^m\)(q^nx)^{\ell+1}=0
\end{align}
for $1\le i\ne j\le N$ and $m,n\in \Z$, noticing that
\begin{equation}\begin{split}\label{main25}
&[Y_W(E_{Nm+i,Nn+j},x_1),Y_W(E_{Nm+i,Nn+j},x_1)]\\=\, &
[q^{ n}\(E_{i,j}t_1^m\)(q^nx_1),
q^{ n}\(E_{i,j}t_1^m\)(q^nx_2)]=0.
\end{split}\end{equation}
Note that $W$ is naturally a faithful equivariant quasi module for $V_{\wh{\fsl_\infty}}(\ell,0)/\ker Y_W$.
With \eqref{main21}, \eqref{main25} and \eqref{main24}, it follows from
 Lemma \ref{lem:special} that
\[E_{Nm+i,Nn+j}(-1)^{\ell+1}\bm 1=0\quad \te{in}\ V_{\wh{\fsl_\infty}}(\ell,0)/\ker Y_W\]
for all $1\le i\ne j\le N$ and $m,n\in \Z$.
Then by Lemma \ref{lem:charintslmod} $Y_{W}(\cdot,x)$ reduces to an equivariant quasi module structure
$\overline{Y_{W}}(\cdot,x)$ for $L_{\wh{\fsl_\infty}}(\ell,0)$.
Consequently,  $W$ is an equivariant
quasi $L_{\wh{\fsl_\infty}}(\ell,0)$-module with the required property.
\end{proof}

\section{Classification of  irreducible equivariant quasi $L_{\wh{\fsl_\infty}}(\ell,0)$-modules}
The main goal of this section is to classify irreducible
$\N$-graded $(\Z,\chi_q)$-equivariant quasi $L_{\wh{\fsl_\infty}}(\ell,0)$-modules for any nonnegative integer $\ell$, or equivalently
irreducible $\N$-graded integrable   $\wh{\fsl_N}(\C_q)$-modules of level $\ell$.
Irreducible $\N$-graded $(\Z,\chi_q)$-equivariant
quasi modules for the tensor product vertex $\Z$-algebra $L_{\wh{\fsl_\infty}}(\ell_1,0)$ $\ot \cdots\ot
L_{\wh{\fsl_\infty}}(\ell_d,0)$ are also determined.

\subsection{Integrable highest weight $\wh{\fsl_N}(\C_q)$-modules}
In this subsection, we introduce a notion of highest weight $\wh{\fsl_N}(\C_q)$-module
and classify all irreducible $\N$-graded integrable highest weight $\wh{\fsl_N}(\C_q)$-modules.

Note that $\fsl_N(\C[t_0,t_0^{-1}])\oplus \C\bm\rk_0\oplus \C\bm\rd_0$ is a subalgebra of $\wt{\fsl_N}(\C_q)$,
which is isomorphic to the affine Kac-Moody algebra $\wt{\fsl_N}$ of type  $A_{N-1}^{(1)}$.
Here, we identify $\wt{\fsl_N}$ with this subalgebra of $\wt{\fsl_N}(\C_q)$:
\begin{align}
\wt{\fsl_N}=\fsl_N(\C[t_0,t_0^{-1}])\oplus \C\bm\rk_0\oplus \C\bm\rd_0\subset \wt{\fsl_N}(\C_q).
\end{align}

Set
\begin{align}
\fh=\sum_{i=1}^{N-1} \C(E_{i,i}-E_{i+1,i+1}),
\end{align}
a Cartan subalgebra of $\fsl_N(=\fsl_N(\C))$, and set
\begin{align}
H=\fh\oplus \C\bm\rk_0\oplus \C\bm\rd_0,
\end{align}
a Cartan subalgebra of the affine Kac-Moody algebra $\wt{\fsl_N}$. %Furthermore, set
%\begin{align}
%\mathcal H=H\oplus \C\bm\rk_1\oplus \C\bm\rd_1
%=\fh\oplus \C\bm\rk_0\oplus \C\bm\rk_1\oplus \C\bm\rd_0\oplus \C\bm\rd_1,
%\end{align}
%which is a Cartan subalgebra of $\wt{\fsl_N}(\C_q)$.

We shall use the following triangular decomposition of $\wh{\fsl_N}(\C_q)$:
\begin{align*}
\wh{\fsl_N}(\C_q)
=\wh{\fsl_N}(\C_q)^+\oplus \wh{\CH}   \oplus \wh{\fsl_N}(\C_q)^-,
\end{align*}
where
\begin{align}\label{slNq-pm}
\wh{\fsl_N}(\C_q)^\pm
=\sum_{\pm m_0\in \Z_+,m_1\in \Z} \C E_{i,j}t_0^{m_0}t_1^{m_1}+ \sum_{\pm (j-i)\in \Z_+,m\in \Z}\C E_{i,j}t_1^{m},
\end{align}
and
\begin{align}
\wh{\CH}=\sum_{1\le i\le N,\ n\in \Z}
\C h_{i,n}  +\C\bm\rk_1,
\end{align}
where
\begin{equation}\begin{split}
\label{defhin}h_{i,n}&=(E_{i,i}-E_{i+1,i+1})t_1^n\   \   \  \quad\te{for}\  1\le i\le N-1,\ n\in \Z,\\
h_{N,n}&=-q^{n}E_{1,1}t_1^n+E_{N,N}t_1^n\quad\te{for}\  n\in \Z\backslash \{0\},\\
h_{N,0}&=\bm\rk_0-(E_{1,1}-E_{N,N}).
\end{split}\end{equation}

To emphasize the dependence on the positive integer $N$, we shall also alternatively denote the subalgebra
$\wh{\CH}$ by $\wh{\CH}_N$.
Note that $\{h_{i,n}\ |\  1\le i\le N,\ n\in \Z\}\cup \{\bm\rk_1\}$
is a basis of $\wh{\CH}$.

\begin{dfnt}\label{defhwm}
{\em An $\wh{\fsl_N}(\C_q)$-module $W$ is called a {\em highest weight module with highest weight
$\lambda\in \wh{\CH}^*$}
 if there exists a nonzero vector $v_\lambda\in W$, called a {\em highest weight vector,}  such that
 $W=\U(\wh{\fsl_N}(\C_q)) v_\lambda$, $\wh{\fsl_N}(\C_q)^+ v_\lambda=0$,  and
\begin{align}\label{eta=0} h v_\lambda=\lambda(h)v_\lambda\quad \te{for}\ h\in \wh{\CH}.
\end{align}
An {\em $\N$-graded highest weight $\wh{\fsl_N}(\C_q)$-module with highest weight
$\lambda\in \wh{\CH}^*$} is a highest weight $\wh{\fsl_N}(\C_q)$-module $W$  equipped with an $\N$-grading
$W=\oplus_{n\in \N}W(n)$ such that the highest weight vector $v_\lambda\in W(0)$.}
\end{dfnt}

\begin{remt}
{\em Recall that $H=\fh+\C \bm\rk_0+\C\bm\rd_0$ is a Cartan subalgebra of $\wt{\fsl_{N}}$.
We see that an $\N$-graded highest weight $\wh{\fsl_N}(\C_q)$-module $W$
is an  $H$-weight $\wh{\fsl_N}(\C_q)\oplus \C\bm\rd_0$-module
with $\bm\rd_0$ acting trivially on the highest weight vector.
Then $W$ is a highest weight  $\wh{\fsl_N}(\C_q)\oplus \C\bm\rd_0$-module defined in \cite{ER}
with respect to the usual partial order on $H^{*}$.}
\end{remt}

Note that the condition \eqref{eta=0} implies that $\lambda(\bm\rk_1)=0$.
Now, let $\lambda\in \wh{\CH}^*$ such that $\lambda(\bm\rk_1)=0$.
Let $\C v_\lambda$ be the one-dimensional $(\wh{\fsl_N}(\C_q)^+ +\wh{\CH})$-module with
\[\wh{\fsl_N}(\C_q)^+ v_\lambda=0\quad \te{and}
\quad h v_\lambda=\lambda(h) v_\lambda\quad \te{for}\ h\in \wh{\CH}.\]
Then form an induced $\wh{\fsl_N}(\C_q)$-module
\begin{align}
V(\lambda)=\U(\wh{\fsl_N}(\C_q))\otimes_{\U(\wh{\fsl_N}(\C_q)^+ + \wh{\CH})}\C v_\lambda.
\end{align}
It is clear that $V(\lambda)$ is an $(\fh+\C\bm\rk_0+\C\bm\rk_1)$-weight module. On the other hand,
define $\deg v_\lambda=0$ to make $V(\lambda)$
an $\N$-graded highest weight $\wh{\fsl_N}(\C_q)$-module.
It follows that $V(\lambda)$ has a unique maximal $H$-weight submodule,
which we denote by $J_{\lambda}$.  Set
\begin{align}
L(\lambda)=V(\lambda)/J_{\lambda},
\end{align}
which is a  graded irreducible  $\wh{\fsl_N}(\C_q)$-module.
One can show that $L(\lambda)$ is also an irreducible $\wh{\fsl_N}(\C_q)$-module.
Furthermore, it is straightforward to see that for any $\lambda,\mu\in  \wh{\CH}^*$, $L(\lambda)\simeq L(\mu)$ as an $\N$-graded
$\wh{\fsl_N}(\C_q)$-module if and only if $\lambda=\mu$.

\begin{remt}
{\em Note that for any $\lambda\in \wh{\CH}^*$ such that $\lambda(\bm\rk_1)=0$,
 every $\N$-graded highest weight $\wh{\fsl_N}(\C_q)$-module with highest weight $\lambda$
 is naturally a homomorphism image of $V(\lambda)$ and any such irreducible  module
is isomorphic to $L(\lambda)$. %On the other hand, every (nonzero) $\N$-graded
% highest weight $\wh{\fsl_N}(\C_q)$-module has a unique maximal graded submodule.
}
\end{remt}

Next, we  identify integrable  $\wh{\fsl_N}(\C_q)$-modules among
the irreducible highest weight modules $L(\lambda)$.
We first define a class of linear functionals on $\wh{\CH}$.
Let $k$ be a positive integer. For any pair
\begin{align}\label{deflambdac}
(\bm{\lambda},\bm{c})\in (H^*)^k\times (\C^\times)^k
\end{align}
with $\bm\lambda=(\lambda_1,\dots,\lambda_k),\  \bm{c}= (c_1,\dots,c_k)$,
we define a linear functional $\eta_{\bm{\lambda},\bm{c}}$ on $\wh{\CH}$
by $\eta_{\bm{\lambda},\bm{c}}(\bm\rk_1)=0$
and
\begin{align}\label{def-eta}
\eta_{\bm{\lambda},\bm{c}}(h_{j,n})=\sum_{i=1}^k \lambda_i(h_{j,0}) c_i^n\   \quad\te{for}\ 1\le j\le N,\ n\in \Z.
\end{align}
In particular, for any $\lambda\in H^*,\ c\in \C^{\times}$, we have a linear functional $\eta_{\lambda,c}$. Then
$$\eta_{\bm{\lambda},\bm{c}}=\eta_{\lambda_1,c_1}+\cdots +\eta_{\lambda_k,c_k}.$$

Set
\begin{align}
P_+=\{\lambda\in H^*\mid \lambda(h_{i,0})\in\N,\ \lambda(\bm\rd_0)=0\quad  \te{for }1\le i\le N\},
\end{align}
the set of dominant integral weights of  $\wt{\fsl_N}$.

The following is a classification result (cf. \cite{ER,CT}):

\begin{prpt}\label{prop:intehwm}
Let $(\bm{\lambda},\bm{c})\in (P_+)^k\times (\C^\times)^k$ with $k$ a positive integer.
Then the irreducible $\N$-graded highest
weight $\wh{\fsl_N}(\C_q)$-module $L(\eta_{\bm{\lambda},\bm{c}})$ is integrable. Furthermore, every irreducible
integrable $\N$-graded highest weight $\wh{\fsl_N}(\C_q)$-module is isomorphic to a module of this form.
\end{prpt}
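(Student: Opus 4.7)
The plan is to prove both directions, essentially adapting the classification argument of Rao \cite{ER} to the present $\N$-graded setting.

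\smallskip
\noindent\emph{Existence.} To show that $L(\eta_{\bm\lambda,\bm c})$ is integrable, I aim to exhibit an integrable $\wh{\fsl_N}(\C_q)$-module that carries a nonzero vector of weight $\eta_{\bm\lambda,\bm c}$ annihilated by $\wh{\fsl_N}(\C_q)^+$; then $L(\eta_{\bm\lambda,\bm c})$ is an irreducible subquotient and integrability is inherited. For each pair $(\lambda_i,c_i)\in P_+\times\C^\times$, I first construct an integrable module $M(\lambda_i,c_i)$ with a highest weight vector of weight $\eta_{\lambda_i,c_i}$. The building blocks arise from the fermionic Fock realization of Gao \cite{G3} combined with the Chari--Pressley evaluation construction \cite{CP} (the same ingredients that reappear in Section~5 to produce $\mathcal F_N^{\bm a}$); $c_i$ plays the role of an evaluation parameter on the $t_1$ direction. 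The tensor product $M(\lambda_1,c_1)\otimes\cdots\otimes M(\lambda_k,c_k)$, with its diagonal action, is then integrable and contains a vector annihilated by $\wh{\fsl_N}(\C_q)^+$ whose weight on $\wh{\CH}$ is, by linearity of \eqref{def-eta} in its arguments, exactly $\eta_{\lambda_1,c_1}+\cdots+\eta_{\lambda_k,c_k}=\eta_{\bm\lambda,\bm c}$.

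\smallskip
\noindent\emph{Classification.} Let $W=L(\eta)$ be an irreducible $\N$-graded integrable highest weight module. Restricting to the affine Kac--Moody subalgebra $\wt{\fsl_N}\subset\wt{\fsl_N}(\C_q)$ and applying the standard $\fsl_2$-integrability argument yields $\eta(\bm\rk_0)=\ell\in\N$ together with $\eta(h_{j,0})\in\N$ for $1\le j\le N-1$, so the ``finite part'' of $\eta$ comes from a dominant integral weight of $\wt{\fsl_N}$. To pin down all the values $\eta(h_{j,n})$, I use the family of $\wh{\fsl_2}$-subalgebras $\wh{\mathcal A}_{i,j}(m)$ isolated in the proof of Proposition~\ref{prop:charintgmod}: each acts integrably on $W$, and the resulting Kac-type discreteness constraints on the action of the Cartan modes $q^{mn}E_{i,i}t_0^n-E_{j,j}t_0^n+m\delta_{n,0}\bm\rk_1$ force the two-sided formal series $\sum_{n\in\Z}\eta(h_{j,n})z^n$ to expand a rational function of $z$ with only finitely many, nonzero, complex poles. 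Extracting the distinct poles $c_1,\dots,c_k\in\C^\times$ and computing the corresponding residues produces $\lambda_1,\dots,\lambda_k\in P_+$ with $\eta(h_{j,n})=\sum_{i=1}^k\lambda_i(h_{j,0})c_i^n$, i.e., $\eta=\eta_{\bm\lambda,\bm c}$.

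\smallskip
\noindent\emph{Main obstacle.} The serious step is the rationality/pole-extraction argument in the classification direction: one must show that the spectral points $c_1,\dots,c_k$ are common to all indices $j$ and that the partial-fraction data produce dominant integral weights. This requires exploiting the $\wh{\fsl_2}$-integrability simultaneously across all shifts $m\in\Z$ and all root directions $(i,j)$, together with cyclicity of $W$ from the highest weight vector, and is the heart of Rao's argument in \cite{ER}. Constructing the building blocks $M(\lambda_i,c_i)$ for non-fundamental $\lambda_i$ also requires some care, since a single basic Fock module realizes only a fundamental weight and general $\lambda_i\in P_+$ must be produced via a further tensor product of basic modules sharing the same evaluation parameter $c_i$.
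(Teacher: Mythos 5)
The existence half of your proposal matches the paper's intent: the paper cites Rao \cite{ER} and points to Section~6, where integrability of $L(\eta_{\bm\lambda,\bm c})$ is obtained exactly from the Gao fermionic construction combined with Chari--Pressley evaluation parameters, after using Remark~\ref{replacefun} to reduce to fundamental $\lambda_i$'s. So your sketch there is essentially the paper's route.

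The classification half has a genuine gap: you reach for the wrong family of $\wh{\fsl_2}$-subalgebras. The subalgebras $\wh{\mathcal A}_{i,j}(m)$ from the proof of Proposition~\ref{prop:charintgmod} loop in the $t_0$-direction and have level $\ell$ (their central element is $\bm\rk_0$); their Cartan modes $q^{mn}E_{i,i}t_0^n-E_{j,j}t_0^n+m\delta_{n,0}\bm\rk_1$ lie in $\wh{\fsl_N}(\C_q)^{\pm}$ for $n\neq 0$, so they act nilpotently, not as scalars, on the highest weight vector; only the $n=0$ mode lands in $\wh{\CH}$, and it gives a single number $\chi(E_{i,i}-E_{j,j})$, the same for every $m$ once one knows $\chi(\bm\rk_1)=0$. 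Consequently these subalgebras extract \emph{no} information about the dependence of $\chi(h_{i,n})$ on the $t_1$-loop index $n$, and the ``rationality/pole-extraction'' step you identify as the crux cannot be carried out from them. The paper instead isolates the $t_1$-loop, level-zero subalgebras $\mathfrak a_i$ (for $1\le i\le N-1$, spanned by $E_{i,i+1}t_1^n, E_{i+1,i}t_1^n, h_{i,n},\bm\rk_1$, with $\bm\rk_1$ as central element acting by $0$, and an analogous $\mathfrak a_N$), so that $\U(\mathfrak a_i)v$ is an integrable loop-highest weight $\wh{\fsl_2}$-module of level zero with loop-highest weight $\chi_i(n)=\chi(h_{i,n})$. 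The technical content is then packaged in Lemma~\ref{lem:affint}, proved just beforehand, which combines the Jakobsen--Kac irreducibility criterion for Verma-type loop modules, the Billig--Zhao exp-polynomial estimates, and the Chari--Pressley classification of finite-dimensional irreducible $\fsl_2[t,t^{-1}]$-modules to conclude $\chi_i(m)=\sum_j p_{i,j}b_{i,j}^m$ with $p_{i,j}\in\Z_+$ and $b_{i,j}\in\C^\times$. Finally, the step you flag as a ``serious obstacle'' — making the spectral points common to all $i$ — is actually immediate: take the union of all $b_{i,j}$'s, pad with zero coefficients, and then dominant integrality of the resulting $\lambda_j$ falls out directly from $p_{i,j}\in\Z_+$.
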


\begin{remt}
{\rm Note that the first assertion of  Proposition \ref{prop:intehwm} was proved
in \cite{ER}, while the second assertion was proved for Lie algebra $\wh{\fsl_N}(\C_q)\oplus \C\bm\rd_0$
under the assumption that  the $H$-weight subspaces of $W$ are finite-dimensional.}
\end{remt}

To prove Proposition \ref{prop:intehwm}, we shall need a result on level zero integrable modules for
affine Lie algebra $\wh{\fsl_2}$. Let $\{e, h, f\}$ be the standard basis of $\fsl_2$ and
let $\chi:\Z\rightarrow \C$ be any function.
An  $\wh{\fsl_2}$-module $W$ of level zero is  called a {\em loop-highest weight module
with loop-highest weight $\chi$} if there is a
nonzero vector $w$ in $W$ such that
\begin{align}
W=\U(\wh{\fsl_2})w,\quad
\(e\ot t^m\)w=0,\quad  \(h\ot t^m\)w=\chi(m)w\quad \te{for}\ m\in \Z.
\end{align}
The notion ``loop-highest weight module" (see \cite{CG} for example) is designated to distinguish
this from the standard highest weight module (see \cite{Kac}).

\begin{lemt}\label{lem:affint}
If there exists an integrable loop-highest weight $\wh{\fsl_2}$-module $W$
with a nonzero loop-highest weight $\chi$,
then there exist finitely many positive integers $p_1,\dots,p_k$ and
nonzero complex numbers $b_1,\dots,b_k$ such that
\[\chi(m)=\sum_{j=1}^k p_j\,b_j^m\quad \te{for}\ m\in \Z.\]
\end{lemt}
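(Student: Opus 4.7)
\medskip

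\noindent\textbf{Plan.} My approach is to reduce to the irreducible case and then apply a Drinfeld-polynomial style argument in the spirit of Chari and Pressley \cite{CP} for level-zero integrable modules of the loop algebra $\fsl_2\ot\C[t,t^{-1}]$.

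Any loop-highest weight module has a unique maximal proper submodule not meeting $\C w$ (by a Zorn-style argument), so I may pass to its irreducible quotient, which retains the loop-highest weight $\chi$, and assume $W$ is irreducible. I first show $N:=\chi(0)$ is a positive integer. The standard $\fsl_2$-triple $\{e\ot 1,\,h\ot 1,\,f\ot 1\}\subset\wh{\fsl_2}$ makes $w$ a highest weight vector of weight $\chi(0)$, and the local nilpotence of $f\ot 1$ forces $\chi(0)\in\N$. If $\chi(0)=0$, then the $\fsl_2$-triples $\{e\ot t^m,\,h\ot 1,\,f\ot t^{-m}\}$ (which all share the Cartan element $h\ot 1$ and therefore give $w$ the same weight $0$) yield $(f\ot t^{-m})w=0$ for every $m$; then $(h\ot t^m)w=[e\ot t^m,\,f\ot 1]w=0$, contradicting $\chi\ne 0$. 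Hence $N\in\Z_+$, and the same triples show $(f\ot t^m)^{N+1}w=0$ for every $m\in\Z$.

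Next, I introduce the formal exponential series
\[\Lambda^{\pm}(u)\;=\;\exp\Big(-\sum_{n\ge 1}\frac{h\ot t^{\pm n}}{n}\,u^n\Big),\]
which act on $w$ as $\Lambda^{\pm}(u)w=\psi_{\pm}(u)\,w$ with $\psi_{\pm}(u)\in\C[[u]]$, $\psi_{\pm}(0)=1$. The heart of the proof is the Drinfeld-polynomial claim: both $\psi_{+}(u)$ and $\psi_{-}(u)$ are polynomials of degree at most $N$. Granting this, I factor $\psi_{+}(u)=\prod_{j=1}^{k}(1-b_j u)^{p_j}$ with $b_j\in\C^{\times}$ distinct and $p_j\in\Z_+$; taking logarithmic derivatives and matching coefficients yields $\chi(n)=\sum_{j=1}^{k}p_j b_j^{n}$ for $n\ge 1$. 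A parallel analysis of $\psi_{-}(u)$, combined with the matching of Drinfeld data via the involution $b_j\leftrightarrow b_j^{-1}$ which is enforced by irreducibility of $W$, gives $\chi(-n)=\sum_{j}p_j b_j^{-n}$ for $n\ge 1$. Together with the identity $\chi(0)=\sum_j p_j = N$ (from comparing degrees), this produces the asserted formula $\chi(m)=\sum_{j=1}^{k}p_j b_j^{m}$ for every $m\in\Z$.

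The principal obstacle is the polynomiality of $\psi_{\pm}(u)$. This is the level-zero analogue of the Chari-Pressley classification of integrable highest weight modules for the loop algebra, and requires a careful inductive argument using the relations $[h\ot t^n,\,f\ot 1]=-2\,f\ot t^n$, $[f\ot t^m,\,f\ot t^n]=0$, and the nilpotency $(f\ot t^m)^{N+1}w=0$. Concretely, one must show that the coefficient of $u^r$ in $\psi_{\pm}(u)$ annihilates $w$ whenever $r>N$; this is typically done by unfolding the exponential, iteratively applying $e\ot t^n$ to the vanishing vectors $(f\ot t^m)^{N+1}w$, and extracting linear recurrences of length $N$ among the values $\chi(n)$.
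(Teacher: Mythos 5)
Your outline takes a genuinely different route from the paper. The paper outsources the heavy lifting to three references: Jakobsen--Kac (\cite{JK}, Proposition 6.2) to conclude that $\chi$ must be an exp-polynomial (since otherwise the Verma loop module would be irreducible and hence equal to $W$, contradicting integrability); Billig--Zhao \cite{BZ} to conclude that the irreducible quotient then has finite-dimensional $h$-weight spaces, hence is finite-dimensional (since the $h\ot 1$-weights are bounded by $\pm\chi(0)$); and Chari--Pressley \cite[Proposition 2.1(iii)]{CP} to read off the formula $\chi(m)=\sum_j p_j b_j^m$ from the classification of finite-dimensional irreducible $\fsl_2[t,t^{-1}]$-modules as tensor products of evaluation modules. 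You instead attempt a self-contained Drinfeld-polynomial computation. The preliminary reductions are fine: passing to the irreducible quotient, seeing that $\chi(0)\in\Z_+$ via the triples $\{e\ot t^m,h\ot 1,f\ot t^{-m}\}$ (on a level-zero module these really are $\fsl_2$-triples), ruling out $\chi(0)=0$, and getting $(f\ot t^m)^{N+1}w=0$ --- all of this is correct and is essentially implicit in the paper's argument too.

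The genuine gap is that the two steps you yourself flag as ``the heart of the proof'' and ``the principal obstacle'' are not carried out: (i) the claim that $\psi_{\pm}(u)$ are \emph{polynomials} of degree at most $N$, and (ii) the matching of the data from $\psi_+$ and $\psi_-$ via $b_j\leftrightarrow b_j^{-1}$ with $p_j$ preserved. Step (i) is precisely the level-zero analogue of the Chari--Pressley ``Drinfeld polynomial'' computation; it requires an actual induction, not a gesture toward ``unfolding the exponential and iteratively applying $e\ot t^n$''. Step (ii) is even less developed: you invoke ``irreducibility of $W$'' to force the matching, but that invocation is exactly where the Chari--Pressley structure theorem (that an irreducible finite-dimensional $\fsl_2[t,t^{-1}]$-module is a tensor product of evaluation modules) does the work. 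As written, the proof proposal reduces the lemma to two unproven claims whose proofs together constitute the entire content of the lemma, so the proposal cannot be regarded as a complete argument. If your goal is a more elementary proof than the paper's chain of citations, you would need to (a) establish, by an explicit computation in $\U(\wh{\fsl_2})$, that the coefficient of $u^r$ in $\Lambda^+(u)$ (and likewise $\Lambda^-(u)$) kills $w$ for $r>N$, using $(f\ot t^m)^{N+1}w=0$ and the relations $[h\ot t^n,\,e\ot t^k]=2e\ot t^{n+k}$; and (b) prove directly that the $\psi_+$ and $\psi_-$ data coincide after inverting the roots --- for instance by showing $W$ is finite-dimensional and then exhibiting it as a tensor product of evaluation modules, at which point you are back to the Chari--Pressley argument the paper cites.
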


\begin{proof} Recall from  \cite{BZ} that $\chi$ is called an exp-polynomial if there exist finitely many
polynomials $f_1,\dots,f_k$ and
nonzero complex numbers $b_1,\dots,b_k$  such that $\chi(m)=\sum_{j=1}^k f_j(m)\,b_j^m$ for all $m\in \Z$.
A result of Kac-Jocobsen (see \cite[Proposition 6.2]{JK}) states
that if $\chi$ is not an exp-polynomial, then the Verma type loop-highest weight
$\wh{\fsl_2}$-module with loop-highest weight $\chi$ is irreducible.
On the other hand, every Verma type loop-highest weight
$\wh{\fsl_2}$-module is not integrable.
As $W$ is assumed to be integrable,  the Verma type loop-highest weight
$\wh{\fsl_2}$-module with loop-highest weight $\chi$ must be reducible and hence
$\chi$ is an exp-polynomial.

Let $w$ be a (nonzero) loop-highest weight vector in $W$.
Then $h w=nw$  for some  nonnegative integer $n$ as $w$ is a highest weight vector
 in the integrable $\fsl_2$-module $W$.
Noticing that $W=\U(\C f\otimes \C[t,t^{-1}])w$, we have
\begin{align*}
W=\oplus_{m\in \Z, m\le n} W_m\quad \mbox{ with }W_n=\C w,
\end{align*}
where $W_m=\{w\in W\mid h w=m w\}$.
Since $W$ is an integrable $\fsl_2$-module, we have $W=\oplus_{m\in \Z, |m|\le n} W_m$.
From \cite{BZ},  all the $h$-weight subspaces of the (unique) irreducible quotient module $\overline{W}$ of $W$ are
 finite-dimensional. Consequently, $\overline{W}$ is finite-dimensional.
Then this lemma follows from \cite[Proposition 2.1 (iii)]{CP}.
\end{proof}

{\bf Proof of Proposition \ref{prop:intehwm}:}
The first assertion was proved in \cite{ER}.
(This also follows from an explicit realization of $L(\eta_{\bm{\lambda},\bm{c}})$ in Section 6.)
Now, let $W$ be an irreducible integrable $\N$-graded highest weight $\wh{\fsl_N}(\C_q)$-module
with highest weight $\chi$ and highest weight vector $v$.
For $1\le i\le N-1$, set
\[ \mathfrak{a}_i=\te{Span}\{E_{i,i+1}t_1^n,\   E_{i+1,i}t_1^n, \  h_{i,n},\  \bm\rk_1\mid n\in \Z\},\]
and set
\[ \mathfrak{a}_N=\te{Span}\{q^nE_{N,1}t_0t_1^n,\   E_{1,N}t_0^{-1}t_1^n,\   h_{N,n},\  \bm\rk_1\mid n\in \Z\}.\]
It can be readily seen that ${\mathfrak a}_i$ for $1\le i\le N$ are subalgebras of $\wh{\fsl_N}(\C_q)$,
which are isomorphic to $\wh{\fsl_2}$.
Then  for every $i$,
$\U(\mathfrak a_i)v$ is an integrable loop-highest weight $\wh{\fsl_2}$-module with loop-highest weight $\chi_i$, where
$$\chi_i(n)=\chi(h_{i,n})\   \   \   \  \mbox{ for }n\in \Z.$$
By Lemma \ref{lem:affint}, for each $1\le i\le N$, there exist finitely many
nonzero complex numbers $b_{i,1},\dots, b_{i,k_i}$ and nonnegative integers $p_{i,1},\dots,p_{i,k_i}$ such that
$$\chi_i(m)=\sum_{j=1}^{k_i}p_{i,j}b_{i,j}^{m}\   \   \   \mbox{ for }m\in \Z.$$
Let $c_{1},\dots, c_{r}$ be all the distinct elements of $\{ b_{i,j}\ |\ 1\le i\le N, \ 1\le j\le k_i\}$.
Then
$$\chi_i(m)=\sum_{j=1}^{r}q_{i,j}c_{j}^{m}\   \   \   \mbox{ for }1\le i\le N,\  m\in \Z,$$
where $q_{i,j}$ are nonnegative integers which are independent of $m$.
Define $r$ linear functionals $\lambda_1,\dots,\lambda_r$ on $H$ by
$\lambda_i(h_{j,0})=q_{i,j}$ for $1\le i,j\le N$ and $\lambda_i(\bm{\rd}_0)=0$.
We have $\lambda_i\in P_{+}$ and $\chi=\eta_{{\bf \lambda},{\bf c}}$.
Then $W$ is isomorphic to $L(\eta_{\bm{\lambda},\bm{c}})$, as desired.

\begin{remt}\label{replacefun}
{\rm Recall that a fundamental dominant weight is an element $\lambda\in P_+$ such that
$\lambda(\bm\rk_0)=1$.
Let $(\bm\lambda,\bm{c})\in P_+^k\times (\C^{\times})^k$ with $\lambda_i\ne 0$ for $1\le i\le k$.
Set $\ell_i=\lambda_i(\bm\rk_0)\in  \Z_{+}$.  Furthermore, set $\ell=\ell_1+\cdots +\ell_k$.
%We also set
%\begin{align}\label{def-ell(r)}
%\ell_{(0)}=0,\  \  \ell_{(r)}=\ell_{1}+\cdots +\ell_r \  \   \   \mbox{ for }1\le r\le k.
%\end{align}
%In particular, $\ell=\ell_{(k)}$.
Note that
$\eta_{\bm\lambda,\bm{c}}+\eta_{\bm\lambda',\bm{c}'}=\eta_{(\bm\lambda,\bm\lambda'),(\bm{c},\bm{c}')}$ and
$\eta_{\bm\lambda+\bm\mu,\bm{c}}=\eta_{(\bm\lambda,\bm\mu),(\bm{c},\bm{c})}$.
It then follows that there exists  a pair $(\bm{\mu},\bm{d})\in P_+^{\ell}\times (\C^{\times})^{\ell}$
such that $\eta_{\bm\lambda,\bm{c}}=\eta_{\bm\mu,\bm{d}}$,
where $\bm{\mu}=(\mu_1,\dots,\mu_\ell)$ with $\mu_{1},\dots,\mu_{\ell}$ fundamental dominant weights.  }
%\[\lambda_r=\Lambda_{\ell_{(r-1)}+1}+\cdots+\Lambda_{\ell_{(r)}}\]
%for $1\le r\le k$.
%For $\ell_{(r-1)}+1\le i\le  \ell_{(r)}$, set $C_i=c_r$.
%Then we have $\eta_{\bm\lambda,\bm{c}}=\eta_{\bm\Lambda,\bm{C}}$ (recalling (\ref{def-eta})), where
%$(\bm\Lambda,\bm{C})=((\Lambda_1,\dots,\Lambda_\ell),(C_1,\dots,C_\ell))$.}
\end{remt}

It is known (see \cite{DLM}) that every irreducible integrable restricted module
for any (untwisted) affine Kac-Moody Lie algebra is a highest weight module.
For the extended affine Lie algebra $\wh{\frak{sl}_N}(\C_q)$, we have the following result:

\begin{prpt}\label{prop:irrintres}
Let $W=\oplus_{n\in \N}W(n)$ be a nonzero $\N$-graded integrable %restricted
$\wh{\frak{sl}_N}(\C_q)$-module on which $\bm\rk_0$ and $\bm\rk_1$ act as  scalars $\ell_0$ and $\ell_1$, respectively.
Then $\ell_0$ is a nonnegative integer, $\ell_1=0$, and
$\Omega_W$ is a nonzero $\wh{\CH}$-submodule of $W$, where
\begin{align}
 \Omega_W=\{w\in W\mid \wh{\frak{sl}_N}(\C_q)^+w=0\}.
 \end{align}
 Furthermore, every  irreducible $\N$-graded integrable (restricted)  $\wh{\frak{sl}_N}(\C_q)$-module
  is a highest weight module.
\end{prpt}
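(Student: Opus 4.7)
The proof plan is to handle the four assertions sequentially.

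First, restrictedness follows immediately from the $\N$-grading: for $w \in W(k)$, the element $E_{i,j}t_0^{m_0}t_1^{m_1}$ (of degree $-m_0$) maps $w$ into $W(k-m_0) = 0$ once $m_0 > k$.  For $\ell_0 \in \N$, I extend the action to $\wh{\fsl_N}(\C_q) \oplus \C\bm\rd_0$ via the grading and restrict to the affine Kac--Moody subalgebra $\wt{\fsl_N} = \fsl_N(\C[t_0,t_0^{-1}]) \oplus \C\bm\rk_0 \oplus \C\bm\rd_0$ of type $A_{N-1}^{(1)}$.  Integrability of $\fsl_N$ forces $\fh$ to act semisimply with integer eigenvalues, so together with the $\bm\rd_0$-grading, $W$ becomes a restricted integrable weight module for $\wt{\fsl_N}$.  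By the standard decomposition theorem for such modules (see \cite{DLM}), $W$ is a direct sum of integrable highest weight $\wt{\fsl_N}$-modules, each at non-negative integer level, yielding $\ell_0 \in \N$.

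Next, to prove $\Omega_W \ne 0$: the lowering operators in $\wt{\fsl_N}^-$ strictly increase the $\N$-degree, so at least one $\wt{\fsl_N}$-highest weight vector must lie in $W(0)$; hence $T := W(0) \cap \ker(\wt{\fsl_N}^+)$ is nonzero.  What remains is to produce $v \in T$ also annihilated by the extra positive generators $\{E_{i,j}t_1^m : j > i,\, m \ne 0\}$ of $\wh{\fsl_N}(\C_q)^+$.  The plan is to work within a finite-dimensional joint $H$-weight subspace of a single irreducible $\wt{\fsl_N}$-summand (finite-dimensional by the weight multiplicity theorem for integrable highest weight affine modules) and iteratively extract a common kernel vector using local nilpotence and a total ordering of the extra generators refining the $\fh$-weight ordering.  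This existence step is the main obstacle: the extra generators do not preserve $T$, so an Engel-type argument must be combined with careful bookkeeping of how application of $E_{i,j}t_1^m$ shifts weights across $\wt{\fsl_N}$-summands.  The $\wh{\CH}$-stability of $\Omega_W$ is a straightforward check from \eqref{commutator1} that $[\wh{\CH}, \wh{\fsl_N}(\C_q)^+] \subset \wh{\fsl_N}(\C_q)^+$.

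Once $v \in \Omega_W$ is obtained, I may assume $v$ is an $\fh$-weight vector with weight $\mu$.  For $\ell_1 = 0$: the triple $\{E_{1,2}t_1^m,\, E_{2,1}t_1^{-m},\, (E_{1,1}-E_{2,2})+m\bm\rk_1\}$ spans an ordinary copy of $\fsl_2$ (the $\bm\rk_0$-term in \eqref{commutator1} vanishes when $m_0=n_0=0$).  Integrability gives local nilpotence of the raising and lowering operators, and $E_{1,2}t_1^m \cdot v = 0$ since $E_{1,2}t_1^m \in \wh{\fsl_N}(\C_q)^+$.  Hence $v$ generates a finite-dimensional irreducible $\fsl_2$-module with highest weight $\mu(E_{1,1}-E_{2,2})+m\ell_1 \in \N$.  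This must be non-negative for every $m \in \Z$, which forces $\ell_1 = 0$.

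For the final assertion, assume $W$ is irreducible.  With $\ell_1 = 0$ established, all Heisenberg-type brackets in $\wh{\CH}$ (which carry coefficients proportional to $\bm\rk_1$) vanish by \eqref{commutator1}, so $\wh{\CH}$ acts on $\Omega_W$ as an abelian Lie algebra preserving $\fh$-weight spaces.  On a finite-dimensional joint $H$-weight subspace of $\Omega_W$, Lie's theorem yields a common $\wh{\CH}$-eigenvector $v_\lambda$, which satisfies Definition \ref{defhwm}.  By irreducibility, $\U(\wh{\fsl_N}(\C_q))v_\lambda = W$, so $W$ is a highest weight module, completing the proof.
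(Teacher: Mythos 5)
Your proposal has a genuine gap at the step that matters most: producing a nonzero vector in $\Omega_W$. You correctly flag the difficulty --- the degree-zero raising operators $E_{i,j}t_1^m$ (for $i<j$, $m\neq 0$) do not preserve $\ker(\wt{\fsl_N}^+)$ nor any individual irreducible $\wt{\fsl_N}$-summand --- but the remedy you sketch (intersecting kernels iteratively inside a finite-dimensional $H$-weight slice of a single $\wt{\fsl_N}$-summand, using a ``total ordering refining the $\fh$-weight ordering'') does not close it: the operators $E_{i,j}t_1^m$ are infinite in number, they all shift the $\fh$-weight by the \emph{same} positive root $\epsilon_i-\epsilon_j$ (so such an ordering cannot distinguish them), and they move you out of the chosen summand, so there is no finite exhaustion. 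The paper abandons $\wt{\fsl_N}$-decompositions entirely and instead works in the minimum nonzero degree piece $W(n)$, which \emph{is} stable under every degree-zero operator. On $W(n)$ all $E_{i,j}t_0^{m_0}t_1^{m_1}$ with $m_0\ge 1$ act as zero, and combining this with Proposition~\ref{prop:charintgmod} yields $E_{i,j}^{\ell_0+1}=0$ on $W(n)$; hence $W(n)$ is a sum of finite-dimensional irreducible $\fsl_N$-modules $L(\lambda)$ with $\lambda(\theta^\vee)\le\ell_0$. Since only finitely many such $\lambda$ exist, $W(n)$ has a maximal $\fh$-weight $\lambda_0$ with $W(n)_{\lambda_0}\neq 0$ and $W(n)_{\lambda_0+\alpha}=0$ for every positive root $\alpha$, and then $\wh{\fsl_N}(\C_q)^+$ kills $W(n)_{\lambda_0}$ outright --- no kernel-intersection argument is required.

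Because your $\ell_1=0$ step uses the joint highest weight vector $v$ whose existence was not established, it inherits the gap; the paper instead derives $\ell_1=0$ separately from integrability of the $\fsl_2$-triples $\{E_{i,j}t_1^m,\ E_{j,i}t_1^{-m},\ E_{i,i}-E_{j,j}+m\bm\rk_1\}$. Two smaller slips: you assume $W(0)\neq 0$ where you should take the minimal nonzero degree, and the degree-zero part $\fn^-\subset\wt{\fsl_N}^-$ does not ``strictly increase the $\N$-degree.'' Your $\ell_0\in\N$ step and the final passage to a highest weight module are essentially sound, though for the latter the paper's route is cleaner: irreducibility of $W$ forces $\Omega_W$ to be an irreducible module for the abelian Lie algebra $\wh{\CH}/\C\bm\rk_1$, hence one-dimensional (the $\fh$-weight spaces of $W(n)$ being finite-dimensional), without any appeal to Lie's theorem.
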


\begin{proof} For the first assertion, by Proposition \ref{prop:charintgmod}, $\ell_0$ is a nonnegative integer.
We now prove $\ell_1=0$. Fix two integers $i,j$ with $1\le i<j\le N$.
Let $m\in \Z$. Note that the correspondence
\begin{align*}
e\mapsto E_{i,j}t_1^m, \   \   f\mapsto E_{j,i}t_1^{-m},\ \  h\mapsto E_{i,i}-E_{j,j}+m\bm\rk_1\end{align*}
gives rise to a Lie algebra embedding of $\fsl_2$ into  $\wh{\frak{sl}_N}(\C_q)$.
Under this embedding, $W$ becomes an $\fsl_2$-module on which $e$ and $f$ act locally nilpotently.
Then $W$ is an integrable $\fsl_2$-module and hence it is
a direct sum of finite-dimensional irreducible $\fsl_2$-modules. Let $v$ be a (nonzero)
highest weight vector of weight $k$. As
\[hv=(E_{i,i}-E_{j,j}+m\bm\rk_1)v= (k+m\ell_1) v,\]
 $k+m\ell_1$ must be a nonnegative integer. Thus
$k+m\ell_1\in \N$ for all $m\in \Z$, which implies $\ell_1=0$.

Now that $\bm\rk_1$ acts trivially, $W$ is an integrable module for the loop algebra $\fsl_N(\C[t_1,t_1^{-1}])$.
As $W$ is an $\N$-graded module, there exists $n\in \N$ such that $W(n)\ne 0$ and
$W(m)=0$ for all $m<n$. Then
\begin{align}\label{lowest-weight-subspace}
(E_{i,j}t_0^{m_0}t_1^{m_1})W(n)\subset W(n-m_0)=0
\end{align}
for all $1\le i,j\le N,\ m_0\ge 1,\ m_1\in \Z$.
Notice that $W(n)$ is an $\fsl_N(\C[t_1,t_1^{-1}])$-submodule of $W$.
As $W$ is an integrable restricted $\wh{\frak{sl}_N}(\C_q)$-module,
by Proposition \ref{prop:charintgmod} we have  $(E_{i,j}t_1^{m})(z)^{\ell_{0}+1}=0$ for $1\le i\ne j\le N,\ m\in \Z$.
This together with (\ref{lowest-weight-subspace}) implies  that $(E_{i,j}t_1^{m})^{\ell_{0}+1}=0$ on $W(n)$ for all $m\in \Z$.
In particular, we have $E_{i,j}^{\ell+1}=0$ on $W(n)$ for all $1\le i\ne j\le N$.
Then $W(n)$ is a direct sum of some finite-dimensional irreducible $\fsl_N$-modules $L(\lambda)$
where $\lambda$ are dominant integral weights satisfying the condition $\lambda(\theta^{\vee})\le \ell$
(with $\theta$ denoting the highest positive root).
As there are only finitely many such dominant integral weights, there exists  $\lambda\in \fh^*$ such that
$W(n)_{\lambda}\ne 0$ and  $W(n)_{\lambda+\alpha}=0$ for any positive root $\alpha$ of $\fsl_N$.
Then $(\wh{\frak{sl}_N}(\C_q)^+)W(n)_{\lambda}=0$.
Thus we have $W(n)_{\lambda}\subset \Omega_W$, proving that $\Omega_W\ne 0$.
As $[\wh{\frak{sl}_N}(\C_q)^+, \wh{\CH}]\subset \wh{\frak{sl}_N}(\C_q)^+$,
it follows that $\Omega_W$ is an $\wh{\CH}$-submodule of $W$.

Now, assume that $W$ is an irreducible $\N$-graded integrable $\wh{\frak{sl}_N}(\C_q)$-module.
Then $\bm\rk_0$ and $\bm\rk_1$ act as scalars $\ell_0$ and $\ell_1$ on $W$, respectively.
As $\bm\rk_1$ acts trivially on $W$ by the first assertion,
$\Omega_W$ becomes  a module for the abelian Lie algebra $\wh{\CH}/\C\bm\rk_1$.
Furthermore, the irreducibility of $W$ implies that $\Omega_W$ is an  irreducible $\wh{\CH}$-module.
Consequently, $\Omega_W=\C v_\lambda$ for some $v_\lambda\in \Omega_{W}\cap W_{\lambda}$.
It then follows that $W$ is a  highest weight $\wh{\frak{sl}_N}(\C_q)$-module
with highest weight vector $v_\lambda$.
\end{proof}

Combining Propositions  \ref{prop:irrintres} and \ref{prop:intehwm} we immediately have:

\begin{cort}\label{main-slNCq}
Every irreducible $\N$-graded integrable %restricted
 $\wh{\fsl_N}(\C_q)$-module of level $\ell\in \N$ is isomorphic to
 %an irreducible integrable highest weight $\wh{\fsl_N}(\C_q)$-module
 $L(\eta_{\bm{\lambda},\bm{c}})$ for some $\bm{\lambda}=(\lambda_1,\dots,\lambda_k)\in (P_+)^k$
and $\bm{c}=(c_1,\dots,c_k)\in (\C^\times)^k$ with $k$ a positive integer such that $\sum_{i=1}^k\lambda_i(\bm\rk_0)=\ell$.
\end{cort}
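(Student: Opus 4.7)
The plan is to combine the two structural results immediately preceding the corollary, namely Propositions \ref{prop:irrintres} and \ref{prop:intehwm}, which between them absorb essentially all the content.

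First, I would take an irreducible $\N$-graded integrable $\wh{\fsl_N}(\C_q)$-module $W$ of level $\ell\in \N$ and invoke Proposition \ref{prop:irrintres}: since $W$ is irreducible, $\N$-graded, and integrable, that proposition tells me $W$ is a highest weight module, and the $\N$-grading forces the highest weight vector to lie in the degree-zero piece, making $W$ an $\N$-graded highest weight module in the sense of Definition \ref{defhwm}. Proposition \ref{prop:intehwm} then applies to yield a positive integer $k$ together with a pair $(\bm\lambda,\bm c)=((\lambda_1,\ldots,\lambda_k),(c_1,\ldots,c_k))\in P_+^k\times(\C^\times)^k$ such that $W\simeq L(\eta_{\bm\lambda,\bm c})$.

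The only remaining task is to verify the level constraint $\sum_{j=1}^{k}\lambda_j(\bm\rk_0)=\ell$. Here I would use the identity $E_{1,1}-E_{N,N}=h_{1,0}+h_{2,0}+\cdots+h_{N-1,0}$ in $\fh$ together with the definition $h_{N,0}=\bm\rk_0-(E_{1,1}-E_{N,N})$ from \eqref{defhin}, which yields $\bm\rk_0=\sum_{i=1}^{N}h_{i,0}$ as elements of $\wh{\CH}$. Since $\bm\rk_0$ acts as $\ell$ on $W$ while each $h_{i,0}$ acts on the highest weight vector by $\eta_{\bm\lambda,\bm c}(h_{i,0})=\sum_{j=1}^{k}\lambda_j(h_{i,0})$ by \eqref{def-eta}, summing over $i$ gives
\begin{equation*}
\ell=\sum_{i=1}^{N}\sum_{j=1}^{k}\lambda_j(h_{i,0})=\sum_{j=1}^{k}\lambda_j(\bm\rk_0),
\end{equation*}
as required. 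I do not anticipate any genuine obstacle: the real work has already been done in Proposition \ref{prop:irrintres} (forcing $\bm\rk_1$ to act as zero and producing a highest weight vector from an $\N$-grading argument) and in Proposition \ref{prop:intehwm} (classifying irreducible integrable highest weights via Kac--Jacobsen exp-polynomial loop-highest weights for $\wh{\fsl_2}$-copies inside $\wh{\fsl_N}(\C_q)$); the corollary itself is a brief bookkeeping step plus the elementary Cartan identity above.
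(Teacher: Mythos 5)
Your proof is correct and follows the paper's intended route exactly: the paper presents this corollary with the one-line justification "Combining Propositions \ref{prop:irrintres} and \ref{prop:intehwm} we immediately have," and your proposal is precisely that combination, supplemented by the elementary Cartan identity $\bm\rk_0=\sum_{i=1}^{N}h_{i,0}$ to pin down the level constraint, which is the right way to make the bookkeeping explicit. The only slight imprecision is the remark that the $\N$-grading "forces the highest weight vector to lie in the degree-zero piece": what Proposition \ref{prop:irrintres} actually produces is a highest weight vector in the lowest nonzero graded piece $W(n_0)$, and one reindexes the grading (or simply notes that the corollary only asserts a module isomorphism) to match Definition \ref{defhwm}; this is cosmetic and does not affect the argument.
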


Combining Corollary \ref{main-slNCq} with Remark \ref{rem:gradedver} and Theorem \ref{thm:main2},
we immediately have the  main result of this section:

\begin{thm}\label{thm:main3}
For any $\ell\in \N$, irreducible $\N$-graded $(\Z,\chi_q)$-equivariant
quasi $L_{\wh{\fsl_\infty}}(\ell,0)$-modules up to isomorphism are exactly  the irreducible integrable
$\N$-graded highest weight $\wh{\fsl_N}(\C_q)$-modules $L(\eta_{\bm{\lambda},\bm{c}})$ for
 $\bm{\lambda}=(\lambda_1,\dots,\lambda_k)\in (P_+)^k$
and $\bm{c}=(c_1,\dots,c_k)\in (\C^\times)^k$ with $k\in \Z_{+}$ such that $\sum_{i=1}^k\lambda_i(\bm\rk_0)=\ell$.
\end{thm}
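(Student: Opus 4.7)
The plan is to deduce Theorem \ref{thm:main3} directly by combining the three main results already established in the paper: the category isomorphism of Theorem \ref{thm:main2}, the graded version recorded in Remark \ref{rem:gradedver}, and the classification in Corollary \ref{main-slNCq}. No new ingredient is required; the proof is essentially a matter of transporting structure across the canonical correspondence and checking that irreducibility, the $\N$-grading, and the level are preserved in both directions.

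First, I would start with an irreducible $\N$-graded $(\Z,\chi_q)$-equivariant quasi $L_{\wh{\fsl_\infty}}(\ell,0)$-module $(W,Y_W)$. Since $L_{\wh{\fsl_\infty}}(\ell,0)$ is a quotient of $V_{\wh{\fsl_\infty}}(\ell,0)$, Theorem \ref{thm:main2} (together with Proposition \ref{prop:main1}) shows that $W$ is an integrable restricted $\wh{\fsl_N}(\C_q)$-module of level $\ell$. The $\N$-grading on $W$ as an equivariant quasi module corresponds, via Remark \ref{rem:gradedver}, to an $\N$-grading on $W$ as an $\wh{\fsl_N}(\C_q)$-module compatible with the degree-$n$ decomposition \eqref{gradingong}-\eqref{grading-0}. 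Moreover, the correspondence of Proposition \ref{prop:main1} is an isomorphism of categories, so the irreducibility of $W$ as an $\N$-graded equivariant quasi module translates exactly to irreducibility as an $\N$-graded integrable $\wh{\fsl_N}(\C_q)$-module. Applying Corollary \ref{main-slNCq} then yields $W\simeq L(\eta_{\bm{\lambda},\bm{c}})$ for some $\bm{\lambda}\in P_+^k$, $\bm{c}\in (\C^\times)^k$ with $\sum_i \lambda_i(\bm\rk_0)=\ell$.

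For the converse direction, I would start with $L(\eta_{\bm{\lambda},\bm{c}})$ with $\bm\lambda,\bm c$ as in the statement. By Proposition \ref{prop:intehwm} this is an integrable, irreducible, $\N$-graded, restricted $\wh{\fsl_N}(\C_q)$-module of level $\ell$. Again by Theorem \ref{thm:main2} and Remark \ref{rem:gradedver}, it carries a canonical structure of $\N$-graded $(\Z,\chi_q)$-equivariant quasi $L_{\wh{\fsl_\infty}}(\ell,0)$-module, and irreducibility is preserved because the correspondence is a category isomorphism (so submodules on one side correspond bijectively to submodules on the other). Finally, I would note that the non-isomorphism of distinct $L(\eta_{\bm{\lambda},\bm{c}})$'s as $\N$-graded $\wh{\fsl_N}(\C_q)$-modules, already observed after the definition of $L(\lambda)$ in Section 4.1, transfers verbatim to non-isomorphism as equivariant quasi modules.

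The only potentially subtle point — and thus the main item to pin down carefully — is to verify that the grading transfer is really as stated: namely, that an $\N$-grading on $W$ as a quasi module corresponds to an $\N$-grading as an $\wh{\fsl_N}(\C_q)$-module under which a highest weight vector sits in degree zero. This is precisely the content of Remark \ref{rem:gradedver} combined with the explicit formulas in Proposition \ref{prop:main1}, since the grading operator $L(0)$ on the quasi-module side matches $-\bm\rd_0$ on the Lie algebra side up to a scalar shift. Once this matching of gradings is confirmed, the theorem follows immediately.
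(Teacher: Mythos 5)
Your proposal takes exactly the same route as the paper: the paper's proof consists of the single sentence "Combining Corollary \ref{main-slNCq} with Remark \ref{rem:gradedver} and Theorem \ref{thm:main2}, we immediately have the main result," and you invoke precisely these three ingredients, merely spelling out why the category isomorphism preserves irreducibility, the $\N$-grading, and non-isomorphism between distinct $L(\eta_{\bm\lambda,\bm c})$'s. The added detail is harmless and correct, so your argument is a faithful expansion of the paper's own proof.
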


Furthermore, we have:

\begin{lemt}\label{lem:isoclass}
Let $(\bm{\lambda},\bm{c})\in (H^{*})^r\times (\C^\times)^r$ with
$r$ a positive integer such that $\eta_{\bm{\lambda},\bm{c}}\ne 0$.
Then there exists
\[(\bm{\mu},\bm{d})=((\mu_1,\dots,\mu_s),(d_1,\dots,d_s))\in (H^{*})^s\times (\C^\times)^s\] for some positive integer $s$,
satisfying the condition
\begin{align}\label{lacond}
\mu_1,\dots,\mu_s\ \mbox{are nonzero} \  \mbox{ and }   \ d_1,\dots,d_s\ \mbox{are distinct},
\end{align}
such that $\eta_{\bm{\lambda},\bm{c}}=\eta_{\bm{\mu},\bm{d}}$.
Furthermore,  assume $(\bm{\mu}',\bm{d}')\in (H^{*})^{s'}\times (\C^\times)^{s'}$ is another  pair
satisfying the same condition above.
Then $\eta_{\bm{\mu},\bm{d}}=\eta_{\bm{\mu}',\bm{d}'}$
if and only if  $s=s'$ and there is a permutation $\tau\in S_s$ such that
 \[\mu_{\tau(i)}=\mu_i'\quad \te{and}\quad d_{\tau(i)}=d_i'\quad\te{for}\ i=1,\dots,s.\]
\end{lemt}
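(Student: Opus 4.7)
The plan is to establish existence by a direct grouping argument and uniqueness via the classical linear independence of the exponential sequences $\{n\mapsto c^n\}_{n\in \Z}$ for distinct $c\in \C^\times$.

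For existence, I first note that \eqref{def-eta} is additive in the first argument, so $\eta_{\bm\lambda,\bm c}=\sum_{i=1}^r \eta_{\lambda_i,c_i}$. I partition $\{1,\dots,r\}$ according to the value of $c_i$ and replace each block by a single pair whose $\lambda$-component is the sum of the $\lambda_i$ over the block; this leaves $\eta_{\bm\lambda,\bm c}$ unchanged while making the $c$-entries pairwise distinct. I then discard each remaining entry whose $\lambda$-component vanishes (it contributes $0$ to $\eta$). The hypothesis $\eta_{\bm\lambda,\bm c}\ne 0$ ensures the resulting list $(\bm\mu,\bm d)$ is nonempty, and by construction it satisfies \eqref{lacond}.

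For uniqueness, I assume $(\bm\mu,\bm d)$ and $(\bm\mu',\bm d')$ both satisfy \eqref{lacond} with $\eta_{\bm\mu,\bm d}=\eta_{\bm\mu',\bm d'}$. Let $e_1,\dots,e_t$ enumerate the distinct elements of $\{d_1,\dots,d_s\}\cup\{d_1',\dots,d_{s'}'\}$. Evaluating the equality on $h_{j,n}$ via \eqref{def-eta} for each $1\le j\le N$ and $n\in \Z$ gives
\begin{equation*}
\sum_{k=1}^{t}\alpha_{k,j}\, e_k^{n}=0 \quad \text{for all } n\in \Z,
\end{equation*}
where $\alpha_{k,j}$ equals $\mu_i(h_{j,0})$ when $e_k=d_i$ (else $0$) minus the analogous contribution from $(\bm\mu',\bm d')$; this is well-defined because of the distinctness assumption on each side. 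Specializing $n$ to $t$ consecutive integers produces a nonsingular Vandermonde system in the $\alpha_{k,j}$, forcing $\alpha_{k,j}=0$ for all $k,j$. If some $e_k$ occurs only in $\bm d$, then $\mu_i(h_{j,0})=0$ for every $1\le j\le N$, so $\mu_i$ vanishes on $\fh+\C\bm\rk_0=\mathrm{span}\{h_{1,0},\dots,h_{N,0}\}$, contradicting \eqref{lacond}; symmetrically for $\bm d'$. Hence $\bm d$ and $\bm d'$ enumerate the same multiset, yielding a unique $\tau\in S_s$ with $d_{\tau(i)}=d_i'$, and the vanishing of the $\alpha$'s gives $\mu_{\tau(i)}(h_{j,0})=\mu_i'(h_{j,0})$ for all $j$, i.e.\ $\mu_{\tau(i)}=\mu_i'$ on $\fh+\C\bm\rk_0$.

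The crux is the Vandermonde-based linear independence of $\{n\mapsto e_k^n\}_{n\in \Z}$ for pairwise distinct nonzero $e_k\in \C$, which is classical and presents no real difficulty; the rest is bookkeeping. The one delicate point is that $\eta_{\mu,d}$ is insensitive to $\mu(\bm\rd_0)$, so "nonzero" in \eqref{lacond} must be read as nonzero on $\fh+\C\bm\rk_0$ (equivalently, $\eta_{\mu_i,d_i}\ne 0$); in the intended applications $\mu_i\in P_+$, where $\mu_i(\bm\rd_0)=0$ automatically, so the two notions coincide.
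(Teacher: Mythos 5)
Your proof is correct and takes essentially the same route as the paper: the same block-summing/discarding construction for existence, and for uniqueness the same reliance on the linear independence of the sequences $n\mapsto e^n$ over distinct $e\in\C^\times$ (you make this concrete via a Vandermonde system, while the paper simply asserts the impossibility of the resulting relation). Your closing remark is a genuinely useful observation the paper glosses over: since the $h_{j,0}$ for $1\le j\le N$ span only $\fh\oplus\C\bm\rk_0$, the functional $\eta_{\mu,d}$ is blind to $\mu(\bm\rd_0)$, so both the nonvanishing hypothesis in \eqref{lacond} and the conclusion $\mu_{\tau(i)}=\mu_i'$ must be read modulo the $\bm\rd_0$-coordinate — harmless in the paper's applications, where $\mu_i\in P_+$ forces $\mu_i(\bm\rd_0)=0$.
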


\begin{proof}  Write $\bm{\lambda}=(\lambda_1,\dots,\lambda_r)\in (H^{*})^r$
and $\bm{c}=(c_1,\dots,c_r)\in (\C^{\times})^r$. Let $c'_1,\dots,c'_k$  be all the distinct numbers
in $\{c_1,\dots,c_r\}$. Define $\bm{\lambda}'=(\lambda'_1,\dots,\lambda'_k)$ with
$\lambda'_j=\sum_{i, c_i=c'_j}\lambda_i\in H^{*}$ for $1\le j\le k$. Then
$\eta_{\bm{\lambda},\bm{c}}=\eta_{\bm{\lambda}',\bm{c}'}$. As $\eta_{\bm{\lambda},\bm{c}}\ne 0$,
we have $\lambda'_j\ne 0$ for some $j$.
Let $\mu_1,\dots,\mu_s$ be all the nonzero elements in the list $\lambda_{1}',\dots,\lambda'_k$ and let
$d_1,\dots,d_s$ be the corresponding nonzero numbers in the list $c'_1,\dots,c'_k$. Then we have
$\eta_{\bm{\lambda},\bm{c}}=\eta_{\bm{\mu},\bm{d}}$ with $\bm{\mu}=(\mu_1,\dots,\mu_s)$ and
$\bm{d}=(d_1,\dots,d_s)$, as desired.

Note that for any $(\bm{\lambda},\bm{c})\in (H^{*})^r\times (\C^\times)^r$ with $c_1,\dots,c_r$ distinct,
$\eta_{\bm{\lambda},\bm{c}}=0$ if and only if $\lambda_{i}=0$ for all $i$. Assume  $\eta_{\bm{\mu},\bm{d}}=\eta_{\bm{\mu}',\bm{d}'}$.
Set $$I=\{ 1\le i\le s\ |\  d_i\ne d'_j\  \mbox{ for all }1\le j\le s'\}.$$
If $I\ne \emptyset$, the equality $\eta_{\bm{\mu},\bm{d}}=\eta_{\bm{\mu}',\bm{d}'}$ implies
$$\sum_{i\in I}\eta_{\mu_i,d_i}+\sum_{j=1}^{s'}\eta_{\nu_j,d_j'}  =0$$
for some $\nu_j\in \fh^{*}$. This is impossible  as $d_i$ with $i\in I$ and
$d_1',\dots,d'_{s'}$ are all distinct and $\mu_i\ne 0$ for $i\in I$. Thus $I=\emptyset$, i.e.,
 $\{d_1,\dots,d_{s}\}\subset \{d_1',\dots,d'_{s'}\}$. Symmetrically, we have
 $\{d'_1,\dots,d'_{s'}\}\subset \{d_1,\dots,d_{s}\}$. Thus $\{d_1,\dots,d_{s}\}=\{d_1',\dots,d'_{s'}\}$.
  In particular, we have $s=s'$ and there exists a permutation $\tau$ such that $d_{\tau(i)}=d_i'$ for $i=1,\dots,s$.
 Furthermore, we have
 $$\eta_{\bm{\mu},\bm{d}}=\sum_{i=1}^s\eta_{\mu_{i}, d_{i}}=\sum_{i=1}^s\eta_{\mu_{\tau(i)}, d_{\tau(i)}}
 =\sum_{i=1}^s\eta_{\mu_{\tau(i)}, d'_{i}}\   \  \mbox{and }   \   \
 \eta_{\bm{\mu}',\bm{d}'}=\sum_{i=1}^s\eta_{\mu'_i,d'_{i}},$$
 which imply  $\mu_{\tau(i)}=\mu_i'$ for $i=1,\dots,s$.
 \end{proof}

%\subsection{Irreducible  equivariant quasi $L_{\wh{\fsl_\infty}}(\ell,0)$-modules}

\subsection{Irreducible equivariant quasi $L_{\wh{\fsl_\infty}}(\ell_1,0)\ot \cdots\ot
L_{\wh{\fsl_\infty}}(\ell_d,0)$-modules}
Let $d$ be a positive integer and let $\ell_1,\dots,\ell_d, N_1,\dots,N_d$ be positive integers with
$N_i\ge 2$ for $1\le i\le d$. Set $\bm\ell=(\ell_1,\dots,\ell_d)$.  From Example \ref{tensorex},
we have a simple $\Z$-graded vertex $\Z$-algebra
\begin{align}
L_{\wh{\fsl_\infty}}(\bm\ell,0):=(L_{\wh{\fsl_\infty}}(\ell_1,0),\rho_{N_1})\ot \cdots \ot (L_{\wh{\fsl_\infty}}(\ell_r,0),\rho_{N_d}),
\end{align}
recalling that $(L_{\wh{\fsl_\infty}}(\ell_i,0),\rho_{N_i})$ denotes the simple vertex $\Z$-algebra
$L_{\wh{\fsl_\infty}}(\ell_i,0)$ with the $\Z$-action on $\fsl_\infty$ given by $\rho_{N_i}$.
For $1\le i\le d$, we view $(L_{\wh{\fsl_\infty}}(\ell_i,0),\rho_{N_i})$ as
 a vertex $\Z$-subalgebra of $L_{\wh{\fsl_\infty}}(\bm\ell,0)$ in the obvious way.
%Recall the tensor product equivariant quasi $V$-modules constructed in Example \ref{tensorex}.
The main goal of  this section is to prove the following (cf. \cite[Theorem 4.7.4]{FHL}):

\begin{prpt}\label{prop:tensordec}
Let $W_i$ be an irreducible  $(\Z,\chi_q)$-equivariant quasi $(L_{\wh{\fsl_\infty}}(\ell_i,0),\rho_{N_i})$-module
for  $1\le i\le d$. Then the tensor product equivariant quasi $L_{\wh{\fsl_\infty}}(\bm\ell,0)$-module
$W_1\ot \cdots\ot W_d$ is  irreducible.
On the other hand, every irreducible $\N$-graded $(\Z,\chi_q)$-equivariant quasi
$L_{\wh{\fsl_\infty}}(\bm\ell,0)$-module is isomorphic to such a tensor product module.
\end{prpt}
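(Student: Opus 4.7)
The plan is to reduce both assertions to statements about $\wh{\fsl_{N_i}}(\C_q)$-modules via Theorem 3.12. For the first assertion, each $W_i$ becomes an irreducible integrable restricted $\wh{\fsl_{N_i}}(\C_q)$-module of level $\ell_i$, and since the vertex subalgebras $(L_{\wh{\fsl_\infty}}(\ell_i,0),\rho_{N_i})$ of $L_{\wh{\fsl_\infty}}(\bm\ell,0)$ pairwise commute, the induced actions of $\wh{\fsl_{N_i}}(\C_q)$ on $W_1\ot\cdots\ot W_d$ commute. I would then run a Frenkel-Huang-Lepowsky-type density argument (compare Theorem 4.7.4 of \cite{FHL}): given a nonzero submodule $U$, take $u\in U$ of the form $u=\sum_{j=1}^r u_j^{(1)}\ot\cdots\ot u_j^{(d)}$ of minimal length $r$ with $\{u_j^{(1)}\}$ linearly independent, and separate the first tensor slot by acting with a suitable component of a vertex operator from the first tensor factor, invoking Schur's lemma for the irreducible $W_1$ to force $r=1$. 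Iterating across slots and using the irreducibility of each $W_i$ in turn yields $U=W_1\ot\cdots\ot W_d$.

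For the second assertion, let $W$ be an irreducible $\N$-graded equivariant quasi $L_{\wh{\fsl_\infty}}(\bm\ell,0)$-module. Restricting to each tensor factor and applying Theorem 3.12 endows $W$ with $d$ commuting integrable restricted $\wh{\fsl_{N_i}}(\C_q)$-module structures at respective levels $\ell_i$. I would generalize Proposition 4.6 to this setting: first note that the central element $\bm\rk_1$ inside each copy acts as zero by the $\wh{\fsl_2}$-integrability argument applied within that copy; then produce a joint highest weight vector $v$ by iterating the single-Cartan construction of $\Omega_W$, starting in the lowest-degree subspace $W(n)$ and finding at each stage a highest weight vector for the next Cartan within the weight subspace achieved so far, using commutativity of the actions to preserve the previous highest weight conditions. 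The joint weight yields $\eta_{\bm\lambda_i,\bm c_i}$ on the $i$-th Cartan with $(\bm\lambda_i,\bm c_i)\in P_+^{k_i}\times(\C^\times)^{k_i}$ by Proposition 4.4. Sending the tensor of highest weight vectors in $\bigotimes_i L(\eta_{\bm\lambda_i,\bm c_i})$ to $v$ extends to a surjective homomorphism of equivariant quasi $L_{\wh{\fsl_\infty}}(\bm\ell,0)$-modules via Theorem 3.12; since the source is irreducible by the first assertion and the target by hypothesis, this map is an isomorphism.

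The principal obstacle is the density step in the first assertion: unlike the classical Frenkel-Huang-Lepowsky setting, the quasi Jacobi identity only holds after multiplication by a polynomial whose roots lie in $\chi_q(\Z)=\{q^n\mid n\in\Z\}$, so the separation of the linearly independent $u_j^{(1)}$ must be engineered to avoid those roots. I expect one has to exploit the explicit presentation of $L_{\wh{\fsl_\infty}}(\ell_1,0)$ and the integrability of $W_1$ to guarantee enough vertex-operator components to isolate $u_1^{(1)}$. A secondary subtlety in the second assertion is ensuring that the iterative construction of the joint highest weight vector does not empty out, which requires verifying that each successive maximal-weight subspace remains stable and nonzero under the remaining commuting actions.
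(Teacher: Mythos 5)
Your route is the paper's (FHL-type density for irreducibility, joint highest-weight extraction for the classification), but two remarks are in order.

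For the first assertion, the paper likewise invokes the Jacobson density theorem as in \cite{FHL}; however, your worry about the polynomial $p(x_1,x_2)$ in the quasi Jacobi identity is a red herring. The density argument never touches the Jacobi identity (quasi or otherwise). All it needs is that each $W_i$ is irreducible, that Schur's lemma holds for $W_i$ --- which is exactly why the paper flags that each $W_i$ has countable dimension over $\C$ --- and the Jacobson density theorem, which then produces an element of the associative algebra generated by the components of $Y_{W_1}(v,x)$, $v\in L_{\wh{\fsl_\infty}}(\ell_1,0)$, sending $u_1^{(1)}\mapsto u_1^{(1)}$ and $u_j^{(1)}\mapsto 0$ for $j>1$. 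Neither integrability, nor the explicit presentation of $L_{\wh{\fsl_\infty}}(\ell_1,0)$, nor the location of the roots of $p$ enters at any point.

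For the second assertion, your iteration is essentially what the paper does, with one caveat: $\Omega_W^i\neq 0$ by Proposition \ref{prop:irrintres}, and since the $\wh{\fsl_{N_i}}(\C_q)$-actions commute, $\Omega_W^1$ is a nonzero $\N$-graded integrable restricted $\wh{\fsl_{N_j}}(\C_q)$-submodule of $W$, so Proposition \ref{prop:irrintres} applied to it gives $\Omega_W^1\cap\Omega_W^2\ne 0$, and so on --- one starts the iteration with $\Omega_W^1$, not with the lowest-degree subspace $W(n)$, which is only the starting point inside the proof of Proposition \ref{prop:irrintres} itself. The genuine gap is the concluding step: ``Sending the tensor of highest weight vectors $\ldots$ to $v$ extends to a surjective homomorphism'' is not justified. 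There is no a priori reason that $v$ satisfies every relation imposed on the highest weight vector of the irreducible module $\bigotimes_i L(\eta_{\bm\lambda_i,\bm c_i})$, so a linear assignment on that vector need not extend to a module map out of it. The correct conclusion (the paper's) is to set $\mathcal K=\wh{\fsl_{N_1}}(\C_q)\oplus\cdots\oplus\wh{\fsl_{N_d}}(\C_q)$, observe that both $W$ and $\bigotimes_i L(\eta_{\bm\lambda_i,\bm c_i})$ are irreducible $\N$-graded highest weight $\mathcal K$-modules with the same highest weight $\lambda$, and that such a module is unique up to isomorphism because each is the unique irreducible $\N$-graded quotient of the Verma module $M_{\mathcal K}(\lambda)$.
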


\begin{proof} Note that each $W_i$ is of countable dimension over $\C$, so that the Schur lemma holds.
Then the first assertion follows from the Jacobson density theorem just as in \cite{FHL}.
Now, we consider the second assertion. Let $W$ be any irreducible $\N$-graded $(\Z,\chi_q)$-equivariant quasi
$L_{\wh{\fsl_\infty}}(\bm\ell,0)$-module. Then for each $1\le i\le d$, $W$ is naturally an $\N$-graded $(\Z,\chi_q)$-equivariant quasi
$L_{\wh{\fsl_\infty}}(\ell_i,0)$-module, and for $1\le i\ne j\le d$,
the actions of $L_{\wh{\fsl_\infty}}(\ell_i,0)$ and $L_{\wh{\fsl_\infty}}(\ell_j,0)$ on $W$ commute.
By Theorem \ref{thm:main2},  $W$ is naturally an $\N$-graded  integrable and restricted
$\wh{\fsl_{N_i}}(\C_q)$-module of level $\ell_i$. Furthermore, the actions of $\wh{\fsl_{N_i}}(\C_q)$ and $\wh{\fsl_{N_j}}(\C_q)$
on $W$ commute for $1\le i\ne j\le d$.
By Proposition \ref{prop:irrintres}, we have
$\Omega_{W}^{i}\ne 0$, where
$$\Omega_{W}^{i}=\{ w\in W \mid \wh{\frak{sl}_{N_i}}(\C_q)^+w=0\}.$$
 For $1\le i\ne j\le d$, as $[ \wh{\frak{sl}_{N_i}}(\C_q), \wh{\frak{sl}_{N_j}}(\C_q)]=0$ on $W$  we have
$ \wh{\frak{sl}_{N_j}}(\C_q)\Omega_{W}^{i}\subset \Omega_{W}^{i}$.
It follows (from the first part of Proposition \ref{prop:irrintres}) that $\cap_{i=1}^d \Omega_{W}^{i}\ne 0$.
Set
\begin{align*}
\mathcal K=\wh{\fsl_{N_1}}(\C_q)\oplus \cdots \oplus \wh{\fsl_{N_d}}(\C_q).
\end{align*}
As an irreducible $\N$-graded $(\Z,\chi_q)$-equivariant quasi
$L_{\wh{\fsl_\infty}}(\bm\ell,0)$-module, $W$ is also an irreducible $\N$-graded $\mathcal K$-module.
From the proof of the second assertion of Proposition \ref{prop:irrintres},  we  conclude that $\cap_{i=1}^d \Omega_{W}^{i}=\C v$,
where $v\in \cap_{i=1}^d \Omega_{W}^{i}$ and there exists a linear functional   $\lambda$ on
$\wh\CH_{N_1}\oplus \cdots\oplus \wh\CH_{N_d}$ such that  $W=U(\mathcal K)v$ and
\[hv=\lambda(h)v\quad\te{for}\ h\in \wh\CH_{N_1}\oplus \cdots\oplus \wh\CH_{N_d}.\]

Given a linear functional $\lambda$ on $\wh\CH_{N_1}\oplus \cdots\oplus \wh\CH_{N_d}$, we say that
a $\mathcal K$-module $M$ is  a  highest weight module of highest weight  $\lambda$ if
there is a nonzero vector $w\in M$ such that $\U(\mathcal K)w=M$,
$(\wh{\fsl_{N_1}}(\C_q)^+\oplus \cdots \oplus \wh{\fsl_{N_d}}(\C_q)^+)w=0$ and
\[hw=\lambda(h)w\quad\te{for}\ h\in \wh\CH_{N_1}\oplus \cdots\oplus \wh\CH_{N_d}.\]
That is, $W$ is an $\N$-graded highest weight $\mathcal K$-module.
On the other hand,  we define Verma module $M_{\mathcal K}(\lambda)$ in the obvious way, which is a universal $\N$-graded
highest weight $\mathcal K$-module. Note that $M_{\mathcal K}(\lambda)$ has a unique irreducible $\N$-graded quotient module.
Consequently, irreducible $\N$-graded highest weight $\mathcal K$-modules of highest weight $\lambda$ are unique up to isomorphism.
 It then follows that  any irreducible $\N$-graded highest weight $\mathcal K$-module of highest weight $\lambda$ is isomorphic to
the tensor product $\mathcal K$-module
\[L(\lambda|_{\wh\CH_{N_1}})\ot \cdots \ot L(\lambda|_{\wh\CH_{N_d}}),\]
where $L(\lambda|_{\wh\CH_{N_1}})$ is the irreducible highest weight $\wh{\fsl_{N_i}}(\C_q)$-module
with highest weight $\lambda|_{\wh\CH_{N_i}}\in (\wh\CH_{N_i})^*$. This proves the second assertion.
\end{proof}

\section{Fermionic Fock modules and $(\wh{\fgl_N}(\C_q),\mathrm{GL}_{{\bf I}_{\bm a}})$-duality}
In this section, we give a family of fermionic Fock modules for $\wh{\fgl_N}(\C_q)$ and
establish a duality between $\wh{\fgl_N}(\C_q)$ and Levi subgroups of $\rGL_\ell(\C)$ .

We start by recalling  from \cite{G3} a fermionic representation of $\wh{\fgl_N}(\C_q)$.
Let $N$ and $\ell$ be positive integers with $N\ge 2$ as before.
Define $\mathcal C^{\ell}_{N}$ to be the associative (Clifford) algebra with generators
$$\psi^\mu_i(m), \   \   \bar{\psi}^\mu_i(m)\  \  (\mbox{where }1\le i\le N, \  1\le \mu\le \ell, \   m\in \Z),$$
subject to relations
\begin{align*}
&\psi^\mu_i(m)\psi^\nu_j(n)+\psi^\nu_j(n)\psi^\mu_i(m)=0,\   \   \   \  \bar\psi^\mu_i(m)\bar\psi^\nu_j(n)+\bar\psi^\nu_j(n)\bar\psi^\mu_i(m)=0,\\
&\psi^\mu_i(m)\bar\psi^\nu_j(n)+\bar\psi^\nu_j(n)\psi^\mu_i(m)=\delta_{i,j}\delta_{\mu,\nu}\delta_{m+n,0}1
\end{align*}
for $1\le i,j\le N$, $1\le \mu,\nu\le \ell$,  $m,n\in \Z$.
Form generating functions
\begin{align*}
\psi^\mu_i(x)=\sum_{m\in \Z} \psi^\mu_i(m)x^{-m}\quad\te{and}\quad
 \bar\psi_i^\mu(x)=\sum_{m\in \Z}\bar\psi_i^\mu(m)x^{-m}
\end{align*}
for $1\le i\le N$, $1\le \mu\le \ell$. It is clear that $\mathcal C^{\ell}_{N}$ is a $\Z$-graded algebra with
\begin{align}
\deg \psi_{i}^{\mu}(n)=\deg \bar\psi_{i}^{\mu}(n)=-n
\end{align}
for $1\le i\le N$, $1\le \mu\le \ell$. Set
\begin{eqnarray*}
&&(\mathcal C^{\ell}_{N})^{an}=\< \psi_i^\mu(m+1),\  \bar\psi_i^\mu(m)\ | \  1\le i\le N,\   1\le \mu\le \ell, \ m\in \N\>,\\
&&(\mathcal C^{\ell}_{N})^{cr}=\< \psi_i^\mu(-m),\  \bar\psi_i^\mu(-m-1)\ | \  1\le i\le N,\  1\le \mu\le \ell, \ m\in \N\>,
\end{eqnarray*}
(the subalgebras of $\mathcal C^{\ell}_{N}$ generated by the indicated elements). We have
$$\mathcal C^{\ell}_{N}\simeq (\mathcal C^{\ell}_{N})^{cr}\otimes (\mathcal C^{\ell}_{N})^{an}$$
as a vector space.

Denote by $\mathcal F_N^\ell$ the
$\mathcal C^{\ell}_{N}$-module generated by vector $|0\>$, subject to relations
\begin{align}
\psi_i^\mu(m+1)|0\>=\bar\psi_i^\mu(m)|0\>=0\quad \te{for}\  1\le i\le N,\   1\le \mu\le \ell, \ m\in \N.
\end{align}
Then $\mathcal F_N^\ell= (\mathcal C^{\ell}_{N})^{cr}$
as a $(\mathcal C^{\ell}_{N})^{cr}$-module. It is well known that $\mathcal F_N^\ell$ is
an irreducible $\mathcal C^{\ell}_{N}$-module.
Defining $\deg |0\>=0$,  we make $\mathcal F_N^\ell$ an $\N$-graded $\mathcal C^{\ell}_{N}$-module
\begin{align}\label{grading-FNell}
\mathcal F_N^\ell=\oplus_{n\in \N}\mathcal F_{N}^\ell(n)
\end{align}
with $\mathcal F_{N}^\ell(n)$
denoting the homogeneous subspace of degree $n$. It is straightforward to show that $\mathcal F_{N}^\ell(n)$
is finite-dimensional for every $n\in \N$.
% \begin{align*}
%\deg\(\psi_{i_1}^{p_1}(m_1)\cdots \psi_{i_a}^{p_a}(m_a)\bar\psi_{j_1}^{r_1}(n_1)\cdots
%\bar\psi_{j_b}^{r_b}(n_b)|0\>\)=-\sum_{i=1}^a m_i-\sum_{j=1}^b n_j.
 %\end{align*}

%The Clifford algebra $\mathcal C_N^\ell$ admits a natural $\Z$-grading $\mathcal C_N^\ell=\oplus_{n\in \Z}\mathcal C_{N,(n)}^\ell$   with
%\begin{align*}\label{zgradingonc}
%\deg(\psi_i^p(n))=\deg(\bar\psi_i^p(n))=n\quad\te{for}\ i=1,\dots,N, p=1,\dots,\ell\ \te{and}\ n\in \Z.
%\end{align*}
%
%Let $\mathcal C_N^{\ell,+}$ and $\mathcal C_N^{\ell,-}$ be the subalgebras
% of $\mathcal C_N^\ell$ respectively generated by
%the elements
%$\psi_i^p(m+1), \bar\psi_i^p(m)$ and $\psi_i^p(-m),\bar\psi_i^p(-m-1)$
%for $i=1,\dots,N,p=1,\dots,\ell, m\in \N$.
%Denote by $\mathcal F_N^\ell$
% the unique irreducible $\mathcal C^{\ell}_{N}$-module, which contains a nonzero vector $|0\>$ such that
% $\mathcal C_N^{\ell,+}|0\>=0$.
%Note that $\mathcal F_N^\ell\cong \mathcal C_N^{\ell,-}$ as a vector space
%and there is an $\N$-grading
% $\mathcal F_N^\ell=\oplus_{n\in \Z}\mathcal F_{N,(n)}^\ell$  on   $\mathcal F_N^\ell$ with
% \begin{align*}
%\mathcal F_{N,(n)}^\ell=\{a|0\>\mid a\in \mathcal C_N^{\ell,-}\cap \mathcal C_{N,(n)}^\ell\}.
% \end{align*}

Following  \cite{G3}, we introduce the following normal ordering:
\begin{equation}\label{normal-ordering-Gao}
:\psi_i^\mu(m)\bar\psi^\nu_j(n):= \begin{cases} \psi_i^\mu(m)\bar\psi^\nu_j(n)\ \ \ &\text{if}\ m\leq n,\\
-\bar\psi_j^\nu(n)\psi_i^\mu(m) &\text{if}\ m>n
\end{cases} \end{equation}
for $1\le i,j\le N$, $1\le \mu,\nu\le \ell,\  m,n\in \Z$.

\begin{remt}\label{normal-ordering-2}
{\em For $1\le i,j\le N$, $1\le \mu,\nu\le \ell,\  m,n\in \Z$, it is straightforward to show that
the normal ordering defined in (\ref{normal-ordering-Gao}) coincides with the one defined by
\begin{equation*}
:\psi_i^\mu(m)\bar\psi^\nu_j(n):= \begin{cases} \psi_i^\mu(m)\bar\psi^\nu_j(n)\ \ \ &\text{if}\ n\geq 0,\\
-\bar\psi_j^\nu(n)\psi_i^\mu(m) &\text{if}\ n<0.
\end{cases}
\end{equation*}}
\end{remt}

For $1\le i,j\le N,\  m\in \Z$ and $\bm{a}=(a_1,\dots,a_\ell)\in (\C^\times)^\ell$,   define a field
\begin{equation*}
\Psi_{i,j}^{\bm{a}}(m,x)
=\begin{cases}
\sum_{p=1}^{\ell}:\psi_i^p(x)\bar\psi_j^p(x):
& \mbox{ if }m=0\\
\sum_{p=1}^\ell a_p^m\(:\psi_i^p(x)\bar\psi_j^p(q^mx):
+\delta_{i,j}\frac{q^m}{1-q^m}\)& \mbox{ if }m\ne 0.
\end{cases}\end{equation*}
Furthermore,  use the expansion
\begin{align}
\Psi_{i,j}^{\bm{a}}(m,x)=\sum_{n\in \Z}\Psi_{i,j}^{\bm{a}}(n,m)x^{-n}
\end{align}
to define operators $\Psi_{i,j}^{\bm{a}}(n,m)\in \mathrm{End}\(\mathcal F_N^\ell\)$.
Explicitly, we have
\begin{equation}
\begin{split}\label{defpsiija}\Psi_{i,j}^{\bm{a}}(n,0)
&=\sum_{p=1}^\ell\sum_{k\in \Z}:\psi_i^p(n-k)\bar\psi_j^p(k):,\\
\Psi_{i,j}^{\bm{a}}(n,m)
&=\sum_{p=1}^\ell a_p^m\bigg(\sum_{k\in \Z}q^{-mk}:\psi_i^p(n-k)\bar\psi_j^p(k):+\delta_{n,0}\delta_{i,j}\frac{q^m}{1-q^m}\bigg)\end{split}
\end{equation}
for $m\ne 0$.
The following is a generalization of a result of \cite{G3}:

\begin{prpt}\label{lem:algact}
Let $\ell$ be a positive integer and
let $\bm{a}\in \(\C^\times\)^\ell$. Then
$\mathcal F_N^\ell$ is a $\wh{\fgl_N}(\C_q)$-module with $\bm\rk_0=\ell$,  $\bm\rk_1= 0$, and
\begin{align}\label{nuaction}
E_{i,j}t_0^{m_0}t_1^{m_1}= \Psi_{i,j}^{\bm{a}}(m_0,m_1)\quad\te{ for }1\le i,j\le N,\  m_0,m_1\in \Z.
\end{align}
 Furthermore,
 $\mathcal F_N^\ell$ is an integrable and restricted  $\wh{\fsl_N}(\C_q)$-module which is $\N$-graded with
 finite-dimensional homogeneous subspaces.
\end{prpt}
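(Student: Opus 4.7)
The plan is to verify directly that the operators $\Psi^{\bm a}_{i,j}(m_0,m_1)$ satisfy the defining bracket relations \eqref{commutator1} of $\wh{\fgl_N}(\C_q)$ on $\mathcal F_N^\ell$, with $\bm\rk_0$ acting as $\ell$ and $\bm\rk_1$ acting as $0$; the remaining assertions then follow from general principles. As preparation I would decompose
\begin{align*}
\Psi^{\bm a}_{i,j}(m,x)=\sum_{p=1}^{\ell}\phi^{(p)}_{i,j}(m,x),
\end{align*}
where $\phi^{(p)}_{i,j}(0,x)=\,:\psi^p_i(x)\bar\psi^p_j(x):$ and, for $m\ne 0$, $\phi^{(p)}_{i,j}(m,x)=a_p^m\bigl(:\psi^p_i(x)\bar\psi^p_j(q^m x):+\delta_{i,j}\tfrac{q^m}{1-q^m}\bigr)$. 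Since each $\phi^{(p)}_{i,j}$ is an even element in the fermions of the single species $p$, operators of different species commute, and the full bracket reduces to the diagonal sum $\sum_{p}[\phi^{(p)}_{i,j}(m_1,x_1),\phi^{(p)}_{k,l}(n_1,x_2)]$, i.e., to $\ell$ commuting copies of a single-species calculation.

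For each fixed $p$, I would compute the commutator via the Wick theorem for products of normal-ordered fermionic bilinears, using the formal $\delta$-function identity that packages $\{\psi^p_i(m),\bar\psi^p_j(n)\}=\delta_{i,j}\delta_{m+n,0}$. The ``single-contraction'' pieces produce precisely the two summands $\delta_{j,k}q^{m_1 n_0}\phi^{(p)}_{i,l}(m_1+n_1,\cdot)-\delta_{i,l}q^{n_1 m_0}\phi^{(p)}_{k,j}(m_1+n_1,\cdot)$, matching the first two terms on the right-hand side of \eqref{commutator1} after extracting modes. The scalar correction $\delta_{i,j}\tfrac{q^m}{1-q^m}$ in the definition of $\phi^{(p)}_{i,j}$ is chosen so that the ``double-contraction'' scalar contribution, after the $\sum_k q^{-mk}$ summations, assembles into $\delta_{j,k}\delta_{i,l}\delta_{m_0+n_0,0}\delta_{m_1+n_1,0}q^{m_1 n_0}m_0$ per species. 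Summing over $p=1,\ldots,\ell$ multiplies this central term by $\ell$, and no term proportional to $m_1$ ever appears, identifying $\bm\rk_0=\ell$ and $\bm\rk_1=0$.

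For the supplementary assertions: restrictedness follows from the mode expansion, since the annihilation conditions $\psi^p_i(n+1)|0\rangle=\bar\psi^p_i(n)|0\rangle=0$ for $n\in\N$ force $\Psi^{\bm a}_{i,j}(m_0,m_1)w=0$ once $m_0$ is large enough relative to any given $w\in\mathcal F_N^\ell$. The grading \eqref{grading-FNell} is preserved by the action, the mode $\Psi^{\bm a}_{i,j}(m_0,m_1)$ having degree $-m_0$, and each $\mathcal F_N^\ell(n)$ is finite-dimensional as a subspace of $(\mathcal C_N^\ell)^{cr}$ spanned by finitely many monomials in creation modes of total degree $n$. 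For integrability, I would invoke Proposition \ref{prop:charintgmod}: for $i\ne j$ and any $m\in\Z$, each field $:\psi^p_i(x)\bar\psi^p_j(q^m x):$ squares to zero by fermionic antisymmetry, and the $\ell$ single-species fields commute pairwise, so a multinomial expansion of $(E_{i,j}t_1^m)(x)^{\ell+1}$ has in each summand two factors of the same species $p$ and therefore vanishes. The main technical obstacle is the Wick-theorem bookkeeping in the single-species commutator: showing that the specific form of the constant $\tfrac{q^m}{1-q^m}$ conspires with the twist sum $\sum_k q^{-mk}$ to yield exactly the factor $m_0$ (rather than $m_1$) in the central term, thereby ruling out an $\bm\rk_1$ contribution.
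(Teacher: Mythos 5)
Your proposal is correct in broad outline but takes a genuinely different route to the module structure than the paper does. The paper's proof of the first assertion leans on two shortcuts: it invokes Gao's theorem from \cite{G3}, which gives the $\wh{\fgl_N}(\C_q)$-module structure on $\mathcal F_N$ for $\ell=1$ and $\bm a=(1)$ as a known result, and then obtains the general case by identifying $\mathcal F_N^\ell$ with the Chari--Pressley evaluation module $(\mathcal F_N)(a_1)\ot\cdots\ot(\mathcal F_N)(a_\ell)$, so that the $a_p^{m_1}$ factors are absorbed into evaluation parameters. You instead propose to re-derive Gao's level-one bracket relation from scratch by Wick calculus; your reduction to a single species $p$ (the even bilinears $\phi^{(p)}_{i,j}$ commute across species) is essentially a re-implementation of the evaluation-tensor step. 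What the Wick approach buys you is self-containedness; what it costs is the nontrivial bookkeeping needed to see that the constant $\frac{q^m}{1-q^m}$, the normal-ordering convention, and the sum $\sum_k q^{-mk}$ conspire to produce a central term proportional to $m_0$ and never to $m_1$. You rightly flag this as the main technical obstacle, but leaving it as a plan is precisely where your argument remains incomplete relative to the paper's one-line appeal to \cite{G3}. For the supplementary assertions you proceed essentially as the paper does: the paper reduces integrability to $\ell=1$ (tensor products of integrable modules are integrable) and then shows the single-species field squares to zero by fermionic antisymmetry, whereas you unfold the same ingredients into a direct multinomial expansion giving $(E_{i,j}t_1^m)(x)^{\ell+1}=0$, correctly matching the exponent required by Proposition \ref{prop:charintgmod}; the restrictedness and finite-dimensionality of graded pieces are handled identically.
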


\begin{proof}  In case that $\ell=1$ and $\bm{a}=a_{1}=1$, this was proved in \cite{G3}.
Denote this particular $\wh{\fgl_N}(\C_q)$-module simply by $\mathcal F_N$.
Now, we consider the general case. Associated to the $\ell$-tuple $\bm{a}$, we have an evaluation
 $\wh{\fgl_N}(\C_q)$-module $(\mathcal F_N)(a_{1})\ot \cdots\ot (\mathcal F_N)(a_{\ell})$, where
\[(\mathcal F_N)(a_{1})\ot \cdots\ot (\mathcal F_N)(a_{\ell})
=\mathcal F_N^{\ot \ell}\]
as a vector space and where $\bm\rk_0=\ell,\   \   \bm\rk_1=0$, and
\begin{align*}
E_{i,j}t_0^{m_0}t_1^{m_1}(v_1\ot \cdots\ot v_\ell)
=\sum_{r=1}^\ell a_{r}^{m_1}\left(v_1\ot \cdots\ot E_{i,j}t_0^{m_0}t_1^{m_1} v_r\ot \cdots \ot v_\ell\right)
\end{align*}
for $1\le i,j\le N$, $m_0,m_1\in \Z$, and $v_r\in \mathcal F_N$.
Then through the natural identification of  $\mathcal F_N^\ell$
with $(\mathcal F_N)(a_{1})\ot \cdots\ot (\mathcal F_N)(a_{\ell})$,
we obtain a $\wh{\fgl_N}(\C_q)$-module structure on $\mathcal F_N^\ell$ with the action given by \eqref{nuaction}
as desired.

For the second assertion, using the fact that $\mathcal F_N^\ell$ is a
 tensor product of level $1$ $\wh{\fsl_N}(\C_q)$-modules, it suffices to deal with the case with $\ell=1$.
It is clear from \eqref{defpsiija} that the $\wh{\fsl_N}(\C_q)$-module $\mathcal F_N$ is restricted.
For the integrability, in view of Proposition \ref{prop:charintgmod}, it suffices to show that
for $1\le i\ne j\le N$ and $m_1\in \Z$,
\begin{align*}
\Psi_{i,j}^{\bm{a}}(m_1,x_1)\Psi_{i,j}^{\bm{a}}(m_1,x_2)
=\Psi_{i,j}^{\bm{a}}(m_1,x_2)\Psi_{i,j}^{\bm{a}}(m_1,x_1)
\end{align*}
and $\Psi_{i,j}^{\bm{a}}(m_1,x)^2=0$ on $ \mathcal F_N$.
Note that
\begin{align*}
\psi_i^{1}(x_1)\psi_i^{1}(x_2)=-\psi_i^{1}(x_2)\psi_i^{1}(x_1),\   \   \   \
\bar\psi_j^{1}(x_1)\bar\psi_j^{1}(x_2)=-\bar\psi_j^{1}(x_2)\bar\psi_j^{1}(x_1),
\end{align*}
which imply $\psi_i^{1}(x)^2=0$ and $\bar\psi_j^{1}(x_1)^2=0$.
On the other hand, as $i\ne j$, we also have
$\psi_i^{1}(x_1)\bar\psi_j^{1}(x_2)=-\bar\psi_j^{1}(x_2)\psi_i^{1}(x_1)$,
which implies
$$:\psi_i^{1}(x)\bar\psi_j^{1}(q^{m_1}x):=\psi_i^{1}(x)\bar\psi_j^{1}(q^{m_1}x).$$
Then
$$\Psi_{i,j}^{\bm{a}}(m_1,x)=a_{1}^{m_1}:\psi_i^{1}(x)\bar\psi_j^{1}(q^{m_1}x):
=a_{1}^{m_1}\psi_i^{1}(x)\bar\psi_j^{1}(q^{m_1}x).$$
It follows that
\begin{align*}
\Psi_{i,j}^{\bm{a}}(m_1,x_1)\Psi_{i,j}^{\bm{a}}(m_1,x_2)
=\Psi_{i,j}^{\bm{a}}(m_1,x_2)\Psi_{i,j}^{\bm{a}}(m_1,x_1)
\end{align*}
and
\begin{align*}
\Psi_{i,j}^{\bm{a}}(m_1,x_1)\Psi_{i,j}^{\bm{a}}(m_1,x_2)
=-a_{1}^{2m_1}\psi_i^{1}(x_1)\psi_i^{1}(x_2)\bar\psi_j^{1}(q^{m_1}x_1)\bar\psi_j^{1}(q^{m_1}x_2),
\end{align*}
which implies $\Psi_{i,j}^{\bm{a}}(m_1,x)^2=0$.
This completes the proof.
\end{proof}

\begin{dfnt}
{\em For $\bm{a}\in \(\C^\times\)^\ell$, denote by
\begin{align}
\rho_{\bm{a}}: \  \wh{\fgl_N}(\C_q)\longrightarrow \mbox{End} (\mathcal F_N^\ell)
\end{align}
the representation of $\wh{\fgl_N}(\C_q)$ on $\mathcal F_N^\ell$
obtained in Proposition \ref{lem:algact}, and we also denote by $\mathcal F_{N}^{\bm{a}}$
the $\wh{\fgl_N}(\C_q)$-module.}
\end{dfnt}

Next, we present an analogue  in the toroidal setting of the skew $(\fgl_N(\C),\mathrm{GL}_\ell(\C))$-duality.
Let $n$ be a positive integer. Set  $\rGL_{n}=\mathrm{GL}_{n}(\C)$,
let $\rH_n$ be the subgroup of $\rGL_{n}$, consisting of diagonal matrices, and
let $\rN_n^+$ be the subgroup consisting of upper-triangular unipotent matrices.
Following \cite{GW}, set
\begin{align}
\Z_{++}^n=\{\bm{\mu}=(\mu_1,\dots,\mu_n)\in \Z^n\mid \mu_1\ge \mu_2\ge \cdots \ge \mu_n\}
\end{align}
and view each $\bm{\mu}=(\mu_1,\dots,\mu_n)\in \Z_{++}^n$ as a dominant regular character of $\rH_n$ by
\begin{align*}
 \bm{\mu}(h)= h^{\bm\mu}=h_1^{\mu_1}\cdots h_n^{\mu_n}\quad\te{for}\ h=\mathrm{diag}\{h_1,\dots,h_n\}\in \rH_{n}.
\end{align*}
 For  $\bm{\mu}\in \Z_{++}^n$, denote by $L_{\rGL_n}(\bm{\mu})$ the
irreducible highest weight $\rGL_n$-module of highest weight $\bm{\mu}$.
A basic fact is that  $L_{\rGL_n}(\bm{\mu})$ is regular, finite-dimensional, and the space of
$\rN_n^+$-fixed vectors of weight $\bm\mu$ is $1$-dimensional.

More generally, let $\ell$ be a positive integer and let  ${\bf I}$ be a partition  of $\{1,\dots,\ell\}$
with the associated equivalence relation denoted by $\sim$.
Define
\begin{align}
\rGL_{\bf I}=\{(c_{ij})_{i,j=1}^\ell\in \rGL_\ell\mid c_{ij}=0\ \te{if}\ i\not\sim j\},
\end{align}
 the Levi subgroup of $\rGL_{\ell}$ associated to $\bf I$.
Correspondingly, we have the following linear algebraic subgroups of $\rGL_{\bf I}$:
\begin{align}
\rN^+_{\bf I}=\rN^+_\ell\cap \rGL_{\bf I}\quad\te{and}\quad \rH_{\bf I}=\rH_\ell\cap \rGL_{\bf I}.
\end{align}
Then we identify dominant regular characters of $\rH_{\bf I}$ with elements of
\begin{align}
\Z_{++}^{\bf I}:=\{ \bm \mu=(\mu_1,\dots,\mu_\ell)\in \Z^\ell\ |\  \mu_i\ge \mu_j\   \mbox{ for }1\le i,j\le \ell,\  i\sim j,\ i< j\}.
\end{align}
For each $\bm{\mu}\in \Z_{++}^{\bf I}$,
denote  by $L_{\rGL_{\bf I}}(\bm{\mu})$
 the   finite-dimensional irreducible highest weight $\rGL_{\bf{I}}$-module of highest weight $\bm{\mu}$.

Let  $W$ be any locally regular $\rGL_{\bf I}$-module of countable dimension.
Then  $W$ is a direct sum of  finite-dimensional irreducible regular
$\rGL_{\bf I}$-modules (cf. \cite{GW}):
\begin{align*}
W= \bigoplus_{\bm\mu\in P(W)} W^{\rN_{\bf I}^+}(\bm\mu)
\ot L(\bm{\mu}),
\end{align*}
where $W^{\rN_{\bf I}^+}(\bm\mu)$ denotes
the space of   $\rN_{\bf I}^+$-fixed vectors  of weight  $\bm\mu$ in $W$
and $P(W)=\{\bm\mu\in\Z_{++}^{\bf I}\mid W^{\rN_{\bf I}^+}(\bm\mu)\ne 0\}$.

\begin{dfnt}
{\em Let $\CL$ be a Lie algebra and let $V$ be a vertex $\Gamma$-algebra.
We say that $(\mathcal L,\rGL_{\bf I})$ (resp.\,$(V,\rGL_{\bf I})$) is a (Howe) dual pair on a
locally regular $\rGL_{\bf I}$-module $W$ if \te{(i)}
$W$ is an $\mathcal L$-module (resp.  equivariant quasi $V$-module)
such that the actions of  $\mathcal L$ (resp. $V$) and $\rGL_{\bf I}$ commute.
\te{(ii)} For every  $\bm\mu\in P(W)$, the $\CL$-submodule (resp. equivariant quasi $V$-submodule)
$W^{\rN_{\bf I}^+}(\bm\mu)$ of $W$ is irreducible.
\te{(iii)} For any $\bm\mu,\bm\nu\in P(W)$,
 $W^{\rN_{\bf I}^+}(\bm\mu)\cong W^{\rN_{\bf I}^+}(\bm\nu)$ if and only if
 $\bm\mu=\bm\nu$.}
 \end{dfnt}

Recall from \cite{FF} that $\mathcal F_N^\ell$ is a module for the general Lie algebra $\fgl_\ell$  with
\begin{align}\label{laction}
E_{r,s}=\sum_{i=1}^N\sum_{n\in \Z}:\psi_i^r(-n)\bar\psi_i^{s}(n):
\end{align}
for $1\le r,s\le \ell$.
It is clear that $\mathcal F_N^\ell=\oplus_{n\in \Z}\mathcal F_{N}^\ell(n)$
(recall (\ref{grading-FNell})) is  a direct sum
of finite-dimensional $\fgl_\ell$-submodules.
Then this  $\fgl_\ell$-module structure on $\mathcal F_N^\ell$ gives rise to
a module structure for  $\rGL_\ell$.
Furthermore, for any partition ${\bf I}$ of $\{1,\dots,\ell\}$, $\mathcal F_N^\ell$ is
a locally regular $\rGL_{\bf{I}}$-module.

%As in Introduction, we have: ????

\begin{dfnt}
{\em For any positive integer $\ell$, define
\begin{align*}
(\C^\times)_q^\ell=\{(a_1,\dots,a_\ell)\in (\C^\times)^\ell\mid \te{either}\ a_i=a_j \ \te{or}\ a_i\notin a_j\Gamma_q\ \te{for}\ 1\le i,j\in \ell\},
\end{align*}
where $\Gamma_q=\{ q^n\ |\ n\in \Z\}$. Furthermore, for any $\bm{a}\in (\C^\times)^\ell_q$, we define
an equivalence relation $\sim_{\bm a}$ on $\{1,\dots,\ell\}$ by  $i\sim_{\bm a} j$ if and only if $a_i=a_j$ for $1\le i,j\le \ell$, and denote by
 ${\bf I}_{\bm a}$ the   partition of $\{1\,\dots,\ell\}$ associated to $\sim_{\bm a}$.}
\end{dfnt}

%\begin{remt}\label{allLevisubgroup}
%{\em Note that for any partition $\bf I$ of $\{1,\dots,\ell\}$, there exist infinitely many
 %$\bm{a}\in (\C^\times)_q^\ell$ such that ${\bf I}={\bf I}_{\bm a}$. Purpose????
%}\end{remt}

%Let ${\bf I}$  be a partition of $S$ with associated equivalence relation
%$\sim$. Define
%\begin{align}
%(\C^{\times})^{\bf I}_{q}=\{ \bm{a} \in (\C^{\times})^S\ |\
%a_i=a_j  \mbox{ for }i\sim j \mbox{ and }a_i\notin  a_j\Gamma_q \mbox{ for }i\not\sim j\},
%\end{align}
%where $\Gamma_q=\{ q^n\ |\ n\in \Z\}$. %Furthermore, for any $\bm{a}\in (\C^{\times})^{\bf I}_{q}$, define
%\begin{align}\label{defbara}
%\bar{\bm{a}}=(\bar{a}_1,\dots,\bar{a}_s)\in (\C^\times)^s
%\end{align}
%by $\bar{a}_r=a_p$ for any $p\in S_r$ and $1\le r\le s$.
The following is the main result of this section:

\begin{thmt}\label{prop:dualitygl}
For any $\bm{a}\in (\C^\times)^{\ell}_{q}$ with $\ell$ a  positive integer,
 $(\wh{\fgl_N}(\C_q),\rGL_{{\bf I}_{\bm a}})$ is a dual pair on the Fock space $\mathcal F_N^\ell$
with the $\wh{\fgl_N}(\C_q)$-module structure afforded by $\rho_{\bm{a}}$.
\end{thmt}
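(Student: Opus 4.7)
The proof amounts to verifying the three defining conditions of a dual pair.

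\textbf{Commutativity.} First I would verify that the actions of $\rGL_{{\bf I}_{\bm a}}$ and $\rho_{\bm a}(\wh{\fgl_N}(\C_q))$ commute by computing $[E_{r,s},\Psi^{\bm a}_{i,j}(n,m_1)]$ directly. For $m_1=0$ this reduces to the classical $(\fgl_N,\fgl_\ell)$ Howe duality on the fermionic Fock space. For $m_1\ne 0$, applying the anticommutation relations in $\mathcal C^\ell_N$ to the expressions \eqref{defpsiija} and \eqref{laction} yields, after the sums over the color index $p$ telescope, an expression proportional to $(a_s^{m_1}-a_r^{m_1})\sum_k q^{-m_1 k}{:}\psi^r_i(n-k)\bar\psi^s_j(k){:}$. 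This vanishes for every $m_1\in\Z$ precisely when $a_r=a_s$, i.e.\ $r\sim_{\bm a}s$; together with the trivial case of diagonal elements, this confirms that $\rGL_{{\bf I}_{\bm a}}$ commutes with $\rho_{\bm a}(\wh{\fgl_N}(\C_q))$.

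\textbf{Blockwise decomposition.} For (ii) and (iii), the strategy is to reduce to Frenkel's classical level-rank duality blockwise. Let ${\bf I}_{\bm a}=\{I_1,\ldots,I_d\}$ with $|I_k|=\ell_k$ and let $b_k$ be the common value of $a_p$ for $p\in I_k$; by hypothesis $b_1,\ldots,b_d$ lie in pairwise distinct $\Gamma_q$-cosets. Identify $\mathcal F_N^{\bm a}$ with the Chari-Pressley-type tensor product $\bigotimes_{k=1}^d\mathcal F_N(b_k)^{\otimes\ell_k}$ of level-one $\wh{\fsl_N}(\C_q)$-modules, compatibly with the factorization $\rGL_{{\bf I}_{\bm a}}\cong\prod_k\rGL_{\ell_k}$. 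Applying Frenkel's classical $(\wh{\fsl_N},\rGL_{\ell_k})$-duality to the $k$-th factor gives a multiplicity-free decomposition $\mathcal F_N(b_k)^{\otimes\ell_k}\cong\bigoplus_{\bm\mu_k}V_N^{(k)}(\bm\mu_k)\otimes L_{\rGL_{\ell_k}}(\bm\mu_k)$, where each $V_N^{(k)}(\bm\mu_k)$ is an irreducible integrable highest weight $\wh{\fsl_N}$-module of level $\ell_k$. Because the action of $\wh{\fsl_N}(\C_q)$ on this block is the constant-$b_k$-evaluation twist of the $\wh{\fsl_N}$-action, each $V_N^{(k)}(\bm\mu_k)$ carries an $\wh{\CH}$-highest-weight vector of weight $\eta_{\lambda_k,b_k}$, with $\lambda_k\in P_+$ determined by $\bm\mu_k$ via Frenkel's correspondence.

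\textbf{Assembly.} Tensoring over $k$, the isotypic component of $\mathcal F_N^{\bm a}$ of type $(\bm\mu_1,\ldots,\bm\mu_d)$ is $\bigotimes_k V_N^{(k)}(\bm\mu_k)$, an $\N$-graded highest weight $\wh{\fsl_N}(\C_q)$-module whose highest $\wh{\CH}$-weight is $\eta_{(\lambda_1,\ldots,\lambda_d),(b_1,\ldots,b_d)}$. By Proposition \ref{prop:intehwm} it is isomorphic to the irreducible integrable module $L(\eta_{\bm\lambda,\bm b})$, which gives (ii) (and hence also $\wh{\fgl_N}(\C_q)$-irreducibility, since any $\wh{\fgl_N}(\C_q)$-submodule is automatically a $\wh{\fsl_N}(\C_q)$-submodule). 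Condition (iii) then follows from Lemma \ref{lem:isoclass}: the $b_k$'s are pairwise distinct and the $\lambda_k$'s are uniquely recovered from $\bm\mu_k$, so distinct $(\bm\mu_1,\ldots,\bm\mu_d)$ force non-isomorphic highest weights. The main obstacle will be establishing the irreducibility of the tensor product $\bigotimes_k L(\eta_{\lambda_k,b_k})$ as a $\wh{\fsl_N}(\C_q)$-module with pairwise distinct evaluation parameters---a Chari-Pressley-type statement in the EALA setting---which I would handle by analyzing the $\wh{\CH}$-weight spectrum and exploiting the linear independence of the exponentials $b_k^n$ appearing in \eqref{def-eta} to preclude any proper integrable highest-weight submodule.
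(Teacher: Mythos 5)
Your proposal takes a genuinely different route from the paper, and it does correctly identify the crux, but it leaves the crux unproved, so there is a genuine gap.

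The paper's argument does not factor the Fock space block by block. Instead, it invokes the Frenkel--Wang dual pair $(\overline{\fgl_\infty}^{\oplus s},\rGL_{{\bf I}_{\bm a}})$ on $\mathcal F_N^\ell$ (Proposition~\ref{duality1}) and then proves a submodule-lattice coincidence lemma (Lemma~\ref{lem:samesubmod}): a subspace of $\mathcal F_N^\ell$ is a $\wh{\fgl_N}(\C_q)$-submodule under $\rho_{\bm a}$ if and only if it is an $\overline{\fgl_\infty}^{\oplus s}$-submodule, and likewise for module maps. The proof of that lemma is a Vandermonde-type argument: the operator $\rho_{\bm a}(E_{i,j}t_0^{m_0}t_1^{m_1})$ is a finite sum $\sum_{r,k}(b_rq^{-k})^{m_1}\,\imath_r(E_{(m_0-k)N+i,\,kN+j})$, and since $q$ is not a root of unity and the $b_r$ lie in distinct $\Gamma_q$-cosets, the scalars $b_rq^{-k}$ are pairwise distinct; varying $m_1$ therefore lets one solve for each individual $\imath_r(E_{\cdot,\cdot})$ acting on a fixed vector. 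Once the two submodule lattices are known to agree, the dual-pair properties transfer verbatim.

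Your proposal reaches the same destination by a different road: factor $\mathcal F_N^{\bm a}$ into blocks, apply level-rank duality blockwise, and assemble by tensoring. The commutativity computation is sound, and the reduction of conditions (ii) and (iii) to an irreducibility statement for $\bigotimes_k V_N^{(k)}(\bm\mu_k)$ as a $\wh{\fsl_N}(\C_q)$-module is also the right reduction. But that irreducibility statement \emph{is} the theorem: ``$\bigotimes_k V_N^{(k)}(\bm\mu_k)$ is an $\N$-graded highest weight module with highest weight $\eta_{\bm\lambda,\bm b}$, hence by Proposition~\ref{prop:intehwm} it is $L(\eta_{\bm\lambda,\bm b})$'' is a non sequitur --- a highest weight module with a given highest weight is not automatically irreducible, and Proposition~\ref{prop:intehwm} identifies only the irreducible ones. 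You acknowledge this in your last paragraph and propose to ``preclude any proper integrable highest-weight submodule'' by exploiting linear independence of the $b_k^n$, but this is exactly the Vandermonde argument of Lemma~\ref{lem:samesubmod}, and you do not carry it out. (Even the reduction to highest-weight submodules requires a word: one must invoke Proposition~\ref{prop:irrintres} on the candidate submodule, and then compare $\wh{\CH}$-weights, neither of which you do.) There is also a smaller citational concern: you invoke a ``Frenkel's classical $(\wh{\fsl_N},\rGL_{\ell_k})$-duality'' on each block, but the result actually established in the sources used by the paper is $(\overline{\fgl_\infty},\rGL_{\ell_k})$-duality; passing from $\overline{\fgl_\infty}$-irreducibility of the isotypics to $\wh{\fsl_N}$- or $\wh{\fsl_N}(\C_q)$-irreducibility is itself a separation-of-variables step of the same Vandermonde type, not a free restriction. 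In short, both routes hinge on the same linear-independence mechanism; the paper isolates and proves it once (Lemma~\ref{lem:samesubmod}), while your proposal defers it to a final ``obstacle'' that is not overcome.
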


The rest of this section is devoted to the proof of this theorem.
We start with a duality result of \cite{F}.
 Set
 \begin{align}
 \overline\fgl_{\infty}=\fgl_\infty\oplus \C\bm\rk,
 \end{align}
  a one-dimensional central extension of Lie algebra $\fgl_\infty$, where
 \begin{align*}
 [A,B]=AB-BA+\sum_{i\le 0}\mathrm{tr}([E_{i,i},A]B) \bm\rk
 \end{align*}
for $A,B\in \fgl_\infty$ (see \cite{DJKM}).
It is known (see \cite{F}) that
$\mathcal F_N^\ell$ is a $\overline\fgl_{\infty}$-module of level $\ell$
(with $\bm\rk=\ell$), where
 \begin{align}\label{degree2gen}
E_{mN+i,nN+j}=
\sum_{p=1}^\ell :\psi_i^p(-m)\bar\psi_j^p(n):
\end{align}
for $1\le i,j\le N,\   m,n\in \Z$.
Furthermore, we have (see \cite{F}; cf. \cite{FKRW,W}):

\begin{prpt}\label{duality1}
Let  $N$ and $\ell$ be positive integers.
Then $(\overline\fgl_{\infty},\rGL_\ell)$ is a dual pair on $\mathcal F_N^\ell$.
Furthermore, for any partition ${\bf I}=\{S_1,\dots,S_s\}$ of $\{1,\dots,\ell\}$,
$(\overline\fgl_{\infty}^{\oplus s},\rGL_{\bf I})$ is a dual pair on
$\mathcal F_N^\ell$.
\end{prpt}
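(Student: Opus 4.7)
The plan is to first establish the single-block case (where the partition consists of a single block and $\rGL_{\bf I}=\rGL_\ell$), and then deduce the general partition case by a tensor product factorization of $\mathcal F_N^\ell$.

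For the single-block case, the strategy follows the classical skew Howe / Frenkel level-rank duality template for fermionic Fock space. First I would verify that the $\overline{\fgl_\infty}$-action \eqref{degree2gen} commutes with the $\fgl_\ell$-action \eqref{laction}: this is a direct Wick-calculus computation, since the two sets of generators involve disjoint index summations (the index $p\in\{1,\dots,\ell\}$ for one, and the index $i\in\{1,\dots,N\}$ together with Fourier modes for the other), so their commutator reduces to at most a central term which cancels via normal ordering. Next I would classify the joint highest-weight vectors for the upper-triangular subalgebra of $\overline{\fgl_\infty}$ together with $\rN_\ell^+$; these are explicit semi-infinite-wedge monomials in $\psi_i^p(m)$ and $\bar\psi_i^p(m)$ applied to $|0\>$, in bijection with signatures $\bm\mu\in\Z_{++}^\ell$. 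A joint-weight comparison then identifies each multiplicity space $(\mathcal F_N^\ell)^{\rN_\ell^+}(\bm\mu)$ as an irreducible highest-weight $\overline{\fgl_\infty}$-module of level $\ell$, and distinct $\bm\mu$ yield non-isomorphic ones since their $\rH_\ell$-weights differ.

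For a general partition ${\bf I}=\{S_1,\dots,S_s\}$, the key observation is the factorization
\begin{align*}
\mathcal F_N^\ell \;\cong\; \mathcal F_N^{|S_1|} \ot \cdots \ot \mathcal F_N^{|S_s|}
\end{align*}
obtained by grouping the Clifford generators $\psi_i^\mu(m),\bar\psi_i^\mu(m)$ according to which block of ${\bf I}$ contains the upper index $\mu$. Modulo the standard fermionic sign convention (which does not affect decompositions into bosonic representations, since both $\overline{\fgl_\infty}^{\oplus s}$ and $\rGL_{\bf I}$ act by even operators), this factorization is $(\overline{\fgl_\infty}^{\oplus s},\rGL_{\bf I})$-equivariant: the $k$-th summand of $\overline{\fgl_\infty}^{\oplus s}$ acts only on the $k$-th tensor factor with level $|S_k|$, and the block factor $\rGL_{|S_k|}$ of $\rGL_{\bf I}$ acts only on the $k$-th factor. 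Applying the single-block result to each $\mathcal F_N^{|S_k|}$ and taking tensor products then yields the claimed $(\overline{\fgl_\infty}^{\oplus s},\rGL_{\bf I})$-duality, using that outer tensor products of irreducible regular modules for the distinct $\rGL_{|S_k|}$ factors remain irreducible regular $\rGL_{\bf I}$-modules, and similarly for the $\overline{\fgl_\infty}^{\oplus s}$-side.

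The main obstacle will be the careful bookkeeping of the central extension: the cocycle defining $\overline{\fgl_\infty}$ involves the sea-level trace $\sum_{i\le 0}\mathrm{tr}([E_{i,i},A]B)$, and one must verify that, under the above factorization, each summand of $\overline{\fgl_\infty}^{\oplus s}$ indeed acts with central charge exactly $|S_k|$ and not $\ell$. This reduces to a normal-ordering identity for the generators in \eqref{degree2gen} restricted to $p\in S_k$; it is routine but requires attention. A secondary technical point is to confirm that every joint highest-weight vector for the upper-triangular subalgebra of $\overline{\fgl_\infty}^{\oplus s}$ together with $\rN_{\bf I}^+$ factors through the tensor product decomposition; this can be handled either by a direct generator-level commutant argument or by reducing to the single-block case one tensor factor at a time.
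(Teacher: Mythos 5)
The paper does not actually prove Proposition 5.8; it is cited as a known result, ``see \cite{F}; cf. \cite{FKRW,W}.'' So there is no ``paper's own proof'' to compare against, and what you have written is a sketch of the standard argument one would find in those references (Frenkel's original level-rank duality, and Wang's treatment of $\widehat{\fgl}_\infty$ dual pairs on fermionic Fock space). Your overall strategy --- commutation check, parametrization of joint highest-weight vectors by signatures, weight comparison for irreducibility in the single-block case, then a Clifford-algebra tensor factorization $\mathcal F_N^\ell\cong \mathcal F_N^{|S_1|}\ot\cdots\ot \mathcal F_N^{|S_s|}$ to reduce the general partition case to the single-block case --- is the right one and matches what the cited sources do; in particular the reduction of the $\rGL_{\bf I}$ case to a tensor product of $\rGL_{|S_r|}$ cases is exactly the natural route, and your attention to the level being $|S_r|$ (via the central cocycle) is correctly placed, consistent with the paper's formula $\imath_r(\bm\rk)=|S_r|$ in \eqref{fglsaction}.

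One small imprecision worth flagging: the commutation of the $\fgl_\ell$-action \eqref{laction} with the $\overline\fgl_\infty$-action \eqref{degree2gen} does \emph{not} follow from ``disjoint index summations'' --- both families of quadratic operators involve the full set of Clifford generators $\psi_i^p(m),\bar\psi_i^p(m)$ over all $i,p,m$, and the two sums genuinely interact under Wick contraction. What actually happens is that, viewing both as quadratics inside the big algebra of all fermion bilinears (indexed by triples $(m,i,p)$), the two images are the centralizers of each other's ``flavor'' index, and the cross-contraction terms cancel in pairs; the central term also vanishes on the $\fgl_\ell$ side since those generators preserve charge and degree. This is a short but genuine computation, not an automatic consequence of index bookkeeping. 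Beyond that wording issue the plan is sound.
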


Note that for Proposition \ref{duality1} the $\overline\fgl_{\infty}^{\oplus s}$-action on $\mathcal F_N^{\ell}$
 is given by
  \begin{align}\label{fglsaction}
 \imath_r(\bm\rk)= |S_r|, \  \  \imath_r(E_{Nm+i,Nn+j})&=
\sum_{p\in S_r} :\psi_{i}^p(-m)\bar\psi_{j}^p(n):
\end{align}
for $1\le r\le s$, $1\le i,j\le N,\  m,n\in \Z$, where  $\imath_r$ denotes
the identification map of $\overline\fgl_{\infty}$ with the $r$-th component of
$\overline\fgl_{\infty}^{\oplus s}$.

\begin{lemt}\label{lem:samesubmod}
Let  $\ell$ be a positive integer and  let $\bm{a}\in (\C^\times)^{\ell}_{q}$ with the associated partition
${\bf I}_{\bm a}=\{S_1,\dots, S_s\}$.
Then a subspace $U$ of $\mathcal F_N^\ell$
is a $\wh{\fgl_N}(\C_q)$-submodule through $\rho_{\bm{a}}$ if and only if
$U$ is a $\overline\fgl_{\infty}^{\oplus s}$-submodule.
 Furthermore, for any  $\wh{\fgl_N}(\C_q)$-submodules $V$ and $W$
of $\mathcal F_{N}^{\ell}$, a linear map $f$ from $V$ to $W$ is a $\wh{\fgl_N}(\C_q)$-module homomorphism
 if and only if $f$ is a $\overline\fgl_{\infty}^{\oplus s}$-module homomorphism.
\end{lemt}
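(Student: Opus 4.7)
The plan is to reduce the statement to a frequency-disentangling argument by first re-expressing the $\wh{\fgl_N}(\C_q)$-action $\rho_{\bm a}$ explicitly in terms of the commuting family of $\overline\fgl_\infty$-actions $\imath_1,\dots,\imath_s$. Let $\alpha_r$ denote the common value of $a_p$ for $p\in S_r$. Regrouping the defining formula for $\Psi_{i,j}^{\bm a}(m_0,m_1)$ according to the partition ${\bf I}_{\bm a}$ and identifying the inner sums via the formula for $\imath_r$ yields, for $m_1\ne 0$,
\[
\rho_{\bm a}(E_{i,j}t_0^{m_0}t_1^{m_1})
=\sum_{r=1}^s\sum_{k\in\Z}(\alpha_r q^{-k})^{m_1}\,\imath_r\!\bigl(E_{N(k-m_0)+i,\,Nk+j}\bigr)+c_{i,j,m_0}(m_1)\cdot\mathrm{id},
\]
where $c_{i,j,m_0}(m_1)=\delta_{m_0,0}\delta_{i,j}\frac{q^{m_1}}{1-q^{m_1}}\sum_{r=1}^s|S_r|\alpha_r^{m_1}$ is a scalar; an analogous central-free formula holds at $m_1=0$. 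Because $\mathcal F_N^\ell$ is a restricted $\wh{\fgl_N}(\C_q)$-module (Proposition 5.1), the sum over $k$ reduces to a finite sum when applied to any fixed vector.

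The crucial observation is that the ``frequencies'' $\{\alpha_r q^{-k}:1\le r\le s,\,k\in\Z\}$ are pairwise distinct: the definition of $(\C^\times)^\ell_q$ together with that of ${\bf I}_{\bm a}$ forces $\alpha_r/\alpha_{r'}\notin\Gamma_q$ whenever $r\ne r'$, and $q$ is not a root of unity. With this in hand, the easy direction of both statements is transparent. If $U$ is $\overline\fgl_\infty^{\oplus s}$-invariant (resp.\ $f\colon V\to W$ is $\overline\fgl_\infty^{\oplus s}$-equivariant), then each $\imath_r(E_{Nm+i,Nn+j})$ preserves $U$ (resp.\ commutes with $f$), and the scalar central term does as well, so $U$ is $\rho_{\bm a}$-invariant (resp.\ $f$ is $\rho_{\bm a}$-equivariant).

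For the converse direction the plan is a Vandermonde inversion. Fix $v\in\mathcal F_N^\ell$ and indices $(m_0,i,j)$, and let $T(v)$ be the finite set of pairs $(r,k)$ for which $\imath_r(E_{N(k-m_0)+i,Nk+j})v\ne 0$. Specializing $m_1$ to any $|T(v)|$ distinct nonzero integers produces a linear system whose matrix, built from the distinct frequencies $\alpha_r q^{-k}$, is Vandermonde and therefore invertible; after moving the scalar central term across, the right-hand sides all lie in $U$, so the inversion expresses each $\imath_r(\cdots)v$ as a linear combination of vectors in $U$. Hence $U$ is $\overline\fgl_\infty^{\oplus s}$-invariant. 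For the homomorphism assertion, the central scalars cancel between $f(\rho_{\bm a}(X)v)$ and $\rho_{\bm a}(X)f(v)$, leaving the identity
\[
\sum_{(r,k)\in T(v)\cup T(f(v))}(\alpha_r q^{-k})^{m_1}\bigl[f(\imath_r(\cdots)v)-\imath_r(\cdots)f(v)\bigr]=0\quad\text{for all }m_1\ne 0,
\]
and linear independence of distinct exponentials forces every bracket to vanish. The main obstacle is organizational rather than conceptual: one must carefully interleave the restricted-module finiteness (which legitimizes the $k$-sums on each vector) with the $(\C^\times)^\ell_q$-hypothesis (which is essential for the distinctness of frequencies, and therefore indispensable for the Vandermonde inversion).
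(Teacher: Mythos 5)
Your proposal is correct and follows essentially the same route as the paper: regroup $\rho_{\bm a}$ by the partition ${\bf I}_{\bm a}$ so that the action is written as a finite (on each vector) sum $\sum_{r,k}(\alpha_r q^{-k})^{m_1}\imath_r(\cdots)$ plus a scalar, observe that the hypothesis $\bm a\in(\C^\times)^\ell_q$ together with $q$ not a root of unity forces the frequencies $\alpha_r q^{-k}$ to be pairwise distinct, and then invert a Vandermonde system over finitely many values of $m_1$ to isolate each $\imath_r(\cdots)v$. This is exactly the argument in the paper, with the homomorphism assertion handled the same way; no gap.
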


\begin{proof} For $1\le i,j\le N,\   m_0,m_1\in \Z$, set
$$E_{i,j}(m_0,0)=E_{i,j}t_0^{m_0},$$
\begin{align*}
E_{i,j}(m_0,m_1)=E_{i,j}t_0^{m_0}t_1^{m_1}+\delta_{i,j}\delta_{m_0,0}\frac{q^{m_1}}
{q^{m_1}-1}\left(\sum_{r=1}^\ell a_r^{m_1}\right) \frac{\bm\rk_0}{\ell}\in \wh{\fgl_N}(\C_q)
\end{align*}
for $m_1\ne 0$. Then a subspace $U$ of $\mathcal F_{N}^{\ell}$ $(=\mathcal F_{N}^{\bm{a}})$
is a $\wh{\fgl_N}(\C_q)$-submodule
if and only if  $E_{i,j}(m_0,m_1)U\subset U$ for all $1\le i,j\le N,\   m_0,m_1\in \Z$.
On the other hand, from \eqref{defpsiija} and \eqref{nuaction} we have
\begin{align}\label{wteij}
E_{i,j}(m_0,m_1)v=\sum_{k\in \Z}\sum_{r=1}^s (b_rq^{-k})^{m_1}
\(\sum_{p\in S_r}:\psi_i^p(m_0-k)\bar\psi_j^p(k):v\)
\end{align}
for $v\in \mathcal F_{N}^{\bm{a}}$, where $b_r=a_p$ for $p\in S_r$. This together with \eqref{fglsaction} implies that
every $\overline\fgl_{\infty}^{\oplus s}$-submodule of $\mathcal F_N^\ell$  is
a $\wh{\fgl_N}(\C_q)$-submodule of $\mathcal F_{N}^{\bm{a}}$.

Now, let $V$ be a
$\wh{\fgl_N}(\C_q)$-submodule of $\mathcal F_{N}^{\bm{a}}\ (=\mathcal F_N^\ell)$.
Let $1\le i,j\le N$, $m_0\in \Z$ and $v\in V$ be fixed. Then there is a finite subset $K$ of $\Z$
such that
\[\sum_{p\in S_r}:\psi_i^p(m_0-k)\bar\psi_j^p(k):v=0\quad\te{for all}\ 1\le r\le s,\  k\notin K.\]
Using this we get
\begin{align*}
\sum_{k\in K}\sum_{r=1}^s (b_rq^{-k})^{m_1}
\(\sum_{p\in S_r}:\psi_i^p(m_0-k)\bar\psi_j^p(k):v\)\in V
\end{align*}
for $1\le m_1\le s|K|$.
As $q$ is not a root of unity and $b_i\notin b_j\Gamma_q$ for $1\le i\ne j\le s$, we have  that
$b_rq^{-k}$ are distinct for all $1\le r\le s,\ k\in \Z$.
Then we obtain
\[\sum_{p\in S_r}:\psi_i^p(m_0-k)\bar\psi_j^p(k):v\in V\   \   \mbox{ for }1\le r\le s,\ k\in K.\]
Note that this is also true for $k\notin K$.
Combining this with \eqref{fglsaction}, we conclude that $V$ is a $\overline\fgl_{\infty}^{\oplus s}$-submodule
of $\mathcal F_N^\ell$.
This proves the first assertion.
The second assertion can be proved similarly.
 \end{proof}

%\begin{remt}\label{more-general}
%{\em Set $\Gamma_q=\{ q^n\ |\ n\in \Z\}\subset \C^{\times}$ (a group).
%Let $\Gamma_q$ act on $\C^{\times}$ by multiplication.
%Note that a pair $(\bm \ell,\bm a)\in P(\ell,s)\times (\C^{\times})^{s}_{q}$ amounts to
%a connected partition $\{ S_1,\dots,S_s\}$ of $\{ 1,\dots,\ell\}$ in the sense that each $S_i$ is connected,
%together with a $\C^{\times}$-valued function
% with incongruent function values modulo $\Gamma_q$. }
%\end{remt}
%
%
%\begin{remt}\label{on-lemma 5.4}
%{\em Recall from Remark \ref{variation-1} that for any $\bm a\in (\C^{\times})^{\ell}$,
%we have a $\wh{\fgl_N}(\C_q)$-module $\mathcal F_{N}^{\bm a}$ of level $\ell$.
%Now, assume that $\bm a$ satisfies the condition that
%for any $1\le i,j\le \ell$, we have either $a_i=a_j$ or  $a_i\ne q^na_j$ for any $n\in \Z$.
%Define an equivalence relation $\sim$ on $\{1,\dots,\ell\}$ by
%$i\sim j$ if and only if $a_i=a_j$ and let $\{S_1,\dots,S_s\}$ be the associated partition.
%With Remark \ref{on-gl-infinity},
%from the proof we see that Lemma \ref{lem:samesubmod} holds in this setting. }
% \end{remt}

{\bf Proof of Theorem \ref{prop:dualitygl}:} Now we can conclude the proof:
From Proposition \ref{duality1}, the $\fgl_\ell$-action given by \eqref{laction} on $\mathcal F_N^\ell$ with
$$E_{r,s}=\sum_{i=1}^{N}\sum_{n\in \Z}:\psi_i^r(-n)\bar{\psi}_i^s(n):$$
for $1\le r,s\le \ell$ commutes with
the action of $\overline\fgl_{\infty}$ with
$$E_{mN+i,nN+j}=\sum_{r=1}^\ell:\psi_i^r(-m)\bar{\psi}_j^r(n):$$
for $m,n\in \Z,\ 1\le i,j\le N$.
Then we see that the $\fgl_\ell$-action given by \eqref{laction} on $\mathcal F_N^\ell$ commutes
with the $\wh{\fgl_N}(\C_q)$-action given by \eqref{nuaction} and (\ref{defpsiija})
through $\rho_{\bm{a}}$.  Since
$(\overline\fgl_{\infty}^{\oplus s},\rGL_{{\bf I}_{\bm a}})$ is a dual pair on $\mathcal F_N^\ell$, it follows immediately
from Lemma \ref{lem:samesubmod} that $(\wh{\fgl_N}(\C_q),\rGL_{{\bf I}_{\bm a}})$ is a dual pair on
$\mathcal F_N^\ell$.

\begin{remt}
{\rm  With $\fgl_N(\C)$ a subalgebra of $\wh{\fgl_N}(\C_q)$, the $\wh{\fgl_N}(\C_q)$-module
$\mathcal F_{N}^{\bm{a}}$ is naturally a $\fgl_N(\C)$-module.
Note that the $\fgl_N(\C)$-action on $\mathcal F_N^{\ell}$ is independent of $\bm{a}$
and the degree-zero subspace $\mathcal F_{N}^\ell(0)$ is a $\fgl_N(\C)$-submodule of
$\mathcal F_{N}^{\bm{a}}$.  It was known (see \cite{H2}) that
$(\fgl_N(\C),\rGL_\ell)$ is a dual pair on $\mathcal F_{N}^{\ell}(0)$.
In view of this, Theorem \ref{prop:dualitygl} can be viewed as an extension of the skew  $(\fgl_N(\C),\rGL_\ell)$-duality.}
\end{remt}

\section{$(L_{\wh{\fsl_\infty}}(\ell,0),\mathrm{GL}_{{\bf I}_{\bm a}})$-duality and Fermionic realization}
Let $\ell$ be a positive integer as before.
Recall from Lemma \ref{lem:algact} that for every $\bm{a}\in (\C^\times)^\ell$,
 we have an $\N$-graded  integrable restricted $\wh{\fsl_N}(\C_q)$-module $\mathcal F_{N}^{\bm{a}}\ (=\mathcal F_N^\ell)$
 of level $\ell$.
Then by Theorem \ref{thm:main2}, $\mathcal F_{N}^{\bm{a}}$ is naturally an $\N$-graded
 $(\Z,\chi_q)$-equivariant quasi $L_{\wh{\fsl_\infty}}(\ell,0)$-module.
In this section, we continue to show that for any $\bm{a}\in (\C^{\times})^{\ell}_{q}$,
$\mathcal F_{N}^{\bm{a}}$
is a direct sum of irreducible quasi $L_{\wh{\fsl_\infty}}(\ell,0)$-submodules with finite multiplicities.
Furthermore, we prove that every irreducible $\N$-graded $(\Z,\chi_q)$-equivariant
quasi $L_{\wh{\fsl_\infty}}(\ell,0)$-module is isomorphic to an irreducible submodule of $\mathcal F_{N}^{\bm{a}}$
for some $\bm{a}\in (\C^{\times})^{\ell}_{q}$.

%Recall from Remark \ref{more-general} that

We start with some notations. First, for $\mu\in \Z$, we define integers $\dot{\mu}$ and $\ddot{\mu}$  by
\begin{align}\label{mu-decomp}
\mu=\dot{\mu}N+\ddot{\mu}\quad\te{with}\  1\le \ddot{\mu}\le N.
\end{align}

\begin{dfnt}\label{defetamua}
{\em  For any $\bm{\mu}=(\mu_1,\dots,\mu_\ell)\in \Z^\ell$ and $\bm{a}=(a_1,\dots,a_\ell)\in (\C^\times)^\ell$, we define
a linear functional $\eta_{\bm\mu,\bm a}$ on $\wh{\CH}$ by
$\eta_{\bm\mu,\bm a}(\bm\rk_1)=0$ and
\begin{align}
\eta_{\bm\mu,\bm a}(h_{i,n})=\sum_{k=1}^\ell (a_k\,q^{-\dot{\mu}_k})^{n}\delta_{\ddot{\mu}_k,i}
\end{align}
for $1\le i\le N,\ n\in \Z$.}
\end{dfnt}

\begin{remt}\label{connection-12}
{\em For $1\le i\le N$, define $\Lambda_i\in \mathcal H^{*}$ by
\begin{eqnarray}
\Lambda_i(h_{j,0})=\delta_{i,j}\   \   \mbox{ for }1\le j\le N.
\end{eqnarray}
For $\bm\mu\in \Z^\ell$ and $\bm{a}\in (\C^\times)^\ell$, set
$\bm{\lambda}=(\Lambda_{\ddot{\mu}_{1}},\dots,\Lambda_{\ddot{\mu}_{\ell}})$ and
$\bm{c}=(a_{1}q^{-\dot{\mu}_{1}},\dots, a_{\ell} q^{-\dot{\mu}_{\ell}})$.
Then we have $\eta_{\bm{\mu},\bm{a}}=\eta_{\bm{\lambda},\bm{c}}$ which was defined in (\ref{def-eta}).
Furthermore, let $\bm\nu\in \Z^\ell$ and $\bm{b}\in (\C^\times)^\ell$.
Then it follows from Lemma \ref{lem:isoclass}
 that $\eta_{{\bm\mu},\bm{a}}=\eta_{{\bm\nu},\bm{b}}$ if  and only if
 there is a permutation
$\sigma$ on $\{1,\dots,\ell\}$ and an $\bm m=(m_1,\dots,m_\ell)\in \Z^\ell$ such that
\[\sigma(\bm \mu)=\bm\nu+N\bm m=(\nu_1+Nm_1,\dots,\nu_\ell+Nm_\ell),\]
and that \[\sigma(\bm{a})=q^{\bm m}\bm b:
=(q^{m_1}b_1,\dots,q^{m_\ell}b_\ell).\]
 }
\end{remt}

\begin{remt}\label{remark-J}
{\em For $1\le i\le N,\ \mu\in \Z_{+}$, set
\begin{align}
J_{i,\mu}=\{ k\in \Z_{\ge 0}\ |\  kN+i\le \mu\}.
\end{align}
From definition, for $1\le i\le N-1$ we have
\begin{align}
J_{i,\mu}=\begin{cases}
J_{i+1,\mu}\cup \{ \dot{\mu}\} &\te{ if }i=\ddot{\mu}\\
J_{i,\mu}=J_{i+1,\mu}&\te{ otherwise}.\end{cases}
\end{align}
We also have
\begin{align}
J_{1,\mu}=\{0,1,\dots,\dot{\mu}\}=
\begin{cases}
J_{N,\mu} &\te{ if } \ddot{\mu}= N\\
J_{N,\mu}\cup \{ \dot{\mu}\} &\te{ otherwise}.
\end{cases}
\end{align}

On the other hand, for a negative integer $\mu$, we set
\begin{align}
\bar{J}_{i,\mu}=\{ k\in \Z_{\ge 0}\ |\  (k+1)N-i+1\le -\mu\}=J_{N+1-i,-\mu}.
\end{align}
For $1\le i\le N-1$, we have
$\bar{J}_{i+1,\mu}=\bar{J}_{i,\mu}\cup \{ -\dot{\mu}-1\}$ if $i=\ddot{\mu}$ and
$\bar{J}_{i,\mu}=\bar{J}_{i+1,\mu}$ otherwise.
We also have
\begin{align}
\bar{J}_{N,\mu}=
\begin{cases}
\{0,1,\dots,-\dot{\mu}-2\}=\bar{J}_{1,\mu} &\te{ if } \ddot{\mu}= N\\
\{0,1,\dots,-\dot{\mu}-1\}=\bar{J}_{1,\mu}\cup \{ -\dot{\mu}-1\} &\te{ otherwise}.
\end{cases}
\end{align}}
\end{remt}

Set $I_{N}=\{ 1,2,\dots,N\}$ for convenience.  Define maps $\pi$ and $\bar{\pi}: \Z\times I_N\rightarrow \Z$ by
\begin{align}
\pi(n,i)=nN-i,\  \  \  \    \bar{\pi}(n,i)=nN+i-1\   \  \mbox{ for }(n,i)\in \Z\times I_N.
\end{align}
Note that both $\pi$ and $\bar{\pi}$ are bijections and
$$\pi(\Z_{+}\times I_N)=\Z_{\ge 0},\  \  \pi(\Z_{-}\times I_N)=\Z_{-},\  \
\bar{\pi}(\Z_{\ge 0}\times I_N)=\Z_{\ge 0},\  \  \bar{\pi}(\Z_{-}\times I_N)=\Z_{-}.$$
For convenience, we define $\phi^p_m,\ \bar{\phi}^p_m$ for $1\le p\le \ell,\ m\in \Z$ by
$$\phi^p_{\pi(n,i)}=\psi_i^p(n),\   \   \   \ \bar{\phi}^p_{\bar{\pi}(n,i)}=\bar{\psi}^p_i(n)$$
for $1\le i\le N,\ n\in \Z$, i.e.,
\begin{eqnarray}
\phi^p_{nN-i}=\psi_i^p(n),\   \   \   \   \   \  \bar{\phi}^p_{nN+i-1}=\bar{\psi}^p_i(n).
\end{eqnarray}
Then
\begin{eqnarray}
&&\phi^p_{m}\phi^q_n+\phi^q_n\phi^p_m=0,\   \   \    \   \bar{\phi}^p_{m}\bar{\phi}^q_n+\bar{\phi}^q_n\bar{\phi}^p_m=0,\\
&&\phi^p_{m}\bar{\phi}^q_n+\bar{\phi}^q_n\phi^p_m=\delta_{p,q}\delta_{m+n+1,0}
\end{eqnarray}
for $1\le p,q\le \ell,\ m,n\in \Z$.

For $\mu\in \Z,\ 1\le p\le \ell$, we define an element $A^p(\mu)$ of the Clifford algebra $\mathcal C_N^\ell$
in terms of $\phi^p_n$ and $\bar{\phi}^p_n$ for $n\in \Z$ by
\begin{eqnarray}
A^{p}(\mu)=\begin{cases}\phi^{p}_{-\mu}\cdots \phi^p_{-2} \phi^p_{-1}&\mbox{ for }\mu\ge 1,\\
1&\mbox{ for }\mu=0,\\
\bar{\phi}^p_{\mu}\cdots \bar{\phi}^p_{-2} \bar{\phi}^p_{-1}&\mbox{ for }\mu\le -1.
\end{cases}
\end{eqnarray}
Let $\mu$ be a positive integer. Then
\begin{align}
\phi^p_{n}A^p(\mu)|0\>=0\   \   &\mbox{ for }n\ge 0,\   \   \bar{\phi}^p_{n}A^p(\mu)|0\>=0\   \mbox{ for }n\ge \mu,\\
\phi^p_{-n}A^p(\mu)=0\   \  & \mbox{ for }1\le n\le \mu.
\end{align}
(Note that $\phi^p_{n}A^p(\mu)=(-1)^{\mu}A^p(\mu)\phi^p_n$ and $\phi^p_{-n}\phi^p_{-n}=0$.)
Similarly, we have
\begin{align}
\bar{\phi}^p_{n}A^p(-\mu)|0\>=0\   \   &\mbox{ for }n\ge 0,\   \   \phi^p_{n}A^p(-\mu)|0\>=0\   \mbox{ for }n\ge \mu,
\label{property-01}\\
\bar{\phi}^p_{-n}A^p(-\mu)=0\   \  & \mbox{ for }1\le n\le \mu.\label{property-02}
\end{align}
We also have
\begin{align}\label{property-03}
A^p(\mu)A^q(\nu)=\pm A^q(\nu)A^p(\mu)\   \   \   \mbox{ for } 1\le p\ne q\le \ell,\ \mu,\nu\in \Z.
\end{align}

Using these properties, we have (cf. \cite{W}):

\begin{prpt}\label{lem:jointhwv}
Let ${\bm a}\in (\C^{\times})^{\ell}_{q}$ with $\ell$ a positive integer and let
$\bm{\mu}\in \Z_{++}^{{\bf I}_{\bm a}}$.
Set
\begin{align}
v_{{\bm{\mu}}}=A(\bm{\mu})|0\>=A^1(\mu_1)\cdots A^\ell(\mu_\ell)|0\>\in \mathcal F_N^\ell.
\end{align}
Then $v_{\bm{\mu}}$ is a joint highest weight vector in
$\mathcal F_{N}^{\bm{a}}=\mathcal F_{N}^{\ell}$ in the sense that
\begin{align}\label{hwv1}
\wh{\fsl_N}(\C_q)^+ v_{\bm{\mu}}=0,\quad
h v_{\bm{\mu}}=\eta_{\bm{\mu},\bm{a}}(h)v_{\bm{\mu}}\quad \te{for}\ h\in \wh{\CH}
\end{align}
and that
\begin{align}\label{hwv2}
\rN_{{\bf I}_{\bm a}}^+ v_{\bm{\mu}}=v_{\bm{\mu}},\quad
hv_{\bm{\mu}}=\bm{\mu}(h) v_{\bm{\mu}}\quad
\te{for}\ h\in \rH_{{\bf I}_{\bm a}}.
\end{align}
\end{prpt}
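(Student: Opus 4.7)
The strategy is to verify \eqref{hwv1} and \eqref{hwv2} separately, both by direct computation in the Clifford algebra using the contraction properties \eqref{property-01}--\eqref{property-03} of the ``filling'' operators $A^p(\mu)$, together with the explicit formulas for the $\rGL_\ell$-action \eqref{laction} and the $\wh{\fsl_N}(\C_q)$-action $\Psi^{\bm a}_{i,j}(m_0,m_1)$ from Proposition \ref{lem:algact} and \eqref{defpsiija}.

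For \eqref{hwv2}, the subgroup $\rN^+_{{\bf I}_{\bm a}}$ is infinitesimally generated by $E_{r,s}$ with $r<s$ and $r\sim_{\bm a} s$, so $\mu_r\ge\mu_s$. Each bilinear $:\psi^r_i(-n)\bar\psi^s_i(n):$ (with $r\ne s$) anticommutes past all $A^q(\mu_q)$ for $q\notin\{r,s\}$ by \eqref{property-03}, so $E_{r,s}v_{\bm\mu}$ reduces to bilinears applied to $A^r(\mu_r)A^s(\mu_s)|0\>$. A short case split on the signs of $(\mu_r,\mu_s)$ then shows that every surviving term requires either $\bar\psi^s_i(n)$ to remove a mode absent from $A^s(\mu_s)$, or --- after a successful removal --- $\psi^r_i(-n)$ to create a mode already occupied in $A^r(\mu_r)$ (the constraint $\mu_r\ge\mu_s$ being precisely what ensures this); both give zero by anticommutation. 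The Cartan identity $hv_{\bm\mu}=\bm\mu(h)v_{\bm\mu}$ reduces to $E_{r,r}v_{\bm\mu}=\mu_r v_{\bm\mu}$, an occupation-number count from the construction of $A^r(\mu_r)$.

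For the annihilation part of \eqref{hwv1}, by \eqref{slNq-pm} the positive part $\wh{\fsl_N}(\C_q)^+$ is spanned by elements with $m_0\ge 1$ (any $i,j$) or $m_0=0,\ j>i$; the anomaly in \eqref{defpsiija}, being supported only at $i=j$ and $m_0=0$, plays no role here. For each $p$ one verifies
\begin{align*}
\sum_{k\in \Z}q^{-m_1 k}\!:\!\psi^p_i(m_0-k)\bar\psi^p_j(k)\!:\,A^p(\mu_p)|0\>=0.
\end{align*}
When $m_0\ge 1$, an index analysis via \eqref{property-01}--\eqref{property-02} shows that for every $k$ either $\bar\psi^p_j(k)$ annihilates $A^p(\mu_p)|0\>$ outright, or the subsequent creation $\psi^p_i(m_0-k)$ either has a nonnegative index (annihilating $|0\>$) or duplicates an existing factor of $A^p(\mu_p)$; the key numerical input is $m_0 N\ne j-i$ for $m_0\ge 1$, which forbids the would-be ``cancelling'' configuration. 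When $m_0=0$ and $j>i$, the off-diagonal indexing forces the same duplication.

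For the Cartan eigenvalue part of \eqref{hwv1}, using \eqref{defhin} it suffices to prove the single-factor identity
\begin{align*}
\Bigl(\sum_{k\in \Z} q^{-nk}\!:\!\psi^p_j(-k)\bar\psi^p_j(k)\!:+\tfrac{q^n}{1-q^n}\Bigr)A^p(\mu_p)|0\>=g^p_{j,n}\,A^p(\mu_p)|0\>
\end{align*}
for $n\ne 0$, with $g^p_{j,n}=q^{-n\dot\mu_p}/(1-q^n)$ when $j\le \ddot\mu_p$ and $g^p_{j,n}=q^{n(1-\dot\mu_p)}/(1-q^n)$ when $j>\ddot\mu_p$. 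This follows from a finite occupation-number computation: the anomaly $q^n/(1-q^n)$ is the regularized Dirac-sea contribution from $|0\>$, and the finitely many occupied modes in $A^p(\mu_p)|0\>$ shift it by one of two values depending on whether $j$ lies above or below $\ddot\mu_p$. The telescoping identities $g^p_{j,n}-g^p_{j+1,n}=q^{-n\dot\mu_p}\delta_{\ddot\mu_p,j}$ for $1\le j\le N-1$ and $g^p_{N,n}-q^n g^p_{1,n}=q^{-n\dot\mu_p}\delta_{\ddot\mu_p,N}$ then yield, after weighting by $a_p^n$ and summing over $p$, exactly the eigenvalues $\eta_{\bm\mu,\bm a}(h_{i,n})$ from Definition \ref{defetamua}. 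The case $n=0$ is handled analogously, with $\bm\rk_0$ in $h_{N,0}$ tracking the level $\ell$; the relation $\eta_{\bm\mu,\bm a}(\bm\rk_1)=0$ matches $\bm\rk_1$ acting trivially on $\mathcal F_N^{\bm a}$. The main obstacle I anticipate is the bookkeeping in this last identity --- confirming that the anomaly regularizes precisely the right divergence across all four sign combinations of $(\mu_p,n)$ and that the case split at $j=\ddot\mu_p$ is uniform.
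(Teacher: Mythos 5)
Your proposal takes essentially the same approach as the paper's proof: a direct Clifford-algebra computation using the contraction properties of the operators $A^p(\mu_p)$, with a case split on the sign of $\mu_p$, and the Cartan eigenvalue extracted by a finite occupation-number count against the normal-ordered bilinears. The paper packages the Cartan computation via the index sets $J_{i,\mu}$ and $\bar J_{i,\mu}$ (its Claim~B together with Remark~\ref{remark-J}) rather than your eigenvalue function $g^p_{j,n}$, and it leaves the $\rGL_{{\bf I}_{\bm a}}$-side conditions \eqref{hwv2} as ``proved similarly,'' which you spell out; these are the same underlying computations, merely organized differently (the relevant numerical input in your annihilation step should read $m_0 N\ne i-j$ rather than $m_0N\ne j-i$, but this does not affect the argument).
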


\begin{proof} First, from (\ref{property-01})-(\ref{property-03}) we have the following facts for $1\le p\le \ell,\ n\in \Z$:
\begin{align}
\phi^{p}_{n}A(\bm\mu)|0\>=0& \   \   \mbox{ if }n\ge -\mu_p,\label{propert-11}\\
\bar{\phi}^{p}_{n}A(\bm\mu)|0\>=0& \   \   \mbox{ if  } n\ge \mu_p.\label{propert-12}
\end{align}

(A) Annihilation property $\wh{\fsl_N}(\C_q)^+ v_{\bm{\mu}}=0$.

Let $1\le i,j\le N,\ n\in \Z$ such that either $n>0$ or $n=0$ and $i<j$.
We claim
\begin{align}
:\psi_i^{p}(n-k)\bar\psi_j^p(k): A(\bm\mu)|0\>=0\quad\te{for all}\  1\le p\le \ell,\  k\in \Z.
\end{align}

(I) The case with $n=0$ and $i<j$. Note that it is equivalent to prove
$$\sum_{k\in \Z}f(k):\psi_i^p(-k)\bar{\psi}_j^p(k):A(\bm\mu)|0\>=0$$
for any complex-valued function $f$ on $\Z$. From definition, for $1\le p,q\le \ell$ we have
\begin{eqnarray}\label{general-exp}
&&\sum_{k\in \Z}f(k):\psi_i^p(-k)\bar{\psi}_j^q(k):\\
&=&\sum_{k\ge 0}f(k)\psi_i^p(-k)\bar{\psi}_j^q(k)
-\sum_{k\le -1}f(k)\bar{\psi}_j^q(k)\psi_i^p(-k)\nonumber\\
&=&\sum_{k\ge 0}f(k)\phi^p_{-kN-i}\bar{\phi}^q_{kN+j-1}
-\sum_{k\le -1}f(k)\bar{\phi}^q_{kN+j-1}\phi^p_{-kN-i}\nonumber\\
&=&\sum_{k\ge 0}\left(f(k)\phi^p_{-kN-i}\bar{\phi}^q_{kN+j-1}-f(-k-1)\bar{\phi}^q_{-(k+1)N+j-1}\phi^p_{(k+1)N-i}\right).
\nonumber
\end{eqnarray}
Let $k\ge 0$. Assume  $\mu_p\ge 0$.
We have  $\phi^p_{(k+1)N-i}A(\bm\mu)|0\>=0$ as $(k+1)N-i\ge 0$.
One the other hand, if  $kN+j>\mu_p$, then $\bar{\phi}^p_{kN+j-1}A(\bm\mu)|0\>=0$.
If $kN+j\le \mu_p$, then $kN+i<kN+j\le \mu_p$.
In this case, we have $\phi^p_{-kN-i}\bar{\phi}^p_{kN+j-1}A(\bm\mu)|0\>=0$ as
$$\phi^p_{-kN-i}\bar{\phi}^p_{kN+j-1}A(\bm\mu)=-\bar{\phi}^p_{kN+j-1}\phi^p_{-kN-i}A(\bm\mu)
=0.$$
Thus $\sum_{k\in \Z}f(k):\psi_i^p(-k)\bar{\psi}_j^p(k):A(\bm\mu)|0\>=0$ from (\ref{general-exp}).
The case with $\mu_p< 0$ follows from a similar argument.

(II) The case with $n>0$. Note that
$$:\psi_i^{p}(n-k)\bar\psi_j^p(k):=\psi_i^{p}(n-k)\bar\psi_j^p(k)= -\bar\psi_j^p(k)\psi_i^{p}(n-k).$$
We here prove this for the case $\mu_p>0$ while the case $\mu_p<0$ follows from a similar argument.
If $k<n$, we have
$$\psi_i^{p}(n-k)A(\bm\mu)|0\>=\pm A(\bm\mu)\psi_i^{p}(n-k)|0\>
=0.$$
Assuming $k\ge n$, we have
$$\psi_i^{p}(n-k)\bar\psi_j^p(k)A(\bm\mu)|0\>=\phi^p_{(n-k)N-i}\bar{\phi}^p_{kN+j-1}A(\bm\mu)|0\>.$$
If $kN+j> \mu_p$, we have $\bar{\phi}^p_{kN+j-1}A(\bm\mu)|0\>=0.$
If $kN+j\le \mu_p$, we have
$$(n-k)N-i=-kN-j+(nN+j-i)> -\mu_p,$$ so that $\phi^p_{(n-k)N-i}A(\bm\mu)=0$. Hence
$$\phi^p_{(n-k)N-i}\bar{\phi}^p_{kN+j-1}A(\bm\mu)|0\>=-\bar{\phi}^p_{kN+j-1}\phi^p_{(n-k)N-i}A(\bm\mu)|0\>=0.$$
Therefore we have $:\psi_i^{p}(n-k)\bar\psi_j^p(k):A(\bm\mu)|0\>=0$ for $\mu_p>0$.
 Then it follows from (\ref{slNq-pm}) and (\ref{nuaction}) (and (\ref{defpsiija})) that $\wh{\fsl_N}(\C_q)^+ v_{\bm{\mu}}=0$.

(B) Determination of the highest weight.  %Recall
%\begin{eqnarray*}
%&&h_{i,n}=(E_{i,i}-E_{i+1,i+1})t_1^{n}\   \   \  \   \mbox{ for }1\le i\le N-1,\ n\in \Z,\\
%&&h_{N,n}=-q^nE_{1,1}t_1^n+E_{N,N}t_1^n\   \   \  \mbox{ for }n\ne 0,\\
%&&h_{N,0}={\bf k}_0-E_{1,1}+E_{N,N}.
%\end{eqnarray*}

For $1\le i\le N,\ n\in \Z$, from  \eqref{defpsiija} and \eqref{nuaction} we have
$$E_{i,i}t_1^n=
\sum_{p=1}^{\ell}\sum_{k\in \Z}(a_pq^{-k})^n:\psi_i^p(-k)\bar{\psi}_i^p(k):+(1-\delta_{n,0})\frac{a_p^nq^n}{1-q^n}.
$$
Then
\begin{align*}
h_{i,n}&=(E_{i,i}-E_{i+1,i+1})t_1^{n}\\
&=\sum_{p=1}^{\ell}\sum_{k\in \Z}(a_pq^{-k})^n\left(:\psi_i^p(-k)\bar{\psi}_i^p(k):
-:\psi_{i+1}^p(-k)\bar{\psi}_{i+1}^p(k):\right)
\end{align*}
 for $1\le i\le N-1,\ n\in \Z$,
\begin{align*}
h_{N,0}={\bf k}_0-E_{1,1}+E_{N,N}=\ell+\sum_{p=1}^{\ell}\sum_{k\in \Z}\left(:\psi_N^p(-k)\bar{\psi}_N^p(k):
-:\psi_{1}^p(-k)\bar{\psi}_{1}^p(k):\right),
\end{align*}
\begin{align*}
h_{N,n}&=-q^nE_{1,1}t_1^n+E_{N,N}t_1^n\\
& =\sum_{p=1}^{\ell}\sum_{k\in \Z}(a_pq^{-k})^n\left(
:\psi_{N}^p(-k)\bar{\psi}_{N}^p(k):-q^n:\psi_1^p(-k)\bar{\psi}_1^p(k):\right)+a_p^nq^n
\end{align*}
for $n\ne 0$.

{\bf Claim B:} For $1\le p\le \ell$, we have
\begin{equation*}
\sum_{k\in \Z}f(k):\psi_i^p(-k)\bar\psi_i^p(k):A(\bm\mu)|0\>=\begin{cases}
\left(\sum_{k\in J_{i,\mu_p}}f(k)\right)A(\bm\mu)|0\>\ &\te{if}\ \mu_p>0,\\
%0\ &\te{if}\ \mu_p=0,\\
-\left(\sum_{k\in \bar{J}_{i,\mu_p}}f(k)\right)A(\bm\mu)|0\>\ &\te{if}\ \mu_p<0,
\end{cases}
\end{equation*}
where $f$ is any complex-valued function on $\Z$.

We here give a proof for the case $\mu_p>0$ while the case $\mu_p<0$ follows from a similar argument.
Let $1\le i\le N$, $1\le p\le \ell$, $k\ge 0$.  We have
$$\bar{\phi}^p_{-kN-i}\phi^p_{kN+i-1}A(\bm\mu)|0\>=\pm \bar{\phi}^p_{-kN-i}A(\bm\mu)\phi^p_{kN+i-1}|0\>
=0.$$
On the other hand, we have
 $$\phi^p_{-kN-i}\bar{\phi}^p_{kN+i-1}A(\bm\mu)|0\>=\begin{cases}0\   \  &\mbox{ if }kN+i>\mu_p,\\
A(\bm\mu)|0\>\   \  &\mbox{ if }kN+i\le \mu_p,
 \end{cases} $$
noticing that $\bar{\phi}^p_{kN+i-1}A(\bm\mu)|0\>=\pm A(\bm\mu)\bar{\phi}^p_{kN+i-1}|0\>=0$ if $kN+i> \mu_p$ and
$$\phi^p_{-kN-i}\bar{\phi}^p_{kN+i-1}A(\bm\mu)|0\>
 =-\bar{\phi}^p_{kN+i-1}\phi^p_{-kN-i}A(\bm\mu)|0\>+A(\bm\mu)|0\>=A(\bm\mu)|0\>$$
 if $kN+i\le \mu_p$, as $\phi^p_{-kN-i}A(\bm\mu)=0$.
Then Claim B follows from (\ref{general-exp}) in this case.

Now, using Claim B and Remark \ref{remark-J}, we get
$h_{i,n} A(\bm\mu)|0\>=\eta_{\bm\mu,\bm a}(h_{i,n}) A(\bm\mu)|0\>$ for $1\le i\le N,\ n\in \Z$, as desired.

The other assertions can be proved similarly and we omit the details.
\end{proof}

As the main result of this section, we have:

\begin{thm}\label{thm:main4}
Let $\ell$ be a positive integer and let
 $\bm{a}\in (\C^\times)^\ell_{q}$.
Then both
$(\wh{\fsl_N}(\C_q),\rGL_{{\bf I}_{\bm a}})$  and $(L_{\wh{\fsl_\infty}}(\ell,0),\rGL_{{\bf I}_{\bm a}})$ are dual pairs on
$\mathcal F^{\bm a}_{N}$. Furthermore, we have
\begin{align}\label{decomfnella}
\mathcal F_{N}^{\bm{a}}= \bigoplus_{{\bm{\mu}}\in \Z_{++}^{\bf I}}
 L(\eta_{{\bm{\mu}},\bm{a}})\ot L_{\rGL_{{\bf I}_{\bm a}}}({\bm{\mu}})
\end{align}
as an $(\wh{\fsl_N}(\C_q),\rGL_{{\bf I}_{\bm a}})$-module.
\end{thm}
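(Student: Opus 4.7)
The plan is to leverage the dual pair $(\wh{\fgl_N}(\C_q),\rGL_{{\bf I}_{\bm a}})$ on $\mathcal F_N^{\bm a}$ from Theorem \ref{prop:dualitygl}, which already yields the isotypic decomposition
\[\mathcal F_N^{\bm a} = \bigoplus_{\bm\mu \in P(\mathcal F_N^{\bm a})} (\mathcal F_N^{\bm a})^{\rN^+_{{\bf I}_{\bm a}}}(\bm\mu) \ot L_{\rGL_{{\bf I}_{\bm a}}}(\bm\mu)\]
with each multiplicity space an irreducible $\wh{\fgl_N}(\C_q)$-module. What remains is to upgrade the left-hand factors to irreducible $\wh{\fsl_N}(\C_q)$-modules of the required isomorphism type and to pin down the index set. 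Proposition \ref{lem:jointhwv} provides a nonzero vector $v_{\bm\mu} \in (\mathcal F_N^{\bm a})^{\rN^+_{{\bf I}_{\bm a}}}(\bm\mu)$ for every $\bm\mu \in \Z_{++}^{{\bf I}_{\bm a}}$, so combined with the tautological inclusion $P(\mathcal F_N^{\bm a}) \subset \Z_{++}^{{\bf I}_{\bm a}}$ we obtain $P(\mathcal F_N^{\bm a}) = \Z_{++}^{{\bf I}_{\bm a}}$.

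Next I descend from $\wh{\fgl_N}(\C_q)$ to $\wh{\fsl_N}(\C_q)$ using $\wh{\fgl_N}(\C_q) = \wh{\fsl_N}(\C_q) \oplus \C I_N$ from \eqref{decwhfgl}. A short computation with \eqref{commutator1} shows that $I_N$ is central in $\wh{\fgl_N}(\C_q)$, hence by Schur's lemma acts as a scalar on each irreducible summand $(\mathcal F_N^{\bm a})^{\rN^+_{{\bf I}_{\bm a}}}(\bm\mu)$. Consequently any $\wh{\fsl_N}(\C_q)$-invariant subspace is automatically $\wh{\fgl_N}(\C_q)$-invariant, and irreducibility descends. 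Since Proposition \ref{lem:jointhwv} also shows $v_{\bm\mu}$ is a $\wh{\fsl_N}(\C_q)$-highest weight vector with highest weight $\eta_{\bm\mu, \bm a}$, each summand is identified with $L(\eta_{\bm\mu, \bm a})$ by uniqueness of the irreducible highest weight module.

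To complete the dual pair assertion for $\wh{\fsl_N}(\C_q)$, distinct $\bm\mu \in \Z_{++}^{{\bf I}_{\bm a}}$ must produce non-isomorphic modules. If $\eta_{\bm\mu, \bm a} = \eta_{\bm\mu', \bm a}$, then Remark \ref{connection-12} combined with Lemma \ref{lem:isoclass} furnishes a permutation $\sigma$ of $\{1,\dots,\ell\}$ and $\bm m \in \Z^\ell$ with $\sigma(\bm a) = q^{\bm m}\bm a$ and $\sigma(\bm\mu) = \bm\mu' + N\bm m$; the hypothesis $\bm a \in (\C^\times)^\ell_q$ together with $q$ not being a root of unity forces $\bm m = 0$ and confines $\sigma$ to permute within the blocks of ${\bf I}_{\bm a}$, whereupon the dominance of $\bm\mu, \bm\mu'$ yields $\bm\mu = \bm\mu'$. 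The parallel assertion for $(L_{\wh{\fsl_\infty}}(\ell,0), \rGL_{{\bf I}_{\bm a}})$ is then immediate from Theorem \ref{thm:main2}, which sends integrable $\wh{\fsl_N}(\C_q)$-submodules to equivariant quasi $L_{\wh{\fsl_\infty}}(\ell,0)$-submodules and respects the commuting $\rGL_{{\bf I}_{\bm a}}$-action. The main obstacle I anticipate is the descent of irreducibility in the second paragraph: it relies delicately on the centrality of $I_N$, and while the underlying computation is short one must account for the central extensions in \eqref{commutator1}.
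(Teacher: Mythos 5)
Your proof is correct and follows essentially the same route as the paper: pull back the $(\wh{\fgl_N}(\C_q),\rGL_{{\bf I}_{\bm a}})$ isotypic decomposition from Theorem \ref{prop:dualitygl}, use Proposition \ref{lem:jointhwv} to show all $\bm\mu \in \Z_{++}^{{\bf I}_{\bm a}}$ occur and to identify $v_{\bm\mu}$ as a highest weight vector of weight $\eta_{\bm\mu,\bm a}$, descend irreducibility to $\wh{\fsl_N}(\C_q)$ via the centrality of $I_N$ and the splitting \eqref{decwhfgl}, and finish with Remark \ref{connection-12} for injectivity and Theorem \ref{thm:main2} for the quasi-module version. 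The one step you flag as a risk is in fact safe: $I_N = \sum_i E_{i,i}$ sits in degree $(0,0)$, so the central cocycle terms in \eqref{commutator1} vanish and the commutator $[\,I_N, E_{k,l}t_0^{n_0}t_1^{n_1}\,]$ telescopes to zero, confirming $I_N$ is central in $\wh{\fgl_N}(\C_q)$.
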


\begin{proof} We write $\bf I$ simply for ${\bf I}_{\bm a}$.
From Theorem \ref{prop:dualitygl} we have
\begin{align}\label{decomfnella1}
\mathcal F_{N}^{\bm{a}}= \bigoplus_{{\bm\mu}\in \Z_{++}^{\bf I}}
(\mathcal F_{N}^{\bm{a}})^{\rN_{\bf I}^+}({\bm\mu})\ot L_{\rGL_{\bf I}}({\bm\mu})
\end{align}
as a $(\wh{\fgl_N}(\C_q),\rGL_{\bf I})$-module, where  $(\mathcal F_{N}^{\bm{a}})^{\rN_{\bf I}^+}({\bm\mu})$
if nonzero is an irreducible $\wh{\fgl_N}(\C_q)$-module.
As
$(\mathcal F_{N}^{\bm{a}})^{\rN_{\bf I}^+}({\bm\mu})\ne 0$
for ${\bm\mu}\in \Z_{++}^{\bf I}$ by Proposition \ref{lem:jointhwv},
it must be an irreducible $\wh{\fgl_N}(\C_q)$-module, so that
the identity matrix $I_N$ acts as a scalar.
Then it follows from \eqref{decwhfgl} that
 $(\mathcal F_{N}^{\bm{a}})^{\rN_{\bf I}^+}({\bm\mu})$ is  an irreducible $\wh{\fsl_N}(\C_q)$-module.
Also from Proposition \ref{lem:jointhwv}, $v_{{\bm{\mu}}}$ is a (nonzero) highest weight vector
 in $(\mathcal F_{N}^{\bm{a}})^{\rN_{\bf I}^+}({\bm\mu})$
 of weight  $\eta_{{\bm{\mu}},\bm{a}}$.
 It follows that $(\mathcal F_{N}^{\bm{a}})^{\rN_{\bf I}^+}({\bm\mu})$ is a highest weight
 irreducible $\wh{\fsl_N}(\C_q)$-module isomorphic to $L(\eta_{{\bm{\mu}},\bm{a}})$.
 Thus we have the  decomposition  \eqref{decomfnella}.
 Finally, the dual pair assertion follows from the fact that
 $L(\eta_{{\bm{\mu}},\bm{a}})\cong L(\eta_{{\bm{\nu}},\bm{a}})$ if and only if
 $\eta_{{\bm{\mu}},\bm{a}}=\eta_{{\bm{\nu}},\bm{a}}$,
 which is equivalent to ${\bm{\mu}}={\bm{\nu}}$ by Remark \ref{connection-12}.
\end{proof}

%\begin{remt}\label{rem:cases=1}
%{\rm  Note that in the special case where
%$\bm{\ell}=\ell$ (with $s=1$) is a positive integer and $\bm{a}=a$ a nonzero complex number,
%by Theorem \ref{thm:main4} we have  dual pairs
%$(\wh{\fsl_N}(\C_q),\rGL_\ell)$ and $(L_{\wh{\fsl_\infty}}(\ell,0),\rGL_\ell)$
%on $\mathcal F_N^{\ell,a}$ with
%\begin{align}
%\mathcal F_{N}^{\ell,a}= \bigoplus_{\bm{\mu}\in \dot{P}_+(\ell)}
% L(\eta_{\bm{\mu},a})\ot L_{\ell}(\bm{\mu}).
%\end{align}}
%\end{remt}

\begin{remt}
{\em Note that for any partition $\bf I$ of $\{1,\dots,\ell\}$, there exists
 $\bm{a}\in (\C^\times)_q^\ell$ such that ${\bf I}={\bf I}_{\bm a}$.
 It then follows from Theorem \ref{thm:main4} that every irreducible regular module for the
Levi subgroup $\rGL_{\bf I}$ of $\rGL_\ell$ occurs in the decomposition \eqref{decomfnella1}.
On the other hand, every Levi subgroup of $\rGL_\ell$ is conjugate to $\rGL_{\bf I}$
for some partition $\bf I$ of $\{1,\dots,\ell\}$. }
\end{remt}

As the first application of Theorem \ref{thm:main4}, we have the following realization
of irreducible $\N$-graded integrable restricted $\wh{\fsl_N}(\C_q)$-modules of level $\ell$,
or equivalently irreducible $\N$-graded $(\Z,\chi_q)$-equivariant quasi $L_{\wh{\fsl_\infty}}(\ell,0)$-modules:

\begin{thm}\label{thm:ferreal}
Let $\ell$ be a positive integer.
Then for any ${\bm a}\in (\C^{\times})^{\ell}_{q}$ and
$\bm\mu\in \Z_{++}^{{\bf I}_{\bm a}}$,  the space
$(\mathcal F_{N}^{\bm{a}})^{\rN_{{\bf I}_{\bm a}}^+}(\bm\mu)$ of
the $\rN_{{\bf I}_{\bm a}}^+$-fixed points of weight
 $\bm\mu$ in $\mathcal F_{N}^{\bm{a}}$
is an irreducible $\N$-graded $(\Z,\chi_q)$-equivariant quasi
$L_{\wh{\fsl_\infty}}(\ell,0)$-module.
On the other hand, every  irreducible $\N$-graded $(\Z,\chi_q)$-equivariant quasi
$L_{\wh{\fsl_\infty}}(\ell,0)$-module can be realized this way.
\end{thm}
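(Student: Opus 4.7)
The plan is to derive both assertions from Theorem \ref{thm:main4} combined with the classification of irreducible $\N$-graded equivariant quasi $L_{\wh{\fsl_\infty}}(\ell,0)$-modules in Theorem \ref{thm:main3} (equivalently, Corollary \ref{main-slNCq}), with the only real work lying in showing that every parameter $(\bm\lambda,\bm c)\in P_+^k\times(\C^\times)^k$ with $\sum_i\lambda_i(\bm\rk_0)=\ell$ arises in the form $(\bm\mu,\bm a)$ of Definition \ref{defetamua} for some $\bm a\in(\C^\times)_q^\ell$ and some $\bm\mu\in\Z_{++}^{{\bf I}_{\bm a}}$.

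For the first (realization) assertion, I would argue as follows. Fix $\bm a\in(\C^\times)_q^\ell$ and $\bm\mu\in\Z_{++}^{{\bf I}_{\bm a}}$. Theorem \ref{thm:main4} gives the $(\wh{\fsl_N}(\C_q),\rGL_{{\bf I}_{\bm a}})$-module decomposition \eqref{decomfnella}, from which $(\mathcal F_N^{\bm a})^{\rN_{{\bf I}_{\bm a}}^+}(\bm\mu)\cong L(\eta_{\bm\mu,\bm a})$ as an $\wh{\fsl_N}(\C_q)$-module. Since $\mathcal F_N^{\bm a}$ is $\N$-graded and the $\rGL_{{\bf I}_{\bm a}}$-action preserves the degree grading (it is even built from the degree-preserving action in \eqref{laction}), this isotypic factor inherits the $\N$-grading and is an irreducible $\N$-graded integrable highest weight $\wh{\fsl_N}(\C_q)$-module of level $\ell$. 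Applying Theorem \ref{thm:main2} and Remark \ref{rem:gradedver} converts this to the desired irreducible $\N$-graded $(\Z,\chi_q)$-equivariant quasi $L_{\wh{\fsl_\infty}}(\ell,0)$-module.

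For the second (exhaustiveness) assertion, Theorem \ref{thm:main3} tells me that every irreducible $\N$-graded $(\Z,\chi_q)$-equivariant quasi $L_{\wh{\fsl_\infty}}(\ell,0)$-module is of the form $L(\eta_{\bm\lambda,\bm c})$ for some $\bm\lambda\in P_+^k,\ \bm c\in(\C^\times)^k$ with $\sum_i\lambda_i(\bm\rk_0)=\ell$. Using Remark \ref{replacefun}, I may replace $(\bm\lambda,\bm c)$ by a pair in which each $\lambda_i$ is a fundamental dominant weight $\Lambda_{j_i}$ with $1\le j_i\le N$; in particular $k=\ell$. My task is then to produce $\bm a\in(\C^\times)_q^\ell$ and $\bm\mu\in\Z_{++}^{{\bf I}_{\bm a}}$ with $\eta_{\bm\mu,\bm a}=\eta_{\bm\lambda,\bm c}$; once this is done, the first assertion plus the isomorphism criterion discussed after Definition \ref{defhwm} identify $L(\eta_{\bm\lambda,\bm c})$ with $(\mathcal F_N^{\bm a})^{\rN_{{\bf I}_{\bm a}}^+}(\bm\mu)$.

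The construction of $(\bm\mu,\bm a)$ proceeds in three steps. First, partition $\{1,\dots,\ell\}$ according to the equivalence relation $i\approx j$ iff $c_ic_j^{-1}\in\Gamma_q$. Second, on each equivalence class $S$ pick a representative scalar $b_S\in\C^\times$ and write $c_i=b_Sq^{-n_i}$ for a unique $n_i\in\Z$ whenever $i\in S$; then set $a_i=b_S$ and $\dot\mu_i=n_i$, $\ddot\mu_i=j_i$, so that $\mu_i=n_iN+j_i$. Comparing with Remark \ref{connection-12} shows $\eta_{\bm\mu,\bm a}=\eta_{\bm\lambda,\bm c}$, and by construction $\bm a\in(\C^\times)_q^\ell$. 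Third, within each equivalence class of ${\bf I}_{\bm a}$, permute the indices (applying the same permutation to $\bm\mu$ and $\bm a$, which by Remark \ref{connection-12} preserves $\eta_{\bm\mu,\bm a}$) so that $\mu_i$ is weakly decreasing as $i$ runs through the class in increasing order; this ensures $\bm\mu\in\Z_{++}^{{\bf I}_{\bm a}}$. The main obstacle, and the only substantive point, is checking that after this permutation the equality $\eta_{\bm\mu,\bm a}=\eta_{\bm\lambda,\bm c}$ is maintained and that the partition ${\bf I}_{\bm a}$ really is determined by equality of the $a_i$'s, both of which follow from Remark \ref{connection-12} and the genericity hypothesis on $q$ built into the definition of $(\C^\times)_q^\ell$.
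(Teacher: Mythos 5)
Your proposal is correct and follows essentially the same route as the paper's own proof: the first assertion is read off from Theorem \ref{thm:main4}, and for the second you invoke Remark \ref{replacefun} to reduce to fundamental weights and then build $(\bm\mu,\bm a)$ from $(\bm\lambda,\bm c)$ by the identical partition-and-representative construction, checking $\eta_{\bm\mu,\bm a}=\eta_{\bm\lambda,\bm c}$ via the identification in Remark \ref{connection-12}. The only cosmetic point: in your third step, permuting within an equivalence class of ${\bf I}_{\bm a}$ leaves $\bm a$ unchanged (all entries in a class are the same scalar), so only $\bm\mu$ is actually reordered; the paper phrases this more directly as the permutation invariance $\eta_{\tau\bm\lambda,\tau\bm c}=\eta_{\bm\lambda,\bm c}$, but your reasoning is sound.
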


\begin{proof} The first assertion follows from  Theorem \ref{thm:main4}.  As for the second assertion,
recall from (the proof of) Theorem \ref{thm:main4} that $(\mathcal F_{N}^{\bm{a}})^{\rN_{{\bf I}_{\bm a}}^+}(\bm\mu)
\cong L(\eta_{\bm\mu,\bm{a}})$.
Thus it suffices to show that for any pair $(\bm\lambda,\bm{c})\in (P_+)^k\times (\C^\times)^k$ in Theorem \ref{thm:main3},
there exist  $\bm{a}\in (\C^\times)^\ell_{q}$  and
$\bm\mu\in \Z_{++}^{{\bf I}_{\bm a}}$
such that $\eta_{\bm\mu,\bm{a}}=\eta_{\bm{\lambda},\bm{c}}$.
In view of Remark \ref{replacefun}, it suffices to consider the case
 with $k=\ell$ and $\bm{\lambda}=(\lambda_1,\dots,\lambda_\ell)$ such that
 $\lambda_1,\dots,\lambda_\ell\in P_+$ are all fundamental.

Define an equivalence relation on $\{1,\dots,\ell\}$ such that $i\sim j$ if and only if $c_i=q^n c_j$
for some $n\in \Z$. Let ${\bf I}=\{S_1,\dots, S_s\}$ be the  partition of $\{1,\dots,\ell\}$ associated to $\sim$.
For $1\le r\le s$, pick a representative $b_r\in \{c_i\ |\ i\in S_r\}$.
Define $\bm{a}=(a_1,\dots,a_\ell)\in (\C^\times)^\ell$ where $a_k=b_r$ if $k\in S_r$.
Then $\bm{a}$ lies in $(\C^\times)^\ell_{q}$ and
${\bf I}={\bf I}_{\bm a}$ is the partition of $\{1,\dots,\ell\}$ associated to $\bm{a}$.
As $\lambda_1,\dots,\lambda_\ell\in P_+$ are all fundamental,
for each $1\le i\le \ell$ there is a unique $m_i\in \{1,\dots,N\}$ such that
$\lambda_i(h_{j,0})=\delta_{j,m_i}$ for $1\le j\le N$.
On the other hand, for each $1\le i\le \ell$, there exist unique $1\le r\le s$ and $n_i\in \Z$
 such that $c_i=b_r q^{-n_i}$. Then define $\mu_i=N n_i+m_i$.
Since $\eta_{\tau\bm{\lambda},\tau\bm{c}}=\eta_{\bm{\lambda},\bm{c}}$ for any permutation $\tau$ on $\{1,\dots,\ell\}$,
 we can assume $\mu_i\ge \mu_{j}$ for $i\sim j$ and $i<j$.
Then the $\ell$-tuple $\bm{\mu}:=(\mu_1,\dots,\mu_\ell)$ lies in $\Z_{++}^{{\bf I}}$,
and we have $\eta_{\bm{\mu},\bm{a}}=\eta_{\bm\lambda,\bm{c}}$, as desired.
\end{proof}

\begin{remt}
{\rm Note that although the $\Z$-graded vertex algebra $L_{\wh{\fsl_\infty}}(\ell,0)$ has infinite-dimensional homogenous subspaces,
it follows from Theorem \ref{thm:ferreal} and Proposition \ref{lem:algact} that all the
 homogenous subspaces of every irreducible $\N$-graded $(\Z,\chi_q)$-equivariant quasi
$L_{\wh{\fsl_\infty}}(\ell,0)$-module are finite-dimensional.}
\end{remt}

\section{$L_{\wh{\fsl_\infty}}(\ell,0)\otimes L_{\wh{\fsl_\infty}}(\ell',0)\rightarrow
L_{\wh{\fsl_\infty}}(\ell+\ell',0)$ branching}

Recall that for any nonzero complex number $\ell$, the $\Z$-graded vertex algebra $L_{\wh{\fsl_\infty}}(\ell,0)$
contains $\fsl_\infty$ as its degree-one subspace which
generates $L_{\wh{\fsl_\infty}}(\ell,0)$ as a vertex algebra.
Let $\ell$ and $\ell'$ be positive integers which are fixed throughout this section.
 It is known that $L_{\wh{\fsl_\infty}}(\ell+\ell',0)$ as an $\wh{\fsl_\infty}$-module is isomorphic to the submodule
 generated by the vacuum vector
 ${\bf 1}\otimes {\bf 1}$ in $L_{\wh{\fsl_\infty}}(\ell,0)\ot L_{\wh{\fsl_\infty}}(\ell',0)$.
 This gives rise to a $\Z$-graded vertex algebra embedding of $L_{\wh{\fsl_\infty}}(\ell+\ell',0)$ into
 $L_{\wh{\fsl_\infty}}(\ell,0)\ot L_{\wh{\fsl_\infty}}(\ell',0)$, which is uniquely determined by
 \begin{align}\label{vaembed}
u\mapsto u\ot  \bm{1}+\bm{1}\ot  u\quad\te{for}\ u\in \fsl_\infty.
\end{align}
 It is clear that this is also a vertex $\Z$-algebra embedding with the particular $\Z$-module structures
 defined in Section 3.2.
 We then view $L_{\wh{\fsl_\infty}}(\ell+\ell',0)$ as a vertex $\Z$-subalgebra of
 $L_{\wh{\fsl_\infty}}(\ell,0)\ot L_{\wh{\fsl_\infty}}(\ell',0)$.
 In this section, we shall determine the $L_{\wh{\fsl_\infty}}(\ell,0)\otimes L_{\wh{\fsl_\infty}}(\ell',0)\rightarrow
L_{\wh{\fsl_\infty}}(\ell+\ell',0)$ branching rule for irreducible equivariant quasi modules.

In view of Proposition \ref{prop:tensordec} and Theorem \ref{thm:ferreal},
every irreducible $\N$-graded $(\Z,\chi_q)$-equivariant quasi
module for $L_{\wh{\fsl_\infty}}(\ell,0)\ot L_{\wh{\fsl_\infty}}(\ell',0)$ is isomorphic to
\[L(\eta_{{\bm\mu},\bm{a}})\ot L(\eta_{{\bm\nu},\bm{b}})\]
 for some pairs
\begin{align*}
 (\bm{a},\bm{\mu})%=((\ell_1,\dots,\ell_r),(\bm\mu_1,\dots,\bm\mu_r),(a_1,\dots,a_r))
\in (\C^{\times})_{q}^{\ell}\times \Z_{++}^{{\bf I}_{\bm a}}\quad \te{and}\quad
(\bm{b},{\bm\nu})%=((\ell_1',\dots,\ell_{s}'),(\bm\nu_1,\dots,\bm\nu_{s}),(b_1,\dots,b_{s}))
\in  (\C^{\times})_{q}^{\ell'}\times \Z_{++}^{{\bf I}_{\bm b}}.
\end{align*}
For any $1\le i\le \ell$, set $m_i=0$ if $a_i\notin b_j\Gamma_q$ for any $j$ and otherwise set $m_i$ to be the unique integer such that
 $q^{m_i}a_i\in \{b_1,\dots,b_{\ell'}\}$.
Furthermore, set $\bm{m}=(m_1,\dots,m_\ell)\in \Z^\ell$.
  Recall from Remark \ref{connection-12} that
  \[\eta_{{\bm\mu},\bm{a}}=\eta_{\bm\mu+N\bm{m},q^{\bm m}\bm{a}}.\]
It is straightforward to see that $q^{\bm m}\bm{a}\in (\C^{\times})^\ell_q$, $\bm\mu+N\bm{m}\in \Z_{++}^{{\bf I}_{q^{\bm m}\bm a}},$ and
 \[(q^{\bm{m}}\bm{a},\bm{b})\in (\C^\times)^{\ell+\ell'}_q.\]
 In view of this, it suffices to consider $L(\eta_{{\bm\mu},\bm{a}})\ot L(\eta_{{\bm\nu},\bm{b}})$ with
$(\bm{a},\bm{b})\in (\C^\times)^{\ell+\ell'}_q$.

View ${\rm GL}_{\ell}\times {\rm GL}_{\ell'}$ as a subgroup of ${\rm GL}_{\ell+\ell'}$ in the obvious way.
Then we have a natural group embedding
\[\rGL_{{\bf I}_{\bm a}}\times \rGL_{{\bf I}_{\bm b}}(\subset {\rm GL}_{\ell}\times {\rm GL}_{\ell'})\hookrightarrow
\rGL_{{\bf I}_{(\bm a,\bm b)}}(\subset {\rm GL}_{\ell+\ell'}).\]
Write the $\rGL_{{\bf I}_{(\bm a,\bm b)}}\rightarrow \rGL_{{\bf I}_{\bm a}}\times \rGL_{{\bf I}_{\bm b}}$ branching as
\begin{align}\label{branchingrg}
\Res_{\rGL_{{\bf I}_{\bm a}}\times \rGL_{{\bf I}_{\bm b}}}^{\rGL_{{\bf I}_{(\bm a,\bm b)}}}
L_{\rGL_{{\bf I}_{(\bm a,\bm b)}}}(\bm{\xi})= \bigoplus_{\bm{\mu}\in \Z_{++}^{{\bf I}_{\bm a}},\
\bm{\nu}\in \Z_{++}^{{\bf I}_{\bm b}}} D_{\bm{\mu}, \bm{\nu}}^{\bm{\xi}}
 \left(L_{\rGL_{{\bf I}_{\bm a}}}(\bm{\mu})\ot
L_{\rGL_{{\bf I}_{\bm b}}}(\bm{\nu})\right)
\end{align}
for $\bm{\xi}\in \Z_{++}^{{\bf I}_{(\bm a,\bm b)}}$, where
$D_{\bm{\mu},\bm{\nu}}^{\bm{\xi}}$ denotes
the (finite) multiplicity  of
the $\rGL_{{\bf I}_{\bm a}}\times \rGL_{{\bf I}_{\bm b}}$-module $L_{\rGL_{{\bf I}_{\bm a}}}(\bm{\mu})\ot
L_{\rGL_{{\bf I}_{\bm b}}}(\bm{\nu})$ in $L_{\rGL_{{\bf I}_{(\bm a,\bm b)}}}(\bm{\xi})$.
Using this decomposition we have the following main result of this section:

\begin{thm}\label{thm:tensordec}
Let $\ell,\ell'$ be positive integers and let
$$\bm a\in  (\C^\times)^\ell_q,\   \   \bm b\in (\C^\times)^{\ell'}_q,\   \
{\bm\mu}\in \Z_{++}^{{\bf I}_{\bm a}},\   \
{\bm\nu}\in   \Z_{++}^{{\bf I}_{\bm b}} $$
such that  $(\bm a, \bm b)\in (\C^\times)^{\ell+\ell'}_q$.
Then
$L(\eta_{{\bm\mu},\bm{a}})\ot L(\eta_{{\bm\nu},\bm{b}})$
as a quasi $L_{\wh{\fsl_\infty}}(\ell+\ell',0)$-module decomposes into irreducible submodules as
\begin{align}\label{coffximu}
\Res_{L_{\wh{\fsl_\infty}}(\ell+\ell',0)}^{L_{\wh{\fsl_\infty}}(\ell,0)\ot L_{\wh{\fsl_\infty}}(\ell',0)}
L(\eta_{{\bm\mu},\bm{a}})\ot L(\eta_{{\bm\nu},\bm{b}})
= &\bigoplus_{\bm{\xi}\in \Z_{++}^{{\bf I}_{(\bm a,\bm b)}}}
D_{\bm{\mu}, \bm{\nu}}^{\bm{\xi}}\,
L(\eta_{{\bm\xi},(\bm{a},\bm{b})}).
\end{align}
\end{thm}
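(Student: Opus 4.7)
The plan is to deduce the branching from a seesaw reciprocity based on two compatible decompositions of the fermionic Fock space $\mathcal F_N^{(\bm a,\bm b)}$ attached to $(\bm a,\bm b)\in(\C^\times)^{\ell+\ell'}_q$.

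First I would verify the compatibility between the vertex algebra embedding \eqref{vaembed} and the diagonal Lie algebra embedding. Under the canonical correspondence of Proposition \ref{prop:main1} and Theorem \ref{thm:main2}, the embedding $L_{\wh{\fsl_\infty}}(\ell+\ell',0)\hookrightarrow L_{\wh{\fsl_\infty}}(\ell,0)\otimes L_{\wh{\fsl_\infty}}(\ell',0)$ translates, on the level of $\wh{\fsl_N}(\C_q)$-modules, into the diagonal embedding $\wh{\fsl_N}(\C_q)\hookrightarrow\wh{\fsl_N}(\C_q)\oplus\wh{\fsl_N}(\C_q)$. Comparing the fermionic formulas in \eqref{defpsiija} for $\bm a$, $\bm b$ and $(\bm a,\bm b)$, one sees that $\Psi^{(\bm a,\bm b)}_{i,j}(n,m)$ acting on $\mathcal F_N^{\bm a}\otimes\mathcal F_N^{\bm b}$ coincides with $\Psi^{\bm a}_{i,j}(n,m)\otimes 1+1\otimes\Psi^{\bm b}_{i,j}(n,m)$. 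Hence $\mathcal F_N^{\bm a}\otimes\mathcal F_N^{\bm b}$, viewed as a quasi $L_{\wh{\fsl_\infty}}(\ell+\ell',0)$-module via \eqref{vaembed}, is isomorphic to $\mathcal F_N^{(\bm a,\bm b)}$ with its standard action $\rho_{(\bm a,\bm b)}$.

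Next I would exploit this identification by applying Theorem \ref{thm:main4} in two different ways, thereby realizing $\mathcal F_N^{(\bm a,\bm b)}$ as a module for the two commuting algebra-group pairs
\begin{equation*}
\bigl(\wh{\fsl_N}(\C_q),\ \rGL_{{\bf I}_{(\bm a,\bm b)}}\bigr)\quad\text{and}\quad
\bigl(\wh{\fsl_N}(\C_q)\oplus\wh{\fsl_N}(\C_q),\ \rGL_{{\bf I}_{\bm a}}\times\rGL_{{\bf I}_{\bm b}}\bigr),
\end{equation*}
which sit inside the larger $\bigl(\fgl_\infty\oplus\fgl_\infty,\rGL_{\ell+\ell'}\bigr)$-picture as a seesaw. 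The first gives
\begin{equation*}
\mathcal F_N^{(\bm a,\bm b)}=\bigoplus_{\bm\xi\in\Z_{++}^{{\bf I}_{(\bm a,\bm b)}}} L(\eta_{\bm\xi,(\bm a,\bm b)})\otimes L_{\rGL_{{\bf I}_{(\bm a,\bm b)}}}(\bm\xi),
\end{equation*}
while the second gives
\begin{equation*}
\mathcal F_N^{\bm a}\otimes\mathcal F_N^{\bm b}=\bigoplus_{\bm\mu,\bm\nu}\bigl(L(\eta_{\bm\mu,\bm a})\otimes L(\eta_{\bm\nu,\bm b})\bigr)\otimes\bigl(L_{\rGL_{{\bf I}_{\bm a}}}(\bm\mu)\otimes L_{\rGL_{{\bf I}_{\bm b}}}(\bm\nu)\bigr).
\end{equation*}

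Finally I would apply the $\rN_{{\bf I}_{\bm a}}^+\times\rN_{{\bf I}_{\bm b}}^+$-fixed point functor of weight $(\bm\mu,\bm\nu)$ to both decompositions. On the tensor decomposition this isolates the irreducible quasi $\bigl(L_{\wh{\fsl_\infty}}(\ell,0)\otimes L_{\wh{\fsl_\infty}}(\ell',0)\bigr)$-module $L(\eta_{\bm\mu,\bm a})\otimes L(\eta_{\bm\nu,\bm b})$. On the first decomposition the same operation picks out, in each $\bm\xi$-component, the space of $\rN_{{\bf I}_{\bm a}}^+\times\rN_{{\bf I}_{\bm b}}^+$-fixed vectors of weight $(\bm\mu,\bm\nu)$ in $L_{\rGL_{{\bf I}_{(\bm a,\bm b)}}}(\bm\xi)$, whose dimension is precisely the Levi branching multiplicity $D^{\bm\xi}_{\bm\mu,\bm\nu}$ appearing in \eqref{branchingrg}. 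Equating the results and using that the diagonal $\wh{\fsl_N}(\C_q)$-action on the left-hand side is exactly the action induced from $L_{\wh{\fsl_\infty}}(\ell+\ell',0)\subset L_{\wh{\fsl_\infty}}(\ell,0)\otimes L_{\wh{\fsl_\infty}}(\ell',0)$, we obtain \eqref{coffximu}. The main obstacle is the first step: carefully checking that the tensor product equivariant quasi module structure, once restricted to $L_{\wh{\fsl_\infty}}(\ell+\ell',0)$ via \eqref{vaembed}, reproduces $\rho_{(\bm a,\bm b)}$; once this identification is in hand, the seesaw reciprocity reduces the branching to the purely group-theoretic Levi branching \eqref{branchingrg}.
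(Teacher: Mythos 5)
Your proposal is correct and follows essentially the same route as the paper's own proof: identify $\mathcal F_N^{\bm a}\ot\mathcal F_N^{\bm b}$ with $\mathcal F_N^{(\bm a,\bm b)}$, realize the two dual pairs of Theorem \ref{thm:main4} as a seesaw, and extract the branching multiplicities by comparing the two isotypic decompositions (your final step of applying the $\rN_{{\bf I}_{\bm a}}^+\times\rN_{{\bf I}_{\bm b}}^+$-fixed-point functor of weight $(\bm\mu,\bm\nu)$ is just a pointwise version of the paper's comparison of the two full $(L_{\wh{\fsl_\infty}}(\ell+\ell',0),\rGL_{{\bf I}_{\bm a}}\times\rGL_{{\bf I}_{\bm b}})$-module decompositions). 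The explicit fermionic check that $\Psi^{(\bm a,\bm b)}_{i,j}(n,m)=\Psi^{\bm a}_{i,j}(n,m)\ot 1+1\ot\Psi^{\bm b}_{i,j}(n,m)$, which you flag as the main obstacle, is indeed the compatibility the paper asserts without spelling out, and it is a direct split of the $p$-summation in \eqref{defpsiija}.
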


\begin{proof} Identify the Fock space $\mathcal F_N^{\bm{a}}\ot \mathcal F_N^{\bm{b}}$ $(=\mathcal F_N^{\ell}\ot \mathcal F_N^{\ell'})$
 with $\mathcal F_N^{(\bm a,\bm b)}$  $(=\mathcal F_N^{\ell+\ell'})$ canonically.
By Theorem \ref{thm:main4}, we have the following two dual pairs
\begin{align}\label{seesaw1}
(L_{\wh{\fsl_\infty}}(\ell,0)\ot L_{\wh{\fsl_\infty}}(\ell',0), \rGL_{{\bf I}_{\bm a}}\times \rGL_{{\bf I}_{\bm b}})
\quad\te{and}\quad (L_{\wh{\fsl_\infty}}(\ell+\ell',0), \rGL_{{\bf I}_{(\bm a,\bm b)}})
\end{align}
 on
 $\mathcal F_N^{\bm{a}}\ot \mathcal F_N^{\bm{b}}\  (=\mathcal F_N^{(\bm a,\bm b)})$.
Recall the following canonical embeddings
\begin{align*}
L_{\wh{\fsl_\infty}}(\ell+\ell',0)\hookrightarrow L_{\wh{\fsl_\infty}}(\ell,0)\ot L_{\wh{\fsl_\infty}}(\ell',0)
\quad\te{and}\quad \rGL_{{\bf I}_{\bm a}}\times \rGL_{{\bf I}_{\bm b}}\hookrightarrow \rGL_{{\bf I}_{(\bm a,\bm b)}}.
\end{align*}
Notice that the $L_{\wh{\fsl_\infty}}(\ell+\ell',0)$-action on $\mathcal F_N^{(\bm{a},\bm{b})}$ coincides with that on
$\mathcal F_N^{\bm{a}}\ot \mathcal F_N^{\bm{a}}$ via the embedding
$L_{\wh{\fsl_\infty}}(\ell+\ell',0)\hookrightarrow L_{\wh{\fsl_\infty}}(\ell,0)\ot L_{\wh{\fsl_\infty}}(\ell',0)$, while
the $\rGL_{{\bf I}_{\bm a}}\times \rGL_{{\bf I}_{\bm b}}$-action on $\mathcal F_N^{(\bm{a},\bm{b})}$
via the embedding
$\rGL_{{\bf I}_{\bm a}}\times \rGL_{{\bf I}_{\bm b}}\hookrightarrow \rGL_{{\bf I}_{(\bm a,\bm b)}}$ coincides with that on
$\mathcal F_N^{\bm{a}}\ot \mathcal F_N^{\bm{a}}$.
That is, the two dual pairs in  \eqref{seesaw1} form a seesaw pair in the sense of \cite{Ku}.
The rest essentially follows from  the  reciprocity law attached to this seesaw pair
 as in the classical case (see \cite{H2}).

First, by \eqref{decomfnella} we have
\begin{align*}
\mathcal F_{N}^{\bm{a}}= \bigoplus_{\bm{\mu}\in \Z_{++}^{{\bf I}_{\bm a}}}
 L(\eta_{\bm{\mu},\bm{a}})\ot L_{\rGL_{{\bf I}_{\bm a}}}(\bm{\mu})\quad\te{and}\quad
 \mathcal F_{N}^{\bm{b}}= \bigoplus_{\bm{\nu}\in \Z_{++}^{{\bf I}_{\bm b}}}
 L(\eta_{\bm{\nu},\bm{n}})\ot L_{\rGL_{{\bf I}_{\bm b}}}(\bm{\nu}).
\end{align*}
Then
\begin{align}\label{tenprodec11}
\mathcal F_{N}^{\bm{a}}\ot
\mathcal F_{N}^{\bm{b}}
= \bigoplus_{\bm{\mu}\in \Z_{++}^{{\bf I}_{\bm a}},\  \bm{\nu}\in \Z_{++}^{{\bf I}_{\bm b}}}
\(L(\eta_{\bm{\mu},\bm{a}})\ot L(\eta_{\bm{\nu},\bm{a}})\)
\ot \(L_{\rGL_{{\bf I}_{\bm a}}}(\bm{\mu})\ot L_{\rGL_{{\bf I}_{\bm b}}}(\bm{\nu})\)
\end{align}
as an $(L_{\wh{\fsl_\infty}}(\ell+\ell',0),\rGL_{{\bf I}_{\bm a}}\times \rGL_{{\bf I}_{\bm b}})$-module.
On the other hand, by \eqref{decomfnella}  we have
\begin{align*}
\mathcal F_N^{{\bf I}_{(\bm a,\bm b)}}
= \bigoplus_{\bm{\xi}\in \Z_{++}^{{\bf I}_{(\bm a,\bm b)}}}
L(\eta_{\bm{\xi},(\bm{a},\bm{b})})\ot L_{\rGL_{{\bf I}_{(\bm a,\bm b)}}}(\bm{\xi}).
\end{align*}
Combining this with \eqref{branchingrg}, we get
\begin{align}\label{tenprodec12}
\mathcal F_N^{{\bf I}_{(\bm a,\bm b)}}=
\bigoplus_{\bm{\mu}\in \Z_{++}^{{\bf I}_{\bm a}},\  \bm{\nu}\in \Z_{++}^{{\bf I}_{\bm b}}}
\bigoplus_{\bm{\xi}\in \Z_{++}^{{\bf I}_{(\bm a,\bm b)}}}
D_{\bm{\mu},\bm{\nu}}^{\bm{\xi}}\, L(\eta_{\bm{\xi},(\bm{a},\bm{b})})\ot
(L_{\rGL_{{\bf I}_{\bm a}}}(\bm{\mu})\ot L_{\rGL_{{\bf I}_{\bm b}}}(\bm{\nu}))
\end{align}
as an $(L_{\wh{\fsl_\infty}}(\ell+\ell',0),\rGL_{{\bf I}_{\bm a}}\times \rGL_{{\bf I}_{\bm b}})$-module.
With the identification $\mathcal F_N^{(\bm{a},\bm{b})}
=\mathcal F_N^{\bm{a}}\ot \mathcal F_N^{\bm{b}}$,
combining \eqref{tenprodec11} with \eqref{tenprodec12} we obtain (\ref{coffximu}) as desired.
\end{proof}

It is known (cf. \cite{GW}) that for any positive integer $n$, the branching
from $\rGL_{n+1}$ to $\rGL_{n}\times \rGL_1$ on every finite-dimensional irreducible module is multiplicity free.
Then as an immediate consequence of Theorem \ref{thm:tensordec} (with  $\ell'=1$) we have:

\begin{cort}
The branching from $L_{\wh{\fsl_\infty}}(\ell,0)\ot L_{\wh{\fsl_\infty}}(1,0)$ to
$L_{\wh{\fsl_\infty}}(\ell+1,0)$ on every irreducible $\N$-graded $(\Z,\chi_q)$-equivariant quasi module
is multiplicity free.
\end{cort}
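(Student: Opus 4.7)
The plan is to derive this directly from Theorem \ref{thm:tensordec} specialized to $\ell'=1$, reducing the multiplicity-freeness to the classical branching from $\rGL_{n+1}$ to $\rGL_n\times \rGL_1$. First I would invoke Proposition \ref{prop:tensordec} to note that every irreducible $\N$-graded $(\Z,\chi_q)$-equivariant quasi $L_{\wh{\fsl_\infty}}(\ell,0)\ot L_{\wh{\fsl_\infty}}(1,0)$-module has the form $L(\eta_{\bm\mu,\bm a})\ot L(\eta_{\nu,b})$ with $\bm a\in(\C^\times)^\ell_q$, $b\in\C^\times$, $\bm\mu\in \Z_{++}^{{\bf I}_{\bm a}}$, $\nu\in\Z$. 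By the reduction carried out in the paragraph just before Theorem \ref{thm:tensordec}, after replacing $\bm a$ by $q^{\bm m}\bm a$ for a suitable $\bm m\in\Z^\ell$, one may assume $(\bm a,b)\in(\C^\times)^{\ell+1}_q$. Theorem \ref{thm:tensordec} then expresses the restriction as
\[
\bigoplus_{\bm\xi\in\Z_{++}^{{\bf I}_{(\bm a,b)}}} D^{\bm\xi}_{\bm\mu,\nu}\, L(\eta_{\bm\xi,(\bm a,b)}),
\]
with multiplicities equal to the $\rGL_{{\bf I}_{(\bm a,b)}}\to \rGL_{{\bf I}_{\bm a}}\times \rGL_{{\bf I}_b}$ branching multiplicities; so it suffices to prove $D^{\bm\xi}_{\bm\mu,\nu}\leq 1$.

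Next I would analyze the partition ${\bf I}_{(\bm a,b)}$. Writing ${\bf I}_{\bm a}=\{S_1,\dots,S_s\}$, the defining condition on $(\C^\times)^{\ell+1}_q$ leaves exactly two possibilities: either $b\notin a_i\Gamma_q$ for every $1\leq i\leq\ell$, in which case ${\bf I}_{(\bm a,b)}={\bf I}_{\bm a}\cup\{\{\ell+1\}\}$ and the subgroup inclusion $\rGL_{{\bf I}_{\bm a}}\times \rGL_1\hookrightarrow \rGL_{{\bf I}_{(\bm a,b)}}$ is an equality (so all branching multiplicities are $1$); or there is a unique block $S_r$ and some $i\in S_r$ with $b=a_i$, in which case ${\bf I}_{(\bm a,b)}$ is obtained from ${\bf I}_{\bm a}$ by replacing $S_r$ with $S_r\cup\{\ell+1\}$. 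In the second case the embedding $\rGL_{{\bf I}_{\bm a}}\times \rGL_1\hookrightarrow \rGL_{{\bf I}_{(\bm a,b)}}$ becomes the product of identity inclusions on the factors indexed by $r'\neq r$ with the standard embedding $\rGL_{|S_r|}\times \rGL_1\hookrightarrow \rGL_{|S_r|+1}$.

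The claim then follows from the classical fact (a special case of the Pieri rule, see \cite{GW}) that the restriction of any finite-dimensional irreducible regular $\rGL_{n+1}$-module to $\rGL_n\times \rGL_1$ is multiplicity free. Since the branching on all the other Levi factors is the identity, every $D^{\bm\xi}_{\bm\mu,\nu}$ is either $0$ or $1$, and the corollary follows. The whole argument is essentially routine given Theorem \ref{thm:tensordec}; the only point that needs attention is verifying that the case analysis is exhaustive, which is immediate from the definition of $(\C^\times)^{\ell+1}_q$ forcing $b$ to be either equal to or $\Gamma_q$-disjoint from each $a_i$.
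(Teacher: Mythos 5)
Your proof is correct and takes essentially the same route as the paper: invoking Theorem \ref{thm:tensordec} with $\ell'=1$ and reducing to the classical multiplicity-free branching from $\rGL_{n+1}$ to $\rGL_n\times\rGL_1$. You have spelled out the case analysis of ${\bf I}_{(\bm a,b)}$ that the paper leaves implicit (either $\{\ell+1\}$ forms a new singleton block, making the inclusion an equality, or it merges into a unique block $S_r$), which is exactly the content of the paper's one-line appeal to Theorem \ref{thm:tensordec} plus \cite{GW}.
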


It is important to note that the branching from $L_{\wh{\fsl_\infty}}(\ell,0)\ot L_{\wh{\fsl_\infty}}(1,0)$ to
$L_{\wh{\fsl_\infty}}(\ell+1,0)$ on irreducible $\N$-graded modules
are {\em not} multiplicity free.

\begin{remt}
{\rm In view of Theorem \ref{thm:main2}, Theorem \ref{thm:tensordec} also gives an explicit tensor product decomposition
for any two irreducible integrable highest weight $\wh{\fsl_N}(\C_q)$-modules.
Let $\mathcal O_{int}^{f}$ be the category of $\wh{\fsl_N}(\C_q)$-modules which are sums of finitely many irreducible
integrable highest weight modules. By Theorem \ref{thm:tensordec} we
 conclude that $\mathcal O_{int}^{f}$ is  a semisimple abelian tensor category.
 This can be viewed as a $q$-analogue of a result of Wang \cite[Theorem 5.2]{W}.}
\end{remt}

\section{$L_{\wh{\fsl_\infty}}(\ell,0)\rightarrow L_{\wh{\fsl_{\infty,{\bf N}}}}(\ell,0)$ branching law}
Let $\ell,d$ be positive integers  and
 let ${\bf N}=(N_1,\dots,N_d)\in \Z_+^d$ with $N_1\ge 2,\dots,N_d\ge 2$.
Set
\begin{align}
N_{(0)}=0,\   \  N_{(r)}=N_1+\cdots+N_r\   \   \mbox{  for }1\le r\le d.
\end{align}
 Especially, set
\begin{align}
N=N_{(d)}=N_1+\cdots+N_d.
\end{align}

For $1\le r\le d$,  set
\begin{align}
\fsl_{\infty,{\bf N}}^{(r)}=\<E_{mN+i,nN+j}\ |\ m,n\in \Z,\ N_{(r-1)}+1\le i\ne j\le N_{(r)}\>,
\end{align}
a Lie subalgebra of $\fsl_{\infty}$.
Then set
\begin{align}
\fsl_{\infty,{\bf N}}=\sum_{r=1}^d \fsl_{\infty,{\bf N}}^{(r)},
\end{align}
a subalgebra of $\fsl_{\infty}$, which actually is a direct sum of the Lie algebras $\fsl_{\infty,{\bf N}}^{(r)}$.
%In fact, $\fsl_{\infty,{\bf N}}$ is a Levi subalgebra  of $\fsl_\infty$.
Notice that each subalgebra $\fsl_{\infty,{\bf N}}^{(r)}$ is isomorphic to $\fsl_\infty$
with an isomorphism $\pi_r$ given by
\begin{align}\label{pi-r}
\pi_{r}(E_{mN+i,nN+j})=E_{mN_r+\bar{i},nN_r+\bar{j}}
\end{align}
for $m,n\in \Z, \  N_{(r-1)}+1\le i\ne j\le N_{(r)},$ where $\bar{i}=i-N_{(r-1)},\  \bar{j}=j-N_{(r-1)}$
(so that $1\le \bar{i}\ne \bar{j}\le N_r$).
Then $\fsl_{\infty,{\bf N}}$ is naturally isomorphic to $\fsl_\infty^{\oplus d}$ with an isomorphism
$$\pi=\pi_1\times \pi_2\times \cdots \times \pi_d.$$
Equip $\fsl_{\infty,{\bf N}}$ with the bilinear form inherited from $\fsl_\infty$, and on the other hand,
equip $\fsl_\infty^{\oplus d}$ with the canonical bilinear form associated to the bilinear form on $\fsl_\infty$.
It is straightforward to see that the isomorphism preserves  the invariant bilinear forms.
Consequently, the isomorphism $\pi$ extends uniquely to an affine Lie algebra isomorphism $\hat{\pi}$ from
$\wh{\fsl_{\infty,{\bf N}}}$ to $\wh{\fsl_\infty^{\oplus d}}$, preserving the canonical central elements.
Furthermore, $\hat{\pi}$ gives rise to a vertex algebra isomorphism
\begin{align}\label{pi-va}
\pi_{va}: \  L_{\wh{\fsl_\infty,{\bf N}}}(\ell,0)\longrightarrow L_{\wh{\fsl_\infty^{\oplus d}}}(\ell,0)
\cong L_{\wh{\fsl_\infty}}(\ell,0)^{\ot d}.
\end{align}

Recall from Definition \ref{sigma-def} the automorphism $\sigma$ of $\fgl_{\infty}$, where
\begin{align*}
\sigma(E_{m,n})=E_{m+1,n+1}\   \   \   \   \mbox{ for }m,n\in \Z.
\end{align*}
It can be readily seen that $\sigma^N$ preserves
the subalgebras $\fsl_{\infty,{\bf N}}^{(r)}$ for $1\le r\le d$, hence
the subalgebra $\fsl_{\infty,{\bf N}}$. We then view $\sigma^N$ as an automorphism of the $\Z$-graded vertex algebra
$L_{\wh{\fsl_\infty,{\bf N}}}(\ell,0)$.
Let $R_{N}$ be the representation of $\Z$ on
$L_{\wh{\fsl_\infty,{\bf N}}}(\ell,0)$ given by
\begin{align}
R_N(n)=\sigma^{nN}\chi_{q}(n)^{-L(0)}=\sigma^{nN}q^{-nL(0)}\   \   \   \mbox{ for }n\in \Z.
\end{align}
Then $L_{\wh{\fsl_\infty,{\bf N}}}(\ell,0)$ is a $\Z$-graded vertex $\Z$-algebra.
 Note that $L_{\wh{\fsl_\infty}}(\ell,0)$ contains $L_{\wh{\fsl_{\infty,{\bf N}}}}(\ell,0)$ as a $\Z$-graded vertex subalgebra.

Notice that under the Lie algebra isomorphism $\pi_r$ from $\fsl_{\infty,{\bf N}}^{(r)}$ to $\fsl_\infty$ (see (\ref{pi-r})),
the automorphism $\sigma^N$ of $\fsl_{\infty,{\bf N}}^{(r)}$ corresponds to the automorphism $\sigma^{N_r}$ of
$\fsl_\infty$.

\begin{dfnt}
{\em For $1\le r\le d$, with $n\in \Z$ acting as $\sigma^{nN_r}$ on
the $\Z$-graded vertex algebra $L_{\wh{\fsl_\infty}}(\ell,0)$, we obtain a vertex $\Z$-algebra,
 which we denote by $(L_{\wh{\fsl_\infty}}(\ell,0),\rho_{N_r})$. }
\end{dfnt}

With this, the vertex algebra isomorphism $\pi_{va}$ (see (\ref{pi-va}) is actually
a vertex $\Z$-algebra isomorphism from $(L_{\wh{\fsl_\infty,{\bf N}}}(\ell,0),\rho_N)$
 to the tensor product vertex $\Z$-algebra
\begin{align}
(L_{\wh{\fsl_\infty}}(\ell,0), \rho_{N_1})\ot \cdots \ot (L_{\wh{\fsl_\infty}}(\ell,0),\rho_{N_d})
\end{align}
 (cf. Example \ref{tensorex} and Lemma \ref{lem:charintslmod}).

Our main goal of this section is to determine
 the $L_{\wh{\fsl_\infty}}(\ell,0)\rightarrow L_{\wh{\fsl_{\infty,{\bf N}}}}(\ell,0)$ branching law.
Recall that equivariant quasi $(L_{\wh{\fsl_\infty}}(\ell,0), \rho_{N_r})$-modules
correspond to integrable and restricted $\wh{\fsl_{N_r}}(\C_q)$-modules of level $\ell$ for $1\le r\le d$.
Recall also from Definition \ref{defetamua} that the linear functional $\eta_{\bm\mu,\bm a}$ on $\wh{\CH}$ also
depends on the fixed positive integer $N$. In the following, we shall need such linear functionals for various positive integers $N$.
For this reason, we here shall denote the functional $\eta_{\bm\mu,\bm a}$ by
$\eta^{(N)}_{\bm\mu,\bm a}$, to show its dependence on $N$.

\begin{remt}
{\em Note that with the vertex $\Z$-algebra isomorphism $\pi_{va}$, it follows from Proposition \ref{prop:tensordec} and
Theorem \ref{thm:ferreal} that each irreducible $\N$-graded $(\Z,\chi_q)$-equivariant quasi
$L_{\wh{\fsl_\infty,{\bf N}}}(\ell,0)$-module is of the form
\underline{}\begin{align}\label{overlineNmod}
L(\eta_{{\bm\mu}_1,\bm{a}_1}^{(N_1)})\ot \cdots \ot
L(\eta_{{\bm\mu}_d,\bm{a}_d}^{(N_d)}),
\end{align}
where $\bm{a}_r\in (\C^{\times})_q^{\ell},\  {\bm\mu}_r\in \Z_{++}^{{\bf I}_{\bm{a}_r}}$ for $1\le r\le d$.}
\end{remt}

Let $\bf I$ be a partition of $\{1,\dots,\ell\}$.
Set $\rGL^{d}_{\bf I}=\rGL_{\bf I}\times \cdots\times \rGL_{\bf I}$ ($d$-times) and
view $\rGL_{\bf I}$ as the diagonal subgroup.
Write the $\rGL_{\bf I}^{d}\rightarrow \rGL_{\bf I}$ branching
(tensor product decomposition) as
\begin{align}\label{cximu}
\Res_{\rGL_{\bf I}}^{\rGL_{\bf I}^{d}} L({\bm\mu}_1)\ot
\cdots  \ot L({\bm\mu}_d)= \bigoplus_{\bm\xi\in \Z_{++}^{\bf I}}
C_{{\bm\mu}_1,\dots,{\bm\mu}_d}^{\bm\xi}\, L(\bm\xi),
\end{align}
where ${\bm\mu}_1,\dots,{\bm\mu}_d\in \Z_{++}^{\bf I}$ and
$C_{{\bm\mu}_1,\dots,{\bm\mu}_d}^{\bm\xi}$
denotes the indicated multiplicity.

\begin{remt}\label{special-cases}
{\em Let ${\bf I}=\{S_1,\dots,S_s\}$ be a partition of $\{1,\dots,\ell\}$.
Note that in case $s=\ell$, we have $\rGL_{\bf I}=\rGL_1^{\ell}$, so the tensor product decomposition
above is always multiplicity free. On the other hand, if $\ell=2$ and $s=1$,
 we have $\rGL_{\bf I}=\rGL_2$. It is known that the tensor product decomposition
above with $d=2$ is also always multiplicity free.}
\end{remt}

As the main result of this section, we have:

\begin{thm}\label{thm:bl2}
Let $\ell,d$ be positive integers and let
${\bf N}=(N_1,\dots,N_d)\in \Z_+^d$ with $N_i\ge 2$. Set $N=N_1+\cdots +N_d$.
Then for any  $\bm{a}\in (\C^{\times})_q^\ell, \  {\bm\xi}\in \Z_{++}^{{\bf I}_{\bm a}}$,
$L(\eta_{{\bm\xi},\bm{a}}^{(N)})$ decomposes into
 irreducible equivariant quasi $L_{\wh{\fsl_{\infty,{\bf N}}}}(\ell,0)$-submodules with multiplicities as
\begin{align}\label{Nd-main}
\Res_{L_{\wh{\fsl_{\infty,{\bf N}}}}(\ell,0)}^{L_{\wh{\fsl_\infty}}(\ell,0)} L(\eta_{{\bm\xi},\bm{a}}^{(N)})
= \bigoplus_{{\bm\mu}_1,\dots,{\bm\mu}_d\in \Z_{++}^{{\bf I}_{\bm a}}}
C_{{\bm\mu}_1,\dots,{\bm\mu}_d}^{{\bm\xi}}\,
L(\eta_{{\bm\mu}_1,\bm{a}}^{(N_1)})\ot \cdots \ot
L^d(\eta_{{\bm\mu}_d,\bm{a}}^{(N_d)}),
\end{align}
where the multiplicities $C_{{\bm\mu}^1,\dots,{\bm\mu}_d}^{{\bm\xi}}$ are the same as in (\ref{cximu}).
\end{thm}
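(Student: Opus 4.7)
The plan is to reprise the seesaw-pair strategy of Theorem \ref{thm:tensordec}, this time with the Fock module $\mathcal F_N^{\bm a}$ as the pivot. Fix $\bm a\in(\C^{\times})_q^{\ell}$. By Theorem \ref{thm:main4} we already have the dual pair $(L_{\wh{\fsl_\infty}}(\ell,0),\rGL_{{\bf I}_{\bm a}})$ on $\mathcal F_N^{\bm a}$ together with the decomposition
\begin{equation*}
\mathcal F_N^{\bm a}=\bigoplus_{\bm\xi\in\Z_{++}^{{\bf I}_{\bm a}}}L(\eta_{\bm\xi,\bm a}^{(N)})\otimes L_{\rGL_{{\bf I}_{\bm a}}}(\bm\xi).
\end{equation*}
Restricting through the vertex-algebra inclusion $L_{\wh{\fsl_{\infty,{\bf N}}}}(\ell,0)\subset L_{\wh{\fsl_\infty}}(\ell,0)$ and the isomorphism $\pi_{va}$ of \eqref{pi-va} yields on $\mathcal F_N^{\bm a}$ a commuting action of $\bigotimes_{r=1}^{d}(L_{\wh{\fsl_\infty}}(\ell,0),\rho_{N_r})$, equivalently (via Theorem \ref{thm:main2}) of $\bigoplus_{r=1}^{d}\wh{\fsl_{N_r}}(\C_q)$. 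The seesaw is completed by the diagonal embedding $\rGL_{{\bf I}_{\bm a}}\hookrightarrow\rGL_{{\bf I}_{\bm a}}^{d}$.

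The core technical step is to identify
\begin{equation*}
\mathcal F_N^{\bm a}\;\cong\;\mathcal F_{N_1}^{\bm a}\otimes\cdots\otimes\mathcal F_{N_d}^{\bm a}
\end{equation*}
as an $\N$-graded module over $\bigoplus_{r=1}^{d}\wh{\fsl_{N_r}}(\C_q)\times\rGL_{{\bf I}_{\bm a}}$, with $\rGL_{{\bf I}_{\bm a}}$ acting diagonally on the right-hand side. At the level of Clifford algebras the isomorphism is the obvious one: the generators $\psi_i^p(m),\bar\psi_i^p(m)$ of $\mathcal F_N^\ell$ partition by the unique index $r$ with $N_{(r-1)}<i\le N_{(r)}$, and under the relabeling $\bar i=i-N_{(r-1)}$ the creation and annihilation conditions match, giving a $\Z$-graded Clifford-algebra identification of $\mathcal F_N^\ell$ with the tensor product of the block Fock spaces. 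What must be checked is that the $\wh{\fsl_{N_r}}(\C_q)$-action coming from the embedded vertex $\Z$-algebra $(L_{\wh{\fsl_\infty}}(\ell,0),\rho_{N_r})$ coincides, via Proposition \ref{prop:main1}, with the fermionic realization $\rho_{\bm a}$ on $\mathcal F_{N_r}^{\bm a}$ of Proposition \ref{lem:algact}. Unwinding the formulas, this reduces to the identity $Y_W(E_{mN+i,nN+j},x)=q^{n}(E_{\bar i,\bar j}t_1^{m-n})(q^{n}x)$ for $N_{(r-1)}<i,j\le N_{(r)}$, which follows from \eqref{degree2gen}, \eqref{defpsiija} and the definition of $\Psi_{\bar i,\bar j}^{\bm a}$. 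Compatibility with $\rGL_{{\bf I}_{\bm a}}$ is immediate from \eqref{laction}, whose generators split as a sum over the blocks, while the level and the twist are transported correctly because $\bm\rk$ still acts by $\ell$ and $\sigma^N$ restricts to $\sigma^{N_r}$ on $\fsl_{\infty,{\bf N}}^{(r)}$ through $\pi_r$.

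With the tensor-product identification in hand, applying Theorem \ref{thm:main4} to each factor $\mathcal F_{N_r}^{\bm a}$ and using the branching \eqref{cximu} of $\rGL_{{\bf I}_{\bm a}}$-modules yields
\begin{equation*}
\mathcal F_N^{\bm a}=\bigoplus_{\bm\xi\in\Z_{++}^{{\bf I}_{\bm a}}}\bigoplus_{\bm\mu_1,\dots,\bm\mu_d\in\Z_{++}^{{\bf I}_{\bm a}}}C_{\bm\mu_1,\dots,\bm\mu_d}^{\bm\xi}\Big(\bigotimes_{r=1}^{d}L(\eta_{\bm\mu_r,\bm a}^{(N_r)})\Big)\otimes L_{\rGL_{{\bf I}_{\bm a}}}(\bm\xi).
\end{equation*}
Comparing with the decomposition above and reading off the $L_{\rGL_{{\bf I}_{\bm a}}}(\bm\xi)$-isotypic component, which is unique for a locally regular $\rGL_{{\bf I}_{\bm a}}$-module of countable dimension, produces exactly \eqref{Nd-main}. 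The main obstacle is the careful Clifford-algebra identification of the preceding paragraph together with the verification that, after restricting through the vertex-algebra embedding, the induced $\wh{\fsl_{N_r}}(\C_q)$-action in the $r$-th block agrees with the standard fermionic representation on $\mathcal F_{N_r}^{\bm a}$; once this is pinned down, everything else is a formal consequence of the seesaw and the dual-pair theorems already in hand.
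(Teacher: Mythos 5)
Your proof is correct and follows essentially the same route as the paper's: both use the seesaw pair on $\mathcal F_N^{\bm a}$ obtained from the block Clifford-algebra identification $\mathcal F_N^\ell\cong\mathcal F_{N_1}^\ell\ot\cdots\ot\mathcal F_{N_d}^\ell$, apply Theorem~\ref{thm:main4} to each factor, and read off the isotypic components via the branching~\eqref{cximu}. The only difference is that you make explicit, via Proposition~\ref{prop:main1} and \eqref{degree2gen}/\eqref{defpsiija}, the verification that the $\wh{\fsl_{N_r}}(\C_q)$-action induced by the vertex-$\Z$-algebra embedding agrees with the fermionic action on $\mathcal F_{N_r}^{\bm a}$, a step the paper labels as ``easy to see''; this is a welcome clarification rather than a different argument.
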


\begin{proof}  Recall that $\mathcal F_N^{\bm{a}}$ is an equivariant quasi module
for $(L_{\wh{\fsl_\infty}}(\ell,0),\rho_N)$
with $\mathcal F_N^{\bm{a}}=\mathcal F_N^{\ell}$ as a vector space.
Then we have an equivariant quasi $L_{\wh{\fsl_{\infty,{\bf N}}}}(\ell,0)$-module
$\mathcal F_N^{\bm{a}}$ via the natural embedding of
$L_{\wh{\fsl_{\infty,{\bf N}}}}(\ell,0)$ into $(L_{\wh{\fsl_\infty}}(\ell,0),\rho_N)$.
On the other hand, we have an equivariant quasi $L_{\wh{\fsl_{\infty,{\bf N}}}}(\ell,0)$-module
$\mathcal F_{N_1}^{\bm{a}}\ot \cdots\ot \mathcal F_{N_d}^{\bm{a}}$ via the vertex algebra isomorphism
$$\pi_{va}: \
(L_{\wh{\fsl_{\infty,{\bf N}}}}(\ell,0),\rho_N)\cong
(L_{\wh{\fsl_\infty}}(\ell,0),\rho_{N_1})\ot \cdots \ot (L_{\wh{\fsl_\infty}}(\ell,0),\rho_{N_d}).$$
It is straightforward to show that there exists a (Clifford) algebra isomorphism
\begin{align}
\theta_{\bf N}^{\ell}:\  \  {\mathcal C}_{N_1}^{\ell}\ot \cdots \ot  {\mathcal C}_{N_d}^{\ell}
\longrightarrow {\mathcal C}_{N}^{\ell}
\end{align}
such that
$$\theta_{\bf N}^{\ell}(\psi_{i}^{p}(n))=\psi_{i+N_{(r-1)}}^{p}(n),\   \   \   \
\theta_{\bf N}^{\ell}(\bar{\psi}_{i}^{p}(n))=\bar{\psi}_{i+N_{(r-1)}}^{p}(n) $$
for $1\le r\le d,\  1\le i\le N_r,\ 1\le p\le \ell,\ n\in \Z$. Furthermore, this gives rise to
a Fock space identification
\begin{align}
\Theta_{\bf N}^{\ell}:\   \
 \mathcal F_{N_1}^{\ell}\ot \cdots\ot \mathcal F_{N_d}^{\ell}\longrightarrow \mathcal F_N^{\ell}.
\end{align}
It is easy to see that  $\Theta_{\bf N}^{\ell}$ is an  isomorphism of quasi $L_{\wh{\fsl_{\infty,{\bf N}}}}(\ell,0)$-modules
from $\mathcal F_{N_1}^{\bm{a}}\ot \cdots\ot \mathcal F_{N_d}^{\bm{a}}$
to $\mathcal F_N^{\ell}$.
It can be readily seen that  $\Theta_{\bf N}^{\ell}$ is also a $\rGL_{{\bf I}_{\bm a}}$-module isomorphism.
Now, by Theorem \ref{thm:main4}, we get a  seesaw pair
%\begin{align*}
%(\wh{\fsl_{N_1}}(\C_q)+ \cdots + \wh{\fsl_{N_d}}(\C_q), \rGL_{{\bf I}_{\bm a}}^{d})
%\quad\te{and}\quad (\wh{\fsl_{N}}(\C_q), \rGL_{{\bf I}_{\bm a}})
%\end{align*}
%on $\mathcal F_N^{\ell}$, and a  seesaw pair
\begin{align*}
((L_{\wh{\fsl_\infty}}(\ell,0),\rho_{N_1})\ot \cdots \ot (L_{\wh{\fsl_\infty}}(\ell,0),\rho_{N_d}), \rGL_{{\bf I}_{\bm a}}^{d})
\quad\te{and}\quad ((L_{\wh{\fsl_\infty}}(\ell,0),\rho_N), \rGL_{{\bf I}_{\bm a}})
\end{align*}
 on $\mathcal F_N^{\ell}$ with the embeddings
\begin{align*}
(L_{\wh{\fsl_\infty}}(\ell,0),\rho_{N_1})\ot \cdots \ot (L_{\wh{\fsl_\infty}}(\ell,0),\rho_{N_d})\cong
(L_{\wh{\fsl_{\infty,{\bf N}}}}(\ell,0),\rho_N)
\hookrightarrow (L_{\wh{\fsl_\infty}}(\ell,0),\rho_N)
\end{align*}
and $\rGL_{{\bf I}_{\bm a}}\hookrightarrow \rGL_{{\bf I}_{\bm a}}^{d}$.
Using the first dual pair and \eqref{cximu}, we obtain the following
  $(L_{\wh{\fsl_{\infty,{\bf N}}}}(\ell,0),\rGL_{{\bf I}_{\bm a}})$-module decomposition
\begin{align*}
&\mathcal F_{N_1}^{\bm{a}}\ot \cdots\ot \mathcal F_{N_d}^{\bm{a}}\\
=\,&\(\oplus_{{\bm\mu}_1\in \Z_{++}^{{\bf I}_{\bm a}}}
L(\eta_{{\bm\mu}_1,\bm{a}}^{(N_1)})\ot L_{\rGL_{{\bf I}_{\bm a}}}({\bm\mu}_1)\)
\ot \cdots \ot  \(\oplus_{{\bm\mu}_d\in \Z_{++}^{{\bf I}_{\bm a}}}
L(\eta_{{\bm\mu}_d,\bm{a}}^{(N_d)})\ot L_{\rGL_{{\bf I}_{\bm a}}}({\bm\mu}_d)\)\\
\cong\,& \oplus_{{\bm\mu}^1,\dots,{\bm\mu}^d\in \Z_{++}^{{\bf I}_{\bm a}}}
\(L(\eta_{{\bm\mu}_1,\bm{a}}^{(N_1)})\ot \cdots \ot
L(\eta_{{\bm\mu}_d,\bm{a}}^{(N_d)})\)\ot \(L_{\rGL_{{\bf I}_{\bm a}}}({\bm\mu}_1)\ot \cdots \ot
L_{\rGL_{{\bf I}_{\bm a}}}({\bm\mu}_d)\)\\
=\,& \oplus_{{\bm\xi},{\bm\mu}_1,\dots,{\bm\mu}_d\in \Z_{++}^{{\bf I}_{\bm a}}}
C_{{\bm\mu}_1,\dots,{\bm\mu}_d}^{{\bm\xi}}\,
(L(\eta_{{\bm\mu}_1,\bm{a}}^{(N_1)})\ot \cdots \ot
L^d(\eta_{{\bm\mu}_d,\bm{a}}^{(N_d)}))\ot L_{\rGL_{{\bf I}_{\bm a}}}({\bm\xi}).
\end{align*}
Combining this with  the decomposition for the dual pair
$((L_{\wh{\fsl_\infty}}(\ell,0),\rho_N), \rGL_{{\bf I}_{\bm a}})$ on  $\mathcal F_N^{\ell}$
(see \eqref{decomfnella}) we obtain (\ref{Nd-main}).
\end{proof}

As immediate consequences of Theorem \ref{thm:bl2} and Remark \ref{special-cases} we have:

\begin{cort}
The branching from $(L_{\wh{\fsl_\infty}}(1,0),\rho_N)$ to  $L_{\wh{\fsl_{\infty,{\bf N}}}}(1,0)$
in Theorem \ref{thm:bl2} on equivaraint quasi modules is multiplicity free.
\end{cort}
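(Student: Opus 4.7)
The plan is to derive the corollary directly from Theorem \ref{thm:bl2} together with Remark \ref{special-cases}, the point being that specializing $\ell=1$ makes the relevant Levi subgroup become $\rGL_{1}$, whose representation theory is trivially multiplicity free.

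First, I would note that for $\ell = 1$, the set $(\C^{\times})_{q}^{1}$ coincides with $\C^{\times}$, and for any $\bm{a}=(a_{1})\in \C^{\times}$ the associated partition ${\bf I}_{\bm{a}}$ of $\{1\}$ has only one block, namely $\{1\}$ itself. Consequently
\[
\rGL_{{\bf I}_{\bm{a}}}=\rGL_{1}=\C^{\times},
\]
and the set $\Z_{++}^{{\bf I}_{\bm{a}}}$ is just $\Z$, with each $\bm\mu\in \Z$ giving rise to a one-dimensional character of $\rGL_{1}$.

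Next, I would examine the multiplicities $C_{\bm\mu_{1},\dots,\bm\mu_{d}}^{\bm\xi}$ appearing in Theorem \ref{thm:bl2}, which by definition (see \eqref{cximu}) are the multiplicities in the $\rGL_{{\bf I}_{\bm{a}}}^{d}\to \rGL_{{\bf I}_{\bm{a}}}$ branching. Since $\rGL_{1}$ is abelian, every irreducible regular $\rGL_{1}$-module is one-dimensional, and the tensor product of the characters $L(\bm\mu_{1})\ot \cdots\ot L(\bm\mu_{d})$ is itself the one-dimensional character $L(\bm\mu_{1}+\cdots+\bm\mu_{d})$. Hence
\[
C_{\bm\mu_{1},\dots,\bm\mu_{d}}^{\bm\xi}=\begin{cases} 1 & \te{if}\ \bm\mu_{1}+\cdots+\bm\mu_{d}=\bm\xi,\\ 0 & \te{otherwise}.\end{cases}
\]
This is precisely the content of Remark \ref{special-cases} in the special case $s=\ell=1$, which I would invoke.

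Finally, substituting this into the decomposition \eqref{Nd-main} of Theorem \ref{thm:bl2} with $\ell=1$ yields
\[
\Res_{L_{\wh{\fsl_{\infty,{\bf N}}}}(1,0)}^{L_{\wh{\fsl_{\infty}}}(1,0)} L(\eta_{\bm\xi,\bm{a}}^{(N)})
=\bigoplus_{\substack{\bm\mu_{1},\dots,\bm\mu_{d}\in \Z\\ \bm\mu_{1}+\cdots+\bm\mu_{d}=\bm\xi}}
L(\eta_{\bm\mu_{1},\bm{a}}^{(N_{1})})\ot \cdots\ot L(\eta_{\bm\mu_{d},\bm{a}}^{(N_{d})}),
\]
in which every multiplicity is either $0$ or $1$. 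The branching is therefore multiplicity free. There is no real obstacle here; the only thing worth being slightly careful about is confirming that the irreducible equivariant quasi modules on the right-hand side are pairwise non-isomorphic, which follows from Lemma \ref{lem:isoclass} together with the fact that the tuples $\bm\mu_{1},\dots,\bm\mu_{d}$ are uniquely determined by $(\bm\xi,\bm a)$ up to the constraint $\bm\mu_{1}+\cdots+\bm\mu_{d}=\bm\xi$.
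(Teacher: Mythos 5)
Your proof is correct and follows essentially the same route the paper intends: specialize $\ell=1$ so that $\rGL_{{\bf I}_{\bm a}}=\rGL_1$, invoke Remark \ref{special-cases} (the $s=\ell$ case) to see that the branching multiplicities $C_{\bm\mu_1,\dots,\bm\mu_d}^{\bm\xi}$ are $0$ or $1$, and substitute into Theorem \ref{thm:bl2}. Your closing remark about pairwise non-isomorphism of the summands is a reasonable instinct, but it is already built into the paper's framework via Theorem \ref{thm:main4} (which establishes that $L(\eta_{\bm\mu,\bm a})\cong L(\eta_{\bm\nu,\bm a})$ if and only if $\bm\mu=\bm\nu$), so the sum in \eqref{Nd-main} is already indexed by distinct irreducibles.
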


\begin{cort}
Assume $\ell=2$ and ${\bf N}=(N_1,N_2)$ (with $d=2$). Then the branching from
$L_{\wh{\fsl_\infty}}(2,0)$ to  $L_{\wh{\fsl_{\infty,{\bf N}}}}(2,0)$ in Theorem \ref{thm:bl2}
is multiplicity free.
\end{cort}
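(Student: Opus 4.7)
The plan is to deduce this corollary directly from Theorem \ref{thm:bl2} by analyzing the branching multiplicities $C_{{\bm\mu}_1,{\bm\mu}_2}^{{\bm\xi}}$ in the case $\ell=2$, $d=2$. Recall that by Theorem \ref{thm:bl2}, every irreducible $\N$-graded $(\Z,\chi_q)$-equivariant quasi $L_{\wh{\fsl_\infty}}(2,0)$-module $L(\eta^{(N)}_{\bm\xi,\bm a})$ decomposes as a quasi $L_{\wh{\fsl_{\infty,{\bf N}}}}(2,0)$-module with multiplicities equal to the multiplicities in the $\rGL_{{\bf I}_{\bm a}}\rightarrow \rGL_{{\bf I}_{\bm a}}\times \rGL_{{\bf I}_{\bm a}}$ tensor product decomposition \eqref{cximu}. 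Thus it suffices to show that for every $\bm a\in (\C^\times)^2_q$ and every $\bm\mu_1,\bm\mu_2\in \Z_{++}^{{\bf I}_{\bm a}}$, the tensor product $L_{\rGL_{{\bf I}_{\bm a}}}(\bm\mu_1)\otimes L_{\rGL_{{\bf I}_{\bm a}}}(\bm\mu_2)$ decomposes as a multiplicity-free direct sum of irreducibles.

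Next, I would observe that since $\ell=2$, there are only two possible partitions of $\{1,2\}$, so ${\bf I}_{\bm a}$ must be either the discrete partition $\{\{1\},\{2\}\}$ (corresponding to $\bm a=(a_1,a_2)$ with $a_1\notin a_2\Gamma_q$) or the trivial partition $\{\{1,2\}\}$ (corresponding to $a_1=a_2$). This forces $\rGL_{{\bf I}_{\bm a}}$ to equal either $\rGL_1\times \rGL_1$ or $\rGL_2$, and both cases are already addressed in Remark \ref{special-cases}.

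In the first case, $\rGL_{{\bf I}_{\bm a}}=\rGL_1\times\rGL_1$ is an abelian group, so every irreducible regular module is one-dimensional and tensor products of characters are again characters; multiplicity-freeness is then immediate. In the second case, $\rGL_{{\bf I}_{\bm a}}=\rGL_2$, and the classical Clebsch--Gordan rule (which may be deduced by restricting to $\mathrm{SL}_2$ and tracking the central $\mathrm{GL}_1$-action) shows that the tensor product of any two irreducible regular $\rGL_2$-modules decomposes without multiplicity. Combining these two cases with Theorem \ref{thm:bl2} yields the desired multiplicity-freeness.

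There is no real obstacle here beyond invoking the structural results already available: the heavy lifting has been done in Theorem \ref{thm:bl2}, which reduces the vertex-algebraic branching to a purely representation-theoretic question about Levi subgroups of $\rGL_2$, and this question is classical and trivial in both of the two sub-cases.
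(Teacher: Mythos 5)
Your proof is correct and follows essentially the same route as the paper: the corollary is stated there as an immediate consequence of Theorem \ref{thm:bl2} together with Remark \ref{special-cases}, and Remark \ref{special-cases} contains exactly the same case analysis on the two possible partitions of $\{1,2\}$ that you carry out ($\rGL_1\times\rGL_1$ versus $\rGL_2$, each with multiplicity-free two-fold tensor products). You merely spell out the elementary justifications (abelian group, Clebsch--Gordan) that the paper leaves as "known" facts.
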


\begin{remt}
{\em Here, we formulate a Lie algebra analogue of Theorem \ref{thm:bl2}.
%Let ${\bf N}=(N_1,\dots,N_d)$, a $d$-tuple of positive integers,
%and set $N=N_1+\dots +N_d$. Recall that $N(0)=0$ and $N_{(r)}=N_1+\dots +N_r$ for $1\le r\le d$.
For $1\le r\le d$, let $\wh{\fsl_{{\bf N}}}^{(r)}(\C_q)$
denote the subalgebra of $\wh{\fsl_{N}}(\C_q)$, generated by the elements
\[ E_{i,j}t_0^{m_0}t_1^{m_1}\quad \te{for}\ m_0,m_1\in \Z,\  N_{(r-1)}+1\le i\ne j\le N_{(r)}.\]
Set
$$\wh{\fsl_{{\bf N}}}(\C_q)=\sum_{r=1}^{d}{\wh{\fsl_{{\bf N}}}}^{(r)}(\C_q),$$
which is a subalgebra of $\wh{\fsl_N}(\C_q)$.
It is easy to see that $\wh{\fsl_{{\bf N}}}^{(r)}(\C_q)$ is isomorphic to $\wh{\fsl_{N_r}}(\C_q)$ (as $N_r=N_{(r)}-N_{(r-1)}$)
and for $1\le r\ne s\le d$,
$$[\wh{\fsl_{{\bf N}}}^{(r)}(\C_q),\wh{\fsl_{{\bf N}}}^{(s)}(\C_q)]=0.$$
Then it follows that every irreducible $\N$-graded integrable restricted $\wh{\fsl_{{\bf N}}}(\C_q)$-module has the form of \eqref{overlineNmod}.
Under the setting of Theorem \ref{thm:bl2}, we have the $\wh{\fsl_N}(\C_q)\rightarrow \wh{\fsl_{{\bf N}}}(\C_q)$ branching law:
\begin{align*}
\Res_{\wh{\fsl_{{\bf N}}}(\C_q)}^{\wh{\fsl_N}(\C_q)}\, L(\eta_{{\bm\xi},\bm{a}}^{(N)})
= \bigoplus_{{\bm\mu}_1,\dots,{\bm\mu}_d\in \Z_{++}^{{\bf I}_{\bm a}}}
C_{{\bm\mu}_1,\dots,{\bm\mu}_d}^{{\bm\xi}}\,
L(\eta_{{\bm\mu}_1,\bm{a}}^{(N_1)})\ot \cdots \ot
L(\eta_{{\bm\mu}_d,\bm{a}}^{(N_d)}).
\end{align*}}
\end{remt}

\section{$\wh{\fsl_N}(\C_q)\rightarrow \wh{\fsl_N}(\C_q[t_0^{\pm M_0},t_1^{\pm M_1}])$ branching law}
Let $M_0$ and $M_1$ be positive integers, which are fixed throughout this section.
 Denote by $\C_q[t_0^{\pm M_0},t_1^{\pm M_1}]$
the subalgebra of $\C_q=\C_q[t_0^{\pm 1},t_1^{\pm 1}]$, generated by $t_0^{\pm M_0}$ and $t_1^{\pm M_1}$.
Set
\begin{align}
\wh {\fsl_N}(\C_q[t_0^{\pm M_0},t_1^{\pm M_1}])=\fsl_N(\C_q[t_0^{\pm M_0},t_1^{\pm M_1}])\oplus \C\bm\rk_0\oplus \C\bm\rk_1\subset \wh{\fsl_N}(\C_q).
\end{align}
Note that $\wh {\fsl_N}(\C_q[t_0^{\pm M_0},t_1^{\pm M_1}])$ is simply the subalgebra of $\wh{\fsl_N}(\C_q)$ generated by
$$E_{i,j}t_0^{m_0M_0}t_1^{m_1M_1}\  \  \mbox{ for }1\le i\ne j\le N, \  m_0,m_1\in \Z.$$
The main goal of this section is to determine the
$\wh{\fsl_N}(\C_q)\rightarrow \wh{\fsl_N}(\C_q[t_0^{\pm M_0},t_1^{\pm M_1}])$
branching law.

First of all, it is straightforward to see that as an algebra $\C_q[t_0^{\pm M_0},t_1^{\pm M_1}]$
is isomorphic to $\C_{q^{M_0M_1}}[t_0^{\pm 1},t_1^{\pm1}]$ with
$t_0^{n_0M_0}t_1^{n_1M_1}$ corresponding to $t_0^{n_0}t_1^{n_1}$ for $n_0,n_1\in \Z$.
Furthermore, we have the following straightforward result:

\begin{lemt}
 There is a Lie algebra isomorphism
$$\pi_{M_0,M_1}:\  \wh{\fsl_N}(\C_q[t_0^{\pm M_0},t_1^{\pm M_1}]) \rightarrow \wh{\fsl_N}(\C_{q^{M_0M_1}}),$$
which is uniquely determined by the assignment
\begin{align}\label{m0m1iso}
E_{i,j}t_0^{m_0M_0}t_1^{m_1M_1}\mapsto E_{i,j}t_0^{m_0}t_1^{m_1},\quad M_0\bm\rk_0\mapsto \bm\rk_0,
\quad M_1\bm\rk_1\mapsto \bm\rk_1
\end{align}
for $1\le i\ne j\le N$, $m_0,m_1\in \Z$.
\end{lemt}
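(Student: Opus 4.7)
My plan is to verify directly that the assignment (\ref{m0m1iso}) extends by linearity to a bijective Lie algebra homomorphism, using Lemma \ref{basis-slCq} to produce compatible bases of the source and target. First I would note that since $q$ is not a root of unity, neither is $q^{M_0M_1}$, so Lemma \ref{basis-slCq} applied with parameter $q^{M_0M_1}$ yields the basis
\[E_{i,j}t_0^{m_0}t_1^{m_1},\quad E_{r,r}-E_{r+1,r+1},\quad \bm\rk_0,\quad \bm\rk_1\]
of $\wh{\fsl_N}(\C_{q^{M_0M_1}})$ under the usual index conditions. For the source, the subspace $\wh{\fsl_N}(\C_q[t_0^{\pm M_0},t_1^{\pm M_1}])\subset \wh{\fsl_N}(\C_q)$ inherits the analogous basis in which the first family is replaced by $E_{i,j}t_0^{m_0M_0}t_1^{m_1M_1}$ while the rest is unchanged.

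Second, the assignment in (\ref{m0m1iso}) sends the source basis bijectively onto the target basis, with the diagonal elements $E_{r,r}-E_{r+1,r+1}$ going to themselves (as is forced by taking $i=j=r$, $m_0=m_1=0$ in the first rule and invoking linearity). Hence the induced linear map $\pi_{M_0,M_1}$ is a linear isomorphism by construction.

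Third, I would verify that $\pi_{M_0,M_1}$ preserves brackets by a direct computation on basis pairs using (\ref{commutator1}). The key case is
\[[E_{i,j}t_0^{m_0M_0}t_1^{m_1M_1},\, E_{k,l}t_0^{n_0M_0}t_1^{n_1M_1}],\]
which, evaluated in $\wh{\fsl_N}(\C_q)$ via (\ref{commutator1}), carries the exponent $q^{m_1M_1\cdot n_0M_0}=(q^{M_0M_1})^{m_1n_0}$ on its off-diagonal terms and the central contribution
\[\delta_{j,k}\delta_{i,l}\delta_{m_0+n_0,0}\delta_{m_1+n_1,0}\,(q^{M_0M_1})^{m_1n_0}\bigl(m_0M_0\,\bm\rk_0 + m_1M_1\,\bm\rk_1\bigr).\]
Applying $\pi_{M_0,M_1}$ (using $M_i\bm\rk_i\mapsto \bm\rk_i$) gives precisely the bracket $[E_{i,j}t_0^{m_0}t_1^{m_1},\,E_{k,l}t_0^{n_0}t_1^{n_1}]$ in $\wh{\fsl_N}(\C_{q^{M_0M_1}})$, as computed from (\ref{commutator1}) with $q$ replaced by $q^{M_0M_1}$. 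The remaining bracket cases (those involving $E_{r,r}-E_{r+1,r+1}$ or the central elements) either reduce to classical $\fsl_N$ brackets with trivial $t$-dependence or vanish by centrality.

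There is no real obstacle beyond careful bookkeeping; the one conceptual point worth emphasizing is that the rescaling $M_i\bm\rk_i\mapsto \bm\rk_i$ is exactly what absorbs the extra factors $M_0$ and $M_1$ appearing in the central term, which also explains why the assignment (\ref{m0m1iso}) must scale the central elements rather than fix them.
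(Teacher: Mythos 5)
The paper states this lemma without proof, calling it ``straightforward,'' so there is no paper argument to compare against; your plan is the natural one and is essentially sound. One imprecision worth flagging: when you describe the basis (from Lemma~\ref{basis-slCq} applied with parameter $q^{M_0M_1}$, transported back by the algebra isomorphism $\C_q[t_0^{\pm M_0},t_1^{\pm M_1}]\cong\C_{q^{M_0M_1}}$), the ``first family'' of basis elements is not just the off-diagonal $E_{i,j}t_0^{m_0M_0}t_1^{m_1M_1}$ with $i\ne j$; it also includes diagonal elements $E_{i,i}t_0^{m_0M_0}t_1^{m_1M_1}$ whenever $(m_0,m_1)\ne(0,0)$. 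Your parenthetical, which handles only the degree-zero diagonals $E_{r,r}-E_{r+1,r+1}$ ``by taking $i=j=r$, $m_0=m_1=0$,'' therefore misses these and is in any case not directly supported by the stated rule, which requires $i\ne j$; the target value on the full basis is instead forced by the bracket structure (e.g.\ $[E_{i,j}t_0^{m_0M_0}t_1^{m_1M_1},E_{j,i}]$ produces $(E_{i,i}-E_{j,j})t_0^{m_0M_0}t_1^{m_1M_1}$, and $[E_{r,r+1},E_{r+1,r}]$ produces $E_{r,r}-E_{r+1,r+1}$). Once you acknowledge the full basis, the bracket verification you carry out from (\ref{commutator1}) applies verbatim to all pairs of indices, including $i=j$ or $k=l$ with nonzero $(m_0,m_1)$, so your closing remark that the ``remaining cases'' reduce to classical $\fsl_N$ brackets or vanish by centrality is not quite right as stated; the cleaner observation is simply that (\ref{commutator1}) is a single formula valid for all indices and degrees, so the one computation you did already covers everything. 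The conceptual point you emphasize, that the rescaling $M_i\bm\rk_i\mapsto\bm\rk_i$ is exactly what absorbs the factors $m_iM_i$ in the central term, is correct and is indeed the crux.
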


%\begin{remt}
%{\em Let $\ell$ and $M_0$ be positive integers as before.
%Identify the set $\{1,2,\dots,M_0\ell\}$ with the set
%$\{ (k,r)\ | \  0\le k\le M_0-1,\ 1\le r\le \ell\}$ through the relation $m=k\ell+r$.
%Under this identification, the natural order on $\{1,2,\dots,M_0\ell\}$ coincides with  the lexicographical order
%on $\{ (k,r)\ | \  0\le k\le M_0-1,\ 1\le r\le \ell\}$.}
%\end{remt}
%
%
%We see that the  equivalence relation on $\{ 1,2,\dots,M_0\ell\}$ associated to the partition ${\bf I}^{M_0}$
%is given by
%$$(k_1,r_1)\sim (k_2,r_2)\   \mbox{ if and only if }k_1=k_2,\ r_1\sim r_2.$$

Let ${\bf I}=\{S_1,\dots,S_s\}$  be a partition of $\{1,2,\dots,\ell\}$. Set
\begin{align}
{\bf I}^{M_0}=\{  k\ell +S_i\  |\  0\le k\le M_0-1,\  1\le i\le s\},
\end{align}
where by definition $k\ell+S_i=\{ k\ell+p\ |\ p\in S_i\}$.
Then ${\bf I}^{M_0}$ is a partition of $\{1,2,\dots,M_0\ell\}$.
For $\bm{a}=(a_1,\dots,a_\ell)\in (\C^{\times})^\ell$, we define $({\bm{a}}q)_{M_0}^{M_1}\in (\C^{\times})^{M_0\ell}$ by
\begin{align}\label{def-aqM}
\(({\bm{a}}q)_{M_0}^{M_1} \)_{k\ell+r}=(a_rq^{-k})^{M_1}
\end{align}
for $0\le k\le M_0-1,\ 1\le r\le \ell$. Then we have:

\begin{lemt}
Let $M_0,M_1$ and $\ell$ be positive integers, let
$\bm{a}\in (\C^{\times})^\ell_q$ and let $\bm\mu\in \Z_{++}^{{\bf I}_{\bm a}}$.
Then there exist  $\bm b\in (\C^{\times})^\ell_q$ and $\bm\nu\in \Z_{++}^{{\bf I}_{\bm b}}$ such that
\begin{align*}
\eta_{\bm\mu,\bm a}=\eta_{\bm\nu,\bm b},\quad ({\bm{b}}q)_{M_0}^{M_1}\in (\C^{\times})^{M_0\ell}_{q^{M_0M_1}}\quad \te{and}\quad
{\bf I}_{({\bm{b}}q)_{M_0}^{M_1}}={\bf I}_{\bm b}^{M_0}.
\end{align*}
\end{lemt}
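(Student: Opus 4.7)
The plan is to construct $\bm b$ and $\bm\nu$ by applying the equivalence relation of Remark~\ref{connection-12}, which states that $\eta_{\bm\mu,\bm a}=\eta_{\bm\nu,\bm b}$ iff there exist a permutation $\sigma\in S_\ell$ and integer shifts $\bm m\in\Z^\ell$ with $\sigma(\bm a)=q^{\bm m}\bm b$ and $\sigma(\bm\mu)=\bm\nu+N\bm m$. For $\bm b$ to remain in $(\C^\times)^\ell_q$, the shifts $m_i$ must be equal on each class of ${\bf I}_{\bm a}$, since otherwise two indices in the same ${\bf I}_{\bm a}$-block would produce $b_i\ne b_j$ with $b_i/b_j\in\Gamma_q$, violating the $(\C^\times)^\ell_q$ condition. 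Writing ${\bf I}_{\bm a}=\{S_1,\dots,S_s\}$ with distinct representative values $c_1,\dots,c_s$, the task reduces to selecting one integer shift $n_j$ per class so that $c_j':=q^{-n_j}c_j$ satisfies the desired properties.

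The next step is to translate the two desired conclusions into explicit conditions on the shifts. A direct computation shows that two entries $(c_j'q^{-k})^{M_1}$ and $(c_{j'}'q^{-k'})^{M_1}$ of $(\bm b q)_{M_0}^{M_1}$ lie in the same $\Gamma_{q^{M_0 M_1}}$-orbit iff $(c_j'/c_{j'}')^{M_1}\in q^{M_1(k-k'+M_0\Z)}$, and coincide iff $(c_j'/c_{j'}')^{M_1}=q^{M_1(k-k')}$. Letting $k,k'$ range over $\{0,\dots,M_0-1\}$, the conjunction of the two conclusions (partition equality and $(\C^\times)^{M_0\ell}_{q^{M_0M_1}}$-membership) becomes equivalent to requiring, for every $j\ne j'$, that $(c_j'/c_{j'}')^{M_1}\notin q^{M_1\Z}$.

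I would then stratify $\{1,\dots,s\}$ by the auxiliary equivalence $j\approx j'$ iff $c_j/c_{j'}=\zeta q^p$ for some $M_1$-th root of unity $\zeta$ and integer $p$. Pairs in different $\approx$-classes automatically satisfy $(c_j/c_{j'})^{M_1}\notin\Gamma_{q^{M_1}}$, a property invariant under shifts, so no constraint is imposed on the corresponding $n_j$'s. Within each $\approx$-class, the representatives can be written $c_j=\zeta_j q^{p_j}c_*$ for a common base $c_*$ with distinct $M_1$-th roots of unity $\zeta_j$ (the distinctness being forced by $\bm a\in(\C^\times)^\ell_q$, which prohibits $c_j/c_{j'}\in\Gamma_q$). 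Here I would choose the shifts $n_j$ so that the $M_0$-subtuples of $(\bm bq)_{M_0}^{M_1}$ coming from different $j$'s in the same $\approx$-class are distributed across the cosets of $\Gamma_{q^{M_0 M_1}}$ inside $\Gamma_{q^{M_1}}c_*^{M_1}$ in a controlled, pigeonhole-type pattern.

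The main technical obstacle is precisely to verify that this distribution can be arranged so that no spurious coincidences or same-orbit collisions appear between entries belonging to distinct blocks of ${\bf I}_{\bm b}^{M_0}$. This is the step where the hypothesis that $q$ is not a root of unity enters crucially, via the fact that $\Gamma_{q^{M_1}}/\Gamma_{q^{M_0 M_1}}$ is cyclic of order exactly $M_0$, so that the residues $(n_j+p_j)\bmod M_0$ can be prescribed freely, one per $\approx$-class, to separate the entries into distinct $\Gamma_{q^{M_0 M_1}}$-cosets. Once the shifts are chosen, one defines $\bm\nu$ by the formula $\bm\nu=\sigma(\bm\mu)-N\bm m$ from Remark~\ref{connection-12}, and the fact that $\bm\nu\in\Z_{++}^{{\bf I}_{\bm b}}$ follows, up to a further permutation within each class, from the elementary reordering of entries of $\sigma(\bm\mu)$ within each class of ${\bf I}_{\bm b}={\bf I}_{\bm a}\circ\sigma^{-1}$.
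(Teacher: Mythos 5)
Your reformulation is an improvement on the paper's argument: you correctly reduce the two target conditions to the single constraint $(c_j'/c_{j'}')^{M_1}\notin q^{M_1\Z}$ for every pair of distinct blocks $j\ne j'$ of ${\bf I}_{\bm a}$. The paper, by contrast, shifts so that the $M_1$-th powers \emph{coincide} on each of its $\sim$-classes; by your own criterion this is exactly the wrong target once a $\sim$-class spans more than one ${\bf I}_{\bm a}$-block.

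But the reformulation you derived also shows that the pigeonhole-style repair in your last two paragraphs cannot succeed. The condition $(c_j'/c_{j'}')^{M_1}\notin q^{M_1\Z}$ is invariant under shifting the $c_j'$'s by powers of $q$: any shift multiplies the ratio by a power of $q$, hence its $M_1$-th power by a power of $q^{M_1}$, which does not change membership in $q^{M_1\Z}$. Inside one of your $\approx$-classes the ratio has the form $\zeta q^p$ with $\zeta^{M_1}=1$, so its $M_1$-th power lies in $q^{M_1\Z}$ for every choice of the shifts $n_j$, and the necessary condition you just established simply cannot be met there. The subsequent appeal to distributing the $M_0$-subtuples across cosets of $\Gamma_{q^{M_0M_1}}$ cannot rescue this: the $M_0$ entries $(c_j'q^{-k})^{M_1}$, $0\le k<M_0$, coming from a \emph{single} block $j$ already traverse all $M_0$ cosets of $\Gamma_{q^{M_0M_1}}$ inside $\Gamma_{q^{M_1}}c_*^{M_1}$, so two distinct blocks in the same $\approx$-class necessarily place entries in a common coset, and then either $(\C^\times)^{M_0\ell}_{q^{M_0M_1}}$-membership or the partition equality must fail. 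The obstruction you uncovered is genuine and is not removed by any choice of residues; for what it is worth, the same obstruction is present and unaddressed in the paper's proof, where the asserted equality ${\bf I}_{({\bm b}q)_{M_0}^{M_1}}={\bf I}_{\bm b}^{M_0}$ already fails as soon as some ratio $a_i/a_j$ is a nontrivial $M_1$-th root of unity times a power of $q$ --- a configuration that $\bm a\in(\C^\times)^\ell_q$ permits.
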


\begin{proof} Define an equivalence relation $\sim$ on $I:=\{1,2,\dots,\ell\}$ by claiming $i\sim j$ if and only if
$a_i^{M_1}=q^{nM_1}a_j^{M_1}$ for some $n\in \Z$. Let $\{ I_1,\dots,I_s\}$ be the partition of $I$,
associated to this equivalence relation.
For $1\le i\le \ell$, set $A_i=\{ a_i\ |\ i\in I_i\}$ and choose a representative $c_i$ from $A_i$.
Then for each $j\in I$ there exist unique $1\le i\le s$ and $n_j\in \Z$ such that
$q^{n_jM_1}a_j^{M_1}=c_i^{M_1}$. Set $\bm{n}=(n_1,\dots,n_\ell)\in \Z^\ell$ and
$$\bm b=q^{\bm n}\bm{a}=(q^{n_1}a_{1},\dots,q^{n_\ell}a_{\ell})\in (\C^{\times})^{\ell},$$
$$\bm\nu=\bm\mu+\bm n N=(\mu_1+n_1N,\dots,\mu_\ell+n_\ell N)\in \Z^{\ell}.$$
It is straightforward to see that $\bm b\in (\C^\times)_q^\ell$ (with ${\bf I}_{\bm b}={\bf I}_{\bm a}$) and $\bm \nu\in \Z_{++}^{{\bf I}_{\bm b}}$.
Furthermore, for $1\le i,j\le \ell$, we have either $b_i^{M_1}=b_j^{M_1}$  or
$b_i^{M_1}\ne q^{nM_1}b_j^{M_1}$ for any $n\in \Z$.
This implies that $({\bm{b}}q)_{M_0}^{M_1}$ lies in $(\C^{\times})^{M_0\ell}_{q^{M_1}}$ (and hence in
$(\C^{\times})^{M_0\ell}_{q^{M_0M_1}}$)
 and
${\bf I}_{({\bm{b}}q)_{M_0}^{M_1}}={\bf I}_{\bm b}^{M_0}$. Finally,  it follows from Remark \ref{connection-12} that $\eta_{\bm\mu,\bm a}=\eta_{\bm\nu,\bm b}$,
as desired.
\end{proof}

Let $\bf I$ be a partition of $\{1,\dots,\ell\}$. From definition, we have
$$\rGL_{{\bf I}^{M_0}}\cong \rGL_{\bf I}\times \cdots \times \rGL_{\bf I}=(\rGL_{\bf I})^{M_0}.$$
View  $\rGL_{\bf I}$ as a subgroup of $\rGL_{{\bf I}^{M_0}}$ through the diagonal embedding.
Write the $\rGL_{{\bf I}^{M_0}}\rightarrow \rGL_{\bf I}$ branching as
\begin{align}
\Res_{\rGL_{\bf I}}^{\rGL_{{\bf I}^{M_0}}}\, L({\bm\xi})
=\bigoplus_{{\bm\mu}\in \Z_{++}^{\bf I}} E_{{\bm\mu}}^{{\bm\xi}}\,
L({\bm\mu})
\end{align}
for  ${\bm\xi}\in \Z_{++}^{{\bf I}^{M_0}}$, where $E_{{\bm\mu}}^{{\bm\xi}}$
denotes the multiplicity  of the $\rGL_{\bf I}$-module
$L({\bm\mu})$ in the $\rGL_{{\bf I}^{M_0}}$-module $L({\bm\xi})$.

%Recall that
%for any permutation $\sigma$ on $I$, we have $\eta_{\sigma \bm\mu,\sigma\bm a}=\eta_{\bm\mu,\bm a}$.
%In view of this, it suffices to consider the case where $I_1,\dots,I_{\underline{s}}$ are subintervals such that
%$a<b$ for all $a\in I_i,\ b\in I_j$ with $i<j$.
%There is a permutation $\tau$ on $\{1,2,\dots,s\}$ such that
%$$\tau \bm a:=(a_{\tau(1)},\dots,a_{\tau(s)})$$

%
%\begin{remt}
%{\em Suppose that $\bm{a}=(a_1,\dots,a_\ell)\in (\C^{\times})^\ell$ satisfies the condition that
%for $1\le i,j\le \ell$,  $a_i^{M_1}= q^{nM_1}a_j^{M_1}$ for some $n\in \Z$ implies $a_i^{M_1}=a_j^{M_1}$ and $n=0$.
%Then for $1\le i,j\le \ell$,  if $a_i= q^na_j$ for some $n\in \Z$ implies $a_i=a_j$ and $n=0$. On the other hand,
% for $1\le i,j\le \ell,\ 0\le k,k'\le M_0-1$,  if $(a_iq^{-k})^{M_1}=q^{nM_0M_1}(a_jq^{-k'})^{M_1}$
%for some $n\in \Z$, then $k=k'$ and $n=0$. (Recall that $q$ is not a root of unity.)}
%\end{remt}

The following is the main result of this section:

\begin{thm}\label{thm:bl3}
Let $\ell,M_0,M_1$ be positive integers, let $\bm a\in (\C^\times)_q^\ell$
 such that $(\bm{a}q)_{M_0}^{M_1}\in (\C^\times)^{M_0\ell}_{q^{M_0M_1}}$  and
${\bf I}_{(\bm{a}q)_{M_0}^{M_1}}={\bf I}_{\bm a}^{M_0}$,  and let $\bm{\mu}\in \Z_{++}^{{\bf I}_{\bm a}}$.
Then
the $\wh{\fsl_N}(\C_q)$-module $L(\eta_{\bm\mu,\bm{a}})$ viewed as an
$\wh{\fsl_N}(\C_q[t_0^{\pm M_0},t_1^{\pm M_1}])$-module %through the Lie algebra isomorphism $\pi_{M_0,M_1}$ (???)
decomposes into irreducible submodules  with multiplicities as
\begin{align}\label{M0M1-branching}
%\Res_{\wh\fg_q^{(M_0,M_1)}}^{\wh\fg_q}
L(\eta_{\bm\mu,\bm{a}})
= \bigoplus_{\bm\xi\in \Z_{++}^{{{\bf I}_{\bm a}}^{M_0}}}
E_{\bm\mu}^{\bm\xi}\,
L(\eta_{\bm\xi,(\bm{a} q)_{M_0}^{M_1}}).
\end{align}
\end{thm}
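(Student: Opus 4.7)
The strategy is the same seesaw-pair/reciprocity machine used in the proofs of Theorems \ref{thm:tensordec} and \ref{thm:bl2}. The first step, and the main technical one, is a Fock space identification: under the isomorphism $\pi_{M_0,M_1}$, the restriction of the $\wh{\fsl_N}(\C_q)$-module $\mathcal F_N^{\bm a}$ to the subalgebra $\wh{\fsl_N}(\C_q[t_0^{\pm M_0},t_1^{\pm M_1}])$ becomes isomorphic to the $\wh{\fsl_N}(\C_{q^{M_0M_1}})$-module $\mathcal F_N^{(\bm a q)_{M_0}^{M_1}}$ of level $M_0\ell$. I would prove this by an explicit Clifford algebra identification: writing any integer $n$ uniquely as $n=mM_0+k$ with $0\le k\le M_0-1$, match the fermion $\psi_i^p(n)$ of $\mathcal C_N^{\ell}$ (at parameter $q$) with the $(k\ell+p)$-th fermion of $\mathcal C_N^{M_0\ell}$ (at parameter $q^{M_0M_1}$) at mode $m$, and do the symmetric thing for $\bar{\psi}_j^p$ with a sign convention chosen so that the anticommutation relations line up. A direct computation with (\ref{defpsiija}) then shows that $\rho_{\bm a}(E_{i,j}t_0^{m_0M_0}t_1^{m_1M_1})$ equals $\rho_{(\bm a q)_{M_0}^{M_1}}(E_{i,j}t_0^{m_0}t_1^{m_1})$ on the identified Fock space, and the central scaling in (\ref{m0m1iso}) (namely $\bm\rk_0\mapsto M_0\bm\rk_0$) correctly converts level $\ell$ into level $M_0\ell$.

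With this identification in hand, Theorem \ref{prop:dualitygl} applied to the parameter $(\bm a q)_{M_0}^{M_1}\in (\C^\times)^{M_0\ell}_{q^{M_0M_1}}$ yields a dual pair $(\wh{\fsl_N}(\C_q[t_0^{\pm M_0},t_1^{\pm M_1}]),\rGL_{{\bf I}_{\bm a}^{M_0}})$ on $\mathcal F_N^{\bm a}$, where I use the hypothesis ${\bf I}_{(\bm a q)_{M_0}^{M_1}}={\bf I}_{\bm a}^{M_0}$. Together with the dual pair $(\wh{\fsl_N}(\C_q),\rGL_{{\bf I}_{\bm a}})$ coming from Theorem \ref{thm:main4}, and the containments
\[\wh{\fsl_N}(\C_q[t_0^{\pm M_0},t_1^{\pm M_1}])\subset \wh{\fsl_N}(\C_q),\qquad \rGL_{{\bf I}_{\bm a}}\hookrightarrow \rGL_{{\bf I}_{\bm a}^{M_0}}\]
(the latter being the diagonal embedding coming from the definition of ${\bf I}_{\bm a}^{M_0}$), these assemble into a seesaw pair. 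Compatibility of the two $\rGL_{{\bf I}_{\bm a}}$-actions is automatic because the underlying $\fgl_\ell$-action (\ref{laction}) on the Fock space is independent of $\bm a$.

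The branching (\ref{M0M1-branching}) then follows by the same reciprocity argument as in the proof of Theorem \ref{thm:tensordec}. On the one hand, Theorem \ref{thm:main4} gives
\[\mathcal F_N^{\bm a}=\bigoplus_{\bm\mu\in \Z_{++}^{{\bf I}_{\bm a}}} L(\eta_{\bm\mu,\bm a})\ot L_{\rGL_{{\bf I}_{\bm a}}}(\bm\mu).\]
On the other hand, applying Theorem \ref{thm:main4} at parameter $(\bm a q)_{M_0}^{M_1}$ to the identified Fock space gives
\[\mathcal F_N^{\bm a}=\bigoplus_{\bm\xi\in \Z_{++}^{{\bf I}_{\bm a}^{M_0}}} L(\eta_{\bm\xi,(\bm a q)_{M_0}^{M_1}})\ot L_{\rGL_{{\bf I}_{\bm a}^{M_0}}}(\bm\xi),\]
and further decomposing each $L_{\rGL_{{\bf I}_{\bm a}^{M_0}}}(\bm\xi)$ as a $\rGL_{{\bf I}_{\bm a}}$-module with multiplicities $E_{\bm\mu}^{\bm\xi}$. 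Comparing the $L_{\rGL_{{\bf I}_{\bm a}}}(\bm\mu)$-isotypic components on both sides yields the claimed decomposition.

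The main obstacle is the Fock-space identification of the first paragraph: getting the signs in the normal-ordered product (\ref{normal-ordering-Gao}) right under the re-indexing and, especially, checking that the scalar correction $\delta_{n,0}\delta_{i,j}\frac{q^m}{1-q^m}$ appearing in (\ref{defpsiija}) for $m_1\ne 0$ matches correctly after summing over the $M_0$-fold block of new fermions indexed by $k=0,\dots,M_0-1$ (both the $q\mapsto q^{M_0M_1}$ substitution and the $M_0$-fold multiplicity need to conspire correctly).
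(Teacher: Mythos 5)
Your plan is essentially the proof in the paper: identify $\mathcal F_N^{\bm a}$, restricted to $\wh{\fsl_N}(\C_q[t_0^{\pm M_0},t_1^{\pm M_1}])$, with $\mathcal F_N^{(\bm a q)_{M_0}^{M_1}}$ via a Clifford re-indexing, then run the seesaw/reciprocity argument. Two spots deserve tightening. First, the re-indexing $n=mM_0+k$ for $\psi$ is wrong: with that convention, $\psi_i^p(n)|0\rangle=0$ for $n\ge 1$ would correspond to $\psi_i^{k\ell+p}(m)|0\rangle=0$ for ``$m>0$ or ($m=0$ and $k\ge 1$)'', which is not the Fock vacuum condition on $\mathcal C_N^{M_0\ell}$; you need $\psi_i^{k\ell+p}(m)\leftrightarrow \psi_i^p(M_0m-k)$ and $\bar\psi_i^{k\ell+p}(m)\leftrightarrow \bar\psi_i^p(M_0m+k)$ (it is vacuum preservation, not anticommutation, that forces the asymmetric signs — anticommutation holds for either sign). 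Second, the claim that compatibility of the two $\rGL_{{\bf I}_{\bm a}}$-actions is ``automatic because the $\fgl_\ell$-action is independent of $\bm a$'' doesn't address the real point: you must check that the Fock space isomorphism $\Phi$ intertwines the $\fgl_\ell$-action on $\mathcal F_N^\ell$ given by \eqref{laction} with the diagonal $\fgl_\ell\hookrightarrow\fgl_{M_0\ell}$ action on $\mathcal F_N^{M_0\ell}$; this is a short but genuine computation (re-summing over $k=0,\dots,M_0-1$ and $n\in\Z$ via $m=M_0n+k$), which the paper carries out. Your final paragraph correctly flags the normal-ordering/scalar-correction issue as the main computational obstacle, and the overall structure is sound.
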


\begin{proof} Recall that we have Clifford algebras $\mathcal C_N^{M_0\ell}$ and $\mathcal C_N^\ell$.
It is straightforward to show that the assignment
\begin{align}\label{identifylast}
  \psi_i^{k\ell+p}(n) \mapsto  \psi_i^{p}(M_0n-k),\quad  \quad
 \bar\psi_i^{k\ell+p}(n)\mapsto \bar\psi_i^{p}(M_0n+k)
\end{align}
for $1\le i\le N$, $0\le k\le M_0-1$, $1\le p\le \ell$, $n\in \Z$ extends uniquely to
an algebra isomorphism from $\mathcal C_N^{M_0\ell}$ to $\mathcal C_N^\ell$, denoted by $\phi$.
(Note that every integer $1\le p'\le M_0\ell$ can be written uniquely as $p'=k\ell+p$ with
$1\le p\le \ell,\ 0\le k\le M_0-1$.)
We have
\begin{eqnarray}\label{isonormal}
\phi\left(:\psi_i^{k\ell+p}(n)\bar\psi_{i'}^{k'\ell+p'}(n'):\right)=:\psi_i^{p}(M_0n-k)\bar\psi_{i'}^{p'}(M_0n'+k'):
\end{eqnarray}
for $1\le i,i'\le N$, $1\le p,p'\le \ell$, $0\le k,k'\le M_0-1$ and $n,n'\in \Z$,
recalling Remark \ref{normal-ordering-2} and noticing that $M_0n'+k'\ge 0$ if and only if $n'\ge 0.$
Via this isomorphism $\phi$, the $\mathcal C_N^{\ell}$-module  $\mathcal F_N^\ell$ becomes a $\mathcal C_N^{M_0\ell}$-module,
which is denoted by $(\mathcal F_N^\ell)^{\phi}$. Notice that for $0\le k\le M_0-1,\ n\in \Z$,
$M_0n-k>0$ if and only if $n>0$, and $M_0n+k\ge 0$ if and only if $n\ge 0$.
It then follows from the construction of $\mathcal F_N^{M_0\ell}$ that
$(\mathcal F_N^\ell)^{\phi}\simeq \mathcal F_N^{M_0\ell}$ as a $\mathcal C_N^{M_0\ell}$-module.
That is, we have a linear isomorphism $\Phi: \    \mathcal F_N^{M_0\ell}\rightarrow \mathcal F_N^\ell $
such that $\Phi(|0\>)=|0\>$ and
\begin{eqnarray*}
\Phi(\psi_i^{k\ell+p}(n)w)=\psi_i^{p}(M_0n-k)\bar{\phi}(w),\   \   \   \
\Phi(\bar{\psi}_i^{k\ell+p}(n)w)=\bar{\psi}_i^{p}(M_0n+k)\bar{\phi}(w)
\end{eqnarray*}
for $1\le i\le N,\ 1\le p\le \ell,\  0\le k\le M_0-1,\ n\in \Z,\ w\in \mathcal F_N^\ell$.

Now, with $\bm{a}=(a_1,\dots,a_\ell)\in (\C^\times)^\ell$,
we have an $\wh{\fsl_N}(\C_q)$-module $\mathcal F_N^{\bm{a}}$, which is naturally
 an $\wh{\fsl_N}(\C_q[t_0^{\pm M_0},t_1^{\pm M_1}])$-module.
For $1\le i\ne j\le N$, $m_0,m_1\in \Z$, recall from \eqref{wteij} that
$E_{i,j}t_0^{M_0m_0}t_1^{M_1m_1}$ (in $\wh{\fsl_N}(\C_q[t_0^{\pm M_0},t_1^{\pm M_1}])$)
%\[\wt{E}_{i,j}t_0^{M_0m_0}t_1^{M_1m_1}=E_{i,j}t_0^{M_0m_0}t_1^{M_1m_1}
%\in \wh{\fsl_N}(\C_q[t_0^{\pm M_0},t_1^{\pm M_1}])\]
 acts on $\mathcal F_N^{\bm{a}}$ as
\begin{align*}\label{wteijM}
&\sum_{n\in \Z}\sum_{r=1}^{\ell} (a_rq^{-n})^{M_1m_1}:\psi_i^r(m_0M_0-n)\bar\psi_j^r(n):\\
=&\sum_{n\in \Z}\sum_{r=1}^{\ell}\sum_{k=0}^{M_0-1} \left((a_rq^{-k})^{M_1}q^{-nM_0M_1}\right)^{m_1}
:\psi_i^r((m_0-n)M_0-k)\bar\psi_j^r(nM_0+k):.
\end{align*}

On the other hand,  via the Lie algebra isomorphism $\pi_{M_0,M_1}$,
the $\wh{\fsl_N}(\C_{q^{M_0M_1}})$-module $\mathcal F_N^{(\bm{a}q)_{M_0}^{M_1}}$ $(=\mathcal F_{N}^{M_0\ell})$
becomes an $\wh{\fsl_N}(\C_q[t_0^{\pm M_0},t_1^{\pm M_1}])$-module, where
%Recall that $\bm{\ell}^{M_0}\in P(M_0\ell,M_0s)\subset \Z_{+}^{sM_0}$.
%Set
%$$J=\{ (k,r)\ |\   0\le k\le M_0-1,\ 1\le r\le s\}$$
%and equip $J$ with the lexicographical order.
%Denote by $(\bm{\ell}^{M_0})_{k,r}$ the $(k,r)$-th component of $\bm{\ell}^{M_0}$.
%Note that $(\bm{\ell}^{M_0})_{k,r}=\ell_r$ and
%$$\sum_{(k',r')<(k,r)} (\bm{\ell}^{M_0})_{k',r'}=k\ell+\ell_{(r-1)}$$
% for $1\le r\le s,\ 0\le k\le M_0-1$.
the element $E_{i,j}t_0^{M_0m_0}t_1^{M_1m_1}$  acts as
\begin{eqnarray*}\label{teijaction}
%\quad\sum_{n\in \Z}\sum_{k=0}^{M_0-1}\sum_{r=1}^s \left((a_rq^{-k})^{M_1}q^{-nM_0M_1}\right)^{m_1}
%\(\sum_{P=M_0\ell_{(r-1)}+1}^{M_0\ell_{(r)}}:\psi_i^{P}(m_0-n)\bar\psi_j^{P}(n):\)\nonumber\\
\quad\sum_{n\in \Z}\sum_{k=0}^{M_0-1}\sum_{r=1}^{\ell} \left((a_rq^{-k})^{M_1}q^{-nM_0M_1}\right)^{m_1}
:\psi_i^{k\ell+r}(m_0-n)\bar\psi_j^{k\ell+r}(n):.
\end{eqnarray*}
It then follows from \eqref{isonormal} that the Fock space identification
\[\Psi:\  \mathcal F_N^{(\bm{a}q)_{M_0}^{M_1}}=\mathcal F_N^{M_0\ell}
\simeq^{\Phi}\mathcal F_N^\ell=\mathcal F_N^{\bm{a}}\]
is an $\wh{\fsl_N}(\C_q[t_0^{\pm M_0},t_1^{\pm M_1}])$-module isomorphism.

Next, we show that $\Psi$ is also a $\rGL_{{\bf I}_{\bm a}}$-module isomorphism.
Recall that the action of $\rGL_{{\bf I}_{\bm a}}$ on $\mathcal F_N^{\ell}$
 is given by the action of the Lie algebra $\fgl_{{\bf I}_{\bm a}}$ from (\ref{laction}), where
 $$\fgl_{{\bf I}_{\bm a}}={\rm span}\{ E_{p,p'}\ |\  1\le p,p'\le \ell,\ p\sim_{\bm a} p'\}\subset \fgl_{\ell}.$$
Similarly, the action of $\rGL_{{\bf I}^{M_0}}$ on $\mathcal F_N^{M_0\ell}$ is given by the action of
the Lie algebra $\fgl_{{{\bf I}_{\bm a}}^{M_0}}$ $(\subset \fgl_{M_0\ell}$).
Furthermore, $\fgl_{{\bf I}_{\bm a}}$ acts on $\mathcal F_N^{M_0\ell}$ through the embedding of $\fgl_{{\bf I}_{\bm a}}$ into $\fgl_{{{\bf I}_{\bm a}}^{M_0}}$
with
% $$\rGL_{\bf I}\subset \rGL_{{\bf I}^{M_0}},\   \   \   \fgl_{\bf I}\subset \fgl_{{\bf I}^{M_0}}.$$
 $$E_{p,p'}\mapsto \sum_{k=0}^{M_0-1}E_{k\ell+p,k\ell+p'}$$
 for $1\le p,p'\le \ell$ with $p\sim_{\bm a} p'$.
We see that the element $E_{p,p'}$ of the Lie algebra $\fgl_{{\bf I}_{\bm a}}$
acts on $\mathcal F_N^{M_0\ell}$ as
$$\sum_{k=0}^{M_0-1}\sum_{i=1}^{N}:\psi_{i}^{k\ell+p}(-n)\bar{\psi}_{i}^{k\ell+p'}(n):.$$
Then it follows  from \eqref{isonormal} that $\Psi$ is a $\fgl_{{\bf I}_{\bm a}}$-module isomorphism.
Thus
$\Psi$ is also a $\rGL_{{\bf I}_{\bm a}}$-module isomorphism.

Finally, by using $\Psi$ we transport the  $\rGL_{{\bf I}_{\bm a}^{M_0}}$-module structure from
$\mathcal F_{N}^{(\bm{a}q)_{M_0}^{M_1}}$ to $\mathcal F_N^{\bm a}$, obtaining
 a seesaw pair
\begin{align*}
(\wh{\fsl_N}(\C_q),\rGL_{{\bf I}_{\bm a}})\quad\te{and}\quad
(\wh{\fsl_N}(\C_{q^{M_0M_1}}),\rGL_{{\bf I}_{\bm a}^{M_0}})
\end{align*}
on the Fock space $\mathcal F_N^{\bm a}$.  Then just as in the proof of Theorem \ref{thm:bl2},
(\ref{M0M1-branching}) follows from this seesaw pair.
\end{proof}

\end{document}